\documentclass[a4paper,11pt]{amsart}
\usepackage[utf8]{inputenc}
\usepackage[T1]{fontenc}

\usepackage{color}
\usepackage{bm}
\usepackage{libertine}
\usepackage{microtype}
\usepackage{mathtools}
\usepackage{amsmath}
\usepackage{amsfonts}
\usepackage{amssymb}
\usepackage{mathrsfs}
\usepackage{fancyhdr}

\usepackage{booktabs}
\usepackage[hmargin=2.8cm,vmargin=2cm]{geometry}
\usepackage{mathrsfs}
\usepackage{gensymb}
\usepackage[hidelinks]{hyperref}
\hypersetup{pdfpagemode=UseNone}

\usepackage[abbrev, backrefs]{amsrefs}
\usepackage[capitalize]{cleveref}
\usepackage[version=4]{mhchem}

\numberwithin{equation}{section}
\newtheorem{proposition}{Proposition}[section]
\newtheorem{theorem}[proposition]{Theorem}
\newtheorem{lemma}[proposition]{Lemma}
\newtheorem{corollary}[proposition]{Corollary}
\theoremstyle{definition}
\newtheorem{definition}[proposition]{Definition}
\theoremstyle{remark}
\newtheorem{remark}[proposition]{Remark}

\usepackage{paralist}
\usepackage{constants}
\usepackage{esint}

%opening
\title[Ginzburg--Landau relaxation for harmonic maps into a general manifold]{Ginzburg--Landau relaxation for harmonic maps on planar domains into a general compact vacuum manifold}

\author{Antonin Monteil}
\address{
Antonin Monteil\\
 University of Bristol\\
 School of Mathematics\\
 Fry building\\
 Woodland Road\\
 BS8 1UG Bristol, United Kingdom
}
\email{antonin.monteil@bristol.ac.uk}

\author{R\'emy Rodiac}
\address{
Rémy Rodiac\\
Université Paris-Saclay, CNRS\\
Laboratoire de Mathématiques d'Orsay\\
91405, Orsay, France\\
}
\email{remy.rodiac@universite-paris-saclay.fr}

\author{Jean Van Schaftingen}
\address{
Jean Van Schaftingen\\
 Université catholique de Louvain\\
 Institut de Recherche en Mathématique et Physique\\
 Chemin du cyclotron 2, L7.01.02\\
1348 Louvain-la-Neuve, Belgium}
\email{jean.vanschaftingen@uclouvain.be}

\subjclass[2010]{35Q56 (35B25, 35J91, 49Q10, 58E20, 76A15)}
\keywords{Landau--de Gennes; renormalisable singular harmonic map; frame-fields}
\thanks{A. Monteil was supported as postdoctoral researcher (charg\'e de recherches) by the Fonds de la Recherche Scientifique--FNRS; R. Rodiac and J. Van Schaftingen were supported by the Mandat d'Impulsion Scientifique F.4523.17, ``Topological singularities of Sobolev maps'' of the Fonds de la Recherche Scientifique--FNRS;
J. Van Schaftingen was supported by the Projet de Recherche T.0229.21 du ``Singular Harmonic Maps and Asymptotics of Ginzburg-Landau Relaxations'' of the Fonds de la Recherche Scientifique–FNRS.}

\usepackage{mathtools}
\newcommand{\st}{\;:\;}

\newcommand{\abs}[1]{\lvert #1 \rvert}
\newcommand{\norm}[1]{\lVert #1 \rVert}

\newcommand{\bigabs}[1]{\bigl\lvert #1 \bigr\rvert}

\newcommand{\weakto}{\rightharpoonup}

\newcommand{\Rset}{\mathbb{R}}

\newcommand{\Nset}{\mathbb{N}}

\newcommand{\Sset}{\mathbb{S}}
\newcommand{\Hset}{\mathbb{H}}

\newcommand{\Deriv}{D}
\newcommand{\compose}{\,\circ\,}
\newcommand{\dif}{\,\mathrm{d}}
\newcommand{\VMO}{\mathrm{VMO}}
\newcommand{\manifold}[1]{\mathcal{#1}}

\newcommand{\defeq}{\coloneqq}
\newcommand{\eqdef}{\eqqcolon}

\DeclareMathOperator{\diam}{diam}

\newcommand{\Conf}[1]{\operatorname{Conf}_{#1}}
\newcommand{\equivnorm}[1]{\lambda(#1)}

\newcommand{\Esing}{\mathcal{E}^{\mathrm{sg}}}

\newcommand{\synhar}[2]{d_{\mathrm{synh}} (#1, #2)}
%{I\!I}

\newcommand{\e}{\varepsilon}

\DeclareMathOperator{\dist}{dist}
\DeclareMathOperator{\id}{id}
\DeclareMathOperator{\Lin}{Lin}

\DeclareMathOperator{\tr}{tr}

\DeclareMathOperator{\syst}{sys}
\date{\today}

\setcounter{tocdepth}{1}
\begin{document}

\begin{abstract}
We study the asymptotic behaviour, as a small parameter $\varepsilon$ tends to zero, of minimisers of a Ginzburg--Landau type energy with a nonlinear penalisation potential vanishing on a compact submanifold $\mathcal{N}$ and with a given $\mathcal{N}$--valued Dirichlet boundary datum.
We show that minimisers converge up to a subsequence to a singular $\mathcal{N}$--valued harmonic map, which is smooth outside a finite number of points around which the energy concentrates
and whose singularities' location minimises a renormalised energy,
generalising known results by Bethuel, Brezis and H\'elein for the circle $\mathbb{S}^1$.
We also obtain $\Gamma$--convergence results  and uniform Marcinkiewicz weak $L^2$ or Lorentz $L^2$ estimates on the derivatives.
We prove that solutions to the corresponding Euler--Lagrange equation converge uniformly to the constraint and converge to harmonic maps away from singularities.
\end{abstract}
\maketitle
\tableofcontents

\section{Introduction}

Given a smooth compact connected manifold \(\manifold{N}\) which can be assumed, thanks to Nash's embedding theorem \cite{Nash_1956}, to be isometrically embedded into the Euclidean space \(\Rset^\nu\) for some \(\nu \in \mathbb{N}_\ast\), given a bounded domain \(\Omega \subset \Rset^2\) with Lipschitz boundary and given \(g\in W^{1/2,2}(\partial \Omega, \manifold{N})\), a \emph{minimising harmonic map} \(u : \Omega \to \manifold{N}\) with boundary condition \(g\) is a map which minimises the \emph{Dirichlet energy}
\begin{equation}
  \label{eq_Dirichlet_energy}
\int_\Omega \frac{\abs{\Deriv u}^2}{2}
\end{equation}
on the nonlinear subspace 
\begin{equation}
  W^{1,2}_g (\Omega,\manifold{N})\defeq \{u \in W^{1,2}(\Omega,\Rset^\nu) \st u \in \manifold{N} \text{ almost everywhere 
      in } \Omega \text{ and }  \tr_{\partial \Omega}u=g  \}
\end{equation}
of the \emph{Sobolev space} \(W^{1, 2} (\Omega, \Rset^\nu)\) of functions having a square-summable weak derivative, where \(\tr_{\partial \Omega}\) denotes the trace operator on \(W^{1, 2} (\Omega, \Rset^\nu)\).
It is known since Morrey's work that, when the domain \(\Omega\) is two-dimensional, any minimising harmonic map is smooth \cite{Morrey_1948}.

Because of topological obstructions, the space \(W^{1, 2}_g (\Omega, \manifold{N})\) can happen to be empty; if \(g \in \mathcal{C} (\partial \Omega, \manifold{N})\), this will be the case if and only if the map \(g\) cannot be extended to a continuous map from \(\Omega\) to \(\manifold{N}\) (see \cite{Schoen_Uhlenbeck_1982}).
This occurs for example when the domain \(\Omega\) is simply-connected while the manifold \(\manifold{N}\) is not simply-connected and the map \(g\) is not homotopic to a constant map.

The \emph{Ginzburg--Landau relaxation strategy} consists in replacing the constraint that \(u \in \manifold{N}\) almost everywhere in \(\Omega\) by an additional penalisation term to the Dirichlet energy \eqref{eq_Dirichlet_energy}.
Fixing a nonnegative function \(F \in \mathcal{C} (\Rset^\nu, [0, +\infty))\) such that
\(F^{-1} (\{0\}) = \manifold{N}\), one defines for every \(\varepsilon \in (0,+\infty)\), the \emph{Ginzburg--Landau energy} as 
\begin{equation}
\label{eq_GLenergy}
\mathcal{E}^\varepsilon_F (u)\defeq \int_\Omega \frac{\abs{\Deriv u}^2}{2}+\frac{F(u)}{\varepsilon^2}.
\end{equation}
In the present work, we will require the function \(F\) to satisfy the following non-degeneracy condition:
\begin{equation}
  \label{hyp20}
        \begin{gathered}
    \text{there exist \(\delta_F,m_F,M_F \in (0,+\infty)\)  such that for every  \(z\in\Rset^\nu\) with  \(\dist(z,\manifold{N})<\delta_F\)}, \\
    \frac{m_F}{2}\dist(z,\manifold{N})^2 \leq F(z)\leq \frac{M_F}{2}\dist(z,\manifold{N})^2.
      \end{gathered}
\end{equation}

The existence of minimisers \(u_\varepsilon\) of \(\mathcal{E}^\varepsilon_F\) under the Dirichlet boundary condition \(\operatorname{tr}_{\partial\Omega}u_\varepsilon =g\) follows from a classical result in the direct method of calculus of variations (see for example \citelist{\cite{Dacorogna_2008}*{Corollary 3.24}}).
When \(\varepsilon \to 0\), one expects the function \(u_\varepsilon\) to eventually take its values into the manifold \(\manifold{N}\) except in some small singular regions; the limiting map can then play a role of generalised solution of the Dirichlet problem for harmonic maps into the manifold \(\manifold{N}\).

Our first main result (\cref{theorem_main1}) describes this asymptotic behaviour of minimisers of the Ginzburg--Landau energy when \(\varepsilon \to 0\): if for each \(\varepsilon > 0\), the function \(u_\varepsilon\) is a minimiser of \(\mathcal{E}^\varepsilon_F\) under the boundary condition \(\tr_{\partial \Omega} u_\varepsilon = g\), then there exists a sequence \( (\varepsilon_n)_{n\in \mathbb{N}}\) converging to \(0\), a finite set of points \(\{a_1,\dotsc,a_k\} \subset \Omega\) and a map \(u_*\in W^{1,2}_{\textrm{loc}} (\Bar{\Omega} \setminus \{a_1,\dotsc,a_k \})\) such that
\(u_{\varepsilon_n}\rightarrow u_*\) strongly in \(W^{1,2}_{\textrm{loc}}(\Bar{\Omega} \setminus \{a_1,\dotsc,a_k \})\), where 
\(u_*\) is an \(\manifold{N}\)--valued harmonic map in \(\Omega \setminus \{a_1,\dotsc,a_k \}\)
and the configuration of points \(\{a_1, \dotsc, a_k\}\) minimises a \emph{renormalised energy}.
This renormalised energy is defined as the sum of a renormalised energy for harmonic maps that we have defined in \cite{Monteil_Rodiac_VanSchaftingen_RE} and that we present in \S \ref{section_renormalised_energies}, and a term defined in \S \ref{section_minimal_energy_on_balls} depending on the singularities and on the penalisation nonlinearity \(F\).

When \(\manifold{N}=\mathbb{S}^1\subset \Rset^2\)and \(F(z)=(1-\abs{z}^2)^2\),
we recover the seminal results of Bethuel, Brezis \& Hélein \cite{Bethuel_Brezis_Helein_1994},
for the original Ginzburg--Landau functional used to model the behaviour of \emph{type II superconductors} for a star-shaped domain \(\Omega\); the results were later extended to simply-connected domains in \cite{Struwe_1994}; here we do not assume that \(\Omega\) is simply-connected and provide thus new results for the original Ginzburg--Landau functional in the multiply-connected case. 
In the case of a general target manifold \(\manifold{N}\), the leading-order asymptotics and the topological charges of singularities in our results (\cref{theorem_main1} \eqref{it_ooChifooNg2zeequohsahsoo} and \eqref{it_eigh6wav2ieS3yahThaipho8} at the \(o (\log 1/\varepsilon)\) level) are due to Canevari \cite{Canevari_2015}. 

Functionals of the form \eqref{eq_GLenergy} appear in various other physical models besides the Ginzburg--Landau model in superconductivity.
The \emph{Landau--de Gennes theory} describes the state of a \emph{nematic liquid crystal} via a field of symmetric traceless \(3\times 3\) matrices which minimises an energy of the form \eqref{eq_GLenergy} with \(\manifold{N} \simeq \Rset \mathbb{P}^2\); the study of such minimisers has been the object of many works \cites{Bauman_Park_Phillips_2012,Golovaty_Montero_2014,Canevari_2015,Ball_Zarnescu_2011}.
Energies of the form \eqref{eq_GLenergy} also appear in physics in \emph{Chern-Simon-Higgs theory} \cite{Bauman_Park_Phillips_2012} with \(\manifold{N}=\mathbb{S}^1\times \{0\}\simeq \mathbb{S}^1 \) and other phase transitions problems like \emph{biaxial molecules in nematic phase} (\(\manifold{N}\simeq SU(2)/Q\), where \(Q\) is the quaternion group), \emph{superfluid \ce{^3He} in dipole-free phase} with \( \manifold{N} \simeq SU(2)\times SU(2)/H \) where \(H\) is a subgroup of \(SU(2) \times SU(2)\) isomorphic to four copies of \(\mathbb{S}^1\) and \emph{superfluid \ce{^3He} in dipole-locked phase} with \( \manifold{N} \simeq \Rset \mathbb{P}^3\) \cite{Mermin_1979}.

Minimisation of Ginzburg--Landau type energies also appears as a strategy in meshing algorithms for numerical analysis and computer graphics:
in order to generate a quadrangular meshing of a surface or a hexahedral meshing of a three-dimensional domain, one constructs first a guiding cross-field or frame-field which is mathematically a map taking its values into \(SO(2)/C_4\) and \(SO(3)/O\), where \(C_4\) is the cyclic group of order \(4\) of   direct symmetries of a square, and \(O\) is the octahedral group of direct symmetries of the cube \cites{Liu_Zhang_Chien_Solomon_Bommes_2018, HTWB,Beaufort_Lambrechts_Henrotte_Geuzaine_Remacle_2017,Viertel_Osting_2017,Chemin_Henrotte_Remacle_VanSchaftingen}.
Mathematically, in the latter case \(\pi_1 (SO (3)/O) = 2O\) is the \emph{nonabelian} binary octahedral group. Since one would like these cross-fields or frame-fields to minimise a Dirichlet energy and since one can face topological obstructions as described earlier in this introduction, the strategy consists in constructing these fields using a Ginzburg--Landau relaxation. The cross-fields and frame-fields will necessarily have singularities and one hopes to place these singularities in an optimal way using this procedure.

The asymptotics that we obtain imply in particular that when the domain \(\Omega\) is a disk and the boundary datum \(g\) is an atomic minimising geodesic in \(\manifold{N}\) (see \S \ref{section_singular_energy}), then the asymptotic profile is of the form \(u_* (x) = g (x/\abs{x})\) (\cref{theorem_explicit}). This generalises the answer of Bethuel, Brezis \& H\'elein to Matano's original problem on the Ginzburg--Landau equation \cite{Elliott_Matano_Tang_1994}.

As another consequence of our results, the \emph{stress-energy tensor} of the limit \(u_*\) has vanishing flux around the singularities --- equivalently, the residue of the \emph{Hopf differential} of \(u_*\) vanishes at each singularity.

The results presented above are not confined to minimisers of the Ginzburg--Landau energy, 
and imply in particular \(\Gamma\)--convergence results at first and second order similar to the classical case, generalising \(\Gamma\)--convergence results at first order
\citelist{\cite{Jerrard_1999}\cite{Jerrard_Soner_2002}\cite{Sandier_1998}} and \(\Gamma\)--convergence results at second order  \cite{Alicandro_Ponsiglione_2014}.
All our results also come with Marcinkiewicz weak \(L^2\) estimates --- or equivalently estimates in the endpoint Lorentz space \(L^{2, \infty}\) --- on the gradient as for the original Ginzburg--Landau functional \cite{Serfaty_Tice_2008}.

We consider next the improvements in the asymptotics that can be obtained when \(u_\varepsilon\) is a weak solution to the equation
\begin{equation}
  \label{eq_GinzburgLandau}
  \Delta u_\varepsilon = \frac{\nabla F (u_\varepsilon)}{\varepsilon^2} \qquad \text{in \(\Omega\)},
\end{equation}
that we refer to \eqref{eq_GinzburgLandau} as the \emph{generalised Ginzburg--Landau equation}.
Minimisers of the Ginzburg--Landau energy \(\mathcal{E}^\varepsilon_F\) 
satisfy the corresponding Euler--Lagrange equation, i.e., \eqref{eq_GinzburgLandau} is satisfied under reasonable assumptions (see \S \ref{section_Euler_Lagrange}).
We prove in \cref{proposition_uniform_convergence_N} that under a boundedness assumption on \(\nabla F (u_\varepsilon)\), the distance to the manifold
\(\dist (u_{\varepsilon_n}, \manifold{N})\) \emph{converges uniformly to \(0\)} up to the boundary and away from singularities for any boundary datum \(g \in W^{1/2, 2} (\partial \Omega, \manifold{N})\) --- which is not continuous in general.
We next prove in \cref{proposition_convergence_solutions} that weakly converging solutions of \eqref{eq_GinzburgLandau} converge to harmonic maps.
Finally, we obtain higher-order convergence up to the boundary under a higher regularity assumption on the boundary datum (\cref{prop_strongconvC0}). 

Another strategy to study phase-transition problems where one deals with manifold-valued order-parameters has been implemented in \cites{Canevari_Orlandi_2020a,Canevari_Orlandi_2020b} by constructing a substitute to the Jacobian determinant used in the classical \(\mathbb{S}^1\)-valued Ginzburg-Landau theory to obtain first-order \(\Gamma\)--convergence results; this substitute is obtained by using flat chains in the setting of manifolds with abelian fundamental groups. Other types of topological obstructions have been analysed via a Ginzburg--Landau relaxation in the case of two-dimensional Riemannian manifolds \cites{Ignat_Jerrard_2017,Ignat_Jerrard_2020}: 
the authors prove the convergence of vector fields minimising some Ginzburg--Landau type energy to a canonical unit-length harmonic tangent field with a finite number of singularities; the singularities arise from a non-vanishing Euler-Poincaré characteristic, their number is determined by the Poincar\'e--Hopf index theorem and their position is governed by a renormalised energy.

\medskip 

We continue the present work with a preliminary section on the projection onto the manifold and on non-degeneracy conditions on \(F\) (\S \ref{section_preliminaries}).
We next recall in \S \ref{section_renormalised_energies} the definitions and properties of singular energy, geometric renormalised energy, renormalisable singular mappings and synharmony from \cite{Monteil_Rodiac_VanSchaftingen_RE}.
In \S \ref{section_minimal_energy_on_balls}, we introduce a quantity measuring the energy of a vortex with a given boundary condition at infinity.
We combine then the different tools to obtain an \emph{upper bound} on the energy of minimisers in \S \ref{section_upper_bound}.

In \S \ref{sect:Lower_bounds}, we obtain by Sandier's vortex-ball method \cite{Sandier_1998} a first lower-bound on the energy and then following Jerrard's strategy \cite{Jerrard_1999} we obtain localised estimates.
We apply then these estimates to energy convergence results, implying convergence of minimisers and \(\Gamma\)--convergence results (\S \ref{section_energy_convergence}). 
We also explain how our results locate singularities on a disk with an atomic minimising geodesic as boundary datum (\S \ref{section_simple_boundary_conditions}).

In the last section \S \ref{section_solutions} we give sufficient conditions for minimisers to be solutions of the Ginzburg--Landau equation. Then we study solutions to this equation and we prove uniform convergence of these solutions to the constraint manifold \(\manifold{N}\), weak convergence to harmonic maps and higher-order convergence away from singularities.

\section{Retraction on the manifold and non-degeneracy of the relaxation potential}
\label{section_preliminaries}

\subsection{Embedding and nearest point retraction}

The Ginzburg--Landau relaxation procedure requires an isometric embedding of the vacuum manifold \(\manifold{N}\) into \(\Rset^\nu\).
The classical Nash embedding theorem \cite{Nash_1956} provides such an embedding.
When \(\manifold{N} = G/H\) where \(G\) is a Lie group and \(H \subset G\) is a closed subgroup, it can be relevant to use an \emph{equivariant isometric embedding}
due to Moore \cite{Moore_1976} (see also \cite{Moore_Schlafly_1980}): there exists an isometric embedding \(\Psi : G/H \to \Rset^\nu\) and a representation  \(R : G \to \Lin (\Rset^\nu)\) such that for every \(g \in G\) and \(y \in G/H\), \(\Psi (gy) = R (g) (\Psi (y))\); in contrast with Nash's embedding theorem, the dimension \(\nu\) of the target space \(\Rset^\nu\) depends on the metric on \(G\) and on the choice of the subgroup \(H\), and the compactness of \(G/H\) is essential (there is no such embedding if \(G/H\) is the hyperbolic space \(\Hset^n \simeq O (1, n)/ O(1) \times O(n)\)).

We define the function \(\dist_{\manifold{N}} : \Rset^\nu \to [0,+\infty)\) by setting for each \(y \in \Rset^\nu\),
\[
  \dist_{\manifold{N}} (y) \defeq \dist (y, \manifold{N})
  \defeq \inf\,\bigl\{\abs{y - z} \st z \in \manifold{N}\bigr\}
\]
and for each \(\delta \in (0, +\infty)\), the set
\[
 \manifold{N}_{\delta}
  \defeq
  \bigl\{ y \in \Rset^\nu \st \dist(y,\manifold{N})<\delta \bigr\}.
\]
The next lemma describes the nearest point retraction of a neighbourhood of \(\manifold{N}\) on \(\manifold{N}\).

\begin{lemma}
  \label{lemma_derivativeNearestPointRetraction}
  There exists \(\delta_{\manifold{N}} >0\) such that the nearest point retraction \(\Pi_{\manifold{N}} : \manifold{N}_{\delta_{\manifold{N}}} \to \manifold{N}\) characterized by
  \begin{equation*}
  \abs{y-\Pi_{\manifold{N}}(y)}=\dist(y,\manifold{N})
  \end{equation*}
is well-defined and smooth. Moreover, if the mappings \(P_{\manifold{N}}^\top : \manifold{N} \to \operatorname{Lin} (\Rset^\nu, \Rset^\nu)\) and \(P_{\manifold{N}}^\perp : \manifold{N} \to \operatorname{Lin} (\Rset^\nu, \Rset^\nu)\) are defined for each \(y \in \manifold{N}\) by setting \(P_{\manifold{N}}^\top (y)\) and \(P_{\manifold{N}}^\perp (y)\) as the orthogonal projections on \(T_y \manifold{N}\) and \((T_y\manifold{N})^\perp\), identified as linear subspaces of \(\Rset^\nu\), then for every \(y\in\manifold{N}_{\delta_\manifold{N}}\) and \(v\in\Rset^\nu\),
\begin{equation}
\label{firstProjectionEstimate}
\abs{\Deriv \dist_{\manifold{N}} (y) [v]}^2 \le \abs{P_{\manifold{N}}^\perp (\Pi_{\manifold{N}} (y)) [v]}^2
\end{equation}
and
\begin{equation}
\label{secondProjectionEstimate}
\Bigl(1-\frac{\dist_{\manifold{N}} (y)}{\delta_\manifold{N}}\Bigr)\abs{\Deriv\Pi_{\manifold{N}}(y)[v]}^2\le \abs{P_{\manifold{N}}^\top (\Pi_{\manifold{N}} (y))[v]}^2
\le C\abs{\Deriv\Pi_{\manifold{N}}(y)[v]}^2,
\end{equation}
for some constant \(C\in(0,+\infty)\) depending on \(\manifold{N}\) and \(\nu\) only.
\end{lemma}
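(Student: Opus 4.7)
The plan is to prove the three parts of the lemma in turn: the existence and smoothness of the retraction, the gradient bound on $\dist_{\manifold{N}}$, and the sandwich estimate for $\Deriv \Pi_{\manifold{N}}$.

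For the first part, I would invoke the normal exponential map. Consider $\Phi : N\manifold{N} \to \Rset^\nu$, $(p,v) \mapsto p+v$, where $N\manifold{N}$ is the normal bundle of $\manifold{N}$ in $\Rset^\nu$. At each $(p,0)$ on the zero section, $\Deriv\Phi(p,0)$ is the identity on $\Rset^\nu \simeq T_p\manifold{N} \oplus (T_p\manifold{N})^\perp$, so $\Phi$ is a local diffeomorphism by the inverse function theorem; compactness of $\manifold{N}$ provides a uniform $\delta_{\manifold{N}} > 0$ such that $\Phi$ restricts to a diffeomorphism onto $\manifold{N}_{\delta_{\manifold{N}}}$, with inverse $y \mapsto (\Pi_{\manifold{N}}(y),\, y-\Pi_{\manifold{N}}(y))$. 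This establishes that $\Pi_{\manifold{N}}$ is well-defined and smooth, and a standard compactness-plus-uniqueness argument confirms the nearest point characterisation $\abs{y-\Pi_{\manifold{N}}(y)} = \dist(y,\manifold{N})$.

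For \eqref{firstProjectionEstimate}, I would differentiate $\dist_{\manifold{N}}(y)^2 = \abs{y-\Pi_{\manifold{N}}(y)}^2$. Since $\Pi_{\manifold{N}}$ maps into $\manifold{N}$, the derivative $\Deriv\Pi_{\manifold{N}}(y)[v]$ lies in $T_{\Pi_{\manifold{N}}(y)}\manifold{N}$, while $y-\Pi_{\manifold{N}}(y) \in (T_{\Pi_{\manifold{N}}(y)}\manifold{N})^\perp$ by construction; their pairing vanishes, leaving
\[
\Deriv\dist_{\manifold{N}}(y)[v] = \Bigl\langle \frac{y-\Pi_{\manifold{N}}(y)}{\dist_{\manifold{N}}(y)},\, v\Bigr\rangle.
\]
As the unit vector $(y-\Pi_{\manifold{N}}(y))/\dist_{\manifold{N}}(y)$ belongs to $(T_{\Pi_{\manifold{N}}(y)}\manifold{N})^\perp$, its pairing with $v$ equals its pairing with $P^\perp_{\manifold{N}}(\Pi_{\manifold{N}}(y))[v]$, and Cauchy--Schwarz gives \eqref{firstProjectionEstimate}.

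The central step is \eqref{secondProjectionEstimate}, whose key ingredient is the identity
\[
P^\top_{\manifold{N}}(\Pi_{\manifold{N}}(y))[v] = \bigl(I - A_{y-\Pi_{\manifold{N}}(y)}\bigr)\bigl(\Deriv\Pi_{\manifold{N}}(y)[v]\bigr),
\]
where for $n \in (T_p\manifold{N})^\perp$ the shape operator $A_n : T_p\manifold{N} \to T_p\manifold{N}$ is characterised by $\langle A_n(w), X\rangle = \langle n, \secondfundform(w,X)\rangle$. To derive it, I would write $y+tv = p(t) + n(t)$ with $p(t) \in \manifold{N}$ and $n(t) \in (T_{p(t)}\manifold{N})^\perp$, differentiate at $t=0$, and extract the tangential part of $v = p'(0) + n'(0)$ by differentiating the orthogonality relation $\langle n(t), X(p(t))\rangle=0$ for each tangent vector field $X$: the normal part of $\Deriv X$ is precisely the second fundamental form, which produces $(n'(0))^\top = -A_n(p'(0))$. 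Since $\secondfundform$ is uniformly bounded on the compact $\manifold{N}$, the operator norm satisfies $\norm{A_n} \leq C_{\manifold{N}}\abs{n}$; by shrinking $\delta_{\manifold{N}}$ so that $2 C_{\manifold{N}}\delta_{\manifold{N}} \leq 1$, I secure $\norm{A_{y-\Pi_{\manifold{N}}(y)}} \leq \dist_{\manifold{N}}(y)/(2\delta_{\manifold{N}}) \leq 1/2$ throughout $\manifold{N}_{\delta_{\manifold{N}}}$. The upper bound then follows from $\norm{I - A_n} \leq 1 + \norm{A_n} \leq 3/2$, while for the lower bound I combine $\abs{P^\top_{\manifold{N}}(\Pi_{\manifold{N}}(y))[v]} \geq (1 - \norm{A_n})\abs{\Deriv\Pi_{\manifold{N}}(y)[v]}$ with the elementary inequality $(1 - t/2)^2 \geq 1 - t$ for $t \in [0,1]$, applied to $t = \dist_{\manifold{N}}(y)/\delta_{\manifold{N}}$. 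The main obstacle is establishing the shape operator identity and calibrating $\delta_{\manifold{N}}$ to the curvature of $\manifold{N}$ precisely enough to recover the exact coefficient $1 - \dist_{\manifold{N}}(y)/\delta_{\manifold{N}}$ on the left of \eqref{secondProjectionEstimate}.
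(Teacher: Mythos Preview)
Your proof is correct and rests on the same geometric mechanism as the paper's: both control the discrepancy between \(\Deriv\Pi_{\manifold{N}}(y)\) and \(P^\top_{\manifold{N}}(\Pi_{\manifold{N}}(y))\) through the second fundamental form, and both calibrate \(\delta_{\manifold{N}}\) against the curvature bound \(\sup\abs{B}\). The presentations differ slightly. You obtain the full operator identity \(P^\top_{\manifold{N}}(\Pi_{\manifold{N}}(y))[v] = (I - A_{y-\Pi_{\manifold{N}}(y)})\bigl[\Deriv\Pi_{\manifold{N}}(y)[v]\bigr]\) directly from the normal-bundle parametrisation, and both inequalities in \eqref{secondProjectionEstimate} then follow from an operator-norm bound on the shape operator --- this even gives an explicit constant \(C=9/4\). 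The paper instead differentiates the orthogonality relation \(P^\top_{\manifold{N}}(\Pi_{\manifold{N}}(y))[\Pi_{\manifold{N}}(y) - y] = 0\), which yields only the quadratic-form version of your identity (your identity paired with \(\Deriv\Pi_{\manifold{N}}(y)[v]\)); that suffices for the lower bound, but the upper bound then requires a separate argument via the kernel equality \(\ker \Deriv\Pi_{\manifold{N}}(y) = \ker P^\top_{\manifold{N}}(\Pi_{\manifold{N}}(y))\) and compactness. Your packaging is a bit more streamlined; the underlying geometry is identical.
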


In the particular case of the sphere \(\manifold{N} = \Sset^n \subseteq \Rset^{n + 1}\), one has \(\Pi_{\manifold{N}}(y)= y/\abs{y}\) if \(y \in \Rset^{n+1}\setminus \{0\}\), \(\Deriv \Pi_{\manifold{N}}(y)[v] = (v\abs{y}^2 - y (y \cdot v))/\abs{y}^3\), and thus \(\abs{\Deriv \Pi_{\manifold{N}}(y)[v]}^2 = \abs{v}^2/\abs{y}^2 - (y \cdot v)^2/\abs{y}^4\) for \(v\in \Rset^{n+1}\). Moreover \(\dist_{\Sset^n}(y)=\abs{\abs{y}-1}\) and \(\abs{\Deriv\dist_{\Sset^n}(y)[v]}=\abs{v \cdot y}/\abs{y}\) for \(y \in \Rset^{n+1}\setminus \{0\}\) and \(v\in \Rset^{n+1}\).
Besides, if \(z\in \Sset^n\) and \(v \in \Rset^{n+1}\): \(P_{\Sset^n}^\perp (z)[v] = z (z \cdot v)\) and \(P_{\Sset^n}^\top (z)[v] = v - z (z \cdot v)\), so that in this case \cref{lemma_derivativeNearestPointRetraction} is a consequence of  the formulae
\begin{align*}
\abs{\Deriv \dist_{\Sset^n} (y)[v]}^2
&= \abs{P_{\Sset^n}^\perp (\Pi_{\Sset^n}(y)) [v]}^2
&\text{ and } &
&\abs{y}^2 \abs{\Deriv \Pi_{\manifold{N}}(y)[v]}^2 = \abs{P_{\Sset^n}^\top (\Pi_{\Sset^n} (y)) [v]}^2,
\end{align*}
for \(y \in \Rset^{n+1}\setminus \{0\}\) and \(v \in \Rset^{n+1}\).

The smoothness of the nearest point retraction is classical \cite{Foote_1984}.
 For related computations on the distance function to embedded manifolds, we refer the reader to \citelist{\cite{Ambrosio_Mantegazza_1998}\cite{Eminenti_Mantegazza_2004}}. 
For every \(y \in \manifold{N}_{\delta_\manifold{N}}\) and \(v \in \Rset^\nu\), we have 
by orthogonality \(\abs{P_{\manifold{N}}^\perp (\Pi_{\manifold{N}} (y)) [v]}^2 +  \abs{P_{\manifold{N}}^\top (\Pi_{\manifold{N}} (y))[v]}^2 = \abs{v}^2\),
and thus by \cref{lemma_derivativeNearestPointRetraction} 
\begin{equation}
\label{eq_xoishee4IuHi8eefahyeithe}
%Keep number for possible future reference
    \abs{\Deriv \dist_{\manifold{N}} (y) [v]}^2+\Bigl(1-\frac{\dist_{\manifold{N}} (y)}{\delta_\manifold{N}}\Bigr)\abs{\Deriv\Pi_{\manifold{N}}(y)[v]}^2\le \abs{v}^2.
\end{equation}

In the proof of \cref{lemma_derivativeNearestPointRetraction} and throughout this work we will use the following facts about the nearest point projection:
\begin{equation}\label{eq:caracterisation_ortho}
\text{for all } y \in \manifold{N}_{\delta_\manifold{N}}, \quad  y-\Pi_\manifold{N}(y) \in (T_{\Pi_\manifold{N}(y)}\manifold{N})^\perp,
\end{equation}
\begin{equation}\label{eq:proj_ortho}
\text{ for all } y \in \manifold{N}, \quad \Deriv\Pi_\manifold{N}(y) \text{ is the orthogonal projection onto } T_y\manifold{N} \text{ i.e., } \Deriv\Pi_\manifold{N}(y)=P^\top_\manifold{N}(y),
\end{equation}
\begin{multline}\label{eq:second_fund_form}
\text{ for all } y \in \manifold{N}, \quad -\Deriv^2 \Pi_\manifold{N}(y): T_y\manifold{N} \otimes T_y\manifold{N} \rightarrow (T_y\manifold{N})^\perp  \\
\text{ is the second fundamental form of } \manifold{N}\subset \Rset^\nu \text{ at } y .
\end{multline}
Point \eqref{eq:caracterisation_ortho} follows from the characterization of the map \(\Pi_\manifold{N}\). For \eqref{eq:proj_ortho} we refer to \cite{Moser_2005}*{Lemma 3.1}. We denote by \(B_x: T_x\manifold{N} \otimes T_x\manifold{N} \rightarrow (T_x \manifold{N})^\perp\) the second fundamental form of \(\manifold{N}\) at \(x \in \manifold{N}\) and we refer to \cite{Do_Carmo_1992}*{definition 6.2.2} for the definition. We observe that, for \(y \in \manifold{N}\), \( \Deriv^2\Pi_\manifold{N}(y)_{\vert T_y \manifold{N} \otimes T_y\manifold{N} }=\Deriv P^\top_\manifold{N}(y)_{\vert T_y \manifold{N} \otimes T_y\manifold{N} }\) and we refer to \cite{Moser_2005}*{Lemma 3.2} for \eqref{eq:second_fund_form}.

\begin{proof}[Proof of \cref{lemma_derivativeNearestPointRetraction}]
It is well-known that when \(\delta > 0\) is small enough, the nearest point retraction \(\Pi_{\manifold{N}}\) is well-defined on \(\manifold{N}_{\delta}\). For every \(y \in \manifold{N}_{\delta}\), by using \eqref{eq:caracterisation_ortho} we find
  \[
  P_{\manifold{N}}^\top (\Pi_{\manifold{N}}(y))[\Pi_{\manifold{N}}(y) - y] = 0.
  \]  
Differentiating this identity with respect to \( y\) by using the chain rule and the Leibniz rule, we find for every \(y \in \manifold{N}_{\delta}\) and \(v \in \Rset^\nu\),
  \begin{equation}
  \label{eq_aePetaeghoow7hux2AoP2bae}
  P_{\manifold{N}}^\top (\Pi_{\manifold{N}}(y))\bigl[\Deriv \Pi_{\manifold{N}}(y)[v] - v\bigr]
    +(\Deriv P_{\manifold{N}}^\top (\Pi_{\manifold{N}}(y)) [\Deriv \Pi_{\manifold{N}}(y)[v]]) \bigl[\Pi_{\manifold{N}}(y) - y\bigr]=0.
  \end{equation}
Noting that \(\Deriv \Pi_{\manifold{N}}(y)[v] \in T_{\Pi_{\manifold{N}}(y)}\manifold{N}\), that for every \(z\in\manifold{N}\), \(P_{\manifold{N}}^\top(z)+P_{\manifold{N}}^\perp(z) = \operatorname{id}\) so that \(\Deriv P_{\manifold{N}}^\top(z)[w]=-\Deriv P_{\manifold{N}}^\perp(z)[w]\) whenever \(w\in T_z\manifold{N}\), we infer from \eqref{eq_aePetaeghoow7hux2AoP2bae} that 
  \begin{equation}
    \label{identity_D_Pi}
    P_{\manifold{N}}^\top (\Pi_{\manifold{N}}(y))\bigl[\Deriv \Pi_{\manifold{N}}(y)[v] - v\bigr]
    - (\Deriv P_{\manifold{N}}^\perp (\Pi_{\manifold{N}}(y)) [\Deriv \Pi_{\manifold{N}}(y)[v]]) \bigl[\Pi_{\manifold{N}}(y) - y\bigr]=0.
  \end{equation}
  We observe that for every \(w \in \Rset^\nu\), \(x\in\manifold{N} \mapsto P_{\manifold{N}}^\perp (x)[w]\in T_x^\perp \manifold{N}\) is a smooth map, and therefore we have \cite{Do_Carmo_1992}*{proposition 6.2.3} if \(x\in\manifold{N}\), \(w, z \in T_x \manifold{N}\) and \(u \in (T_x \manifold{N})^\perp\),
  \begin{equation}
  \label{eq_booxi7chiecoozaelu6Aep1T}
  z \cdot (\Deriv P_{\manifold{N}}^\perp (x)[w])[u]
  = -u\cdot B_{x} (z, w)
  .
  \end{equation}
Moreover, since for every \(y\in\manifold{N}_\delta\), \(v\in\Rset^\nu\), \(\Deriv\Pi_{\manifold{N}}(y)[v] \in T_{\Pi_\manifold{N} (y)} \manifold{N}\), we have
\begin{equation}\label{eq_heiyahchiePaibeejej3Phod}
  \Deriv \Pi_{\manifold{N}}(y)[v] \cdot P_{\manifold{N}}^\top (\Pi_{\manifold{N}}(y))[\Deriv \Pi_{\manifold{N}}(y)[v] - v]
  = \abs{\Deriv \Pi_{\manifold{N}}(y)[v]}^2 -  \Deriv \Pi_{\manifold{N}}(y)[v] \cdot P_{\manifold{N}}^\top (\Pi_{\manifold{N}}(y))[v] .
  \end{equation}
Therefore, we have, by taking the inner product of \eqref{identity_D_Pi} with the vector \(\Deriv\Pi_{\manifold{N}}(y)[v]\), in view of \eqref{eq_booxi7chiecoozaelu6Aep1T} and \eqref{eq_heiyahchiePaibeejej3Phod}
  \begin{multline}
    \label{eq_Quuech0cah2Aizior9yietah}
  \abs{\Deriv \Pi_{\manifold{N}}(y)[v]}^2 +
  (\Pi_{\manifold{N}}(y) - y)\cdot B_{\Pi_{\manifold{N}}(y)} [\Deriv \Pi_{\manifold{N}}(y)[v], \Deriv \Pi_{\manifold{N}}(y)[v]]\\ 
  = P_{\manifold{N}}^\top (\Pi_{\manifold{N}}(y))[v] \cdot \Deriv \Pi_{\manifold{N}}(y) [v].
\end{multline}

Hence, if \(\delta_{\manifold{N}}\in(0,\delta)\) satisfies \(\frac{1}{\delta_\manifold{N}}\ge\sup \{\abs{B_y (z, w)} \st y \in \manifold{N}, z, w \in T_y \manifold{N}, \abs{z} \le 1,\, \abs{w} \le 1 \}\), we have for every \(y\in\manifold{N}_{\delta_\manifold{N}}\) and \(v\in\Rset^\nu\),
  \begin{equation}
    \label{eq_ong6ose5ohphuH5pa}
  \Bigl(1 - \frac{1}{\delta_\manifold{N}}\abs{\Pi_{\manifold{N}}(y) - y}\Bigr) \abs{\Deriv \Pi_{\manifold{N}}(y)[v]}
  \le \abs{P_{\manifold{N}}^\top (\Pi_{\manifold{N}}(y))[v]},
\end{equation}
which is the first inequality in \eqref{secondProjectionEstimate}. In particular, \(\ker P_\manifold{N}^\top(\Pi_{\manifold{N}}(y)) \subset\ker \Deriv\Pi_{\manifold{N}}(y) \) and moreover \(\ker \Deriv\Pi_{\manifold{N}}(y) = \ker P_{\manifold{N}}^\top (\Pi_{\manifold{N}}(y))\) since \(\Deriv\Pi_{\manifold{N}}(y)\) and \( P_\manifold{N}^\top(\Pi_{\manifold{N}}(y))\) are onto from \(\Rset^\nu\) to \(T_{\Pi_\manifold{N}(y)}\manifold{N}\). This yields the second inequality in \eqref{secondProjectionEstimate}.

The first estimate \eqref{firstProjectionEstimate}, follows from the fact that for every \(y \in \manifold{N}_{\delta_{\manifold{N}}} \setminus \manifold{N}\) and \(v \in \Rset^\nu\)
  \begin{equation*}
  \Deriv \dist_{\manifold{N}} (y) [v] = \frac{v \cdot (y - \Pi_{\manifold{N}}(y))}{\abs{y - \Pi_{\manifold{N}}(y)}}
  = \frac{P_{\manifold{N}}^\perp (\Pi_{\manifold{N}}(y))[v] \cdot (y - \Pi_{\manifold{N}}(y))}{\abs{y - \Pi_{\manifold{N}}(y)}}.\qedhere
\end{equation*}
\end{proof}

\subsection{Non-degeneracy of the penalising potential}
\label{subs:behavior_energy_density}
If the function \(F\) satisfies the following first order non-degeneracy condition,
\begin{equation}
  \label{hyp21}
        \begin{gathered}
    \text{\(F \in \mathcal{C}^1(\Rset^\nu,[0,+\infty))\) and there exist \(\delta_F\in(0,\delta_{\manifold{N}})\) and \(m_F,M_F \in (0,+\infty)\) such that} \\
m_F\dist(z,\manifold{N})^2\leq DF(z)[z-\Pi_{\manifold{N}}(z)]\leq M_F \dist(z,\manifold{N})^2\quad\text{for every \(z\in\manifold{N}_{\delta_F}\)},
      \end{gathered}
\end{equation}
then it satisfies our zero order non-degeneracy assumption \eqref{hyp20}. This fact will be useful in \cref{higherOrderSection}.
\begin{lemma}
\label{nonDegeneracyZeroOne}
If \(F \in \mathcal{C}^1(\Rset^\nu , [0, +\infty))\) with \(F=0\) on \(\manifold{N}\) and if \eqref{hyp21} holds, then \eqref{hyp20} holds.
\end{lemma}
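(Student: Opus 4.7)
The plan is to obtain the zero-order bound by integrating the first-order hypothesis along the normal segment from the nearest point on \(\manifold{N}\) up to \(z\), using \(F(\Pi_\manifold{N}(z))=0\) as the base point.

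Fix \(z\in\manifold{N}_{\delta_F}\), set \(\pi\defeq\Pi_\manifold{N}(z)\) and \(d\defeq\dist(z,\manifold{N})=\abs{z-\pi}\), and consider the affine segment
\[
\gamma(t)\defeq\pi+t(z-\pi),\qquad t\in[0,1].
\]
The key technical step is to check that \(\Pi_\manifold{N}(\gamma(t))=\pi\) for every \(t\in[0,1]\), i.e.\ that the nearest point projection is constant along this normal segment. This is where a short but crucial argument is needed. For any \(q\in\manifold{N}\), expanding the square gives
\[
\abs{\gamma(t)-q}^2-\abs{\gamma(t)-\pi}^2=\abs{\pi-q}^2+2t\,(\pi-q)\cdot(z-\pi).
\]
The right-hand side is an affine function of \(t\); it is nonnegative at \(t=0\) (obvious) and at \(t=1\) (because \(\pi\) minimises \(\abs{z-\cdot}\) over \(\manifold{N}\)), hence nonnegative on \([0,1]\). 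Since \(\dist(\gamma(t),\manifold{N})\le td<\delta_F<\delta_\manifold{N}\), the projection \(\Pi_\manifold{N}(\gamma(t))\) is single-valued and equals \(\pi\), and moreover \(\dist(\gamma(t),\manifold{N})=td\).

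Consequently \(\gamma(t)-\Pi_\manifold{N}(\gamma(t))=t(z-\pi)\), so hypothesis \eqref{hyp21} applied at \(\gamma(t)\) yields
\[
m_F\,t^2d^2\ \le\ t\,\Deriv F(\gamma(t))[z-\pi]\ \le\ M_F\,t^2d^2,
\]
that is, after dividing by \(t>0\),
\[
m_F\,t\,d^2\ \le\ \Deriv F(\gamma(t))[z-\pi]\ \le\ M_F\,t\,d^2.
\]
Since \(F\in\mathcal C^1\) and \(F(\pi)=0\) by the assumption \(F=0\) on \(\manifold{N}\), the fundamental theorem of calculus gives
\[
F(z)=F(z)-F(\pi)=\int_0^1 \Deriv F(\gamma(t))[z-\pi]\dif t,
\]
and integrating the previous two-sided inequality over \([0,1]\) produces
\[
\frac{m_F}{2}\dist(z,\manifold{N})^2\ \le\ F(z)\ \le\ \frac{M_F}{2}\dist(z,\manifold{N})^2,
\]
which is exactly \eqref{hyp20} (with the very same constants \(\delta_F\), \(m_F\), \(M_F\)).

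The only non-routine part is establishing that \(\pi\) remains the projection along the whole segment \(\gamma\); everything else is an application of the fundamental theorem of calculus.
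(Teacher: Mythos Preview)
Your proof is correct and follows essentially the same approach as the paper: both integrate hypothesis \eqref{hyp21} along the normal segment \((1-t)\Pi_{\manifold{N}}(z)+tz\) and use \(F(\Pi_{\manifold{N}}(z))=0\). You in fact spell out the one detail the paper leaves implicit, namely that \(\Pi_{\manifold{N}}\) is constant along this segment (so that \(\dist(\gamma(t),\manifold{N})=t\dist(z,\manifold{N})\)); your affine-interpolation argument for this is clean and correct.
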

\begin{proof}
By the assumption \eqref{hyp21}, we have for every \(z\in\manifold{N}_{\delta_F}\) and \(t\in[0,1]\),
\[
m_F\, t\dist(z,\manifold{N})^2\leq \Deriv F((1-t)\Pi_{\manifold{N}}(z)+tz)[z-\Pi_{\manifold{N}}(z)]\leq M_F\, t\dist(z,\manifold{N})^2;
\]
the conclusion follows by integration with respect to \(t\) over \([0,1]\) since \(F=0\) on \(\manifold{N}\).
\end{proof}

A more explicit condition on \(F\) that implies \eqref{hyp21} is given by the second order condition:
\begin{equation}
  \label{hyp22}
  \text{\(F \in \mathcal{C}^2(\Rset^\nu,[0,+\infty))\) and for every \(y\in\manifold{N}\) and \(v \in (T_y\manifold{N})^\perp \setminus \{0 \}\)}, \ \Deriv^2F(y)[v,v] >0.
\end{equation}
\begin{lemma}
\label{lemma_compFanddist}
If \(F \in \mathcal{C}^2 (\Rset^\nu, [0, +\infty))\) with \(F=0\) on \(\manifold{N}\) and if \eqref{hyp22} holds, then \eqref{hyp21} holds.
\end{lemma}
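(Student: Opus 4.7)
My plan is to use a second-order Taylor expansion of $F$ around nearest points on $\manifold{N}$, exploiting the fact that $F$ vanishes to first order on $\manifold{N}$.

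First, I would observe that since $F \ge 0$ on $\Rset^\nu$ and $F = 0$ on $\manifold{N}$, every $y \in \manifold{N}$ is a global minimiser of $F$, so $\Deriv F (y) = 0$ for every $y \in \manifold{N}$. Writing, for $z \in \manifold{N}_{\delta_{\manifold{N}}}$, $y \defeq \Pi_{\manifold{N}} (z) \in \manifold{N}$ and $v \defeq z - \Pi_{\manifold{N}}(z)$, the characterisation \eqref{eq:caracterisation_ortho} yields $v \in (T_y \manifold{N})^\perp$ and $\abs{v} = \dist (z, \manifold{N})$. Writing a Taylor expansion with integral remainder and using $\Deriv F(y) = 0$, I obtain
\begin{equation*}
\Deriv F(z)[z - \Pi_{\manifold{N}}(z)] = \Deriv F(y+v)[v] = \int_0^1 \Deriv^2 F(y+tv)[v,v]\dif t.
\end{equation*}

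Next I would establish that the second fundamental data from \eqref{hyp22} give uniform bounds: the set $\{(y,v) \st y \in \manifold{N},\ v \in (T_y\manifold{N})^\perp,\ \abs{v} = 1 \}$ is a compact subset of the unit normal bundle of $\manifold{N}$ (since $\manifold{N}$ is compact), and the function $(y,v) \mapsto \Deriv^2 F(y)[v,v]$ is continuous and strictly positive on it by \eqref{hyp22}. Hence there exist constants $0 < c_1 \le c_2 < +\infty$ such that
\begin{equation*}
c_1 \abs{v}^2 \le \Deriv^2F(y)[v,v] \le c_2 \abs{v}^2 \qquad\text{for every \(y \in \manifold{N}\) and \(v \in (T_y\manifold{N})^\perp\).}
\end{equation*}

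Finally, I would use uniform continuity of $\Deriv^2 F$ on a compact neighbourhood of $\manifold{N}$: choose $\delta_F \in (0, \delta_{\manifold{N}})$ small enough so that for every $y \in \manifold{N}$ and every $w \in \Rset^\nu$ with $\abs{w - y} < \delta_F$ one has $\bignorm{\Deriv^2 F (w) - \Deriv^2 F (y)} \le c_1/2$ in operator norm. Then for $z \in \manifold{N}_{\delta_F}$, each point $y + tv$ with $t \in [0,1]$ lies within distance $\delta_F$ of $y = \Pi_{\manifold{N}}(z) \in \manifold{N}$, so the integrand in the Taylor formula satisfies $\Deriv^2F(y+tv)[v,v] \ge (c_1 - c_1/2)\abs{v}^2$ and $\Deriv^2F(y+tv)[v,v] \le (c_2 + c_1/2)\abs{v}^2$, yielding \eqref{hyp21} with $m_F = c_1/2$ and $M_F = c_2 + c_1/2$, since $\abs{v}^2 = \dist(z,\manifold{N})^2$.

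The only subtlety is step two, where one must correctly interpret hypothesis \eqref{hyp22} as giving a positive bilinear form only on the normal space: the Taylor remainder produces the full quadratic form $\Deriv^2 F(y+tv)[v,v]$ but the vector $v$ is normal to $T_y \manifold{N}$ by \eqref{eq:caracterisation_ortho}, so one applies the positivity of $\Deriv^2 F(y)$ exactly in the direction where it is guaranteed; no ellipticity on tangential directions is needed.
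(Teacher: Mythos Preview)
Your proof is correct and follows essentially the same approach as the paper: both use a Taylor expansion of \(\Deriv F\) along the segment from \(\Pi_{\manifold{N}}(z)\) to \(z\), combined with a compactness/continuity argument to bound \(\Deriv^2F\) on normal directions in a tubular neighbourhood. The only cosmetic difference is that the paper obtains the two-sided bound on \(\Deriv^2F(z)[v,v]\) for \(z\in\manifold{N}_{\delta_F}\) in a single compactness step, whereas you first bound on \(\manifold{N}\) and then extend by uniform continuity.
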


\begin{proof}
By compactness of \(\manifold{N}\), by continuity of \(\Deriv^2 F\) and by \eqref{hyp22}, there exist \(\delta_F\in (0,\delta_{\manifold{N}})\) and \(m_F,M_F\in(0,+\infty)\) such that for every \(z\in\manifold{N}_{\delta_F}\) and \(v\in (T_{\Pi_{\manifold{N}}(z)}\manifold{N})^\perp\),
\[
m_F\abs{v}^2\leq \Deriv^2F(z)[v,v]\leq M_F\abs{v}^2.
\] 
In particular, since \(z-\Pi_{\manifold{N}}(z)\in (T_{\Pi_{\manifold{N}}(z)}\manifold{N})^\perp\), we have for every \(t\in[0,1]\),
\[
m_F\dist(z,\manifold{N})^2\leq\Deriv^2F((1-t)\Pi_{\manifold{N}}(z)+tz)[z-\Pi_{\manifold{N}}(z),z-\Pi_{\manifold{N}}(z)]\leq M_F\dist(z,\manifold{N})^2,
\]
and the conclusion follows by integration over \([0,1]\) since \(\Deriv F\equiv 0\) on \(\mathcal{N}\).
\end{proof}

\begin{remark}\label{remarkDistanceSquare}
Many potentials \(F\) satisfy the condition \eqref{hyp22}, the most canonical being \(F(z)\defeq \dist(z,\manifold{N})^2\) in a neighbourhood of \(\manifold{N}\): we have for every \(z\in\manifold{N}_{\delta_\manifold{N}}\) and \(v\in\Rset^\nu\),
\[
\Deriv F(z)[v]=2(z-\Pi_{\manifold{N}}(z))\cdot v,
\]
and for every \(v_1,v_2\in \Rset^\nu\),
\begin{equation*}
\Deriv^2F(z)[v_1,v_2]=2(v_1-\Deriv\Pi_{\manifold{N}}(z)[v_1])\cdot v_2,
\end{equation*}
so that, in particular, \(\Deriv^2F(z)[v,v]=2\abs{v}^2\) if \(z\in\manifold{N}\) and \(v\in (T_z\manifold{N})^\perp\), since then \(\Deriv\Pi_{\manifold{N}}(z)\) is the orthogonal projection on \(T_z\manifold{N}\).
\end{remark}

\begin{remark}
In the previous example of the squared distance function, we have \(\abs{\nabla F}^2=4F\). In general, if \(F \in \mathcal{C}^3(\Rset^\nu , [0, +\infty))\) vanishes on \(\manifold{N}\) and satisfies \eqref{hyp22}, then the function \(G\), defined by \(G(y) \defeq \abs{\nabla F(y)}^2\), vanishes on \(\manifold{N}\) and satisfies \eqref{hyp22}. Indeed, if \(y\in\manifold{N}\) and \(v\in T_y\manifold{N}^\perp\setminus\{0\}\), then \(\Deriv G(y)[v]=2(\Deriv^2F(y)[v])[\nabla F(y)]\) and so \(\Deriv^2G(y)[v,v]=2\abs{\Deriv^2F(y)[v]}^2>0\) as \(\Deriv F\equiv 0\) on \(\manifold{N}\).
\end{remark}

\section{Renormalised energies and renormalisable harmonic maps}

\label{section_renormalised_energies}
\subsection{Topological resolution of the boundary datum}

Following our previous work \cite{Monteil_Rodiac_VanSchaftingen_RE}, we describe here the resolution of obstructions of the boundary datum that are responsible for asymptotical singularities for Ginzburg--Landau type functionals.

Given an open set \(\Omega \subset \Rset^2\), an integer \(k \in \Nset\) and a family of distinct points \(a_1, \dotsc, a_k \in \Omega\),
we define the quantity
\begin{multline}
\label{def_rho_barre}
  \Bar{\rho} (a_1, \dotsc, a_k)\\
  \defeq 
  \sup 
  \{\rho > 0 \st  \Bar{B}_{\rho} (a_i) \cap \Bar{B}_{\rho} (a_j) = \emptyset \text{ for each \(i, j \in  \{1, \dotsc, k\}\) such that \(i \ne j\) }\\
  \text{ and }
  \Bar{B}_\rho (a_i) \subset \Omega \text{ for each \(i \in \{1, \dotsc, k\}\)}\}
\end{multline}
and the notion of topological resolution \cite{Monteil_Rodiac_VanSchaftingen_RE}*{Definitions 2.1 and 2.2}
\begin{definition}
\label{def_resolution}
Given \(\Omega \subset \Rset^2\) a domain with a Lipschitz boundary, \(k\in\Nset_\ast\), \(k\) maps \(\gamma_1, \dotsc, \gamma_k\in \VMO (\Sset^1, \manifold{N})\) and a map \(g \in \VMO (\partial \Omega, \manifold{N})\), we say that \((\gamma_1, \dotsc, \gamma_k)\) is a \emph{topological resolution} of \(g\) whenever there exist points \(a_1, \dotsc, a_k \in \Omega\), a radius \(\rho \in (0, \Bar{\rho} (a_1, \dotsc, a_k))\),  and a continuous map \(u \in \mathcal{C} (\Bar{\Omega} \setminus \bigcup_{i = 1}^k B_\rho (a_i), \manifold{N})\) such that \(u \vert_{\partial \Omega}\) is homotopic to \(g\) in \(\VMO (\partial \Omega, \manifold{N})\) and for each \(i \in \{1, \dotsc, k\}\), \(u (a_i + \rho \cdot) \vert_{\Sset^1}\) is homotopic to \(\gamma_i\) in \(\VMO (\Sset^1, \manifold{N})\).
\end{definition}

\Cref{def_resolution} is invariant under changes of the positions of points and of the radius, and under homotopies of \(g\) in \(\VMO (\partial \Omega, \manifold{N})\) and of \(\gamma_1, \dotsc, \gamma_k\) in \(\VMO (\Sset^1, \manifold{N})\).
If the maps \(g, \gamma_1, \dotsc, \gamma_k\) are continuous, then 
we can assume without loss of generality in the definition that \(g = u \vert_{\partial \Omega}\)
and \(u (a_i + \rho \cdot) \vert_{\Sset^1} = \gamma_i\) everywhere \citelist{\cite{Brezis_Nirenberg_1995}\cite{Brezis_Nirenber_1996}}. Topological resolutions can be characterised algebraically in the fundamental group \(\pi_1 (\manifold{N})\) by conjugacy classes \cite{Monteil_Rodiac_VanSchaftingen_RE}*{Proposition 2.4}.

\subsection{Singular energy}
\label{section_singular_energy}
The minimal length in the homotopy class of \(\gamma\in\VMO (\Sset^1,\manifold{N})\) is defined as 
\begin{equation}
  \label{eq_minimalLengthVMO}
\equivnorm{\gamma}\defeq \inf \Bigl\{ \int_{\Sset^1} \abs{\Tilde{\gamma}'}\st \Tilde{\gamma} \in \mathcal{C}^1 (\Sset^1, \manifold{N}) \text{ and } \gamma \text{ are homotopic in \(\VMO (\Sset^1, \manifold{N})\)} \Bigr\}.
\end{equation}
We have then
\begin{equation}
  \label{eq_Eojei1cooruef6uiP}
 \inf \Bigl\{ \int_{\Sset^1} \abs{\Tilde{\gamma}'}^2 \st \Tilde{\gamma} \in \mathcal{C}^1 (\Sset^1, \manifold{N}) \text{ and } \gamma \text{ are homotopic in \(\VMO (\Sset^1, \manifold{N})\)} \Bigr\}
 = \frac{\equivnorm{\gamma}^2}{2 \pi},
\end{equation}
and equality is achieved if and only if \(\gamma\) is a minimising geodesic. The quantity \(\equivnorm{\gamma}\) is  invariant under homotopy in \(\VMO (\Sset^1, \manifold{N})\).

The \emph{systole} of the manifold \(\manifold{N}\) is the length of the shortest closed non-trivial geodesic on \(\manifold{N}\):
\begin{equation}
\label{eq_systole}
  \operatorname{sys} (\manifold{N}) =
  \inf \bigl\{\lambda (\gamma) \st \gamma \in \mathcal{C}^1(\Sset^1,\manifold{N}) \text{ is not homotopic to a constant}\bigr\}.
\end{equation}
In particular, for every \(\gamma \in \VMO (\Sset^1, \manifold{N})\), we have \(
 \equivnorm{\gamma}
 \in \{0\} \cup [\operatorname{sys} (\manifold{N}), +\infty)\).
When \(\manifold{N}\) is compact, \(\operatorname{sys} (\manifold{N}) > 0\).

\begin{proposition}
\label{proposition_equiv_norm_discrete}
If \(\manifold{N}\) is compact, then the set \(
 \{\equivnorm{\gamma} \st \gamma \in \VMO (\Sset^1, \manifold{N})\}
\)
is discrete.
\end{proposition}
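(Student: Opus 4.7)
I would prove the stronger statement that for every \(L \in (0,+\infty)\) the set \(\{\equivnorm{\gamma} \st \gamma\in\VMO(\Sset^1,\manifold{N}),\ \equivnorm{\gamma}\le L\}\) is finite, which implies the discreteness claim.

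First, I would reduce the problem to a counting statement on conjugacy classes of the fundamental group. The quantity \(\equivnorm{\gamma}\) depends only on the \(\VMO\) homotopy class of \(\gamma\), and by the already-cited characterisation (Proposition~2.4 in \cite{Monteil_Rodiac_VanSchaftingen_RE}) these classes are in bijection with the conjugacy classes in \(\pi_1(\manifold{N})\). Thus it suffices to prove that for each \(L > 0\), only finitely many conjugacy classes \([c] \in \pi_1(\manifold{N})/\text{conj}\) satisfy \(\equivnorm{[c]} \le L\).

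Next, I would assume by contradiction that there is a sequence \(([c_n])_{n\in\Nset}\) of pairwise distinct conjugacy classes with \(\equivnorm{[c_n]} \le L\). By definition of \(\equivnorm{\cdot}\), I can pick for each \(n\) a representative \(\gamma_n \in \mathcal{C}^1(\Sset^1, \manifold{N})\) whose class equals \([c_n]\) and whose length is at most \(L+1\), reparametrised with constant speed on \(\Sset^1\); the maps \(\gamma_n\) are then uniformly Lipschitz with constant \((L+1)/(2\pi)\) and take values in the compact set \(\manifold{N}\). Arzelà--Ascoli furnishes a subsequence converging uniformly to some \(\gamma_\infty \in \mathcal{C}(\Sset^1,\manifold{N})\).

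Now I would bring in the rigidity coming from a positive injectivity radius. Since \(\manifold{N}\) is compact, it has an injectivity radius \(\iota_{\manifold{N}} > 0\); any two continuous loops \(\alpha,\beta : \Sset^1 \to \manifold{N}\) with \(\sup_{\Sset^1} \dist_{\manifold{N}}(\alpha,\beta) < \iota_{\manifold{N}}/2\) are freely homotopic via the geodesic straight-line homotopy \(H(t,s) \defeq \exp_{\alpha(t)}(s\,\exp_{\alpha(t)}^{-1}(\beta(t)))\), which is well-defined and continuous. Applying this to the tail of the subsequence \((\gamma_n)\), we conclude that for \(n,m\) large enough, \(\gamma_n\) and \(\gamma_m\) are freely homotopic as continuous maps, hence also homotopic in \(\VMO(\Sset^1,\manifold{N})\) (continuous homotopies induce \(\VMO\) homotopies by the results of Brezis and Nirenberg \cite{Brezis_Nirenberg_1995}), contradicting the distinctness of the conjugacy classes \([c_n]\).

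The main obstacle is therefore the rigidity step, i.e.\ making sure that free homotopy of continuous loops genuinely corresponds to \(\VMO\)-homotopy and hence to equality of the associated conjugacy classes; once this identification is invoked, the Arzelà--Ascoli compactness combined with the positive injectivity radius closes the argument.
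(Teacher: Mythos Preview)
Your argument is correct and follows essentially the same route as the paper: bounded-length representatives, Arzel\`a--Ascoli to extract a uniformly convergent subsequence, and then the observation that uniformly close loops in a compact manifold are freely homotopic. The paper is slightly terser---it takes minimising geodesics as representatives and appeals directly to uniform convergence implying homotopy without spelling out the injectivity-radius straight-line homotopy---and it phrases the conclusion as discreteness rather than your stronger finiteness-below-each-level statement, but the substance is the same.
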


\begin{proof}
By homotopy invariance of \(\equivnorm{\gamma}\) and thanks to the existence of geodesics in each homotopy class, we can assume that the maps \(\gamma\) are taken to be minimising geodesics.
We consider thus a sequence \((\gamma_n)_{n \in \Nset}\) in \(\mathcal{C}^1 (\Sset^1, \manifold{N})\) of minimising closed geodesics such that the sequence of numbers \((\equivnorm{\gamma_n})_{n \in \Nset}\) converges.
In view of \eqref{eq_Eojei1cooruef6uiP} and the Ascoli--Arzel\'a compactness criterion, there is a subsequence of \((\gamma_n)_{n \in \Nset}\) that converges uniformly and hence up to a further subsequence all the maps in the sequence \((\gamma_n)_{n \in \Nset}\) are homotopic and thus \((\equivnorm{\gamma_n})_{n \in \Nset}\) is constant, which implies that the set \(
 \{\equivnorm{\gamma} \st \gamma \in \VMO (\Sset^1, \manifold{N})\}
\) is discrete. 
\end{proof}

The first key quantity in the asymptotics for Ginzburg--Landau type functionals is the following \cite{Monteil_Rodiac_VanSchaftingen_RE}*{Section 2.2}.

\begin{definition}
\label{def_loose_equiv_norm}
If \(\Omega \subset \Rset^2\) is a Lipschitz bounded domain and \(g \in \VMO ( \partial \Omega, \manifold{N})\), we define its \emph{singular energy} to be
\begin{equation*}
    \Esing (g)
    \defeq \inf \Biggl\{ \sum_{i = 1}^k \frac{\equivnorm{\gamma_i}^2}{4 \pi} \st k \in \Nset_\ast \text{ and \((\gamma_1, \dotsc, \gamma_k)\) is a topological resolution of \(g\)}\Biggr\}.
\end{equation*}
\end{definition}
The singular energy \(\Esing\) is invariant under homotopies in \(\VMO (\partial \Omega, \manifold{N})\). For every \(\gamma \in \VMO (\Sset^1, \manifold{N})\), we have \(\Esing (\gamma)
\le \frac{\equivnorm{\gamma}^2}{4 \pi}\) (where in the definition of \(\Esing (\gamma)\), the circle \(\Sset^1\) is thought as the boundary of \(\Omega=B_1\)) and for every\(g \in \VMO (\partial \Omega, \manifold{N})\),
\begin{equation}
\label{eq_paih1ieb8eiNgo4fumaequ6u}
 \Esing (g)
 \in \{0\} \cup \Bigl[\frac{\syst(\manifold{N})^2}{4 \pi}, +\infty\Bigr).
\end{equation}

We say that \((\gamma_1,\dotsc,\gamma_k)\) is a \emph{minimal topological resolution} of \(g\)
whenever it is a topological resolution of \(g\) such that
\(
\Esing (g) = \sum_{i = 1}^k \frac{\equivnorm{\gamma_i}^2}{4 \pi}
\)
and for every \(i \in \{1, \dotsc, k\}\), \(\equivnorm{\gamma_i} > 0\) \cite{Monteil_Rodiac_VanSchaftingen_RE}*{Definition 2.7}. 
For example, if \(g \in \VMO(\partial \Omega,\mathbb{S}^1)\) and \(\deg (g)=d\in \mathbb{Z}\) then \(\Esing(g)=\pi \abs{d}^2\), and a minimal topological resolution is given by \(\abs{d}\) maps of degree \(1\) if \(d>0\), and \(\abs{d}\) maps of degree \(-1\) if \(d<0\). However, in general, minimal topological resolutions are not necessarily unique.

A closed curve \(\gamma \in \mathcal{C} (\Sset^1, \manifold{N})\) is said to be \emph{atomic} whenever \((\gamma)\) is a minimal topological resolution of \(\gamma\) \cite{Monteil_Rodiac_VanSchaftingen_RE}*{Definition 2.8}.
In particular, if \(\lambda (\gamma) = \operatorname{sys} (\manifold{N})\), then \(\gamma\) is atomic.
Atomicity does not exclude the existence of an alternative minimal topological resolution into several maps, this is the case for the manifold \(\manifold{N}\) arising as quotient of \(SU (2)\times SU(2)\) in models of superfluid \ce{^3He} \cite{Monteil_Rodiac_VanSchaftingen_RE}*{Section 9.3.5}.

\subsection{Synharmony between geodesics}
The notion of synharmony between geodesics quantifies how homotopic mappings can be connected almost through a minimising harmonic map \cite{Monteil_Rodiac_VanSchaftingen_RE}*{Section 3.2}.

\begin{definition}
\label{definition_synharmonic}
The \emph{synharmonicity} between two given maps \(\gamma, \beta\in W^{1/2,2}(\Sset^1 , \manifold{N})\), is defined as
\begin{multline*}
  \synhar{\gamma}{\beta}
  \defeq \inf \, \biggl\{ \int_{\Sset^1 \times [0, L]} \frac{\abs{\Deriv u}^2}{2} - \frac{L}{4 \pi} \equivnorm{\gamma}^2
  \st L \in (0, +\infty),
  u \in W^{1, 2} (\Sset^1 \times [0, L], \manifold{N}),\\[-1em]
  \tr_{\Sset^1 \times \{0\}} u  = \gamma \text{ and }
  \tr_{\Sset^1 \times \{L\}} u  = \beta \biggr\}.
\end{multline*}
\end{definition}

The synharmonicity is an extended pseudo-distance which is continuous with respect to the strong topology in \(W^{1/2,2} (\Sset^1, \manifold{N})\) \cite{Monteil_Rodiac_VanSchaftingen_RE}*{Proposition 3.3}.
Bounded sets in \(W^{1/2, 2} (\Sset^1, \manifold{N})\) which contain only homotopic maps have bounded synharmonicity \cite{Monteil_Rodiac_VanSchaftingen_RE}*{Proposition 3.5}.

Two maps \(\gamma, \beta \in W^{1/2, 2} (\Sset^1, \manifold{N})\) are \emph{synharmonic} whenever \(\synhar{\gamma}{\beta} = 0\) \cite{Monteil_Rodiac_VanSchaftingen_RE}*{Definition 3.6}.
The synharmony between minimising geodesics is an equivalence relation, partitioning each homotopy class of minimising geodesics into synharmony classes.
If \(\gamma, \beta \in W^{1/2, 2} (\Sset^1, \manifold{N})\) and \(\synhar{\gamma}{\beta} = 0\), then either \(\gamma = \beta\) almost everywhere in \(\Sset^1\) or
both \(\beta\) and \(\gamma\) are minimising geodesics \cite{Monteil_Rodiac_VanSchaftingen_RE}*{Proposition 3.7}.
Minimising geodesics that are homotopic through minimising geodesics are synharmonic \cite{Monteil_Rodiac_VanSchaftingen_RE}*{Proposition 3.8}; this covers in particular 
\(\gamma \compose R\) and \(\gamma\) where \(R \in SO (2)\).
Although homotopic minimising closed geodesics on a manifold are not synharmonic in general,
this is the case on examples that motivate the use of Ginzburg--Landau type energies in physics and geometry.

\subsection{Renormalised energies of configurations of points}
\label{section_ren1}

Given a bounded open set \(\Omega \subset \Rset^2\) with Lipschitz boundary \(\partial \Omega\), a map \(g \in W^{1/2,2}(\partial \Omega, \manifold{N})\subset \VMO(\partial \Omega, \manifold{N})\), \(k \in \Nset_\ast\) and \(k\) closed minimising geodesics \(\gamma_1,  \dotsc, \gamma_k \in W^{1/2,2}(\Sset^1, \manifold{N})\) that form a topological resolution of \(g\), we consider the \emph{geometrical renormalised energy} \cite{Monteil_Rodiac_VanSchaftingen_RE}*{Section 2.3} defined on the configuration space of \(\Omega\),
\[
\Conf{k} \Omega\defeq
\{(a_1,\dots,a_k)\in\Omega^k\st a_i\neq a_j\text{ if \(i\neq j\)}\},
\]
by setting for every \((a_1,\dots,a_k)\in\Conf{k} \Omega\),
\begin{equation}
\label{eq_def_renorm_geom}
\begin{split}
 \mathcal{E}_{g, \gamma_1, \dotsc, \gamma_k}^{\mathrm{geom}}  (a_1, \dotsc, a_k) &\defeq \lim_{\rho \to 0} \mathcal{E}_{g, \gamma_1, \dotsc, \gamma_k}^{\mathrm{geom}, \rho} (a_1, \dotsc, a_k) - \sum_{i = 1}^k \frac{\equivnorm{\gamma_i}^2}{4 \pi}  \log \frac{1}{\rho}\\
 & = \inf_{\rho \in (0, \Bar{\rho} (a_1, \dotsc, a_k))}
 \mathcal{E}_{g, \gamma_1, \dotsc, \gamma_k}^{\mathrm{geom}, \rho} (a_1, \dotsc, a_k) - \sum_{i = 1}^k \frac{\equivnorm{\gamma_i}^2}{4 \pi}  \log \frac{1}{\rho},
 \end{split}
\end{equation}
where for a radius \(\rho \in (0, \Bar{\rho} (a_1, \dotsc, a_k))\),
we have set 
\begin{multline}
\label{eq_def_renorm_geom_rho} \mathcal{E}_{g, \gamma_1, \dotsc, \gamma_k}^{\mathrm{geom}, \rho} (a_1, \dotsc, a_k)
  = \inf \Bigl\{  \int_{\Omega \setminus \bigcup_{i = 1}^k \Bar{B}_\rho (a_i)}
  \frac{\abs{\Deriv u}^2}{2} \st u \in W^{1, 2} (\Omega \setminus \textstyle \bigcup_{i = 1}^k \Bar{B}_\rho (a_i), \manifold{N}), \\[-.3em]
  \tr_{\partial \Omega} u = g \text{ on \(\partial \Omega\)}
  \text{ and } \tr_{\Sset^1} u (a_i + \rho \cdot) = \gamma_i
  \Bigr\}.
\end{multline}

The function \(\mathcal{E}^\mathrm{geom}_{g, \gamma_1, \dotsc, \gamma_k} : \operatorname{Conf}_k \Omega \to \Rset\) is locally Lipschitz-continuous \cite{Monteil_Rodiac_VanSchaftingen_RE}*{Proposition 4.1}.
If \((\gamma_1, \dotsc, \gamma_k)\) is a minimal topological resolution of \(g\), then the function \(\mathcal{E}^\mathrm{geom}_{g, \gamma_1, \dotsc, \gamma_k}\) is bounded from below on \(\operatorname{Conf}_k \Omega\) \cite{Monteil_Rodiac_VanSchaftingen_RE}*{Proposition 5.6}; moreover if \(\limsup_{n \to \infty} \mathcal{E}_{g, \gamma_1, \dotsc, \gamma_k}^{\mathrm{geom}}
 (a_1^n, \dotsc, a_k^n) < + \infty\), then the singularities \(a_1^n, \dotsc, a_k^n\) always stay away from the boundary and from each other unless their recombination yields another minimal topological resolution of the boundary datum \(g\) \cite{Monteil_Rodiac_VanSchaftingen_RE}*{Proposition 6.1}. (In many examples of interest this does not happen; it occurs for instance for the torus \(\Sset^1\times \Sset^1\) and in models of superfluid \ce{^3He} in dipole-free phase.)

The quantity \(\mathcal{E}_{g, \gamma_1, \dotsc, \gamma_k}^{\mathrm{geom}} (a_1, \dotsc, a_k)\) depends on the curves \(\gamma_i\) only up to synharmonicity: if for each \(i\), the curves \(\gamma_i\) and \(\Tilde{\gamma_i}\) are synharmonic, then \cite{Monteil_Rodiac_VanSchaftingen_RE}*{Proposition 3.10}
\begin{equation}
\label{synharmonicDependanceGeometric}\mathcal{E}_{g, \gamma_1, \dotsc, \gamma_k}^{\mathrm{geom}} (a_1, \dotsc, a_k)=\mathcal{E}_{g,\Tilde{\gamma_1}, \dotsc,\Tilde{\gamma_k}}^{\mathrm{geom}} (a_1, \dotsc, a_k).
\end{equation}

\label{section_ren2}

\subsection{Renormalised energy of renormalisable maps}
A  last notion from \cite{Monteil_Rodiac_VanSchaftingen_RE} that we will be using is the notion of renormalisable singular mappings and their renormalised energies \cite{Monteil_Rodiac_VanSchaftingen_RE}*{Definition 7.1}.

\begin{definition}
\label{def_renormalisable}
  Let \(\Omega\subset\Rset^2\) be a bounded Lipschitz domain. A mapping \(u:\Omega\to\manifold{N}\) is \emph{renormalisable} whenever there exists a finite set \(\{a_1,\dotsc,a_k\}\subset \Omega\) such that if \(\rho > 0\) is small enough,
  \(u \in W^{1, 2} (\Omega \setminus \bigcup_{i = 1}^k \Bar{B}_\rho(a_i) , \manifold{N})\) and
  its \emph{renormalised energy} is finite:
  \begin{equation*}
    \mathcal{E}^{\mathrm{ren}}(u)
    \defeq \liminf_{\rho \to 0} \int_{\Omega\setminus\bigcup_{i=1}^k \Bar{B}_\rho(a_i)}\frac{\abs{\Deriv u}^2}{2}-\sum_{i=1}^k
    \frac{\equivnorm{\tr_{\partial B_\rho (a_i)} u}^2}{4\pi}\log\frac{1}{\rho}<+\infty.
  \end{equation*}
\end{definition}

The set of renormalisable mappings is denoted by \(W^{1, 2}_{\mathrm{ren}} (\Omega, \manifold{N})\).
For every \(u \in W^{1, 2}_{\mathrm{ren}} (\Omega, \manifold{N})\) one has
  \begin{equation}
\label{renormalised_energy_u}
   \begin{split}
    \mathcal{E}^{\mathrm{ren}}(u)
    &=
      \lim_{\rho \to 0}
        \int_{\Omega\setminus\bigcup_{i=1}^k \Bar{B}_\rho(a_i)}
          \frac{\abs{\Deriv u}^2}{2}
          -\sum_{i=1}^k \frac{\equivnorm{\tr_{\Sset^1}u(a_i+\rho\,\cdot)}^2}{4\pi} \log\frac{1}{\rho} \\
    &= \sup_{\rho\in (0, \Bar{\rho} (a_1, \dotsc, a_k))}
    \int_{\Omega\setminus\bigcup_{i=1}^k \Bar{B}_\rho(a_i)}\frac{\abs{\Deriv u}^2}{2}
    -\sum_{i=1}^k \frac{\equivnorm{\tr_{\Sset^1}u(a_i+\rho\,\cdot)}^2}{4\pi} \log\frac{1}{\rho}.
    \end{split}
  \end{equation}

The structure of renormalisable mappings is described in the following \cite{Monteil_Rodiac_VanSchaftingen_RE}*{Proposition 7.2}:

\begin{proposition}
\label{admissibleMaps}
Let \(\Omega\subset\Rset^2\) be a bounded Lipschitz domain. If \(u\in W^{1,2}_{\mathrm{ren}}(\Omega,\manifold{N})\), then either one has \(u\in W^{1,2}(\Omega,\manifold{N})\) or there exist \(k\in\Nset_\ast\), \((a_1, \dotsc, a_k)\in\Conf{k}\Omega\) and \(\gamma_1, \dotsc,\gamma_k\in \mathcal{C}^1 (\Sset^1, \mathcal{N})\) such that
\begin{enumerate}[(i)]
  \item \((\gamma_1, \dotsc, \gamma_k)\) is a topological resolution of \(\tr_{\partial \Omega} u\),
  \item for each \(i \in \{1, \dotsc, k\}\), \(\gamma_i\) is a non-trivial minimising closed geodesic,
  \item for each \(i \in \{1, \dotsc, k\}\), there exists a sequence \((\rho_\ell)_{\ell \in \Nset}\) converging to \(0\) such that the sequence \((\tr_{\Sset^1} u (a_i + \rho_\ell\,\cdot))_{\ell \in \Nset}\) converges strongly to \(\gamma_i\) in \(W^{1, 2} (\Sset^1, \manifold{N})\),
  \item\label{synhamonicConvergenceSing} for each \(i \in \{1, \dotsc, k\}\),
  \(
  \lim_{\rho \to 0}\synhar{\tr_{\Sset^1}u(a_i+\rho\,\cdot)}{\gamma_i}=0,
  \)
    \item \label{it_bohp7shaephei0eeT}
  \(\displaystyle
    \mathcal{E}^{\mathrm{ren}}(u)\ge \mathcal{E}^{\mathrm{geom}}_{g, \gamma_1, \dotsc, \gamma_k}(a_1,\dotsc,a_k)\).
\end{enumerate}
In this case, we denote the set of singularities by \(\operatorname{sing}(u)=\{(a_1,\gamma_1), \dotsc, (a_k, \gamma_k)\}\); in the case where \(u\in W^{1,2}(\Omega,\manifold{N})\), we set \(\operatorname{sing} (u) = \emptyset\).
\end{proposition}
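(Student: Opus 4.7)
The plan is to leverage the finiteness of the $\liminf$ in the definition of renormalisability to identify the asymptotic behaviour of the traces around each singularity $a_i$. First, I would observe that for all sufficiently small $\rho$, the traces $\gamma_\rho^i \defeq \tr_{\Sset^1} u(a_i + \rho\,\cdot)$ all lie in a single $\VMO$-homotopy class (by continuity of the trace map into $\VMO$ applied on the annulus where $u \in W^{1,2}$). Writing $\lambda_i$ for the common value $\equivnorm{\gamma_\rho^i}$, if $\lambda_i = 0$ for some $i$ then the singularity at $a_i$ is removable in the sense that $u$ extends to a $W^{1,2}$ map near $a_i$; iterating this reduction, we end up either with $u \in W^{1,2}(\Omega, \manifold{N})$ or with every remaining $\lambda_i > 0$, placing us in the second alternative.

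Using polar coordinates around each $a_i$ together with \eqref{eq_Eojei1cooruef6uiP}, one obtains the pointwise-in-$r$ lower bound $\int_{\Sset^1}\abs{\partial_\theta u(a_i + re^{i\theta})}^2 \,\mathrm{d}\theta \geq \lambda_i^2/(2\pi)$, and integration in $r$ yields
\begin{equation*}
  \int_{B_\rho(a_i)\setminus B_{\rho'}(a_i)} \frac{\abs{\Deriv u}^2}{2} \geq \frac{\lambda_i^2}{4\pi}\log \frac{\rho}{\rho'}.
\end{equation*}
Combined with the finiteness of $\mathcal{E}^{\mathrm{ren}}(u)$, this forces the excess annular energy to vanish as $\rho, \rho' \to 0$. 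A Fubini/Chebyshev mean-value argument on the logarithmic scale then produces a sequence $\rho_\ell \to 0$ along which $\int_{\Sset^1}\abs{\partial_\theta u(a_i + \rho_\ell\,\cdot)}^2 \to \lambda_i^2/(2\pi)$; since this is the sharp lower bound in \eqref{eq_Eojei1cooruef6uiP}, $\gamma^i_{\rho_\ell}$ is asymptotically a minimising geodesic. By the resulting $W^{1,2}$-boundedness, compact embedding, and the geodesic equation, a further subsequence converges strongly in $W^{1,2}(\Sset^1, \manifold{N})$ to a limit $\gamma_i \in \mathcal{C}^1(\Sset^1, \manifold{N})$ that is a non-trivial minimising closed geodesic, giving (ii) and (iii); (i) follows because $u$ itself, restricted to $\Omega \setminus \bigcup_i B_\rho(a_i)$, witnesses that $(\gamma_1, \dotsc, \gamma_k)$ is a topological resolution of $\tr_{\partial \Omega} u$.

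The main obstacle is (iv), which requires synharmonic convergence along \emph{every} sequence $\rho \to 0$, not only the selected subsequence $\rho_\ell$. I would address this via a dyadic annular decomposition: for each small $\rho$, I decompose $B_\rho(a_i) \setminus B_{\rho_\ell}(a_i)$ (with $\rho_\ell < \rho$) into the annuli $B_{2^{-j}\rho}(a_i) \setminus B_{2^{-j-1}\rho}(a_i)$ and conformally reparametrise each as a finite cylinder $\Sset^1 \times [0, \log 2]$, exploiting the two-dimensional conformal invariance of the Dirichlet energy. The concatenation yields a map admissible for the infimum defining $\synhar{\gamma_\rho^i}{\gamma_i}$, and its excess energy above $L\,\lambda_i^2/(4\pi)$ is precisely the sum of the annular excess energies, which tends to zero uniformly in $\rho$ by the previous step.

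Finally, (v) follows by using $u$ itself, restricted to $\Omega \setminus \bigcup_i \Bar{B}_\rho(a_i)$, as a competitor in the infimum defining $\mathcal{E}^{\mathrm{geom},\rho}_{g, \gamma_1, \dotsc, \gamma_k}(a_1, \dotsc, a_k)$. Its boundary data on $\partial B_\rho(a_i)$ are $\gamma_\rho^i$ rather than $\gamma_i$, so one glues a synharmonic corrector whose energy is at most $\synhar{\gamma_\rho^i}{\gamma_i} + o(1)$ by (iv); letting $\rho \to 0$ and subtracting $\sum_i (\lambda_i^2/(4\pi))\log (1/\rho)$ recovers $\mathcal{E}^{\mathrm{geom}}_{g,\gamma_1,\dotsc,\gamma_k}(a_1,\dotsc,a_k)$ on the right-hand side and $\mathcal{E}^{\mathrm{ren}}(u)$ on the left.
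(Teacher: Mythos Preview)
The paper does not itself prove this proposition: it is imported verbatim from the companion work \cite{Monteil_Rodiac_VanSchaftingen_RE}*{Proposition 7.2}, so there is no in-paper argument to compare against. Your sketch is essentially correct and is presumably close to what is done there; the monotonicity in \eqref{renormalised_energy_u} (which you implicitly use) is what makes the annular excesses go to zero and underpins both the removability when \(\lambda_i=0\) and the mean-value selection of \(\rho_\ell\).

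There is one genuine gap in your treatment of (iv). The conformal reparametrisation of \(u\) on \(B_\rho(a_i)\setminus B_{\rho_\ell}(a_i)\) yields a cylinder map connecting \(\gamma_\rho^i\) to \(\gamma_{\rho_\ell}^i\), not to \(\gamma_i\); so it is admissible for \(\synhar{\gamma_\rho^i}{\gamma_{\rho_\ell}^i}\), not for \(\synhar{\gamma_\rho^i}{\gamma_i}\) as you claim. The fix is short: use the triangle inequality for the synharmonic extended pseudo-distance together with its continuity under strong \(W^{1/2,2}\)-convergence \cite{Monteil_Rodiac_VanSchaftingen_RE}*{Proposition 3.3}, which from (iii) gives \(\synhar{\gamma_{\rho_\ell}^i}{\gamma_i}\to 0\) as \(\ell\to\infty\). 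Incidentally, the dyadic decomposition is superfluous here: a single conformal map \((r,\theta)\mapsto(\theta,\log(\rho/r))\) sends the whole annulus to one cylinder of length \(\log(\rho/\rho_\ell)\), and its excess over \(\tfrac{L}{4\pi}\lambda_i^2\) is exactly the annular excess you already controlled.
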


Stricly speaking, the set \(\operatorname{sing}(u)\) is only defined up to synharmony of the mappings \(\gamma_1, \dotsc, \gamma_k\).
Given \(u\in W^{1,2}_{\mathrm{ren}}(\Omega,\manifold{N})\), \(a\in\Omega\) and a minimising geodesics \(\gamma\), we have that \((a, \gamma)\in\operatorname{sing}(u)\) if and only if \(\Deriv u\) is not square-integrable near \(a\) and if \(\lim_{\rho \to 0}\synhar{\tr_{\Sset^1}u(a_i+\rho\,\cdot)}{\gamma}=0\). 
In particular, the set \(\operatorname{sing}(u)\) is well-defined up to synharmony of minimising geodesics.

\section{Minimal energy on disks with boundary conditions}
\label{section_minimal_energy_on_balls}
We recall that \(F\) denotes the Ginzburg--Landau penalisation which satisfies \(F \in \mathcal{C} (\Rset^\nu, [0,+\infty))\) and  \(F^{-1}(\{0\}) = \manifold{N}\).
For every radius \(R\in (0,+\infty)\) and every curve \(\gamma\in W^{1/2,2}(\Sset^1,\Rset^\nu)\), we set
\begin{equation}
\label{definitionQ}
\mathcal{Q}^R_{F, \gamma}\defeq \inf\left\{
 \int_{B_R}  \frac{\abs{\Deriv u}^2}{2} + F(u)
 \st u\in W^{1,2}(B_R,\Rset^\nu)\text{ s.t.
} \tr_{\Sset^1} u(R\,\cdot)=\gamma
\right\},
\end{equation}
where \(B_R \subset \Rset^2\) is the disk of radius \(R\) centred at the origin \(0 \in \Rset^2\).
By scaling, we have for every \(\varepsilon,R\in (0,+\infty)\)
\begin{equation}
\label{scaling_invariance}
\inf\left\{
 \int_{B_R}  \frac{\abs{\Deriv u}^2}{2} + \frac{F(u)}{\varepsilon^2}
 \st u\in W^{1,2}(B_R,\Rset^\nu)\text{ and } \tr_{\Sset^1} u(R\,\cdot)=\gamma
\right\}=\mathcal{Q}^{R/\varepsilon}_{F,\gamma}.
\end{equation}

\begin{proposition}
\label{proposition_lowerBndBalls}
If \(\gamma\in\mathcal{C}^1(\Sset^1,\manifold{N})\) is a minimising geodesic, then the map
\[
R\in (0,+\infty)\mapsto \mathcal{Q}^R_{F, \gamma} - \frac{\equivnorm{\gamma}^2}{4\pi} \log R
\]
is non-increasing.
\end{proposition}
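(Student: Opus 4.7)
The plan is to prove monotonicity by a direct comparison: given $0 < R_1 < R_2$, extend any admissible competitor on $B_{R_1}$ to $B_{R_2}$ by filling in the annulus $B_{R_2}\setminus B_{R_1}$ with the most economical $\manifold{N}$-valued map compatible with the outer boundary condition, namely the one obtained by propagating $\gamma$ radially. This extension costs no potential energy (since its image sits in $\manifold{N}$), and its Dirichlet cost on the annulus is exactly the logarithmic correction that we want to cancel.

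Concretely, fix $\eta > 0$ and pick $u_1 \in W^{1,2}(B_{R_1},\Rset^\nu)$ with $\tr_{\Sset^1} u_1(R_1 \cdot) = \gamma$ and
\[
\int_{B_{R_1}} \frac{\abs{\Deriv u_1}^2}{2} + F(u_1) \le \mathcal{Q}^{R_1}_{F,\gamma} + \eta.
\]
Define $u_2 : B_{R_2} \to \Rset^\nu$ by $u_2 = u_1$ on $B_{R_1}$ and $u_2(r e^{i\theta}) = \gamma(e^{i\theta})$ for $r \in [R_1, R_2]$. The traces from the two sides of $\partial B_{R_1}$ agree (both equal $\gamma$ in the sense of $W^{1/2,2}$), so $u_2 \in W^{1,2}(B_{R_2},\Rset^\nu)$, and by construction $\tr_{\Sset^1} u_2(R_2\cdot) = \gamma$, so $u_2$ is admissible for $\mathcal{Q}^{R_2}_{F,\gamma}$.

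The key computation is on the annulus. Because $\gamma$ is a \emph{minimising} geodesic, it has constant speed $\abs{\gamma'} \equiv \equivnorm{\gamma}/(2\pi)$, and $u_2$ takes values in $\manifold{N}$ there, so $F(u_2)=0$. In polar coordinates $\abs{\Deriv u_2}^2 = r^{-2}\abs{\gamma'}^2$, whence
\[
\int_{B_{R_2}\setminus B_{R_1}} \frac{\abs{\Deriv u_2}^2}{2} + F(u_2)
= \int_{R_1}^{R_2}\frac{1}{r}\,\mathrm{d}r\; \int_{\Sset^1} \frac{\abs{\gamma'}^2}{2}\,\mathrm{d}\theta
= \frac{\equivnorm{\gamma}^2}{4\pi}\log\frac{R_2}{R_1},
\]
using the equality case in \eqref{eq_Eojei1cooruef6uiP}. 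Combining with the bound for $u_1$ and taking the infimum yields
\[
\mathcal{Q}^{R_2}_{F,\gamma} \le \mathcal{Q}^{R_1}_{F,\gamma} + \frac{\equivnorm{\gamma}^2}{4\pi}\log\frac{R_2}{R_1},
\]
which rearranges into the stated monotonicity after sending $\eta\to 0$.

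There is no real obstacle here; everything reduces to the remark that the radially constant extension of a minimising geodesic is exactly calibrated to produce the coefficient $\equivnorm{\gamma}^2/(4\pi)$ in front of $\log(R_2/R_1)$. The only point to watch is that the hypothesis "$\gamma$ minimising geodesic" (and not merely the infimum in \eqref{eq_minimalLengthVMO}) is used precisely to make $\abs{\gamma'}$ constant and the $L^2$ energy of $\gamma$ on $\Sset^1$ equal $\equivnorm{\gamma}^2/(2\pi)$; without it one only obtains an inequality, which would yield the same monotonicity but with a less sharp interpretation.
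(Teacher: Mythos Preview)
Your proof is correct and follows essentially the same approach as the paper's: extend an admissible competitor on the smaller disk by the radial propagation \(x\mapsto\gamma(x/\abs{x})\) on the annulus, use that \(F\) vanishes there, and compute the Dirichlet energy of the extension via \eqref{eq_Eojei1cooruef6uiP} to obtain exactly the logarithmic correction. The paper's version simply minimises over \(u\) directly rather than introducing the auxiliary \(\eta\), but this is a cosmetic difference.
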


By \cref{proposition_lowerBndBalls}, for every \emph{minimising closed geodesic} \(\gamma \in \mathcal{C}^1 (\Sset^1, \manifold{N})\), we can define
\begin{equation}
  \label{def_topCst}
  \mathcal{Q}_{F, \gamma}\defeq\lim_{R\to +\infty}\biggl(\mathcal{Q}^R_{F, \gamma}-\frac{\equivnorm{\gamma}^2}{4\pi} \log R\biggr)\in [-\infty,+\infty).
\end{equation}

When \(\manifold{N} = \Sset^1\), \cref{proposition_lowerBndBalls} is due to Bethuel, Brezis and Hélein \cite{Bethuel_Brezis_Helein_1994}*{Lemma III.1}.

\begin{remark}
  We shall see in Section \ref{sect:Lower_bounds} that \(\mathcal{Q}_{F, \gamma}>-\infty\) if \(\gamma\) is an atomic minimising geodesic, i.e.\ \(\Esing (\gamma)=\frac{\equivnorm{\gamma}^2}{4 \pi}\) (see \cref{corollary_lowerBndBalls}).
\end{remark}
\begin{proof}[Proof of \cref{proposition_lowerBndBalls}]
Given  \(0<R<S<+\infty\), we consider a map \(u\in W^{1,2}(B_{R},\Rset^\nu)\) such that \(\tr_{\Sset^1} u(R\,\cdot)=\gamma\) on \(\Sset^1\) and we define the map \(v\in W^{1,2}(B_{S},\Rset^\nu)\) for \(x \in B_R\) by
\[
v(x)=
\begin{cases}
u(x)&\text{if }x\in B_{R},\\
\gamma\left(\frac{x}{\abs{x}}\right)&\text{if }x\in B_{S}\setminus B_{R}.
\end{cases}
\]
Since \(\gamma\) is by assumption a minimising geodesic, we have
\begin{equation*}
\begin{split}
\int_{B_{S}}\frac{\abs{\Deriv v}^2}{2}+F(v)&\le \int_{B_{R}}\biggl(\frac{\abs{\Deriv u}^2}{2}+ F(u)\biggr)+\int_{\Sset^1}\frac{\abs{\gamma'}^2}{2}\int_{R}^{S}\frac{\dif r}{r}\\
&=\int_{B_{R}}\biggl(\frac{\abs{\Deriv u}^2}{2}+ F(u)\biggr)+\frac{\equivnorm{\gamma}^2}{4\pi} \log\frac{S}{R}.
\end{split}
\end{equation*}
By minimising over \(u\) and by definition  \eqref{def_topCst} of \(\mathcal{Q}^R_{F, \gamma}\), we get 
\[
\mathcal{Q}^{S}_{F,\gamma}-\frac{\equivnorm{\gamma}^2}{4\pi} \log S\le \mathcal{Q}^{ R}_{F,\gamma}-\frac{\equivnorm{\gamma}^2}{4\pi} \log R.\qedhere
\]
\end{proof}

\begin{proposition}
\label{proposition_QW_dependence}
If \(\gamma,\Tilde{\gamma}\in W^{1/2,2}(\Sset^1,\manifold{N})\), then for every \(R \in (0,+\infty)\), we have
\[
 \inf_{S \ge R} \biggl(\mathcal{Q}^{S}_{F, \Tilde{\gamma}}
 -
 \frac{\equivnorm{\Tilde{\gamma}}^2}{4\pi} \log S \biggr)
\le
\mathcal{Q}^R_{F, \gamma}-\frac{\equivnorm{\gamma}^2}{4\pi} \log R
+\synhar{\gamma}{\Tilde{\gamma}}.
\]
\end{proposition}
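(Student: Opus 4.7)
The plan is to build a competitor for $\mathcal{Q}^{S}_{F, \Tilde{\gamma}}$ on a larger disk $B_S$ by conformally transplanting an almost optimal cylinder map coming from the definition of $\synhar{\gamma}{\Tilde{\gamma}}$ onto an annulus $B_S \setminus \Bar{B}_R$ and gluing it with an almost optimal competitor for $\mathcal{Q}^R_{F, \gamma}$ on $B_R$. If $\synhar{\gamma}{\Tilde{\gamma}} = +\infty$ the inequality is trivial, so I assume the synharmonicity is finite. This forces the admissible cylinder set in \cref{definition_synharmonic} to be nonempty, so $\gamma$ and $\Tilde{\gamma}$ are homotopic in $\VMO(\Sset^1, \manifold{N})$ and, by the homotopy invariance of $\equivnorm{\cdot}$, $\equivnorm{\gamma} = \equivnorm{\Tilde{\gamma}}$; this equality is what will allow the $L$-linear terms to cancel in the end.

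Fix $\varepsilon > 0$ and choose $u \in W^{1,2}(B_R, \Rset^\nu)$ with $\tr_{\Sset^1} u(R\,\cdot) = \gamma$ and
\[
\int_{B_R} \Bigl(\frac{\abs{\Deriv u}^2}{2} + F(u)\Bigr) \le \mathcal{Q}^R_{F, \gamma} + \varepsilon,
\]
together with $L > 0$ and $w \in W^{1, 2}(\Sset^1 \times [0, L], \manifold{N})$ connecting $\gamma$ to $\Tilde{\gamma}$ with
\[
\int_{\Sset^1 \times [0, L]} \frac{\abs{\Deriv w}^2}{2} \le \frac{L\, \equivnorm{\gamma}^2}{4\pi} + \synhar{\gamma}{\Tilde{\gamma}} + \varepsilon.
\]
Set $S \defeq R\,e^L > R$ and transplant $w$ through the conformal diffeomorphism $\Phi : \Sset^1 \times [0, L] \to B_S \setminus \Bar{B}_R$ given by $\Phi(e^{i\theta}, t) = R\, e^t e^{i\theta}$, setting $\Tilde{w} \defeq w \compose \Phi^{-1}$. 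By conformal invariance of the two-dimensional Dirichlet integral, $\int_{B_S \setminus B_R} \abs{\Deriv \Tilde{w}}^2 = \int_{\Sset^1 \times [0, L]} \abs{\Deriv w}^2$, and $F \compose \Tilde{w} \equiv 0$ since $\Tilde{w}$ still takes values in $\manifold{N}$. Now glue: set $v \defeq u$ in $B_R$ and $v \defeq \Tilde{w}$ in $B_S \setminus \Bar{B}_R$. The traces from both sides agree with $\gamma$ on $\partial B_R$, so $v \in W^{1, 2}(B_S, \Rset^\nu)$, while $\tr_{\Sset^1} v(S\,\cdot) = \Tilde{\gamma}$.

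By the definition of $\mathcal{Q}^{S}_{F, \Tilde{\gamma}}$ and the bounds above,
\[
\mathcal{Q}^{S}_{F, \Tilde{\gamma}} \le \int_{B_R} \Bigl(\frac{\abs{\Deriv u}^2}{2} + F(u)\Bigr) + \int_{\Sset^1 \times [0, L]} \frac{\abs{\Deriv w}^2}{2} \le \mathcal{Q}^R_{F, \gamma} + \frac{L\,\equivnorm{\gamma}^2}{4\pi} + \synhar{\gamma}{\Tilde{\gamma}} + 2\varepsilon.
\]
Subtracting $\equivnorm{\Tilde{\gamma}}^2 \log S/(4\pi)$ from both sides and using $\log S = \log R + L$ together with $\equivnorm{\gamma} = \equivnorm{\Tilde{\gamma}}$ yields
\[
\mathcal{Q}^{S}_{F, \Tilde{\gamma}} - \frac{\equivnorm{\Tilde{\gamma}}^2}{4\pi} \log S \le \mathcal{Q}^R_{F, \gamma} - \frac{\equivnorm{\gamma}^2}{4\pi} \log R + \synhar{\gamma}{\Tilde{\gamma}} + 2\varepsilon.
\]
Since $S \ge R$, the left-hand side bounds the infimum in the statement from above, and letting $\varepsilon \to 0$ concludes the proof. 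No serious obstacle is expected; the argument is essentially a conformal rescaling (flat cylinder $\leftrightarrow$ annulus, as in the proof of \cref{proposition_lowerBndBalls}), and the delicate point is only the exact cancellation enabled by $\equivnorm{\gamma} = \equivnorm{\Tilde{\gamma}}$, which is automatic as soon as $\synhar{\gamma}{\Tilde{\gamma}}$ is finite.
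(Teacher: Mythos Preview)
Your proof is correct and follows essentially the same approach as the paper: glue an almost-optimal competitor on $B_R$ with a cylinder map transplanted conformally onto the annulus $B_{Re^L}\setminus \Bar{B}_R$, then use $\equivnorm{\gamma}=\equivnorm{\Tilde{\gamma}}$ to cancel the $L$-terms. The only cosmetic difference is that the paper takes the infimum over $u$ and over cylinder maps $H$ directly rather than fixing $\varepsilon$-optimal choices, but the construction and the key observations are identical.
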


In particular, if \(\gamma\) and \(\Tilde{\gamma}\) are minimising geodesics, then
\(
\mathcal{Q}_{F,\Tilde{\gamma}}\le\mathcal{Q}_{F,\gamma}+\synhar{\gamma}{\Tilde{\gamma}}.
\)
If moreover the maps \(\gamma\) and \(\Tilde{\gamma}\) are synharmonic, then \(\mathcal{Q}_{F,\gamma}=\mathcal{Q}_{F,\Tilde{\gamma}}\).

\begin{proof}[Proof of \cref{proposition_QW_dependence}]
We can assume that the maps \(\gamma\) and \(\Tilde{\gamma}\) are homotopic and, in particular, that \(\equivnorm{\gamma}=\equivnorm{\Tilde{\gamma}}\) since otherwise \(\synhar{\gamma}{\Tilde{\gamma}}=+\infty\). 

We take \(R \in (0,+\infty)\), \(u \in W^{1, 2} (B_R,\Rset^\nu)\) such that \(\tr_{\Sset^1} u(R\,\cdot)=\gamma\), \(L > 0\) and \(H\in W^{1, 2} (\Sset^1 \times [0, L], \manifold{N})\) such that \(H(\cdot, 0) = \gamma\), \(H(\cdot, L) = \Tilde{\gamma}\).
We define \(v \in W^{1, 2} (B_{e^LR},\Rset^\nu)\) by
\[
v (x)=
\begin{cases}
  u (x) & \text{if \( x \in B_R\)},\\
  H\Bigl(\frac{x}{\abs{x}},  \log \frac{\abs{x}}{R}\Bigr)
  & \text{if \(x \in B_{e^L R}\setminus B_R\)}.
\end{cases}
\]
By taking the infimum with respect to \(u\) in the energy of \(v\) we obtain, 
\[
\mathcal{Q}^{e^L R}_{F,\Tilde{\gamma}} - \frac{\equivnorm{\Tilde{\gamma}}^2}{4 \pi} \log (e^L R)
\le \mathcal{Q}^{R}_{F, \gamma}  - \frac{\equivnorm{\gamma}^2}{4 \pi} \log R  + \int_{\Sset^1 \times [0, L]} \frac{\abs{\Deriv H}^2}{2} - \frac{L}{4 \pi}\equivnorm{\gamma}^2,
\]
and thus by definition of synharmonicity (\cref{definition_synharmonic}),
\[
 \inf_{S \ge R} \biggl(\mathcal{Q}^{S}_{F, \Tilde{\gamma}}
 -
 \frac{\equivnorm{\Tilde{\gamma}}^2}{4\pi} \log S \biggr)
\le
\mathcal{Q}^R_{F, \gamma}-\frac{\equivnorm{\gamma}^2}{4\pi} \log R
+\synhar{\gamma}{\Tilde{\gamma}}.\qedhere
\]
\end{proof}

Let \(u\in W^{1,2}_{\textrm{ren}} (\Omega,\manifold{N})\) and let \(\operatorname{sing} (u) \eqdef \{(a_1, \gamma_1), \dotsc, (a_k, \gamma_k)\}\) be given by \cref{admissibleMaps}. We define
\begin{equation}
\label{defTopCstU}
\mathcal{Q}_F(u)\defeq \sum_{i = 1}^k \mathcal{Q}_{F,\gamma_i},
\end{equation}
where the quantity \( \mathcal{Q}_{F, \gamma}\) is defined in \eqref{def_topCst}.
Finally if \(\gamma\) does not takes its values in \(\manifold{N}\) but is still close to it, the difference between \(\mathcal{Q}^R_{F,\gamma}\) and \(\mathcal{Q}^R_{F,\Pi_\manifold{N}(\gamma) }\)  can be estimated as follows.

\begin{proposition}
\label{prop_Q_F_projection}
If \(F \in \mathcal{C} (\Rset^\nu, [0,+\infty))\) satisfies \(F^{-1}(\{0\}) = \manifold{N}\) and \eqref{hyp20}, and if \(\gamma \in W^{1, 2} (\Sset^1, \Rset^\nu)\) satisfies \(\dist_{\manifold{N}} (\gamma(\cdot)) < \delta_{\manifold{N}}/2\) on \(\manifold{N}\),
then for every \(R \ge 2\),

\[
  \abs{\mathcal{Q}^{R}_{F, \gamma}
  - \mathcal{Q}^{R}_{F, \Pi_{\manifold{N}}\compose \gamma}}
  \le
  C
  \int_{\Sset^1}\Bigl( \frac{\abs{\gamma'}^2}{R} + R F (\gamma)\Bigr).
\]
\end{proposition}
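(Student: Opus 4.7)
The plan is to prove both one-sided estimates $\mathcal{Q}^R_{F,\Pi_{\manifold{N}}\compose\gamma}-\mathcal{Q}^R_{F,\gamma}\le C\int_{\Sset^1}(|\gamma'|^2/R+RF(\gamma))$ and its reverse by a competitor construction: starting from an almost-minimiser for one problem, we produce an admissible map for the other by (i) conformally rescaling it to a slightly smaller disk and (ii) gluing a short radial interpolation on a unit-width annulus that changes the boundary value from $\gamma$ to $\Tilde\gamma\defeq\Pi_{\manifold{N}}\compose\gamma$ (or vice versa). The assumption $R\ge 2$ ensures the annulus $B_R\setminus B_{R-1}$ stays inside $B_R$; this is the only use of the lower bound on $R$.

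For the first direction, let $u\in W^{1,2}(B_R,\Rset^\nu)$ with $\tr_{\Sset^1}u(R\,\cdot)=\gamma$ be nearly optimal for $\mathcal{Q}^R_{F,\gamma}$. I would rescale by $w(x)\defeq u(Rx/(R-1))$ on $B_{R-1}$, which preserves the Dirichlet energy by two-dimensional conformal invariance and decreases the potential term by the factor $((R-1)/R)^2\le 1$, so $\int_{B_{R-1}}|\Deriv w|^2/2+F(w)\le \int_{B_R}|\Deriv u|^2/2+F(u)$. Then I would extend $w$ to $B_R$ by setting, on the annulus $A\defeq B_R\setminus B_{R-1}$ and writing $x=r\theta$ with $t\defeq r-(R-1)\in[0,1]$,
\[
v(x)\defeq (1-t)\gamma(\theta)+t\Tilde\gamma(\theta),
\]
so that $\tr_{\Sset^1}v(R\,\cdot)=\Tilde\gamma$.

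The key calculation is then the additional cost on $A$. In polar coordinates $\partial_r v=\Tilde\gamma-\gamma$ and $\partial_\theta v=(1-t)\gamma'+t\Tilde\gamma'$, and since $\dist(\gamma,\manifold{N})<\delta_{\manifold{N}}/2$ the smoothness of $\Pi_{\manifold{N}}$ on $\manifold{N}_{\delta_{\manifold{N}}}$ gives $|\Tilde\gamma'|\le C|\gamma'|$. Using $|\Tilde\gamma-\gamma|=\dist(\gamma,\manifold{N})$ together with the lower bound in \eqref{hyp20}, we obtain $|\Tilde\gamma-\gamma|^2\le (2/m_F)F(\gamma)$, and integrating in polar coordinates, $\int_A|\partial_r v|^2\lesssim R\int_{\Sset^1}F(\gamma)$ and $\int_A r^{-2}|\partial_\theta v|^2\lesssim \log(R/(R-1))\int_{\Sset^1}|\gamma'|^2\lesssim R^{-1}\int_{\Sset^1}|\gamma'|^2$, where I used $\log(R/(R-1))\le 2/R$ for $R\ge 2$. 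For the potential, convexity of the interpolation keeps $v$ within $\delta_{\manifold{N}}/2$ of $\manifold{N}$, so the upper bound in \eqref{hyp20} yields $F(v)\le (M_F/2)\,(1-t)^2|\Tilde\gamma-\gamma|^2$, and $\int_A F(v)\lesssim R\int_{\Sset^1}F(\gamma)$. Combining gives the desired one-sided bound.

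For the reverse direction I would run exactly the same construction starting from a near-minimiser $u$ for $\mathcal{Q}^R_{F,\Tilde\gamma}$ and interpolating from $\Tilde\gamma$ to $\gamma$ on $A$; the same estimates apply word for word since they only use $|\gamma-\Tilde\gamma|^2\le 2F(\gamma)/m_F$, $|\Tilde\gamma'|\le C|\gamma'|$, and the upper bound in \eqref{hyp20}, which hold regardless of which endpoint of the interpolation lies on $\manifold{N}$. The main technical point is making sure the interpolation stays in $\manifold{N}_{\delta_{\manifold{N}}}$ so that both inequalities of \eqref{hyp20} apply to $F(v)$; this is where the hypothesis $\dist_{\manifold{N}}(\gamma(\cdot))<\delta_{\manifold{N}}/2$ is used in an essential way, and it is the only delicate point of the argument since the remaining estimates are routine polar-coordinate bookkeeping.
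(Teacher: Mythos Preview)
Your proposal is correct and follows essentially the same route as the paper: rescale a near-minimiser to \(B_{R-1}\) and linearly interpolate between \(\gamma\) and \(\Pi_{\manifold{N}}\compose\gamma\) on the unit-width annulus, estimating the annular cost via \eqref{hyp20} and the smoothness of \(\Pi_{\manifold{N}}\). Your write-up is in fact slightly more explicit about why the rescaling does not increase the energy and why the interpolation stays in \(\manifold{N}_{\delta_{\manifold{N}}/2}\), but the argument is the same.
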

\begin{proof}
Given \(u \in W^{1, 2} (B_R, \Rset^\nu)\) such that \(\tr_{\Sset^1} u(R\,\cdot)=\gamma\), we define \(v : B_R \to \Rset^\nu\) by setting for each \(x \in B_R\), 
\[
 v (x) =
 \begin{cases}
   u (\frac{R}{R-1}x) & \text{if \(\abs{x} \le R  - 1\)},\\
   (R - \abs{x}) \gamma (\tfrac{x}{\abs{x}}) + (\abs{x} - (R - 1)) \Pi_{\manifold{N}} (\gamma (\tfrac{x}{\abs{x}})) &  \text{if \(R - 1 \le \abs{x} \le R\)}.
 \end{cases}
\]
We compute that \(\Deriv v(x)=\frac{R}{R-1}\Deriv u(\frac{R}{R-1}x)\) if \(\abs{x} \leq R-1\), and if \( R-1\leq \abs{x} \leq R\),
\begin{multline*}
\abs{\Deriv v(x)}^2=\bigabs{\Pi_\manifold{N}\bigl(\gamma\bigl(\tfrac{x}{\abs{x}} \bigr)\bigr) -\gamma \bigl(\tfrac{x}{\abs{x}} \bigr)}^2 \\
+\frac{1}{\abs{x}^2}\,\bigabs{(R-\abs{x})\gamma'\bigl(\tfrac{x}{\abs{x}} \bigr)-(\abs{x}-(R-1))\Deriv\Pi_\manifold{N}(\gamma\bigl(\tfrac{x}{\abs{x}} \bigr)\bigl[\gamma'\bigl(\tfrac{x}{\abs{x}} \bigr) \bigr]}^2.
\end{multline*}
By smoothness and compactness the derivatives of \(\Pi_\manifold{N}\) are bounded in \(\manifold{N}_{\delta_\manifold{N}/2}\) and we have
\begin{equation*}
\bigabs{(R-\abs{x})\gamma'\bigl(\tfrac{x}{\abs{x}} \bigr)-(\abs{x}-(R-1))\Deriv\Pi_\manifold{N}(\gamma\bigl(\tfrac{x}{\abs{x}} \bigr))\bigl[\gamma'\bigl(\tfrac{x}{\abs{x}} \bigr) \bigr] }^2  \leq C \bigabs{\gamma' \bigl( \tfrac{x}{\abs{x}} \bigr)}^2.
\end{equation*}
Moreover, in view of \eqref{hyp20}, we estimate
\begin{equation*}
\bigabs{\Pi_\manifold{N}\bigl(\gamma\bigl(\tfrac{x}{\abs{x}} \bigr)\bigr) -\gamma \bigl(\tfrac{x}{\abs{x}} \bigr)}^2 =\dist(\gamma\bigl(\tfrac{x}{\abs{x}} \bigr),\manifold{N})^2 \leq \C F\bigl( \gamma\bigl(\tfrac{x}{\abs{x}} \bigr) \bigr)
\end{equation*}
and
\begin{multline*}
F\Bigl( \Pi_\manifold{N} \bigl(\gamma\bigl(\tfrac{x}{\abs{x}}\bigr)\bigr)+ (R-\abs{x}) \left(\gamma\bigl(\tfrac{x}{\abs{x}}\bigr)- \Pi_\manifold{N}\bigl(\gamma\bigl(\tfrac{x}{\abs{x}}\bigr)\bigr)   \right) \Bigr)  \\
\leq \C \dist \Bigl( \Pi_\manifold{N} \bigl(\gamma\bigl(\tfrac{x}{\abs{x}}\bigr)\bigr)+ (R-\abs{x}) \left(\gamma\bigl(\tfrac{x}{\abs{x}}\bigr)- \Pi_\manifold{N}\bigl(\gamma\bigl(\tfrac{x}{\abs{x}}\bigr) \bigr)  \right),\mathcal{N} \Bigr)^2 \\
\leq \C \bigabs{\Pi_\manifold{N}\bigl(\gamma\bigl(\tfrac{x}{\abs{x}} \bigr)\bigr) -\gamma \bigl(\tfrac{x}{\abs{x}} \bigr)}^2 \leq \C  F\bigl( \gamma\bigl(\tfrac{x}{\abs{x}} \bigr) \bigr).
\end{multline*}

By using a change of variables and integration in polar coordinates we arrive at
\[
 \int_{B_R} \frac{\abs{\Deriv v}^2}{2} + F (v) \le \int_{B_R} \frac{\abs{\Deriv u}^2}{2} + F (u)
 + \Cl{cst_ahS1gaishae1euNae} \biggl(\int_{\Sset^1} \frac{\abs{\gamma'}^2}{R} + \int_{\Sset^1} R\, F(\gamma)\biggr).
\]
It follows thus that
\[
\mathcal{Q}^{R}_{F, \Pi_{\manifold{N}}\compose \gamma}
-\mathcal{Q}^{R}_{F, \gamma}\le \Cr{cst_ahS1gaishae1euNae}  \biggl(\int_{\Sset^1} \frac{\abs{\gamma'}^2}{R} + R\, F (\gamma)\biggr).
\]
The proof of the converse inequality is similar.
\end{proof}

\section{Upper bound on the energy of minimisers}
\label{section_upper_bound}

Thanks to the singular and renormalised energies presented in \S \ref{section_renormalised_energies} and the minimal energy on disks developed in \S \ref{section_minimal_energy_on_balls}, we establish an upper bound on the Ginzburg--Landau energy \(\mathcal{E}^\varepsilon_F (u)\), defined in \eqref{eq_GLenergy}.
In this section, \(\Omega\) is a Lipschitz bounded domain and \(F \in \mathcal{C} (\Rset^\nu, [0,+\infty))\) satisfies \(F^{-1}(\{0\}) = \manifold{N}\) and \eqref{hyp20}.

We first give an upper bound on the infimum of the energy with given Dirichlet boundary datum in terms of the infimum of the geometric renormalised energy. 

\begin{proposition}%
\label{proposition_geometricUpperBound}
Let \(g\in W^{1/2,2}(\partial\Omega,\manifold{N})\), \(k\in\Nset_\ast\), \(a_1,\dotsc,a_k\) be distinct points in \(\Omega\), and let \((\gamma_{1},\dotsc,\gamma_{k})\) be a minimal topological resolution of \(g\). Then, as \(\varepsilon \to 0\),
  \begin{multline*}
    \inf \{ \mathcal{E}^\varepsilon_F (u) \st u \in W^{1, 2} (\Omega, \Rset^\nu) \text{ and } \tr_{\partial \Omega} u = g\}\\
    \le  \Esing (g) \log \frac{1}{\varepsilon} + \mathcal{E}^{\mathrm{geom}}_{g, \gamma_1, \dotsc,\gamma_k}(a_1,\dotsc,a_k)+\sum_{i=1}^k \mathcal{Q}_{F,\gamma_i}
    + o (1).
  \end{multline*}
\end{proposition}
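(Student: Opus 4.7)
The plan is to construct an explicit competitor $u_\varepsilon$ by gluing three types of profiles on three concentric regions around each singularity: a near-minimiser of the geometric renormalised energy on $\Omega \setminus \bigcup_i B_\rho(a_i)$, the logarithmic profile $x \mapsto \gamma_i((x-a_i)/\abs{x-a_i})$ on the intermediate annulus $B_\rho(a_i) \setminus B_{\varepsilon R}(a_i)$, and a rescaled near-minimiser of $\mathcal{Q}^R_{F,\gamma_i}$ on the innermost disk $B_{\varepsilon R}(a_i)$. The three scales $\varepsilon R \ll \rho \ll 1$ are chosen first independently of $\varepsilon$, and then $\varepsilon \to 0$.

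Fix $\eta>0$. Using \eqref{def_topCst}, pick $R\ge 2$ large enough that $\sum_{i=1}^k \bigl(\mathcal{Q}^R_{F,\gamma_i}-\tfrac{\equivnorm{\gamma_i}^2}{4\pi}\log R\bigr)\le \sum_i \mathcal{Q}_{F,\gamma_i}+\eta$, and by \eqref{eq_def_renorm_geom}, pick $\rho\in(0,\bar\rho(a_1,\dotsc,a_k))$ small enough that $\mathcal{E}^{\mathrm{geom},\rho}_{g,\gamma_1,\dotsc,\gamma_k}(a_1,\dotsc,a_k)-\sum_i \tfrac{\equivnorm{\gamma_i}^2}{4\pi}\log\tfrac{1}{\rho}\le \mathcal{E}^{\mathrm{geom}}_{g,\gamma_1,\dotsc,\gamma_k}(a_1,\dotsc,a_k)+\eta$. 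Select $u^{\mathrm{out}}\in W^{1,2}(\Omega\setminus\bigcup_i \bar B_\rho(a_i),\manifold{N})$ with $\tr_{\partial\Omega}u^{\mathrm{out}}=g$ and $\tr_{\Sset^1}u^{\mathrm{out}}(a_i+\rho\cdot)=\gamma_i$ whose Dirichlet energy is within $\eta$ of $\mathcal{E}^{\mathrm{geom},\rho}_{g,\gamma_1,\dotsc,\gamma_k}(a_1,\dotsc,a_k)$, and for each $i$ select $v_i\in W^{1,2}(B_R,\Rset^\nu)$ with $\tr_{\Sset^1}v_i(R\cdot)=\gamma_i$ and $\int_{B_R}\tfrac{\abs{\Deriv v_i}^2}{2}+F(v_i)\le \mathcal{Q}^R_{F,\gamma_i}+\eta/k$. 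For $\varepsilon<\rho/R$, define
\[
u_\varepsilon(x)\defeq\begin{cases}
u^{\mathrm{out}}(x)&\text{if }x\in \Omega\setminus\bigcup_i B_\rho(a_i),\\
\gamma_i\bigl(\tfrac{x-a_i}{\abs{x-a_i}}\bigr)&\text{if }\varepsilon R\le\abs{x-a_i}\le \rho,\\
v_i\bigl(\tfrac{x-a_i}{\varepsilon}\bigr)&\text{if }\abs{x-a_i}\le\varepsilon R.
\end{cases}
\]
On each interface the three pieces share the common trace $y\in\Sset^1\mapsto \gamma_i(y)$ after the appropriate rescaling, so $u_\varepsilon\in W^{1,2}(\Omega,\Rset^\nu)$ with $\tr_{\partial\Omega}u_\varepsilon=g$.

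Next I would compute the three contributions to $\mathcal{E}^\varepsilon_F(u_\varepsilon)$. The outer region contributes at most $\mathcal{E}^{\mathrm{geom},\rho}_{g,\gamma_1,\dotsc,\gamma_k}(a_1,\dotsc,a_k)+\eta$, with no potential term since $u^{\mathrm{out}}$ is $\manifold{N}$--valued. On each annulus, in polar coordinates centred at $a_i$,
\[
\int_{B_\rho(a_i)\setminus B_{\varepsilon R}(a_i)}\frac{\abs{\Deriv u_\varepsilon}^2}{2}=\int_{\varepsilon R}^\rho\int_{\Sset^1}\frac{\abs{\gamma_i'}^2}{2r^2}r\dif\theta\dif r=\frac{\equivnorm{\gamma_i}^2}{4\pi}\log\frac{\rho}{\varepsilon R},
\]
since \eqref{eq_Eojei1cooruef6uiP} gives $\int_{\Sset^1}\abs{\gamma_i'}^2=\equivnorm{\gamma_i}^2/(2\pi)$ for the minimising geodesic $\gamma_i$, and the potential term vanishes because $\gamma_i(\Sset^1)\subset\manifold{N}$. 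The innermost disk contributes, by the scaling identity \eqref{scaling_invariance} applied to the change of variables $y=(x-a_i)/\varepsilon$, at most $\mathcal{Q}^R_{F,\gamma_i}+\eta/k$. Summing over $i$ and using $\Esing(g)=\sum_i \tfrac{\equivnorm{\gamma_i}^2}{4\pi}$ (since $(\gamma_1,\dotsc,\gamma_k)$ is a \emph{minimal} topological resolution), the $\log\rho$ terms cancel between the outer and annular contributions, the $\log R$ terms cancel between the annular and innermost contributions, and we obtain
\[
\mathcal{E}^\varepsilon_F(u_\varepsilon)\le \Esing(g)\log\frac{1}{\varepsilon}+\mathcal{E}^{\mathrm{geom}}_{g,\gamma_1,\dotsc,\gamma_k}(a_1,\dotsc,a_k)+\sum_{i=1}^k \mathcal{Q}_{F,\gamma_i}+3\eta,
\]
valid for all $\varepsilon<\rho/R$. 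Letting first $\varepsilon\to 0$ and then $\eta\to 0$ concludes.

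No step carries a real conceptual obstacle: everything is a careful bookkeeping of scales. The only point requiring attention is that once $R$ and $\rho$ are frozen at a small $\eta$--cost and $\varepsilon$ is sent to zero, the identity $\int_{\Sset^1}\abs{\gamma_i'}^2=\equivnorm{\gamma_i}^2/(2\pi)$ for minimising geodesics produces exactly the right logarithmic coefficient $\equivnorm{\gamma_i}^2/(4\pi)$ on the annulus so that the $\log\rho$ cancellation with the renormalisation in the definition of $\mathcal{E}^{\mathrm{geom}}$ and the $\log R$ cancellation with the renormalisation in the definition of $\mathcal{Q}_{F,\gamma_i}$ both occur cleanly. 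All that remains to verify, beyond the energy bookkeeping, is that the three pieces of $u_\varepsilon$ share their traces on each interface, which they do by construction.
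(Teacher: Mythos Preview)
Your proof is correct and uses the same gluing idea as the paper. The only organisational difference is that the paper works with \emph{two} regions rather than three: it glues a near-minimiser of \(\mathcal{E}^{\mathrm{geom},\rho}_{g,\gamma_1,\dotsc,\gamma_k}\) on \(\Omega\setminus\bigcup_i B_\rho(a_i)\) directly to near-minimisers of \(\mathcal{Q}^{\rho/\varepsilon}_{F,\gamma_i}\) on each \(B_\rho(a_i)\), and then takes \(\rho=\sqrt{\varepsilon}\) so that \(\rho\to 0\) and \(\rho/\varepsilon\to\infty\) simultaneously as \(\varepsilon\to 0\). Your explicit intermediate annulus with the geodesic profile \(\gamma_i((x-a_i)/\abs{x-a_i})\) is precisely the construction already hidden inside the monotonicity statement of \cref{proposition_lowerBndBalls} (which is what allows \(\mathcal{Q}^{\rho/\varepsilon}_{F,\gamma_i}-\tfrac{\equivnorm{\gamma_i}^2}{4\pi}\log\tfrac{\rho}{\varepsilon}\) to converge as \(\rho/\varepsilon\to\infty\)); the paper's two-region proof is therefore slightly shorter, while yours is more transparent about where the logarithmic coefficient \(\equivnorm{\gamma_i}^2/(4\pi)\) on the annulus comes from.
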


When \(\manifold{N} = \Sset^1\), \cref{proposition_geometricUpperBound}
is due to Bethuel, Brezis and Hélein \cite{Bethuel_Brezis_Helein_1994}*{Lemma VIII.1}.

\begin{proof}%
[Proof of \cref{proposition_geometricUpperBound}]
For every \(\rho \in (0, \Bar{\rho} (a_1, \dotsc, a_k))\),
we consider a map \(u_*\in W^{1, 2} (\Omega \setminus \bigcup_{i = 1}^k \Bar{B}_\rho (a_i), \manifold{N})\)
such \(\tr_{\partial \Omega} u_* = g\) and \(\tr_{\Sset^1} u_* (a_i + \rho\, \cdot) =\gamma_i\) for every \(i \in \{1, \dotsc, k\}\)
and maps \(u_1, \dots, u_k \in W^{1, 2} (B_\rho, \Rset^\nu)\) such that \(\tr_{\Sset^1} u_i (\rho\, \cdot) = \gamma_i\).
We then set
\[
u (x)\defeq
\begin{cases}
  u_* (x)&\text{if \(x\in\Omega\setminus\bigcup_{i=1}^k B_{\rho}(a_i)\),}\\
  u_i(x-a_i)&\text{if \(x\in B_{\rho}(a_i)\) for some \(i\in\{1,\dotsc,k\}\),}
\end{cases}
\]
and we have, since \(F (u_*) = 0\) in \(\Omega \setminus\bigcup_{i=1}^k B_{\rho}(a_i)\),
\[
\mathcal{E}^\varepsilon_F (u)=\int_{\Omega} \frac{\abs{\Deriv u}^2}{2} + \frac{F (u)}{\varepsilon^2}
= \int_{\Omega \setminus\bigcup_{i=1}^k B_{\rho}(a_i)} \frac{\abs{\Deriv u_*}^2}{2} +
\sum_{i = 1}^n \int_{B_\rho} \frac{\abs{\Deriv u_i}^2}{2} + \frac{F (u_i)}{\varepsilon^2}.
\]
By taking the infimum over \(u_*, u_1, \dotsc, u_k\), we obtain by \eqref{eq_def_renorm_geom_rho} and \eqref{scaling_invariance},
\[
\inf \,\bigl\{ \mathcal{E}^\varepsilon_F (u) \st u \in W^{1, 2} (\Omega, \Rset^\nu) \text{ and } \tr_{\partial \Omega} u = g\bigr\}
\le \mathcal{E}^{\mathrm{geom}, \rho}_{g, \gamma_1, \dotsc, \gamma_k} (a_1, \dotsc, a_k)
  + \sum_{i = 1}^k \mathcal{Q}^{\rho/\varepsilon}_{F,\gamma_i}.
\]
By choosing now \(\rho = \sqrt{\varepsilon}\), we obtain
\begin{multline*}
  \inf \, \bigl\{ \mathcal{E}^\varepsilon_F (u) \st u \in W^{1, 2} (\Omega, \Rset^\nu) \text{ and } \tr_{\partial \Omega} u = g\bigr\}
  -
  \sum_{i = 1}^k \frac{\equivnorm{\gamma_i}^2}{4 \pi} \log \frac{1}{\varepsilon}\\
\le \mathcal{E}^{\mathrm{geom}, \sqrt{\varepsilon}}_{g, \gamma_1, \dotsc, \gamma_k} (a_1, \dotsc, a_k)
- \sum_{i = 1}^k \frac{\equivnorm{\gamma_i}^2}{4 \pi} \log \frac{1}{\sqrt{\varepsilon}}
+ \sum_{i = 1}^k \mathcal{Q}^{\frac{1}{\sqrt{\varepsilon}}}_{F,\gamma_i} - \sum_{i = 1}^k \frac{\equivnorm{\gamma_i}^2}{4 \pi} \log \frac{1}{\sqrt{\varepsilon}},
\end{multline*}
and the conclusion follows by letting \(\varepsilon \to 0\) from the definition \eqref{eq_def_renorm_geom} of \(\mathcal{E}_{g, \gamma_1, \dotsc, \gamma_k}^{\mathrm{geom}}  (a_1, \dotsc, a_k)\) and the definition  \eqref{def_topCst} of  \(\mathcal{Q}_{F,\gamma_i}\).
\end{proof}

We also have an upper bound around singularities for  renormalisable maps.
\begin{proposition}
\label{proposition_improvedupperBound}
 For every \(u \in W^{1,2}_{\mathrm{ren}}(\Omega,\manifold{N})\),
if \(\operatorname{sing} (u) = \{(a_1, \gamma_1), \dotsc, (a_k, \gamma_k)\}\), then for every \(\rho \in (0, \Bar{\rho} (a_1, \dotsc, a_k))\), as \(\varepsilon \to 0\),
\begin{multline*}
\inf \, \bigl\{ \mathcal{E}^\varepsilon_F (v) \st v \in W^{1, 2} (\Omega, \Rset^\nu) \text{ and } v = u \text{ in \(\textstyle \Omega \setminus \bigcup_{i = 1}^k B_\rho (a_i)\) } \bigr\} \\
\le \sum_{i = 1}^k \frac{\equivnorm{\gamma_i}^2}{4 \pi} \log \frac{1}{\varepsilon} +   \mathcal{E}^{\mathrm{ren}} (u)+ \mathcal{Q}_{F} (u) + o(1).
\end{multline*}
\end{proposition}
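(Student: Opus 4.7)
The plan is to build a competitor by gluing three pieces on each punctured ball: outside all small balls we keep $u$ itself, on a thin annulus we insert a conformal image of an almost-synharmonic cylinder connecting the actual trace of $u$ to the geodesic $\gamma_i$, and on an innermost disk we place a near-minimiser for $\mathcal{Q}^{\cdot/\varepsilon}_{F,\gamma_i}$. The three logarithmic contributions will telescope, and the synharmonic interpolation error can be made arbitrarily small because of property \eqref{synhamonicConvergenceSing} of \cref{admissibleMaps}.

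More precisely, I would fix $\eta>0$, write $\beta_i^{s}\defeq \tr_{\Sset^1} u(a_i+s\,\cdot)$, and use \cref{admissibleMaps}\eqref{synhamonicConvergenceSing} to choose $\rho'\in(0,\rho)$ (from now on fixed) with $\sum_i \synhar{\gamma_i}{\beta_i^{\rho'}}\le\eta$. For each $i$, I would then pick, from \cref{definition_synharmonic}, a length $L^*_i>0$ and a cylinder map $H_i\in W^{1,2}(\Sset^1\times[0,L^*_i],\manifold{N})$ connecting $\gamma_i$ to $\beta_i^{\rho'}$ with cylinder energy at most $\frac{L^*_i}{4\pi}\lambda(\gamma_i)^2+\synhar{\gamma_i}{\beta_i^{\rho'}}+\eta$, and set $\rho''_i\defeq \rho'e^{-L^*_i}$. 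The competitor $v_\varepsilon$ is defined to equal $u$ on $\Omega\setminus\bigcup_i B_{\rho'}(a_i)$; on the annulus $B_{\rho'}(a_i)\setminus B_{\rho''_i}(a_i)$ to equal the conformal transplant $x\mapsto H_i(x/|x|,\log(\rho'/|x|))$; and on $B_{\rho''_i}(a_i)$ to be a near-minimiser for $\mathcal{Q}^{\rho''_i/\varepsilon}_{F,\gamma_i}$, rescaled via \eqref{scaling_invariance}. The traces match at both interfaces by construction, so $v_\varepsilon\in W^{1,2}(\Omega,\Rset^\nu)$, and $v_\varepsilon=u$ on $\Omega\setminus\bigcup_i B_\rho(a_i)$ since $\rho''_i<\rho'<\rho$.

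The energy splits into three contributions. For the exterior, I would use the sup-characterisation \eqref{renormalised_energy_u} of $\mathcal{E}^{\mathrm{ren}}$, together with $\lambda(\beta_i^{\rho'})=\lambda(\gamma_i)$ (since they are homotopic via the small synharmonic distance), to get
\begin{equation*}
\int_{\Omega\setminus\bigcup_i B_{\rho'}(a_i)}\frac{|Du|^2}{2}\le \mathcal{E}^{\mathrm{ren}}(u)+\sum_i\frac{\lambda(\gamma_i)^2}{4\pi}\log\frac{1}{\rho'}.
\end{equation*}
The annular piece contributes $\sum_i\bigl(\frac{L^*_i}{4\pi}\lambda(\gamma_i)^2+\synhar{\gamma_i}{\beta_i^{\rho'}}+\eta\bigr)$ by conformal invariance of the Dirichlet integral and because $F=0$ on $\manifold{N}$. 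The innermost balls contribute $\sum_i \mathcal{Q}^{\rho''_i/\varepsilon}_{F,\gamma_i}$ which, since $\rho''_i$ is fixed and $\varepsilon\to 0$, equals $\sum_i\bigl(\frac{\lambda(\gamma_i)^2}{4\pi}\log(\rho''_i/\varepsilon)+\mathcal{Q}_{F,\gamma_i}\bigr)+o_\varepsilon(1)$ by the definition \eqref{def_topCst}. The identity $L^*_i=\log(\rho'/\rho''_i)$ makes the logarithmic terms telescope: $\log(1/\rho')+L^*_i+\log(\rho''_i/\varepsilon)=\log(1/\varepsilon)$. Summing gives
\begin{equation*}
\mathcal{E}^\varepsilon_F(v_\varepsilon)\le \sum_i\frac{\lambda(\gamma_i)^2}{4\pi}\log\frac{1}{\varepsilon}+\mathcal{E}^{\mathrm{ren}}(u)+\mathcal{Q}_F(u)+(k+1)\eta+o_\varepsilon(1),
\end{equation*}
and letting $\varepsilon\to 0$ and then $\eta\to 0$ yields the claim.

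The step I expect to require the most care is the bookkeeping: ensuring that the choice of $\rho'$ (depending on $\eta$) is made \emph{before} letting $\varepsilon\to 0$, so that $\rho''_i$ is a fixed positive number and the convergence $\mathcal{Q}^{\rho''_i/\varepsilon}_{F,\gamma_i}-\frac{\lambda(\gamma_i)^2}{4\pi}\log(\rho''_i/\varepsilon)\to \mathcal{Q}_{F,\gamma_i}$ gives a genuine $o_\varepsilon(1)$ at fixed $\rho''_i$. Everything else is an arrangement of the inequalities already collected in \cref{section_renormalised_energies,section_minimal_energy_on_balls}.
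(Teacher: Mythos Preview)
Your proof is correct and is essentially the paper's argument made explicit: the paper hides the annular cylinder interpolation inside an appeal to \cref{proposition_QW_dependence}, then invokes \cref{admissibleMaps}\eqref{synhamonicConvergenceSing} and the sup-characterisation \eqref{renormalised_energy_u} exactly as you do. One cosmetic slip: with $H_i(\cdot,0)=\gamma_i$ and $H_i(\cdot,L^*_i)=\beta_i^{\rho'}$, the transplant $x\mapsto H_i(x/|x|,\log(\rho'/|x|))$ places $\gamma_i$ on the \emph{outer} circle $|x|=\rho'$ rather than the inner one; swap the endpoints of $H_i$ (synharmonicity is symmetric) or use $\log(|x|/\rho''_i)$ instead.
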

The quantity \(\mathcal{Q}_F(u)\) has been defined in \eqref{defTopCstU}.

\begin{proof}%
  [Proof of \cref{proposition_improvedupperBound}]
For every \(u_1, \dotsc, u_k \in W^{1, 2} (B_\rho, \Rset^\nu)\) such that \(\tr_{\Sset^1} u_i (\rho\, \cdot) = \tr_{\Sset^1}u (a_i + \rho \,\cdot)\) for each \(i \in \{1, \dotsc, k\}\), if we define the function \(v : \Omega \to \Rset^\nu\) by
\[
v (x)=
\begin{cases}
  u(x)&\text{if }x\in\Omega\setminus\bigcup_{i=1}^k \Bar{B}_{\rho}(a_i),\\
  u_i(x-a_i)&\text{if for some \(i\in\{1,\dotsc,k\}\), \(x\in B_{\rho}(a_i)\),}
\end{cases}
\]
then we have
\[
 \mathcal{E}^\varepsilon_F (v)
 = \int_{\Omega \setminus \bigcup_{i = 1}^k B_\rho (a_i)} \frac{\abs{\Deriv u}^2}{2}
 + \sum_{i = 1}^k \int_{B_{\rho}} \frac{\abs{\Deriv u_i}^2}{2} + \frac{F (u_i)}{\varepsilon^2},
\]
and thus by taking the infimum over \(u_1, \dotsc, u_k\), we obtain by \eqref{scaling_invariance},
\begin{multline*}
\inf \{ \mathcal{E}^\varepsilon_F (v) \st v \in W^{1, 2} (\Omega, \Rset^\nu) \text{ and } v = u \text{ in \(\textstyle \Omega \setminus \bigcup_{i = 1}^k B_\rho (a_i)\) } \} - \sum_{i = 1}^k \frac{\equivnorm{\gamma_i}^2}{4 \pi} \log \frac{1}{\varepsilon}\\
\le \int_{\Omega \setminus \bigcup_{i = 1}^k B_\rho (a_i)} \frac{\abs{\Deriv u}^2}{2}
- \sum_{i = 1}^k \frac{\equivnorm{\gamma_i}^2}{4 \pi} \log \frac{1}{\rho}
+ \sum_{i = 1}^k \biggl(\mathcal{Q}^{\rho/\varepsilon}_{F,\tr_{\Sset^1} u (a_i + \rho\, \cdot)}
-  \frac{\equivnorm{\gamma_i}^2}{4 \pi} \log \frac{\rho}{\varepsilon}\biggr).
\end{multline*}
We conclude by the definition \eqref{renormalised_energy_u} of \(\mathcal{E}^{\mathrm{ren}} (u)\), by the definitions \eqref{def_topCst} and \eqref{defTopCstU} of the quantities \(\mathcal{Q}_{F,\gamma_i}(u)\) and \(\mathcal{Q}_F(u)\), and by \cref{proposition_QW_dependence} and \eqref{synhamonicConvergenceSing} in \cref{admissibleMaps}.
\end{proof}

\section{Lower bounds on the energy}
\label{sect:Lower_bounds}
We derive a lower bound for the Ginzburg--Landau energy \(\mathcal{E}^\varepsilon_F (u)\), defined in \eqref{eq_GLenergy}, of maps \(u\) in \(W^{1,2} (\Omega,\Rset^\nu)\) with given boundary datum \(\tr_{\partial \Omega} u = g\) that matches the upper bound of \cref{proposition_improvedupperBound}.
We first prove in \cref{section_neiZoh3ahvoh1iNg6} a lower bound of the form \(\Esing (g)\log \frac{1}{\varepsilon}-C\) for maps in \(W^{1,2}(\Omega,\Rset^\nu)\) and for the Ginzburg--Landau energy.
This lower bound along with a localisation of the energy argument allows us to prove boundedness of sequences which have their energies bounded by \(\Esing (g)\log \frac{1}{\varepsilon}+C\) in \cref{section_ogeePa6nooxeeN2ei}.
We have seen in the previous section that such a bound is satisfied by minimisers of \eqref{eq_GLenergy}.
With the help of the compactness of minimisers we are able to improve the lower bound and obtain the desired result in \cref{section_aejie9eiLoh9haire}. 

In this section, \(\Omega\) is a Lipschitz bounded domain and \(F \in \mathcal{C} (\Rset^\nu, [0,+\infty))\) satisfies \(F^{-1}(\{0\}) = \manifold{N}\) and \eqref{hyp20}.

\subsection{Global lower bound}
\label{section_neiZoh3ahvoh1iNg6}
The global lower bound depends  on the tubular neighbourhood extension energy.
\begin{definition}
\label{def_energy_extension}
  Let \(\Omega\subset\Rset^2\) be a bounded Lipschitz domain and \(g \in W^{1/2,2}(\partial \Omega, \manifold{N})\). We define the \emph{tubular neighbourhood extension energy} of \(g\) to be
  \begin{equation*}
    \mathcal{E}^{\mathrm{ext}}(g)
    \defeq
    \inf\Bigl\{\int_{\partial \Omega \times [0,1]} \frac{\abs{\Deriv v}^2}{2} \st v \in W^{1,2}(\partial \Omega\times [0,1],\manifold{N}) \text{ and } \tr_{\partial \Omega \times \{0 \}}v=g \Bigr\}.
  \end{equation*}
\end{definition}

\begin{proposition}
  \label{proposition_Firstlowerbound}
There exists a constant \(C \in (0, +\infty)\), depending only on \(\Omega\) and \(F\), such that for every \(\varepsilon>0\) and every \(u\in W^{1, 2}(\Omega,\Rset^\nu)\) with \(g\defeq\tr_{\partial \Omega} u\) satisfying \(g \in \manifold{N}\) almost everywhere on \(\partial \Omega\), we have
  \begin{multline*}
  \mathcal{E}^\varepsilon_F(u)+C\mathcal{E}^{\mathrm{ext}} (g)
  - \Esing (g)\log \frac{1}{C\varepsilon \Esing (g)}\\
  \geq\frac{1}{C}\biggl(\frac{\abs{\Deriv (\dist_{\manifold{N}} \compose u)}^2}{2} + \frac{F (u)}{\varepsilon^2}+ \sup_{t > 0} t^2 \mathcal{L}^2 \bigl(\abs{\Deriv u}^{-1} ([t, +\infty))\bigr)\biggr),
  \end{multline*}
where the last term on the left-hand side is understood to vanish when  \(\Esing(g)=0\).
\end{proposition}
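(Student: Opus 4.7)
The plan is to adapt Sandier's vortex--ball construction \cite{Sandier_1998} to this non-abelian manifold-valued setting, combined with the nearest-point projection $\Pi_{\manifold{N}}$ of \cref{lemma_derivativeNearestPointRetraction} and the topological resolution framework of \S \ref{section_renormalised_energies}. First I would fix a threshold $\tilde{\delta} \in (0, \min(\delta_F, \delta_{\manifold{N}})/2)$ and consider the \emph{bad set} $S_u \defeq \{x \in \Omega \st \dist(u(x),\manifold{N}) \ge \tilde{\delta}\}$; by \eqref{hyp20} one gets $\mathcal{L}^2(S_u) \le C\varepsilon^2 \mathcal{E}^\varepsilon_F(u)$, so $S_u$ is contained in a union of small balls of total radius $O(\varepsilon)$ which, after Sandier's ball-growth and merging, produces a final disjoint family $(B(x_i,r_i))_{i\in I}$ with $\sum_i r_i$ controlled by $C\varepsilon\Esing(g)$.

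Next, on each annulus $B(x_i,r_i)\setminus B(x_i,\rho_i)$ with $\rho_i \simeq \varepsilon$, the map $u$ takes values in $\manifold{N}_{\tilde{\delta}}$, so $\Pi_{\manifold{N}}\compose u$ is well defined on it and, by a Fubini argument on concentric circles, carries a $\VMO$ homotopy class representable by a minimising geodesic $\gamma_i$. The central quantitative step combines \eqref{eq_xoishee4IuHi8eefahyeithe} with the minimal length inequality \eqref{eq_Eojei1cooruef6uiP} and an integration in polar coordinates to yield on each annulus
\begin{equation*}
  \int_{B(x_i,r_i)\setminus B(x_i,\rho_i)} \frac{\abs{\Deriv u}^2}{2} \ge \frac{\equivnorm{\gamma_i}^2}{4\pi}\log\frac{r_i}{\rho_i} + \frac{1}{C}\int_{B(x_i,r_i)\setminus B(x_i,\rho_i)}\frac{\abs{\Deriv(\dist_{\manifold{N}}\compose u)}^2}{2}.
\end{equation*}

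To convert the sum over $i$ into $\Esing(g)$, I would observe that outside $\bigcup_i B(x_i,r_i)$ the map $\Pi_{\manifold{N}}\compose u$ is continuous and $\manifold{N}$-valued, and that its boundary trace on $\partial\Omega$ is homotopic to $g$ in $\VMO$ after prepending an extension of $g$ into a thin tubular collar of $\partial\Omega$ --- this collar extension is exactly what the term $C\mathcal{E}^{\mathrm{ext}}(g)$ accounts for via \cref{def_energy_extension}. Consequently $(\gamma_1,\dotsc,\gamma_k)$ forms a topological resolution of $g$ in the sense of \cref{def_resolution}, and \cref{def_loose_equiv_norm} gives $\sum_i \equivnorm{\gamma_i}^2/(4\pi)\ge \Esing(g)$. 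Summing the annular estimates, using $\sum r_i \lesssim \varepsilon\Esing(g)$ from the ball-growth accounting together with Jensen's inequality to produce the single logarithm $\log(1/(C\varepsilon\Esing(g)))$, and retaining the non-negative potential contribution separately, delivers the claimed lower bound together with the pointwise control on $\abs{\Deriv(\dist_\manifold{N}\compose u)}^2$ and $F(u)/\varepsilon^2$.

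For the Marcinkiewicz weak-$L^2$ bound on $\abs{\Deriv u}$, I would run a parametric version of the ball construction: for each $t > 0$, I cover the superlevel set $\abs{\Deriv u}^{-1}([t,+\infty))$ by the vortex balls together with a Vitali family of radius $\sim 1/t$ extracted from the complement; a Chebyshev bound against $\int\abs{\Deriv u}^2$ on each such small ball controls the cardinality, giving $t^2 \mathcal{L}^2(\abs{\Deriv u}^{-1}([t,+\infty))) \le C \mathcal{E}^\varepsilon_F(u)$ uniformly in $t$. \emph{The main obstacle} is to carry out Sandier's ball-growth and merging procedure in the non-abelian topological setting: upon merging two balls the homotopy classes combine via concatenation in $\pi_1(\manifold{N})$, and monotonicity of the length functional $\equivnorm{\cdot}^2/(4\pi)$ under this operation is not automatic --- one must invoke the triangle inequality for minimising geodesics in the universal cover together with the $W^{1,2}$-stability of the projected boundary traces under the synharmony tools of \S \ref{section_ren1}.
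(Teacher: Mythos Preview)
Your outline has two genuine gaps and misses the structural shortcut the paper takes.

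\textbf{Gap 1: Lebesgue measure does not control the ball covering.} From \eqref{hyp20} you get \(\mathcal{L}^2(S_u)\le C\varepsilon^2\mathcal{E}^\varepsilon_F(u)\), but in \(\Rset^2\) this says nothing about covering \(S_u\) by disks of total radius \(O(\varepsilon)\): a set of small area can have arbitrarily large diameter or perimeter. What is actually needed is a bound on the one-dimensional Hausdorff content \(\mathcal{H}^1_\infty(K_s)\) of sublevel sets \(K_s=\{\dist_{\manifold{N}}\compose u\ge s\}\), and this comes from the \emph{coarea formula} applied to \(\dist_{\manifold{N}}\compose u\) together with Young's inequality, yielding \(\int_0^\delta \mathcal{H}^1(\Sigma_s)\,s\,ds\lesssim \varepsilon\cdot\mathbf{(I)}\). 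The paper does exactly this in Step~2 of \cref{lemma_Firstlowerbound}.

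\textbf{Gap 2: the weak-\(L^2\) bound you sketch is too weak.} A Vitali--Chebyshev argument gives \(t^2\mathcal{L}^2(\{\abs{\Deriv u}\ge t\})\le C\int_\Omega\abs{\Deriv u}^2\le C\mathcal{E}^\varepsilon_F(u)\), but the right-hand side is of order \(\log(1/\varepsilon)\), not bounded by the left-hand side of the proposition. You need the weak-\(L^2\) excess to be controlled by \(\mathcal{E}^\varepsilon_F(u)-\Esing(g)\log(1/\varepsilon)\), i.e.\ the \emph{renormalised} energy. The paper obtains this by invoking the sharper Lorentz estimate \eqref{weakLorentzEstimate} of \cref{prop_lower_bound_compact}, which already subtracts the logarithm.

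\textbf{The non-abelian merging obstacle is a red herring here.} The paper's route avoids it entirely. Rather than growing balls and tracking how homotopy classes concatenate under merging, the paper splits \(\abs{\Deriv u}^2\) into tangential and normal parts via \cref{lemma_derivativeNearestPointRetraction}, applies coarea to the normal part to produce for each \(s\in(0,\delta)\) a compact set \(K_s\) with small \(\mathcal{H}^1_\infty\), and then applies the black-box \cref{prop_lower_bound_compact} to \(\Pi_{\manifold{N}}\compose u\) on \(\Omega\setminus K_s\). That theorem already outputs \(\Esing(g)\) directly --- as an infimum over all topological resolutions --- so no subadditivity of \(\equivnorm{\cdot}^2\) under concatenation is ever needed. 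The averaging over \(s\) and the elementary identity \(\tau=1+\log\tau+\Psi(\tau)\) then produce the single logarithm \(\log\frac{1}{C\varepsilon\Esing(g)}\). Your proposed fix via ``triangle inequality in the universal cover plus synharmony'' is not the right tool: synharmony measures proximity of homotopic geodesics, not subadditivity of lengths under concatenation, and in any case the merging issue simply does not arise in this proof.
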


When \(\manifold{N} = \Sset^1\), \Cref{proposition_Firstlowerbound}, without the weak estimate on the gradient, is due to Sandier \cite{Sandier_1998}*{Theorem 2}, the corresponding weak estimate being due to Serfaty and Tice \cite{Serfaty_Tice_2008}*{Theorem 2}.
In the general case, the fact that the left-hand side is non-negative is due to Canevari \cite{Canevari_2015}.

\Cref{proposition_Firstlowerbound} will follow from a slightly refined result for smooth maps (see  \Cref{lemma_Firstlowerbound}) together with an approximation argument. The proof of \cref{lemma_Firstlowerbound} follows Sandier's strategy \cite{Sandier_1998}*{Proof of Theorem 2} by an application of the coarea formula and the lower estimates for the Dirichlet energy outside a compact set of maps into a manifold, which depends on the one-dimensional Hausdorff content, whose definition and properties we recall now.

\begin{definition}
\label{def_hausdorff_content}
The \emph{one-dimensional Hausdorff content} of a compact set \(K \subset \Rset^2\) is defined as
\[
 \mathcal{H}^1_\infty (K)
 \defeq
 \inf \biggl\{ \sum_{B \in \mathcal{B}} \diam (B) \st \ K \subset \bigcup_{B \in \mathcal{B}} B \text{ and } \mathcal{B} \text{ is a finite collection of closed disks}\biggr\}.
\]
\end{definition}

The one-dimensional Hausdorff content is an outer measure and is bounded from above by the Hausdorff measure:
\begin{equation}
\label{eq_be1efeethietieQuu}
  \mathcal{H}^1_{\infty} (K) \le \mathcal{H}^1 (K).
\end{equation}

 We also recall the following lemma which will be used repeatedly to transform a covering of some set by disks into a covering by closed disks with disjoint closure (see Lemma 4.1 in \cite{Sandier_Serfaty_2007}).

\begin{lemma}
\label{lemmaMerging}
For every finite set \(\mathcal{B}\) of disks of \(\Rset^2\),
there exists a finite set \(\mathcal{B}'\) of disjoint non-empty closed disks of \(\Rset^2\) such that
\[
 \mathcal{B} = \bigcup_{B' \in \mathcal{B}'} \{B \in \mathcal{B} \st B \subseteq B'\},
\]
and 
\[
  \sum_{B' \in \mathcal{B}'}\diam (B') =\sum_{B \in \mathcal{B}} \diam (B). 
\]
\end{lemma}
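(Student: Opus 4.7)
The plan is to construct $\mathcal{B}'$ by a greedy merging procedure resting on a single elementary geometric fact: if $B_1 = \Bar{B}(c_1, r_1)$ and $B_2 = \Bar{B}(c_2, r_2)$ are closed disks with $B_1 \cap B_2 \neq \emptyset$, there exists a closed disk $B$ containing $B_1 \cup B_2$ such that $\diam(B)=\diam(B_1)+\diam(B_2)$. Indeed, the inequality $\abs{c_1 - c_2} \le r_1 + r_2$ implies that $B_1 \cup B_2$ is contained in some closed disk of radius at most $r_1 + r_2$ (either one of $B_1, B_2$ contains the other, or else the smallest enclosing disk of $B_1 \cup B_2$ has radius $(\abs{c_1-c_2}+r_1+r_2)/2$); a concentric enlargement then produces a closed disk of radius exactly $r_1 + r_2$, that is, of diameter $\diam(B_1)+\diam(B_2)$, still containing $B_1 \cup B_2$.

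Equipped with this, I would run the following finite algorithm. I maintain a current finite collection of closed disks together with an assignment attaching to each $B \in \mathcal{B}$ some element of the current collection that contains $B$. The initial collection is $\{\overline{B} \st B \in \mathcal{B}\}$ with the tautological assignment $B \mapsto \overline{B}$. While two disks $D_1, D_2$ of the current collection satisfy $D_1 \cap D_2 \neq \emptyset$, I remove them, insert the merged disk $D$ supplied by the basic estimate above, and redirect each original $B$ previously assigned to $D_1$ or $D_2$ to the new disk $D$. Since the cardinality of the collection strictly decreases at each step, the procedure halts after finitely many iterations; I then take $\mathcal{B}'$ to be the collection at termination.

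By the stopping criterion, the disks in $\mathcal{B}'$ are pairwise disjoint; they are closed and non-empty by construction. The persistent assignment ensures that every $B \in \mathcal{B}$ lies in some $B' \in \mathcal{B}'$, which is the first identity. Because every merging step preserves the sum of diameters by the basic estimate, the sum $\sum_{B' \in \mathcal{B}'} \diam(B')$ is invariant throughout the algorithm and hence equals $\sum_{B \in \mathcal{B}} \diam(B)$, which is the second identity.

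The only non-routine point in this plan is the basic merging estimate; once it is established, the rest is a manifestly terminating finite procedure and pure bookkeeping.
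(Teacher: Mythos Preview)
Your proof is correct and follows the standard merging procedure. The paper does not actually give its own proof of this lemma; it merely recalls the statement and cites Lemma~4.1 in Sandier--Serfaty, \emph{Vortices in the magnetic Ginzburg--Landau model}, where precisely this greedy two-at-a-time merging argument is carried out. So your approach coincides with the one in the cited reference.
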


We finally rely on the equality between the one-dimensional Hausdorff content of a compact set and of its boundary.

\begin{lemma}
\label{lemma_content_boundary}
If \(K \subseteq \Rset^2\) is compact, then
\(\mathcal{H}^1_\infty (K) = \mathcal{H}^1_\infty (\partial K)
\).
\end{lemma}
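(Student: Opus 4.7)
\medskip

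The equality splits into two inequalities; the inclusion $\partial K \subseteq K$ together with the monotonicity of the one-dimensional Hausdorff content immediately gives $\mathcal{H}^1_\infty(\partial K) \le \mathcal{H}^1_\infty(K)$.

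For the reverse inequality the plan is to upgrade any finite closed-disk cover of $\partial K$ into a cover of $K$ with the same total diameter. Given such a cover $\mathcal{B}$, I would first apply \cref{lemmaMerging} to obtain a family $\mathcal{B}'$ of pairwise disjoint non-empty closed disks with $\bigcup_{B \in \mathcal{B}} B \subseteq \bigcup_{B' \in \mathcal{B}'} B'$ (so that $\partial K \subseteq \bigcup \mathcal{B}'$) and with unchanged total diameter $\sum_{B'\in \mathcal{B}'}\diam(B') = \sum_{B \in \mathcal{B}}\diam(B)$.

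The crux is then the planar topology observation that the open set $U \defeq \Rset^2 \setminus \bigcup_{B' \in \mathcal{B}'} B'$ is connected: since the closed disks in $\mathcal{B}'$ are pairwise disjoint (hence at positive pairwise distance) and bounded, any two points of $U$ can be joined by a polygonal path inside $U$ by detouring around each disk that the initial straight segment would cross. Granting this, $K \cap U$ is closed in $U$ (since $K$ is closed in $\Rset^2$) and also open in $U$: any point $x \in K \cap U$ lies outside $\bigcup \mathcal{B}' \supseteq \partial K$, and therefore in $\operatorname{int} K$, so that some $\Rset^2$-neighbourhood of $x$ is contained in $K \cap U$. The connectedness of $U$ then forces $K \cap U$ to be $\emptyset$ or all of $U$, and the second option is excluded by the boundedness of $K$ against the unboundedness of $U$. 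Hence $K \subseteq \bigcup \mathcal{B}'$, giving
\[
\mathcal{H}^1_\infty(K) \le \sum_{B' \in \mathcal{B}'} \diam(B') = \sum_{B \in \mathcal{B}} \diam(B),
\]
and taking the infimum over all covers $\mathcal{B}$ of $\partial K$ yields the conclusion.

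The only non-formal step is the connectedness of $U$; the rest — the application of \cref{lemmaMerging}, the clopen-in-a-connected-set argument, and the boundedness of $K$ versus the unboundedness of $U$ — is routine bookkeeping.
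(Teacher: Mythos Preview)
Your proof is correct and follows essentially the same route as the paper: both establish the nontrivial inequality by applying \cref{lemmaMerging} to a near-optimal cover of $\partial K$, then exploit the connectedness of the complement of finitely many pairwise disjoint closed disks to conclude that the disks already cover $K$. The only cosmetic difference is that the paper packages the topological step as a path argument (connecting a hypothetical uncovered point of $K$ to a point outside $K$ and locating a boundary point along the path), whereas you phrase it as a clopen-in-a-connected-set argument; these are equivalent.
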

\Cref{lemma_content_boundary} does not hold for the \emph{Haudorff measure}; the proof of \cref{lemma_content_boundary} can be seen to work when \(K \subseteq \Rset^n\) is compact and \(n \ge 2\); the equality fails when \(n = 1\) and \(K=[0, 1] \subset \Rset\).
\begin{proof}[Proof of \cref{lemma_content_boundary}]
By monotonicity of the Hausdorff content, we have \(\mathcal{H}^1_\infty (K)
\ge \mathcal{H}^1_{\infty} (\partial K)\).
It remains thus to establish the converse inequality.

We fix \(\eta > 0\).
By definition of the Hausdorff content, there exist points \(a_1, \dotsc, a_k \in \Rset^2\) and radii \(\rho_1, \dotsc, \rho_k \in (0, + \infty)\)
such that \(\partial K \subseteq \bigcup_{i = 1}^k B_{\rho_i} (a_i)\) and \(\sum_{i = 1}^k 2 \rho_i \le  \mathcal{H}^1_\infty (\partial K) + \eta\).
By \cref{lemmaMerging}, we can assume that \(\Bar{B}_{\rho_i} (a_i) \cap \Bar{B}_{\rho_j} (a_j) = \emptyset\) if \(i, j \in \{1, \dotsc, k\}\) with \(i \ne j\).
We claim that \(K \subset \bigcup_{i = 1}^k B_{\rho_i} (a_i)\).
Indeed, assume by contradiction that there exists a point \(x \in K \setminus  \bigcup_{i = 1}^k B_{\rho_i} (a_i)\).
Since the disks \(\Bar{B}_{\rho_1} (a_1), \dotsc, \Bar{B}_{\rho_k} (a_k)\) are pairwise disjoint, the set \(\Rset^2 \setminus  \bigcup_{i = 1}^k B_{\rho_i} (a_i)\) is path-connected.
Since the set \(K\) is compact, we have \(\Rset^2\setminus (K\cup \bigcup_{i = 1}^k B_{\rho_i} (a_i)) \ne\emptyset\) and there exists thus a continuous map \(\gamma \in \mathcal{C} ([0, 1],\Rset^2 \setminus \bigcup_{i = 1}^k B_{\rho_i} (a_i))\) such that \(\gamma (0) = x\) and \(\gamma (1) \not \in K\).
Since the map \(\gamma\) is continuous, there exists some \(t_* \in [0, 1]\) such that \(\gamma (t_*) \in \partial K\) and we would thus have \(\partial K \setminus \bigcup_{i = 1}^k B_{\rho_i} (a_i)\neq \emptyset\), which is a contradiction.
We have thus
\[
 \mathcal{H}^1_\infty (K)
 \le 2 \sum_{i = 1}^k \rho_i \le \mathcal{H}^1_\infty (\partial K) + \eta;
\]
we conclude by letting \(\eta \to 0\).
\end{proof}

We will use the lower estimate on the Dirichlet energy of maps into a manifold proved in \cite{Monteil_Rodiac_VanSchaftingen_RE}*{Theorem 5.1}.
\begin{theorem}
\label{prop_lower_bound_compact}
For every Lipschitz bounded domain \(\Omega\subset \Rset^2\), every compact set \(K\subset \Omega\) such that \(\mathcal{H}^1_{\infty}(K)>0\) and every map \(v \in W^{1, 2} (\Omega \setminus K, \manifold{N})\), we have
\begin{equation}\label{first_prop_lower_bound_compact}
\int_{\Omega \setminus K} \frac{\abs{\Deriv v}^2}{2}\geq \Esing (\tr_{\partial \Omega}v) \log \frac{\dist(K,\partial \Omega)}{2\mathcal{H}^1_{\infty} (K)}.
\end{equation}
More precisely, there exists a constant \(C > 0\) such that
\begin{equation}
\label{weakLorentzEstimate}
\sup_{t>0}t^2\mathcal{L}^2 \bigl(\{x \in \Omega \setminus K \st \abs{\Deriv v} \ge t\}\bigr)
\le
C
\biggl(\int_{\Omega \setminus K} \frac{\abs{\Deriv v}^2}{2}
 -  \Esing (\tr_{\partial \Omega}v) \log \frac{\dist(K,\partial \Omega)}{2 \mathcal{H}^1_{\infty} ({K})} \biggr).
\end{equation}
\end{theorem}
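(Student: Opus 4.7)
The strategy is to adapt Sandier's vortex-ball construction \cite{Sandier_1998} to the manifold-valued setting, in the spirit of Canevari \cite{Canevari_2015}. I would first use the definition of Hausdorff content to cover \(K\) by finitely many closed disks \(\{\Bar{B}_{r_i}(a_i)\}_{i=1}^m\) with \(\sum_i r_i \le \mathcal{H}^1_\infty(K) + \eta\) for an arbitrary \(\eta > 0\), and invoke \cref{lemmaMerging} to make the disks pairwise disjoint without enlarging the sum.

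Then I would grow these disks by Sandier's flow: each evolves as \(B_{\rho_i(t)}(a_i)\) with \(\rho_i(t) = r_i e^t\), and whenever two disks touch they are replaced by a single closed disk containing both, whose radius equals the sum of the two incoming radii. This rule preserves the total radius \(\sigma(t) \defeq \sum_i \rho_i(t)\) at mergers, so that \(\sigma(t) = e^t \sigma(0)\) holds throughout. I run the flow up to the first time \(T\) at which \(\sigma(T) = \dist(K, \partial \Omega)/2\); a straightforward induction shows the balls stay in \(\Omega\) on \([0, T]\). The key topological input is that for almost every \(t \in [0, T]\), the family of traces \(\gamma_i(t) \defeq \tr_{\partial B_i(t)} v\) (well-defined for a.e.\ \(t\) by Fubini) is a topological resolution of \(g\): indeed \(v\) restricted to the Lipschitz domain \(\Omega \setminus \bigcup_i B_i(t)\) furnishes the \(\VMO\) extension required by \cref{def_resolution}, so by the definition of singular energy \cref{def_loose_equiv_norm},
\[
 \sum_{i} \frac{\equivnorm{\gamma_i(t)}^2}{4 \pi} \ge \Esing(g) \quad \text{for every such } t.
\]

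Between consecutive mergers, the coarea formula combined with the length--energy inequality \eqref{eq_Eojei1cooruef6uiP} on each circle \(\partial B_\rho(a_i)\)---which gives \(\int_{\partial B_\rho}\abs{\partial_\tau v}^2\, d\mathcal{H}^1 \ge \equivnorm{\gamma_i}^2/(2\pi\rho)\)---yields
\[
 \int_{B_i(t_2) \setminus B_i(t_1)} \frac{\abs{\Deriv v}^2}{2}
 \ge \int_{\rho_i(t_1)}^{\rho_i(t_2)} \frac{\equivnorm{\gamma_i}^2}{4\pi\rho}\, d\rho
 = \int_{t_1}^{t_2} \frac{\equivnorm{\gamma_i(t)}^2}{4\pi}\, dt.
\]
Summing over \(i\) and over the finitely many inter-merger intervals, and invoking the topological bound at each \(t\), produces
\[
 \int_{\Omega \setminus K} \frac{\abs{\Deriv v}^2}{2} \ge T\, \Esing(g) = \log \frac{\sigma(T)}{\sigma(0)}\, \Esing(g) \ge \log \frac{\dist(K, \partial \Omega)/2}{\mathcal{H}^1_\infty(K) + \eta}\, \Esing(g).
\]
Letting \(\eta \to 0\) establishes \eqref{first_prop_lower_bound_compact}. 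The weak Lorentz estimate \eqref{weakLorentzEstimate} follows by the refinement of Serfaty--Tice \cite{Serfaty_Tice_2008}: run the same ball-growth argument starting from an enlarged initial cover that also absorbs the super-level set \(\{\abs{\Deriv v} \ge t\}\); the additional initial radius required can be made proportional to \(\sqrt{\mathcal{L}^2(\{\abs{\Deriv v} \ge t\})}\) by an isoperimetric trade-off, and tracking this through the logarithm recovers exactly \eqref{weakLorentzEstimate}.

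The main obstacle is the bookkeeping at mergers: I must verify that \((\gamma_i(t))_i\) remains a topological resolution of \(g\) after each merger, which reduces to identifying the trace on the boundary of the merged ball (up to \(\VMO\)-homotopy) with the concatenation of the two incoming traces along a path joining them in the complement of the disks. A secondary but delicate technical point is the continuous-in-\(t\) selection of good radii \(\rho_i(t)\) on which the traces are well-defined in \(W^{1/2, 2}\); this requires a careful Fubini/slicing argument between mergers.
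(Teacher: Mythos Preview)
The paper does not supply a proof of this theorem: it is quoted verbatim as Theorem~5.1 of the companion paper \cite{Monteil_Rodiac_VanSchaftingen_RE}, and only the subsequent \cref{lemma_Firstlowerbound} is proved here. So there is no ``paper's own proof'' to compare against beyond that citation.

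Your strategy for \eqref{first_prop_lower_bound_compact} is the correct one and is exactly Sandier's ball-growth argument transported to a general target \(\manifold{N}\), which is also the route taken in \cite{Monteil_Rodiac_VanSchaftingen_RE}. The two points you flag as obstacles are genuine but standard: the topological-resolution property survives a merger because the merged ball still sits in \(\Omega\) and \(v\) restricted to \(\Omega\setminus\bigcup_i B_i(t)\) continues to witness \cref{def_resolution}; and the a.e.-in-\(t\) slicing is just Fubini applied to \(\int|\Deriv v|^2\) in polar coordinates on each inter-merger interval.

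Your sketch of \eqref{weakLorentzEstimate} is the weaker part. Absorbing \(\{\abs{\Deriv v}\ge t\}\) into the initial cover by a single ball of radius \(\sqrt{m/\pi}\) (with \(m=\mathcal{L}^2(\{\abs{\Deriv v}\ge t\})\)) and then adding the Chebyshev contribution \(t^2 m/2\) from inside that ball gives, after the ball growth,
\[
 \tfrac{t^2 m}{2} \le \text{(excess)} + \Esing(g)\,\log\Bigl(1+\tfrac{\sqrt{m/\pi}}{\mathcal{H}^1_\infty(K)}\Bigr),
\]
which is a quadratic inequality in \(\sqrt{m}\) rather than the clean linear bound claimed; closing it uniformly in \(t\) requires an additional argument (splitting into small/large \(t\), or bounding the logarithm against the excess via the first inequality already established). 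This is not wrong, but it is not ``exactly \eqref{weakLorentzEstimate}'' as you assert. The argument in \cite{Monteil_Rodiac_VanSchaftingen_RE} instead tracks, layer by layer during the growth, the defect between \(\int_{\partial B_\rho}\abs{\Deriv v}^2\) and the optimal tangential contribution \(\equivnorm{\gamma_i}^2/(2\pi\rho)\); this defect integrates to the excess and directly dominates the weak-\(L^2\) quasinorm without the detour through an enlarged initial cover.
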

The left-hand side of \eqref{weakLorentzEstimate} is the weak-\(L^2\) quasi-norm of \(\abs{\Deriv v}\). \Cref{prop_lower_bound_compact} has its roots in a corresponding estimate for maps outside a finite collection of disks \cite{Bethuel_Brezis_Helein_1994}*{Corollary II.1}. 

We are now ready to state a slightly refined version of \cref{proposition_Firstlowerbound} in the smooth setting:

\begin{lemma}
\label{lemma_Firstlowerbound}
 There exist constants \(C\in (0\,+\infty)\) and \(\delta\in (0,+\infty)\) depending only on \(\Omega\) and \(F\), such that for every \(\varepsilon>0\) and every map \(u\in \mathcal{C}^{2}(\Bar{\Omega},\Rset^\nu)\) with \(g\defeq \operatorname{tr}_{\partial \Omega}u\) satisfying \(g (\partial \Omega) \subseteq \manifold{N}\) and \(\Esing(g)>0\), we have
 \begin{multline*}
 \mathcal{E}^\varepsilon_F(u)+C \mathcal{E}^{\mathrm{ext}} (g)- \Esing (g)\log \frac{1}{C\varepsilon \Esing (g)}\\
 \geq 
 \frac{1}{C}\biggl(\int_{\Omega} \biggl(\frac{\abs{\Deriv (\dist_{\manifold{N}} \compose u)}^2}{2} + \frac{F (u)}{\varepsilon^2}\biggr)
  + \sup_{t > 0} t^2 \mathcal{L}^2\bigl(\{x\in \Omega \st \abs{\Deriv u(x)}\geq t\}\bigr)\\
 +\Esing (g)\,\frac{1}{\delta}\int_0^{\delta} \Psi \biggl(\frac{\mathcal{H}^1_\infty (K_s) \, s }{C\varepsilon\Esing(g)}\biggl) \dif s
\biggr),
\end{multline*}
where the function \(\Psi : (0, +\infty) \to \Rset_+\) is defined by \(\Psi (\tau) \defeq \tau - 1 - \log \tau\) for each \(\tau \in (0, +\infty)\), and where the sets \(K_s\) are defined for every \(s\in (0,+\infty)\) by
\begin{equation*}
  K_s \defeq \{x \in \Omega \st  \dist(u(x),\manifold{N})\geq s \}.
\end{equation*}
\end{lemma}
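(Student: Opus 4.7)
The plan is to follow Sandier's coarea-slicing construction \cite{Sandier_1998}, combining the compact-set topological lower bound of \cref{prop_lower_bound_compact} with the projection identity derived from \cref{lemma_derivativeNearestPointRetraction}. Since $g(\partial\Omega)\subseteq\manifold{N}$, I would first glue $u$ with a near-minimiser of $\mathcal{E}^{\mathrm{ext}}(g)$ in a thin collar $\partial\Omega\times[0,1]$, producing a map $\tilde u$ on a slightly enlarged Lipschitz domain $\tilde\Omega$ whose Ginzburg--Landau energy exceeds that of $u$ by at most $C\mathcal{E}^{\mathrm{ext}}(g)$ and such that $K_s$ stays at fixed positive distance from $\partial\tilde\Omega$ for all $s\in(0,\delta)$ with $\delta<\min(\delta_F,\delta_\manifold{N})$ small enough. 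This buys a uniform quantitative lower bound on $\dist(K_s,\partial\tilde\Omega)$ at the expected cost $C\mathcal{E}^{\mathrm{ext}}(g)$ on the left-hand side.

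Next, for almost every such $s$ Sard's theorem applied to the $\mathcal{C}^2$ map $\dist_\manifold{N}\compose\tilde u$ makes $\partial K_s$ a smooth $1$-submanifold, and \cref{lemma_content_boundary} together with \eqref{eq_be1efeethietieQuu} yields $\mathcal{H}^1_\infty(K_s)\le\mathcal{H}^1(\partial K_s)$. On $\tilde\Omega\setminus K_s$ the composition $\Pi_\manifold{N}\compose\tilde u$ is $\manifold{N}$-valued with boundary trace on $\partial\tilde\Omega$ homotopic to $g$, so \cref{prop_lower_bound_compact} gives simultaneously
\[
\int_{\tilde\Omega\setminus K_s}\frac{\abs{\Deriv(\Pi_\manifold{N}\compose\tilde u)}^2}{2}\ge\Esing(g)\log\frac{c}{\mathcal{H}^1_\infty(K_s)}
\]
and the weak-$L^2$ control \eqref{weakLorentzEstimate}. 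Plugging the pointwise projection estimate \eqref{eq_xoishee4IuHi8eefahyeithe},
\[
\abs{\Deriv\tilde u}^2\ge\abs{\Deriv(\dist_\manifold{N}\compose\tilde u)}^2+(1-s/\delta_\manifold{N})\abs{\Deriv(\Pi_\manifold{N}\compose\tilde u)}^2,
\]
splits off the desired $\int\abs{\Deriv(\dist_\manifold{N}\compose\tilde u)}^2/2$ term on the right-hand side while preserving the topological logarithm up to the harmless factor $(1-s/\delta_\manifold{N})\ge 1-\delta/\delta_\manifold{N}>0$.

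The remaining $\Psi$-integral is produced by coupling $F(\tilde u)/\varepsilon^2$ with the above logarithm through a Young/Legendre-type inequality and then averaging in $s\in(0,\delta)$. Using \eqref{hyp20} in the form $F\ge (m_F/2)\dist_\manifold{N}^2$ together with the coarea formula for $\dist_\manifold{N}\compose\tilde u$, I would peel off a small fraction of the $F$-energy and bound it below by a quantity proportional to $h(s)s/\varepsilon$ with $h(s)\defeq\mathcal{H}^1_\infty(K_s)$; combining this with $\Esing(g)\log(c/h(s))$ via the elementary non-negativity of $\Psi(\tau)=\tau-1-\log\tau$ yields, for the scaled variable $\tau=h(s)s/(C\varepsilon\Esing(g))$, an inequality of the form
\[
\Esing(g)\log\frac{c}{h(s)}+\text{(coarea contribution)}\ge\Esing(g)\log\frac{1}{C\varepsilon\Esing(g)}+\Esing(g)\Psi(\tau).
\]
Averaging over $s\in(0,\delta)$ produces the stated $\Psi$-integral on the right-hand side, while the weak-$L^2$ quasi-norm bound carries over directly from \eqref{weakLorentzEstimate}. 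The hardest part is the constant bookkeeping in this last step: one must peel only a small fraction of $\int\abs{\Deriv(\dist_\manifold{N}\compose\tilde u)}^2/2$ and of $\int F(\tilde u)/\varepsilon^2$ so that the residuals survive on the right-hand side, while the extracted fractions carry exactly the mass needed to run the Young duality with the correct scaling inside $\Psi$.
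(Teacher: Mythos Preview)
Your plan has all the right ingredients and follows the same Sandier-type architecture as the paper: extension to a collar, Sard/coarea on \(\dist_{\manifold{N}}\compose u\), the projection splitting \eqref{eq_xoishee4IuHi8eefahyeithe}, the topological lower bound of \cref{prop_lower_bound_compact}, and the Legendre identity \(\tau=1+\log\tau+\Psi(\tau)\). There is, however, one step where your argument loses something that is \emph{not} harmless.

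You write that the factor \((1-s/\delta_{\manifold{N}})\ge 1-\delta/\delta_{\manifold{N}}>0\) in front of the tangential energy is innocuous. It is not: after applying \cref{prop_lower_bound_compact} and running the Young/\(\Psi\) duality, that factor lands directly in front of \(\Esing(g)\log(1/\varepsilon)\). The lemma requires the coefficient of \(\log(1/\varepsilon)\) to be \emph{exactly} \(\Esing(g)\) (the constant \(C\) in the statement sits only inside the logarithm, so it shifts by an \(O(1)\) amount but cannot rescale the leading coefficient). Losing any fixed fraction \(1-c_0\) of \(\Esing(g)\) produces an error \((1-c_0)\Esing(g)\log(1/\varepsilon)\to+\infty\), which no choice of \(C\) can absorb. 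The paper avoids this loss by a Fubini/layer-cake identity rather than a crude pointwise bound: replacing \(\delta_{\manifold{N}}\) by \(\delta\) in the projection factor and using
\[
\int_{\Omega\setminus K_\delta}\Bigl(1-\frac{\dist_{\manifold{N}}\compose u}{\delta}\Bigr)\frac{\abs{\Deriv(\Pi_{\manifold{N}}\compose u)}^2}{2}
=\frac{1}{\delta}\int_0^\delta\biggl(\int_{\Omega\setminus K_s}\frac{\abs{\Deriv(\Pi_{\manifold{N}}\compose u)}^2}{2}\biggr)\dif s
\]
turns the varying prefactor into an exact average with coefficient \(1\). That is the missing idea; once you use it, the \(\Psi\)-combination goes through with the correct leading term.

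A second, smaller point: the weak-\(L^2\) control does not ``carry over directly'' from \eqref{weakLorentzEstimate}, which only sees \(\Deriv(\Pi_{\manifold{N}}\compose u)\) on \(\Omega\setminus K_s\). To reach the full \(\abs{\Deriv u}\) on \(\Omega\) you must also harvest a Chebyshev bound on \(\abs{\Deriv^\perp u}\) from the normal energy and a Chebyshev bound on \(\abs{\Deriv u}\) on the bad set \(K_\delta\), then combine the three pieces via \(\{\abs{\Deriv u}\ge t\}\subset\{\abs{\Deriv^\top u}\ge t/2\}\cup\{\abs{\Deriv^\perp u}\ge t/2\}\) on \(\Omega\setminus K_\delta\). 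The paper does precisely this (its Steps~2--5); it is routine once noticed, but it is not automatic from \eqref{weakLorentzEstimate} alone.
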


\resetconstant
Before proving \Cref{lemma_Firstlowerbound} we extend maps in \(u\in W^{1,2}(\Omega,\Rset^\nu)\) in the following way.
In view of \cref{def_energy_extension}, there exists \(\delta_{\partial \Omega}>0\) such that if we set
\begin{equation}\label{def:tubular_neighboorhood_of_boundary}
\Omega_{\delta_{\partial \Omega}}\defeq\{x \in \Rset^2\st \dist(x, \partial \Omega)< \delta_{\partial \Omega}\},
\end{equation}
then, we can extend the function \(u\in W^{1,2}(\Omega,\Rset^\nu)\) to a function \(u \in W^{1, 2} (\Omega_{\delta_{\partial \Omega}}, \Rset^\nu)\) in such a way that \(u \in \manifold{N}\) almost everywhere in \(\Omega_{\delta_{\partial \Omega}} \setminus \Omega\) and
\begin{equation}
\resetconstant
\label{extensionEstimate}
\int_{\Omega_{\delta_{\partial \Omega}} \setminus \Omega} \frac{\abs{\Deriv u}^2}{2} \le \Cl{cst_Xa7bu0Yiucheephie} \mathcal{E}^{\mathrm{ext}} (g),
\end{equation}
for some constant \(\Cr{cst_Xa7bu0Yiucheephie}\) depending only on \(\partial \Omega\) (see \cite{Monteil_Rodiac_VanSchaftingen_RE}*{Lemma 5.5}).

\emph{In the rest of the present work we will always assume that maps \(u\in W^{1,2}(\Omega,\Rset^\nu)\) are extended to the larger domain \(\Omega_{\delta_{\partial \Omega}}\) as explained above.}

\begin{proof}[Proof of \Cref{lemma_Firstlowerbound}]

We proceed in several steps:

\medskip
\noindent\emph{Step 1. Splitting normal and tangential derivatives.}
We set for \(x\in\Omega \setminus K_{\delta_\manifold{N}}\),
\begin{align*}
 \Deriv^\top u(x) & \defeq P^\top_{\manifold{N}} (\Pi_{\manifold{N}} (u(x)))\compose \Deriv u(x) &
 &\text{ and }&
 \Deriv^\perp u(x) & \defeq P^\perp_{\manifold{N}} (\Pi_{\manifold{N}} (u(x)))\compose \Deriv u(x),
\end{align*}
with the nearest point retraction \(\Pi_{\manifold{N}}\) and the projections \(P^\top_{\manifold{N}} \) and \(P^\perp_{\manifold{N}}\) being defined in \cref{lemma_derivativeNearestPointRetraction}; there holds in particular, within the set \(\Omega\setminus K_{\delta_\manifold{N}}\),
\begin{gather}
\label{estimatesTangentialDerivatives}
\left(1-\frac{\dist_{\manifold{N}} \compose u}{\delta_\manifold{N}}\right)\abs{\Deriv (\Pi_{\manifold{N}}\compose u)}^2
\le 
\abs{\Deriv^\top u}^2
\le
\Cl{cst_nfoei3909j}\abs{\Deriv (\Pi_{\manifold{N}}\compose u)}^2,
\intertext{and}
\label{estimatesNormalDerivatives}
\abs{\Deriv (\dist_{\manifold{N}}\compose u)}^2\le \abs{\Deriv^\perp u}^2.
\end{gather}
We also let \(\delta_F\in(0,\delta_\manifold{N})\) be a constant as in \cref{lemma_compFanddist} so that for all \(y\in \manifold{N}_{\delta_F}\), we have
\begin{equation}
\label{nonDegF}
F(y)\ge \frac{m_F}{2}\dist_{\manifold{N}}(y)^2.
\end{equation}
By orthogonality between \(P^\perp_{\manifold{N}}\) and \(P^\top_{\manifold{N}}\), we have for every \(\delta\in(0,\delta_F]\)
\begin{equation}
\label{normalTangentialSplitting}
\begin{split}
\mathcal{E}^\varepsilon_F(u)&=
\int_{\Omega\setminus K_{{{\delta}}}}
\biggl(\frac{\abs{\Deriv^\perp u}^2}{2}+ \frac{F \compose u}{\varepsilon^2}
\biggr)
+
\int_{\Omega\setminus K_{{{\delta}}}}\frac{\abs{\Deriv^\top u}^2}{2}
+
\int_{K_{{{\delta}}}}
\biggl(\frac{\abs{\Deriv u}^2}{2}+ \frac{F \compose u}{\varepsilon^2}\biggr)\\
&\eqdef\mathbf{(I)+(II)+(III)}.
\end{split}
\end{equation}

\noindent\emph{Step 2. Estimate of \((\mathbf{I})\) from below.}
Since \(u \in \mathcal{C}^2 (\Omega, \Rset^\nu)\), by Sard's lemma and by the implicit function theorem, for almost every \(s \in (0, +\infty)\),
the set \(K_{s} \subset \Omega\) has a \(\mathcal{C}^2\) boundary and
\begin{equation*}
  \partial K_s = \Sigma_s
  \defeq \{x \in \Omega \st \dist(u(x),\manifold{N})= s  \}.
\end{equation*}
Hence, using successively \eqref{estimatesNormalDerivatives}, Young's inequality, \eqref{nonDegF} and the coarea formula, we obtain
\begin{equation*}
  \begin{split}
 \mathbf{(I)}=\int_{\Omega\setminus K_{{{\delta}}}}
\biggl(\frac{\abs{\Deriv^\perp u}^2}{2}+ \frac{F \compose u}{\varepsilon^2}
\biggr)&\ge \int_{\Omega \setminus K_{{{\delta}}}}\biggl(\frac{\abs{\Deriv (\dist_{\manifold{N}} \compose u)}^2}{2}+ \frac{F \compose u}{\varepsilon^2}\biggr)\\
  &\ge\int_{\Omega \setminus K_{{{\delta}}}} \frac{1}{\varepsilon}\abs{\Deriv (\dist_{\manifold{N}} \compose u)} \sqrt{2 F \compose u}\\
  &\ge\sqrt{m_F}\int_{\Omega \setminus K_{{{\delta}}}} \frac{1}{\varepsilon}\abs{\Deriv (\dist_{\manifold{N}}\compose u)} (\dist_{\manifold{N}} \compose u)\\
  &=\sqrt{m_F}\int_0^{{{\delta}}} \frac{\mathcal{H}^1(\Sigma_s) \, s}{\varepsilon} \dif s.
  \end{split}
\end{equation*}
But, by \cref{lemma_content_boundary} and \eqref{eq_be1efeethietieQuu}, we have for almost every \(s>0\), \(\mathcal{H}^1_\infty (K_s) = \mathcal{H}^1_\infty(\Sigma_s)\le \mathcal{H}^1 (\Sigma_s)\); hence
\[
 \mathbf{(I)}\ge\sqrt{m_F}\int_0^{{{\delta}}}\frac{\mathcal{H}^1_\infty (K_s)\, s}{\varepsilon} \dif s.
 \]
Moreover, by Chebyshev's inequality, we have also
\begin{equation*}
\mathbf{(I)}\ge\int_{\Omega\setminus K_{{{\delta}}}} \frac{\abs{\Deriv^\perp u}^2}{2}
\ge
\sup_{t>0}\frac{t^2}{8}\mathcal{L}^2\bigl(\{x\in \Omega\setminus K_{{{\delta}}} \st \abs{\Deriv^\perp u(x)}\geq t/2\}\bigr).
\end{equation*}
We have thus proved that there exists a constant \(\Cl{estimateOfI}>0\) such that
\begin{multline}
  \label{estimateOfI}
  \mathbf{(I)}\ge
  \frac{1}{\Cr{estimateOfI}}\biggl(
\int_0^{{{\delta}}} \frac{\mathcal{H}^1_\infty(K_s) \, s}{\varepsilon} \dif s + \int_{\Omega\setminus K_{{{\delta}}}} \frac{\abs{\Deriv  (\dist_{\manifold{N}} \compose u)}^2}{2} + \frac{F \compose u}{\varepsilon^2}\\
+\sup_{t>0}t^2\mathcal{L}^2\bigl(\{x\in \Omega\setminus K_{{{\delta}}} \st \abs{\Deriv^\perp u(x)}\geq t/2\}\bigr)\biggr).
  \end{multline}

\noindent\emph{Step 3. Estimate of \(\mathbf{(II)}\) from below.} By \eqref{estimatesTangentialDerivatives} and Fubini's theorem, we have
\begin{equation}
\label{firstEstII}
\begin{split}
 \mathbf{(II)}=\int_{\Omega\setminus K_{{{\delta}}}} \frac{\abs{\Deriv^\top u}^2}{2}
 &\ge 
  \int_{\Omega \setminus {K_{{{\delta}}}}}\left( 1-\frac{ \dist_{\manifold{N}} \compose u}{\delta} \right) \frac{\abs{D(\Pi_{\manifold{N}}\compose u)}^2}{2} \\
  & =
   \int_{\Omega \setminus {K_{{{\delta}}}}} \left( \frac{1}{{{\delta}}}\int_{\dist_{\manifold{N}} \compose u}^{{{\delta}}}\dif s\right) \frac{\abs{D(\Pi_{\manifold{N}}\compose u)}^2}{2}
 \\
  & = \frac{1}{{{\delta}}} \int_0^{{{\delta}}} \left( \int_{\Omega \setminus K_s} \frac{\abs{D(\Pi_{\manifold{N}}\compose u)}^2}{2} \right)\dif s.
  \end{split}
\end{equation}
Then, by \eqref{extensionEstimate}, we have for every \(s\in (0,{{\delta}})\),
\begin{equation*}
 \int_{\Omega\setminus K_s}\frac{\abs{D(\Pi_{\manifold{N}}\compose u)}^2}{2}
\ge
\int_{\Omega_{\delta_{\partial \Omega}}\setminus K_s} \frac{\abs{D(\Pi_{\manifold{N}}\compose u)}^2}{2} 
- \Cr{cst_Xa7bu0Yiucheephie}\mathcal{E}^{\mathrm{ext}}(g),
\end{equation*}
while by the lower estimate on the Dirichlet energy of mappings \cref{prop_lower_bound_compact}, since \(\Pi_{\manifold{N}} \compose u \in W^{1, 2} (\Omega_{\delta_{\partial \Omega}} \setminus K_{{{\delta}}}, \manifold{N})\), for every \(t>0\),
\begin{equation*}
\begin{split}
\int_{\Omega_{\delta_{\partial \Omega}}\setminus K_s} \frac{\abs{D(\Pi_{\manifold{N}}\compose u)}^2}{2} 
& \ge 
\Esing (g) \log \frac{\delta_{\partial \Omega}}{2 \mathcal{H}^1_\infty(K_s)} \\
&\qquad+\frac{1}{\Cl{cst_thuusfoep09h}}t^2\mathcal{L}^2 (\{x \in \Omega \setminus K_s \st \abs{\Deriv (\Pi_{\manifold{N}} \compose u)(x)} \ge t\}),
\end{split}
\end{equation*}
for some constant \(\Cr{cst_thuusfoep09h}>0\). Since by \eqref{estimatesTangentialDerivatives}, we have \(\abs{\Deriv^\top u}\le \sqrt{\Cr{cst_nfoei3909j}}\abs{\Deriv (\Pi_{\manifold{N}} \compose u)}\), we have also
\begin{equation*}
\begin{split}
\mathcal{L}^2 (\{x \in \Omega \setminus K_s \st \abs{\Deriv (\Pi_{\manifold{N}} \compose u)(x)} \ge t\})
\geq\mathcal{L}^2 (\{x \in \Omega \setminus K_s\st \abs{\Deriv^\top u(x)} \ge \sqrt{\Cr{cst_nfoei3909j}}\, t\}).
\end{split}
\end{equation*}
We thus arrive at
\begin{equation*}
 \begin{split}
  \int_{\Omega\setminus K_s} \frac{\abs{D(\Pi_{\manifold{N}}\compose u)(x)}^2}{2} 
&\ge
\Esing (g) \log \frac{\delta_{\partial \Omega}}{2 \mathcal{H}^1_\infty(K_s)} 
- \Cr{cst_Xa7bu0Yiucheephie} \mathcal{E}^{\mathrm{ext}} (g)
\\
&\qquad+\frac{1}{\Cr{cst_thuusfoep09h}}t^2\mathcal{L}^2 (\{x \in \Omega \setminus K_s\st \abs{\Deriv^\top u(x)} \ge \sqrt{\Cr{cst_nfoei3909j}}\, t\}).  
 \end{split}
\end{equation*}
By integration with respect to \(s\) over \((0,{{\delta}})\), we obtain in view of \eqref{firstEstII},
\begin{multline*}
 \mathbf{(II)}
  \ge
\Esing (g)\frac{1}{{{\delta}}}\int_0^{{{\delta}}}\log \frac{\delta_{\partial \Omega}}{2\mathcal{H}^1_\infty(K_s)} \dif s  - \Cr{cst_Xa7bu0Yiucheephie}\mathcal{E}^{\mathrm{ext}} (g)\\
  +
 \frac{1}{\Cr{cst_thuusfoep09h}}t^2\frac{1}{{{\delta}}}\int_0^{{{\delta}}}
 \mathcal{L}^2 (\{x \in \Omega \setminus K_s \st \abs{\Deriv^\top u(x)} \ge \sqrt{\Cr{cst_nfoei3909j}}\, t\})\dif s.
  \end{multline*}
By Fubini's theorem, we compute
  \begin{equation*}
  \begin{split}
  \frac{1}{{{\delta}}}\int_0^{{{\delta}}}\mathcal{L}^2 (\{x \in \Omega \setminus K_s \st \abs{ \Deriv^\top u} \ge \sqrt{\Cr{cst_nfoei3909j}}\, t\})\dif s
&=\int_{\{x\in\Omega\setminus K_{{{\delta}}}\st \abs{ \Deriv^\top u}\ge \sqrt{\Cr{cst_nfoei3909j}}\, t\}}\left(1-\frac{\dist_{\manifold{N}}\compose u}{\delta}\right)\\
&\geq \frac 12\mathcal{L}^2\bigl(\{x\in\Omega\setminus K_{{{\delta}}/2}\st \abs{\Deriv^\top u(x)}\ge \sqrt{\Cr{cst_nfoei3909j}}\, t\}\bigr)
\end{split}
 \end{equation*}
and by the change of variable \(s=2\sqrt{\Cr{cst_nfoei3909j}}\, t\),
\begin{multline*}
\sup_{t>0}t^2\mathcal{L}^2\bigl(\{x\in\Omega\setminus K_{{{\delta}}/2}\st \abs{\Deriv^\top u}\ge \sqrt{\Cr{cst_nfoei3909j}}\, t\}\bigr)\\
\geq
\sup_{s>0}\frac{s^2}{4\Cr{cst_nfoei3909j}}\mathcal{L}^2\bigl(\{x\in\Omega\setminus K_{{{\delta}}/2}\st \abs{\Deriv^\top u(x)}\ge s/2\}\bigr).
\end{multline*}

Hence, we have proved
  \begin{multline}
   \label{estimateOfII}
 \mathbf{(II)}
  \ge
 \Esing (g)\frac{1}{\delta}
\int_0^{{{\delta}}}
\log \frac{\delta_{\partial \Omega}}{2\mathcal{H}^1_\infty(K_s)}  \dif s - \Cr{cst_Xa7bu0Yiucheephie}\mathcal{E}^{\mathrm{ext}} (g)\\
  +
  \frac{1}{8\Cr{cst_nfoei3909j}\Cr{cst_thuusfoep09h}}\sup_{t>0}t^2\mathcal{L}^2\bigl(\{x\in\Omega\setminus K_{{{\delta}}/2}\st \abs{\Deriv^\top u(x)}\ge t/2\}\bigr).
 \end{multline}

\noindent\emph{Step 4. Estimate of \(\mathbf{(III)}\) from below.} By \eqref{estimatesNormalDerivatives} and Chebyshev's inequality, we have
\begin{multline}
\label{estimateOfIII}
\mathbf{(III)}=\int_{K_{{{\delta}}}}
\biggl(\frac{\abs{\Deriv u}^2}{2}+ \frac{F \compose u}{\varepsilon^2}\biggr)
\ge\frac 12\biggl(\int_{K_{{{\delta}}}}
\biggl(\frac{\abs{\Deriv (\dist_{\manifold{N}} \compose u)}^2}{2}+ \frac{F \compose u}{\varepsilon^2}\biggr)
+
\int_{K_{{{\delta}}}}
\frac{\abs{\Deriv u}^2}{2}\biggr)
\\
\ge
\frac{1}{8}\biggl(\int_{K_{{{\delta}}}}
\biggl(\frac{\abs{\Deriv (\dist_{\manifold{N}} \compose u)}^2}{2}+ \frac{F \compose u}{\varepsilon^2}\biggr)
+\sup_{t>0}t^2\mathcal{L}^2(\{x \in K_{{{\delta}}} \st \abs{\Deriv u(x)}\geq t\})\biggr).
\end{multline}

\noindent\emph{Step 5. Putting things together.} We first observe that for every \(t>0\),
\begin{equation*}
\begin{split}
t^2\mathcal{L}^2(\{x\in K_{{{\delta}}}\cup\Omega\setminus K_{{{\delta}}/2}\st \abs{\Deriv u(x)}\geq t\})
&\leq  
t^2\mathcal{L}^2(\{x\in K_{{{\delta}}} \st \abs{\Deriv u(x)}\geq t\})\\  
&\quad + t^2 \mathcal{L}^2(\{ x \in \Omega \setminus K_{{{\delta}}/2} \st \abs{\Deriv^\perp u(x) }\geq t/2 \})\\
&\quad +t^2 \mathcal{L}^2(\{ x \in \Omega \setminus K_{{{\delta}}/2} \st \abs{\Deriv^\top u(x)}\geq t/2 \}),
\end{split}
\end{equation*}
which in view of \eqref{normalTangentialSplitting}, by adding \eqref{estimateOfI}, \eqref{estimateOfII} and \eqref{estimateOfIII}, gives the existence of a constant \(\Cl{cst_ifoewj032do0e2}>0\) such that
\begin{multline}
\label{est_mfi03qk9rfeokp}
\mathcal{E}^\varepsilon_F(u)\ge 
\Esing (g)\frac{1}{\delta}\int_0^{{{\delta}}}
\biggl(\frac{\delta\mathcal{H}^1_\infty(K_s) \, s}{\Cr{estimateOfI}\varepsilon\Esing (g)} +\log \frac{\delta_{\partial \Omega}}{2\mathcal{H}^1_\infty(K_s)} \biggr)\dif s 
- \Cr{cst_Xa7bu0Yiucheephie}\mathcal{E}^{\mathrm{ext}} (g)
\\
+\frac{1}{\Cr{cst_ifoewj032do0e2}}\biggl(
\int_{\Omega} \frac{\abs{\Deriv  (\dist_{\manifold{N}} \compose u)}^2}{2} + \frac{F \compose u}{\varepsilon^2}\\
+\sup_{t>0}t^2\mathcal{L}^2\bigl(\{x\in K_{{{\delta}}}\cup\Omega\setminus K_{{{\delta}}/2} \st \abs{\Deriv u(x)}\geq t\}\bigr)\biggr).
\end{multline}
Applying the identity \(\tau=1+\log\tau+\psi(\tau)\) to \(\tau=\frac{\delta\mathcal{H}^1_\infty(K_s) \, s}{\Cr{estimateOfI}\varepsilon\Esing (g)}\), we obtain
\begin{equation*}
\frac{\delta\mathcal{H}^1_\infty(K_s) \, s}{\Cr{estimateOfI}\varepsilon\Esing (g)} +\log \frac{\delta_{\partial \Omega}}{2\mathcal{H}^1_\infty(K_s)}
=
 1+\log \frac{\delta\delta_{\partial \Omega} s}{ 2\Cr{estimateOfI}\varepsilon \Esing (g)}
+ \Psi \left(\frac{{{\delta}}\mathcal{H}^1_\infty (K_s) \, s }{ \Cr{estimateOfI} \varepsilon \Esing (g)}\right),
\end{equation*}
and we compute that
\begin{equation*}
\frac{1}{\delta}\int_0^{{{\delta}}} \left(1+\log \frac{\delta\delta_{\partial \Omega} s}{ 2\Cr{estimateOfI}\varepsilon \Esing (g)}\right)\dif s
=
\log \frac{\delta^2\delta_{\partial \Omega}}{ 2\Cr{estimateOfI}\varepsilon \Esing (g)}.
\end{equation*}
Hence, there exists a constant \(C>0\) such that
\begin{multline}
\label{justBeforeEnd}
\mathcal{E}^\varepsilon_F(u)+C\mathcal{E}^{\mathrm{ext}} (g)-
\Esing (g)\log \frac{1}{C\varepsilon \Esing (g)}
\\
\ge \frac{1}{C}\biggl(
\int_{\Omega} \frac{\abs{\Deriv  (\dist_{\manifold{N}} \compose u)}^2}{2} + \frac{F \compose u}{\varepsilon^2}+\sup_{t>0}t^2\mathcal{L}^2\bigl(\{x\in K_{{{\delta}}}\cup\Omega\setminus K_{{{\delta}}/2} \st \abs{\Deriv u(x)}\geq t\}\bigr)\\
+\Esing (g)\frac{1}{\delta}\int_0^{\delta} \Psi \biggl(\frac{\mathcal{H}^1_\infty (K_s) \, s }{C\varepsilon\Esing(g)}\biggl) \dif s
\biggr).
\end{multline}
Since we have
\begin{equation*}
\begin{split}
\mathcal{L}^2\bigl(\{x\in \Omega\st \abs{\Deriv u(x)}\geq t\}\bigr)
\le&
\mathcal{L}^2\bigl(\{x\in K_{\delta_F}\cup\Omega\setminus K_{\delta_F/2} \st \abs{\Deriv u(x)}\geq t\}\bigr)\\
&+
\mathcal{L}^2\bigl(\{x\in K_{\delta_F/2} \cup\Omega\setminus K_{\delta_F/4}\st \abs{\Deriv u(x)}\geq t\}\bigr),
\end{split}
\end{equation*}
the desired estimate follows by taking the average of \eqref{justBeforeEnd} for \(\delta\in\{\delta_F,\frac{\delta_F}{2}\}\).
\end{proof}

\begin{proof}[Proof of \cref{proposition_Firstlowerbound}]
  If the Ginzburg--Landau functional is continuous with respect to the \(W^{1,2}\) strong convergence,
  the conclusion follows from \cref{lemma_Firstlowerbound} and an approximation argument.

If the Ginzburg--Landau functional is not continuous, we consider a non-decreasing sequence \((F^\ell)_{\ell \in \Nset}\)
of bounded and continuous functions coinciding with \(F\) in a neighbourhood of \(\manifold{N}\) and converging to \(F\) almost everywhere. 
The proposition holds for each of these functions,  since \(\mathcal{E}_{F_\ell}^\varepsilon\) is continuous for the \(W^{1,2}\) strong convergence by Lebesgue's dominated convergence. The constants appearing only depend on \(\delta_{F_\ell}\) and \(m_{F_\ell}\), which by construction of \(F_\ell\) are indepedent on \(\ell\). 
Hence the conclusion holds with the same constant for each \(F_\ell\), and the conclusion then holds by Lebesgue's monotone convergence theorem.
\end{proof}

As a consequence of \cref{proposition_Firstlowerbound}, we obtain a lower estimate on Ginzburg--Landau energy in terms of the Ginzburg--Landau energy on the boundary.

\begin{corollary}
\label{corollary_lowerBndBoundary}
There exists a constant \(C \in (0, +\infty)\), depending only on \(\Omega\) and \(F\), such that for every \(\varepsilon>0\) and every \(u\in W^{1, 2}(\Omega,\Rset^\nu)\) with \(g \defeq \tr_{\partial \Omega} u \in W^{1, 2} (\partial \Omega, \Rset^\nu)\) and \(\dist(g, \manifold{N}) \le \delta_{\manifold{N}}\), we have
  \begin{equation*}
  \mathcal{E}^\varepsilon_F(u)+C\mathcal{E}^{\varepsilon}_{F, \partial \Omega}  (g)
  \ge \Esing (\Pi_{\manifold{N}} \compose g)\log \frac{1}{C\varepsilon \Esing (\Pi_{\manifold{N}} \compose g)}.
  \end{equation*}
\end{corollary}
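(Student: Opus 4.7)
The plan is to reduce to \cref{proposition_Firstlowerbound} by extending $u$ to a slightly larger domain with $\manifold{N}$-valued boundary data. The difficulty is that the hypothesis only gives $g$ close to (but not necessarily on) $\manifold{N}$, so we must construct an extension that absorbs the ``off-manifold'' part of $g$ at a cost controlled by $\mathcal{E}^\varepsilon_{F,\partial\Omega}(g)$.

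First I would fix a small parameter $\eta \in (0, \delta_{\partial\Omega})$ independent of $\varepsilon$, set
\[
\Omega' \defeq \Omega \cup \{x \in \Rset^2 \setminus \Omega \st \dist(x, \partial\Omega) < \eta\},
\]
and let $\pi : \Omega' \setminus \Omega \to \partial\Omega$ be the nearest point projection (well-defined since $\partial\Omega$ is Lipschitz and $\eta$ is small, modulo minor technicalities handled as in \S \ref{section_preliminaries}). Using a smooth cut-off $\phi \in \mathcal{C}^1([0,\eta],[0,1])$ with $\phi(0)=1$ and $\phi(\eta)=0$, I would define $\tilde{u}$ on $\Omega'$ by $\tilde{u} = u$ on $\Omega$ and, on the collar $\Omega' \setminus \Omega$,
\[
\tilde{u}(x) \defeq \Pi_{\manifold{N}}(g(\pi(x))) + \phi(\dist(x,\partial\Omega))\bigl(g(\pi(x)) - \Pi_{\manifold{N}}(g(\pi(x)))\bigr).
\]
By construction $\tilde{u}$ has $g$ as its trace from the inside of $\partial\Omega$ (matching $u$), and $\tr_{\partial\Omega'}\tilde{u} = \Pi_{\manifold{N}}\compose g \compose \pi$ takes values in $\manifold{N}$ and is homotopic to $\Pi_{\manifold{N}}\compose g$ in $\VMO(\partial\Omega,\manifold{N})$, so in particular $\Esing(\tr_{\partial\Omega'}\tilde{u}) = \Esing(\Pi_{\manifold{N}}\compose g)$.

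Next I would estimate the Ginzburg--Landau energy of $\tilde{u}$ on the collar. Using that the tangential derivative is controlled by $|g'|$ and the normal derivative by $|g-\Pi_{\manifold{N}}(g)|/\eta$, and that $\tilde{u}$ stays within distance $|g-\Pi_{\manifold{N}}(g)|$ of $\manifold{N}$ together with smoothness of $F$, a computation analogous to the one in the proof of \cref{prop_Q_F_projection} yields
\[
\int_{\Omega' \setminus \Omega} \frac{\abs{\Deriv \tilde{u}}^2}{2} + \frac{F(\tilde{u})}{\varepsilon^2}
\le C \int_{\partial\Omega} \Bigl(\abs{g'}^2 + \frac{\abs{g-\Pi_{\manifold{N}}\compose g}^2}{\eta^2} + \frac{F(g)}{\varepsilon^2}\Bigr).
\]
Using \eqref{hyp20} (together with a compactness argument to cover the range $\delta_F \le \dist(g,\manifold{N}) \le \delta_{\manifold{N}}$ using continuity of $F$ and $F^{-1}(\{0\})=\manifold{N}$), we get $\abs{g-\Pi_{\manifold{N}}\compose g}^2 \le C F(g)$, so with $\eta$ fixed this whole quantity is at most $C \mathcal{E}^\varepsilon_{F,\partial\Omega}(g)$. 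Similarly, taking $(x,t) \mapsto \Pi_{\manifold{N}}(g(\pi(x)))$ as an admissible test map for $\mathcal{E}^{\mathrm{ext}}(\tr_{\partial\Omega'}\tilde{u})$, and using $\abs{\Deriv \Pi_{\manifold{N}}}$ bounded, gives $\mathcal{E}^{\mathrm{ext}}(\tr_{\partial\Omega'}\tilde{u}) \le C \int_{\partial\Omega}\abs{g'}^2 \le C \mathcal{E}^\varepsilon_{F,\partial\Omega}(g)$.

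Finally I would apply \cref{proposition_Firstlowerbound} to $\tilde{u}$ on the Lipschitz domain $\Omega'$, whose trace on $\partial\Omega'$ is $\manifold{N}$-valued:
\[
\mathcal{E}^\varepsilon_F(\tilde{u};\Omega') + C \mathcal{E}^{\mathrm{ext}}(\tr_{\partial\Omega'}\tilde{u})
\ge \Esing(\Pi_{\manifold{N}}\compose g)\log\frac{1}{C\varepsilon \Esing(\Pi_{\manifold{N}}\compose g)}.
\]
Since $\mathcal{E}^\varepsilon_F(\tilde{u};\Omega') = \mathcal{E}^\varepsilon_F(u;\Omega) + \mathcal{E}^\varepsilon_F(\tilde{u};\Omega'\setminus\Omega)$, combining with the two estimates above yields the stated inequality after renaming the constant (and invoking the convention that the right-hand side vanishes when $\Esing(\Pi_{\manifold{N}}\compose g)=0$, which is consistent with the convention of \cref{proposition_Firstlowerbound}). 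The main obstacle is the collar energy estimate: one must balance the normal derivative blow-up $1/\eta$, the $L^\infty$ bound on $|g - \Pi_{\manifold{N}} g|$ via $F(g)$, and the inhomogeneous scaling in $\varepsilon$ without picking up an $\varepsilon$-dependent constant in front of $\mathcal{E}^\varepsilon_{F,\partial\Omega}(g)$; fixing $\eta$ independently of $\varepsilon$ (rather than $\eta = \varepsilon$) is what makes this work.
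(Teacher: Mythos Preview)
Your proposal is correct and follows essentially the same approach as the paper: both extend \(u\) across a collar of width independent of \(\varepsilon\) by interpolating between \(g\) and \(\Pi_{\manifold{N}}\compose g\), bound the collar Ginzburg--Landau energy and the tubular-neighbourhood extension energy of the new boundary datum by \(C\,\mathcal{E}^{\varepsilon}_{F,\partial\Omega}(g)\) via \eqref{hyp20}, and then apply \cref{proposition_Firstlowerbound} on the enlarged domain. Your write-up is in fact slightly more explicit than the paper's on two points (the homotopy invariance giving \(\Esing(\tr_{\partial\Omega'}\tilde{u})=\Esing(\Pi_{\manifold{N}}\compose g)\), and the reason for fixing \(\eta\) independently of \(\varepsilon\)); note only that for a merely Lipschitz boundary the nearest-point retraction need not be single-valued, so one should use a bi-Lipschitz collar retraction instead, as the paper does implicitly.
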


Here, \(\mathcal{E}^{\varepsilon}_{F, \partial \Omega}\) is the boundary Ginzburg--Landau functional, defined for \(g \in W^{1, 2} (\partial \Omega, \Rset^\nu)\) by \[
\mathcal{E}^{\varepsilon}_{F, \partial \Omega} (g)
\defeq \int_\Omega \frac{\abs{g'}^2}{2}+\frac{F(g)}{\varepsilon^2}.
\]

\begin{proof}[Proof of \cref{corollary_lowerBndBoundary}]
\resetconstant
We define \(\Tilde{\Omega} \defeq \Omega_\delta\) and \(\Tilde{u}: \Tilde{\Omega} \to \Rset^\nu\) by 
\begin{equation*}
 \Tilde{u} (x)
 \defeq 
 \begin{cases}
  u (x) & \text{if \(x \in \Omega\)},\\
  (1 - \frac{\dist(x, \partial \Omega)}{\delta}) u (\Pi_{\partial \Omega} (x)) + \frac{\dist(x, \partial \Omega)}{\delta}\Pi_{\manifold{N}} u (\Pi_{\partial \Omega} (x))
  & \text{otherwise}.
 \end{cases}
\end{equation*}
where \(\Pi_{\partial \Omega}\) is a retraction of \(\Omega_\delta \setminus \Omega\) on \(\partial \Omega\).
By assumption \eqref{hyp20} on \(F\), we have 
\begin{equation}
\label{eq_hohX0mohkieth4poov4mie7n}
 \mathcal{E}^{\varepsilon} (\Tilde{u}) \le  \mathcal{E}^\varepsilon_F(u) + \C \mathcal{E}^{\varepsilon}_{F, \partial \Omega} (g).
\end{equation}
Moreover, if \(\Tilde{g} \defeq \tr_{\partial \Tilde{\Omega}} \Tilde{u}\), we have 
\begin{equation}
\label{eq_wo3eNei2ohpi5Cakoodochu4}
  \mathcal{E}^{\varepsilon} (\Tilde{g}) \le \C  \mathcal{E}^{\varepsilon}_F (g).
\end{equation}
The conclusion then follows from \cref{proposition_Firstlowerbound} applied to the function \(\Tilde{u}\) and from the estimates \eqref{eq_hohX0mohkieth4poov4mie7n} and \eqref{eq_wo3eNei2ohpi5Cakoodochu4}.
\end{proof}

As a consequence of \cref{proposition_Firstlowerbound}, we obtain the finiteness of the quantity \(\mathcal{Q}_{F, \gamma}\) defined in \eqref{def_topCst}, when \(\gamma\) is an atomic minimising geodesic, i.e.\ when \(\Esing (\gamma)=\equivnorm{\gamma}^2/(4 \pi)\).

\begin{corollary}
\label{corollary_lowerBndBalls}
Let \(F \in \mathcal{C} (\Rset^\nu, [0,+\infty))\).
If \(F^{-1}(\{0\}) = \manifold{N}\), if \(F\) satisfies \eqref{hyp20}
and if \(\gamma\in\mathcal{C}^1(\Sset^1,\manifold{N})\) is an atomic minimising geodesic, then
\(
\mathcal{Q}_{F, \gamma}
>-\infty
\).
\end{corollary}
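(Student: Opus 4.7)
The plan is to reduce $\mathcal{Q}^R_{F,\gamma}$ on the disk $B_R$ to a Ginzburg--Landau energy on the fixed unit disk $B_1$ with small parameter $\varepsilon = 1/R$, and then apply the global lower bound of \cref{proposition_Firstlowerbound} on $\Omega = B_1$ with $g = \gamma$. By the change of variables $v(y) = u(Ry)$ from $B_1$ to $B_R$, we obtain
\[
\mathcal{Q}^R_{F,\gamma} = \inf \Bigl\{ \mathcal{E}^{1/R}_F(v) \st v \in W^{1,2}(B_1,\Rset^\nu),\ \tr_{\Sset^1} v = \gamma \Bigr\}.
\]
Since $\gamma \in \mathcal{C}^1(\Sset^1,\manifold{N})$, the tubular neighbourhood extension energy $\mathcal{E}^{\mathrm{ext}}(\gamma)$ is finite (one may take, e.g., $(\theta,t)\mapsto\gamma(\theta)$ as a competitor). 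Hence \cref{proposition_Firstlowerbound} applied to any admissible $v$ yields
\[
\mathcal{E}^{1/R}_F(v) \ge \Esing(\gamma)\log\frac{R}{C\Esing(\gamma)} - C\,\mathcal{E}^{\mathrm{ext}}(\gamma),
\]
with $C$ independent of $R$ and $v$. Taking the infimum over $v$ gives a matching lower bound for $\mathcal{Q}^R_{F,\gamma}$.

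Here the atomicity hypothesis enters crucially: it gives the exact equality $\Esing(\gamma) = \equivnorm{\gamma}^2/(4\pi)$, so that the $\Esing(\gamma)\log R$ appearing on the right-hand side cancels exactly the term $\frac{\equivnorm{\gamma}^2}{4\pi}\log R$ that one subtracts in the definition \eqref{def_topCst} of $\mathcal{Q}_{F,\gamma}$. Consequently,
\[
\mathcal{Q}^R_{F,\gamma} - \frac{\equivnorm{\gamma}^2}{4\pi}\log R \ge -\frac{\equivnorm{\gamma}^2}{4\pi}\log\bigl(C\Esing(\gamma)\bigr) - C\,\mathcal{E}^{\mathrm{ext}}(\gamma),
\]
for every $R$ (large enough so that $\log R$ is positive; otherwise the statement is immediate from \cref{proposition_lowerBndBalls} and monotonicity). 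The right-hand side is a finite constant independent of $R$, so passing to the limit $R\to +\infty$ via \eqref{def_topCst} gives $\mathcal{Q}_{F,\gamma}> -\infty$.

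The only potential obstacle is really the bookkeeping of the constant: one must verify that \cref{proposition_Firstlowerbound} can be applied uniformly in $\varepsilon = 1/R$ with the same constant $C$, which is clear from its statement since $C$ depends only on $\Omega = B_1$ and $F$, not on $\varepsilon$. The degenerate case $\equivnorm{\gamma}=0$ (so $\gamma$ is constant on a connected component by minimality) is trivial since then the constant competitor $v\equiv\gamma$ gives $\mathcal{Q}^R_{F,\gamma}=0$, and the subtracted logarithm vanishes.
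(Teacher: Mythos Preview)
Your proof is correct and follows essentially the same approach as the paper: rescale to the unit disk via \eqref{scaling_invariance}, apply \cref{proposition_Firstlowerbound} on \(\Omega=B_1\) with \(g=\gamma\), and use atomicity to identify \(\Esing(\gamma)=\equivnorm{\gamma}^2/(4\pi)\) so that the logarithmic terms match. The paper's proof is the same argument, only slightly more telegraphic (it quotes \eqref{scaling_invariance} directly rather than writing out the change of variables, and omits the remarks on the degenerate case \(\equivnorm{\gamma}=0\), which in fact cannot occur for an atomic curve by definition of minimal topological resolution).
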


\begin{proof}%
\resetconstant
By \cref{proposition_Firstlowerbound} applied to \(\Omega=B_1\), the unit disk with centre \(0\) in \(\Rset^2\),
there exists a constant \(\Cl{cst_aw9azae3Ieje1eeshubei5La}\in(0,+\infty)\) such that for every \(\varepsilon>0\) and every \(u\in W^{1,2} (B_1,\Rset^\nu)\)
we have
\begin{equation*}
  \int_{B_1} \frac{\abs{\Deriv u}^2}{2} +\frac{F(u)}{\varepsilon^2} \ge
  \Esing (\tr_{\partial \Omega}u)\log \frac{1}{\Cr{cst_aw9azae3Ieje1eeshubei5La}\varepsilon \Esing (\tr_{\partial \Omega} u)}
  -
\Cr{cst_aw9azae3Ieje1eeshubei5La}\mathcal{E}^{\mathrm{ext}} (\tr_{\partial \Omega}u).
\end{equation*}
By taking the infimum over \(u\) such that \(\tr_{\partial B_1} u = \gamma\), we obtain, in view of \eqref{scaling_invariance}, with \(\rho=\frac{1}{\varepsilon}\),
\[
\mathcal{Q}^{\rho}_{F,\gamma}-\Esing (\gamma)\log \rho \ge  \Esing (\gamma)\log \frac{1}{\Cr{cst_aw9azae3Ieje1eeshubei5La}\Esing (\gamma)}
  -
  \Cr{cst_aw9azae3Ieje1eeshubei5La}\mathcal{E}^{\mathrm{ext}} (\gamma).
\]
The claim follows from \eqref{def_topCst} since, by assumption, \(\Esing (\gamma)=\frac{\equivnorm{\gamma}^2}{4 \pi}\).
\end{proof}

\subsection{Localised lower bound on the energy}
\label{section_ogeePa6nooxeeN2ei}
The next proposition provides some information on the localisation of the energy of mappings satisfying a logarithmic bound.

\begin{proposition}
\label{theorem_localisation}
There exists a constant \(C \in (0,+\infty)\) such that for every \(\kappa \in (0, +\infty)\), \(\eta \in (0, 1/C)\), \(\gamma \in (0, 1)\), \(\varepsilon\in (0,+\infty)\) and \(g\in W^{1/2,2}(\partial\Omega,\manifold{N})\) such that
\begin{align*}
\Esing(g)>0,\quad C e^{C\gamma\kappa}(\Esing (g)\varepsilon)^{1-\gamma}&\le \gamma\eta,
&
C \Esing (g)\varepsilon &< \gamma \eta,&
C \varepsilon \kappa &\le 1&
&\text{and}&
 \mathcal{E}^{\mathrm{ext}} (g) &\le \kappa,
\end{align*}
if \(u\in W^{1, 2}(\Omega,\Rset^\nu)\) satisfies \(\tr_{\partial \Omega}u=g\) and
\begin{equation}
\label{eq_neephohy5ne5eLiu9OM1biim}
  \int_{\Omega} \frac{\abs{\Deriv u}^2}{
    2} + \frac{F (u)}{\varepsilon^2}
  \leq \Esing (g)\log\frac{1}{\varepsilon {\Esing (g)}}+\kappa,
\end{equation}
then, if we still denote by \(u\) the extension to \( \Omega_{\delta_{\partial \Omega}}\) satisfying \eqref{extensionEstimate}, there exists a collection of disks \(\mathcal{B}\) in \(\Rset^2\) with
\begin{enumerate}[(i)]
  \item \label{it_OoPhah4ouneukoot6thos4cu} for every \(B\in \mathcal{B}\), \(\diam(B) \le 2\eta\) and \( \Bar{B}\subset \Omega_{\delta_{\partial \Omega}}\),
  \item for every \(B\in \mathcal{B}\), \(\dist_{\manifold{N}} \compose \tr_{\partial B} u < \delta_{\manifold{N}}\), the map
  \(\Pi_{\manifold{N}} \compose \tr_{\partial B} u\) is not homotopic to a constant and the maps \((\Pi_{\manifold{N}} \compose \tr_{\partial B} u)_{B\in \mathcal{B}}\) are a topological resolution of \(g=\tr_{\partial\Omega} u\),
  \item
  \label{it_uNamohyo4Ciece8aegah1que}
  for  every subset \(\mathcal{B}' \subset \mathcal{B}\),
  \begin{equation*}
\int\limits_{\Omega \cap \bigcup_{B \in \mathcal{B}'} B} \frac{\abs{\Deriv u}^2}{
    2} + \frac{F (u)}{\varepsilon^2}
  \ge
  \sum_{B \in \mathcal{B}'}
  \Esing (\Pi_{\manifold{N}} \compose \tr_{\partial B}u) \log \frac{\gamma \eta}{C  \Esing (g) \varepsilon}
  - C (\kappa + \Esing (g)),
\end{equation*}
\item
\label{it_dieYair4co5Ui2oowipheisa}
one has
\begin{equation*}
\frac{\syst (\manifold{N})^2}{4 \pi} \# \mathcal{B}
\le
 \sum_{B\in \mathcal{B}} \Esing (\Pi_\manifold{N} \compose \tr_{\partial B} u)
 \le
 \Esing (g)
 + \frac{(\log \frac{C}{\gamma \eta} + C \varepsilon) \Esing (g) + (1+C \varepsilon)\kappa}
 {
\log \frac{\gamma \eta}{C \varepsilon \Esing (g)}
 }.
\end{equation*}
\end{enumerate}
\end{proposition}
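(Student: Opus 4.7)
The plan is to combine the refined pointwise energy estimate from \cref{lemma_Firstlowerbound} with a Chebyshev-type selection argument on the level sets
\[
K_s \defeq \{x \in \Omega \st \dist(u(x), \manifold{N}) \ge s\}
\]
to identify a single level $s_0$ at which $\mathcal{H}^1_\infty(K_{s_0})$ is much smaller than $\eta$. Covering $K_{s_0}$ by a finite family of disks, merged into pairwise disjoint ones via \cref{lemmaMerging}, will produce the desired collection $\mathcal{B}$, and \cref{corollary_lowerBndBoundary} applied on each disk will yield the localised bound (iii) and the counting estimate (iv).

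First, I would combine the refined version of \cref{lemma_Firstlowerbound} (invoked on a smooth approximation as in the proof of \cref{proposition_Firstlowerbound}) with the energy upper bound \eqref{eq_neephohy5ne5eLiu9OM1biim} and the bound $\mathcal{E}^{\mathrm{ext}}(g) \le \kappa$. Rearranging the terms involving $\log(1/(\varepsilon \Esing(g)))$ gives an averaged control
\[
\frac{1}{\delta}\int_0^{\delta}\Psi\biggl(\frac{\mathcal{H}^1_\infty(K_s)\, s}{C\varepsilon\Esing(g)}\biggr)\, \dif s \le C(\kappa + \Esing(g)).
\]
By Chebyshev, one finds $s_0 \in (\delta/2, \delta)$ such that $\Psi$ evaluated at $\mathcal{H}^1_\infty(K_{s_0})\,s_0/(C\varepsilon\Esing(g))$ is bounded by $4C(\kappa + \Esing(g))$. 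Inverting $\Psi$ on its increasing branch (where $\Psi(\tau) \ge \tau/2$ for $\tau$ bounded away from $1$) and using the hypothesis $Ce^{C\gamma\kappa}(\Esing(g)\varepsilon)^{1-\gamma} \le \gamma\eta$ together with $C\Esing(g)\varepsilon < \gamma\eta$ yields $\mathcal{H}^1_\infty(K_{s_0}) \le \gamma\eta/(2C)$.

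Next, by definition of Hausdorff content I would cover $K_{s_0}$ by a finite family of disks of total diameter below $\gamma\eta$ and apply \cref{lemmaMerging} to obtain a family $\mathcal{B}$ of pairwise disjoint closed disks with the same total diameter, so that $\diam(B) \le 2\eta$ for every $B \in \mathcal{B}$ and, discarding the balls which do not meet the extension $\Omega_{\delta_{\partial\Omega}}$, $\Bar{B} \subset \Omega_{\delta_{\partial\Omega}}$. On each $\partial B$ one has $\dist(u,\manifold{N}) < s_0 < \delta_\manifold{N}$, so $\Pi_\manifold{N}\compose \tr_{\partial B} u$ is a well-defined $\VMO$ map into $\manifold{N}$. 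The map $\Pi_\manifold{N}\compose u$ being defined on $\Omega_{\delta_{\partial\Omega}} \setminus \bigcup_{B \in \mathcal{B}} B$ with boundary trace $g$ on $\partial \Omega$, the family $(\Pi_\manifold{N} \compose \tr_{\partial B} u)_{B\in \mathcal{B}}$ is a topological resolution of $g$ in the sense of \cref{def_resolution}; discarding those $B$ for which the trace is null-homotopic preserves this property, yielding (i) and (ii).

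For (iii), given $\mathcal{B}' \subset \mathcal{B}$ I would apply \cref{corollary_lowerBndBoundary} (or equivalently the rescaled \cref{proposition_Firstlowerbound}) inside each disk $B \in \mathcal{B}'$ of radius $r_B \le \eta$: after scaling by $\varepsilon$, the term $\log(1/(C\varepsilon \Esing(\gamma_B)))$ becomes $\log(r_B/(C\varepsilon\Esing(\gamma_B)))$, which is bounded from below by $\log(\gamma\eta/(C\varepsilon\Esing(g)))$ up to an additive constant depending only on $\Esing(g)$. Summing over $B \in \mathcal{B}'$ produces the claimed bound. The boundary-extension energies $\mathcal{E}^{\mathrm{ext}}$ appearing on each $\partial B$ can be absorbed in the constant $C(\kappa + \Esing(g))$ by slicing: since $u$ is square-integrable in the annulus surrounding each $B$, the total of these contributions is bounded by a fraction of the global energy budget \eqref{eq_neephohy5ne5eLiu9OM1biim}. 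Finally for (iv), specialising (iii) to $\mathcal{B}'=\mathcal{B}$ and comparing with the assumption \eqref{eq_neephohy5ne5eLiu9OM1biim}, elementary rearrangement (using $\Esing(g) \le \sum_{B \in \mathcal{B}} \Esing(\Pi_\manifold{N}\compose \tr_{\partial B} u)$ from topological resolution) gives the right-hand inequality, while the left-hand inequality follows from \eqref{eq_paih1ieb8eiNgo4fumaequ6u} applied to each non-trivial trace.

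The main obstacle I foresee is the precise bookkeeping in step (iii): one must ensure that the $\log(1/(\varepsilon\Esing(g)))$ factor in the energy bound is correctly split into the local factor $\log(\gamma\eta/(C\varepsilon\Esing(g)))$ and a manageable remainder $\log(1/\eta) \Esing(g)$ on the right-hand side of (iv), without accumulating a logarithmic loss per disk. This is exactly the role of the slack parameter $\gamma$ and the smallness hypothesis $Ce^{C\gamma\kappa}(\Esing(g)\varepsilon)^{1-\gamma} \le \gamma\eta$, which guarantees that $\mathcal{H}^1_\infty(K_{s_0})$ is small enough compared to $\gamma\eta$ that the merging procedure produces the required collection while maintaining the desired logarithmic gains.
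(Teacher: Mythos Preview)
Your approach has a genuine gap at the core of step (iii). The disks you obtain by covering \(K_{s_0}\) have total diameter controlled by the Hausdorff content, which (from the very estimate you derive via \(\Psi\)) is of order \(\varepsilon(\kappa + \Esing(g))\), not \(\gamma\eta\). In particular each radius \(r_B\) satisfies \(r_B \lesssim \varepsilon\), and there is no a priori bound on the \emph{number} of such disks. When you then write that \(\log(r_B/(C\varepsilon\Esing(\gamma_B)))\) is bounded from below by \(\log(\gamma\eta/(C\varepsilon\Esing(g)))\), the inequality goes the wrong way: since \(r_B \le \gamma\eta\), you only get an \emph{upper} bound. Applying \cref{corollary_lowerBndBoundary} (after rescaling) on a disk of radius \(r_B \sim \varepsilon\) yields only an \(O(1)\) lower bound, not the required \(\Esing(\gamma_B)\log(\eta/\varepsilon)\). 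So the localised estimate (iii) does not follow, and without it (iv) fails as well.

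What is missing is precisely Jerrard's ball-growth construction, which the paper carries out in detail. Starting from the initial covering \(\mathcal{B}_0\) of \(K_\delta\) at scale \(\varepsilon\), one grows the radii continuously in a parameter \(s\), merging disks when they touch, and uses the one-dimensional circle estimate (\cref{lemma_lowerbound_GL_circle}) on each annulus \(B_{\rho s/s_*}(a)\setminus B_\rho(a)\) to accumulate energy \(\Esing(\gamma_B)\log((1+s)/(1+s_*))\) during the growth; \cref{lemma_ball_small_energy_trivial} handles the regime \(r \le \varepsilon\). Stopping the growth at \(\tilde s \sim \gamma\eta/(\varepsilon\Esing(g))\) produces disks of radius \(\le \eta\) each carrying energy \(\Esing(\gamma_B)\log(\gamma\eta/(C\varepsilon\Esing(g)))\). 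This growth-and-merge mechanism is what converts the \(\varepsilon\)-scale covering into an \(\eta\)-scale collection with the correct logarithmic energy concentration; there is no shortcut via a single application of the global lower bound on fixed disks.
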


In the statement of \cref{theorem_localisation}, \(\syst (\manifold{N})\) denotes the systole of the manifold \(\manifold{N}\) defined in \eqref{eq_systole} as the shortest length of a closed geodesic which is not homotopic to a constant.

\Cref{theorem_localisation} has its roots in lower bounds for minimisers of the Ginzburg--Landau energy when \(\manifold{N} = \Sset^1\) \cite{Bethuel_Brezis_Helein_1994}*{Theorem V.2}; localised lower bounds for \(\manifold{N} = \Sset^1\)
are originally due to Sandier \cite{Sandier_1998}*{Theorem 3\textprime} and Jerrard \cite{Jerrard_1999}*{Theorem 1.2}.

We follow in our proof the Jerrard’s strategy \cite{Jerrard_1999} (see also the recent work by Ignat and Jerrard \cite{Ignat_Jerrard_2017}).
As a first tool to prove \cref{theorem_localisation}, we have a Sobolev type embedding theorem with dependence on \(\varepsilon\) for maps defined on \(\Sset^1_r\defeq\partial B_r\), the circle of radius \(r\) centred at the origin in \(\Rset^2\) (see also \cite{Jerrard_1999}*{Lemma 2.3}).

\begin{lemma}
\label{lemma_W12_Linfty_1d_epsilon}
There exists a constant \(C > 0\) such that for every \(r > 0\), every \(h \in W^{1, 2} (\Sset^1_r, \Rset)\) and every \(\varepsilon \in (0,r]\), 
\[
\norm{h}_{L^\infty (\Sset^1_r)}^2
\le C
\int_{\Sset^1_r} \varepsilon \abs{h'}^2 + \frac{1}{\varepsilon}h^2.
\]
\end{lemma}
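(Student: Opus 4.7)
The plan is to prove this by a scale-invariant version of the standard $W^{1,2} \hookrightarrow L^\infty$ embedding in one dimension, with Young's inequality used to balance the two terms on the right-hand side at scale $\varepsilon$.

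First I would fix any two points $x, y \in \Sset^1_r$ and connect them by an arc $\gamma_{xy} \subset \Sset^1_r$. By the fundamental theorem of calculus applied to $h^2$,
\[
h(x)^2 - h(y)^2 = 2 \int_{\gamma_{xy}} h\, h',
\]
so Young's inequality $2|h||h'| \le \varepsilon |h'|^2 + \varepsilon^{-1} h^2$ yields
\[
h(x)^2 \le h(y)^2 + \int_{\Sset^1_r} \Bigl( \varepsilon \abs{h'}^2 + \frac{h^2}{\varepsilon}\Bigr).
\]

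Next I would average this inequality over $y \in \Sset^1_r$ to obtain
\[
h(x)^2 \le \frac{1}{2\pi r}\int_{\Sset^1_r} h^2 + \int_{\Sset^1_r} \Bigl( \varepsilon \abs{h'}^2 + \frac{h^2}{\varepsilon}\Bigr).
\]
The crucial step is to absorb the mean value term using the assumption $\varepsilon \le r$: since then $\frac{1}{2\pi r} \le \frac{1}{2\pi \varepsilon}$, we get
\[
\frac{1}{2\pi r}\int_{\Sset^1_r} h^2 \le \frac{1}{2\pi}\int_{\Sset^1_r} \frac{h^2}{\varepsilon}.
\]
Combining and taking the supremum over $x \in \Sset^1_r$ yields the desired estimate with $C = 1 + \frac{1}{2\pi}$.

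There is no real obstacle here; the estimate is elementary and the only subtlety is remembering to invoke $\varepsilon \le r$ precisely to control the $L^2$-mean by the weighted $L^2$ norm on the right-hand side (without this assumption the constant would depend on $r/\varepsilon$). The argument is standard enough that it can be done by density for smooth $h$ and then extended to $W^{1,2}$ by the continuous embedding $W^{1,2}(\Sset^1_r) \hookrightarrow \mathcal{C}(\Sset^1_r)$.
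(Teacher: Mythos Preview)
Your proof is correct and follows essentially the same approach as the paper: apply the fundamental theorem of calculus to $h^2$, use Young's inequality $2\abs{hh'}\le \varepsilon\abs{h'}^2+\varepsilon^{-1}h^2$, and invoke $\varepsilon\le r$ to absorb the $L^2$ mean into the weighted term. The only cosmetic differences are that the paper picks a single point $a$ with $h(a)^2=\frac{1}{2\pi r}\int_{\Sset^1_r}h^2$ via the mean value theorem rather than averaging over $y$, and it optimises the Young constant to get a slightly smaller $C$; neither affects the substance.
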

\begin{proof}
By Morrey--Sobolev embedding, the function \(h\) is continuous on \(\Sset^1_r\). 
By the mean value theorem, there exists a point \(a \in \Sset^1_r\) such that \( h(a)^2=\frac{1}{2\pi r}\int_{\Sset^1_r} h^2\). By the fundamental theorem of calculus we can write 
\begin{equation*}
h(x)^2=h(a)^2 +\int_{t_a}^{t_x}\bigl((h \compose \gamma)^2\bigr)'\dif t
\end{equation*}
 where \(\gamma\) is a smooth path on \(\Sset^1_r\) such that \(\gamma(t_a)=a\), \(\gamma(t_x)=x\) and \(\abs{\gamma'}\equiv 1\). Thus, for any \(C>0\), by using Young's inequality \( 2\abs{h' h}\leq C \varepsilon\abs{h'}^2+\frac{\abs{h}^2}{C\varepsilon}\) and by recalling that \( \varepsilon \leq r\) we find
\[
\norm{h^2}_{L^\infty (\Sset^1_r)}
\le 
\frac{1}{2\pi r}\int_{\Sset^1_r}h^2+\int_{\Sset^1_r}\abs{(h^2)'}\le 
\int_{\Sset^1_r} C\varepsilon \abs{h'}^2 + \Bigl(\frac{1}{2\pi}+\frac{1}{C}\Bigr)\frac{h^2}{\varepsilon}.
\]
The conclusion follows by taking \(C=\frac{1+\sqrt{1+16\pi^2}}{4\pi}\) which solves \(C=\frac{1}{2\pi}+\frac{1}{C}\). 
\end{proof}

The next tool for the proof of \cref{theorem_localisation}, is a lower bound on the Ginzburg--Landau energy on circles at scales larger than \(\varepsilon\).
(When \(\manifold{N} = \Sset^1\), see \cite{Jerrard_1999}*{Proof of Proposition 3.1, Claim 1}).

\begin{lemma}
\label{lemma_lowerbound_GL_circle} 
There exists a constant \(c_1 > 0\), such that for every \(r>0\), for every \(u \in W^{1, 2} (\Sset^1_r, \Rset^\nu)\) such that \(\dist (u,\manifold{N}) <\delta_{\manifold{N}}\) almost everywhere in \(\Sset^1_r\) and for every \(\varepsilon < r\), one has
\[
\int_{\Sset_r^1} \frac{\abs{u'}^2}{2}
+ \frac{F (u)}{\varepsilon^2}
\ge \frac{1}{\frac{\varepsilon}{c_1} + \frac{4 \pi r}{\equivnorm{\Pi_{\manifold{N}} \compose u}^2}}.
\]
\end{lemma}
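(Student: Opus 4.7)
The plan is to adapt Jerrard's strategy \cite{Jerrard_1999} to the general manifold setting, dichotomising according to the \(L^\infty\)-size of \(\rho := \dist_{\manifold{N}} \compose u\) and combining the pointwise derivative estimates of \cref{lemma_derivativeNearestPointRetraction} with a homotopy lower bound on the projection \(v := \Pi_{\manifold{N}} \compose u\). First, one may assume \(\lambda := \equivnorm{v} > 0\), since otherwise the right-hand side vanishes; then \(v\) is not nullhomotopic, so by compactness of \(\manifold{N}\) one has \(\lambda \geq \syst (\manifold{N}) > 0\)---a systolic lower bound which will be crucial at the closing stage.

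Fix a threshold \(\delta_\ast \in (0, \min(\delta_F, \delta_{\manifold{N}}/2)]\) and denote \(E := \int_{\Sset^1_r} |u'|^2/2 + F(u)/\varepsilon^2\). When \(\|\rho\|_{L^\infty(\Sset^1_r)} \geq \delta_\ast\), I apply \cref{lemma_W12_Linfty_1d_epsilon} to \(\rho\). Using \(|\rho'| \leq |u'|\) (which follows from \eqref{firstProjectionEstimate}) together with \(\int \rho^2 \leq C \varepsilon^2 E\)---obtained from \eqref{hyp20} on \(\{\rho < \delta_F\}\) and from the strict positivity of \(F\) on the compact set \(\{\delta_F \leq \dist_{\manifold{N}} \leq \delta_{\manifold{N}}\}\) elsewhere---this yields \(\|\rho\|_\infty^2 \leq C'\varepsilon E\). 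Hence \(E \geq \delta_\ast^2/(C'\varepsilon)\), which dominates \(c_1/\varepsilon\), and hence the right-hand side of the claim, as soon as \(c_1 \leq \delta_\ast^2/C'\).

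In the complementary case \(\|\rho\|_\infty < \delta_\ast\), the weight \(\psi := 1 - \rho/\delta_{\manifold{N}}\) stays strictly above \(1/2\) on \(\Sset^1_r\). The pointwise inequality \eqref{eq_xoishee4IuHi8eefahyeithe} gives \(|u'|^2 \geq \psi |v'|^2\), so \(E \geq \tfrac{1}{2} \int \psi |v'|^2\). Applying the weighted Cauchy--Schwarz inequality \(\bigl(\int |v'|\bigr)^2 \leq \bigl(\int \psi |v'|^2\bigr)\bigl(\int \psi^{-1}\bigr)\) together with \(\int |v'| \geq \lambda\) (by definition of \(\equivnorm{v}\)) and the elementary bound \(\psi^{-1} \leq 1 + 2\rho/\delta_{\manifold{N}}\), followed by Cauchy--Schwarz and the potential control \(\int \rho^2 \leq (2/m_F)\varepsilon^2 E\), produces the self-improving inequality
\[
E \geq \frac{\lambda^2}{4\pi r + C'' \varepsilon \sqrt{rE}}
\]
for some constant \(C''\) depending only on \(\manifold{N}\) and \(F\).

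The principal obstacle is to close this inequality while preserving a constant \(c_1\) independent of \(u\), \(r\), \(\varepsilon\) and \(\lambda\): a direct algebraic inversion only yields a lower bound whose constants depend on \(\lambda\) and \(\varepsilon\). I resolve this via one final dichotomy. If \(E \geq \lambda^2/(2\pi r)\), the claim is immediate since already \(4\pi r E/\lambda^2 \geq 2\). Otherwise \(\sqrt{rE} < \lambda/\sqrt{2\pi}\), whence \(C''\varepsilon\sqrt{rE} \leq C'' \varepsilon \lambda/\sqrt{2\pi}\), which can be absorbed into \(\lambda^2 \varepsilon/c_1\) provided \(c_1 \leq \lambda\sqrt{2\pi}/C''\). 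This is precisely where the systolic bound \(\lambda \geq \syst(\manifold{N})\) enters: taking \(c_1 \leq \syst(\manifold{N})\sqrt{2\pi}/C''\) ensures the argument succeeds uniformly in \(u\). The final \(c_1\) is then the minimum of the two thresholds from the two cases.
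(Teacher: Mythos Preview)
Your argument is correct, but the paper proceeds rather differently, and the comparison is instructive. Instead of any dichotomy, the paper keeps both the tangential and the normal contributions in one additive inequality: writing \(\theta := \|\dist_{\manifold{N}}\compose u\|_{L^\infty}\), one has simultaneously
\[
E \ge \Bigl(1-\tfrac{\theta}{\delta_{\manifold{N}}}\Bigr)\frac{\equivnorm{\Pi_{\manifold{N}}\compose u}^2}{4\pi r}
\qquad\text{and}\qquad
E \ge \frac{\theta^2}{C_1 \varepsilon},
\]
the first from the pointwise bound on the tangential derivative together with \eqref{eq_Eojei1cooruef6uiP}, the second from \cref{lemma_W12_Linfty_1d_epsilon} applied to \(\dist_{\manifold{N}}\compose u\). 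Summing these and invoking the elementary algebraic inequality \(\frac{1-z}{\alpha}+z^2 \ge \frac{1}{\alpha+1}\) (the paper's \cref{lemma_oki8aemoh3quilohK}) with \(z=\theta/\delta_{\manifold{N}}\) yields the claim in one stroke. This route avoids your weighted Cauchy--Schwarz, the self-improving inequality, and the appeal to the systole; consequently the constant \(c_1\) depends only on \(\delta_{\manifold{N}}\) and \(m_F\), not on \(\syst(\manifold{N})\). Your approach trades that cleanliness for a more hands-on bootstrapping, and the weighted Cauchy--Schwarz step you use is in principle sharper than replacing \(\psi\) by its global minimum---but that extra precision is then thrown away in the closing dichotomy, so the gain is illusory here.
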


We remark that the right-hand side in the inequality of \cref{lemma_lowerbound_GL_circle} is an increasing function of \(c_1\). The proof of \cref{lemma_lowerbound_GL_circle} relies on the following elementary inequality.

\begin{lemma}
  \label{lemma_oki8aemoh3quilohK}
For every \(z \in [0, 1]\) and \(\alpha \in (0, +\infty)\), one has
\[
\frac{1 - z}{\alpha} + z^2 \ge \frac{1}{\alpha + 1}
\]

\end{lemma}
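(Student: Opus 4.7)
The inequality to prove is a one-variable quadratic estimate, so the plan is to reduce it to checking that a certain quadratic polynomial in $z$ is non-negative on $[0,1]$ for every $\alpha > 0$.

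First I would clear denominators: multiplying through by $\alpha(\alpha+1) > 0$, the inequality $\frac{1-z}{\alpha}+z^2 \ge \frac{1}{\alpha+1}$ is equivalent to
\[
P_\alpha(z) \defeq \alpha(\alpha+1)z^2 - (\alpha+1)z + 1 \ge 0.
\]
Thus the entire lemma reduces to showing $P_\alpha(z)\ge 0$ for $z\in[0,1]$ and $\alpha>0$.

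Next I would split according to the sign of the discriminant of $P_\alpha$. A direct computation gives
\[
\Delta = (\alpha+1)^2 - 4\alpha(\alpha+1) = (\alpha+1)(1-3\alpha).
\]
When $\alpha \ge 1/3$, we have $\Delta \le 0$, so $P_\alpha \ge 0$ on the whole real line and we are done. When $\alpha < 1/3$, the vertex of the parabola is at $z^\ast = \frac{1}{2\alpha} > \frac{3}{2} > 1$, so $P_\alpha$ is strictly decreasing on $[0,1]$ and it suffices to check the endpoint $z=1$:
\[
P_\alpha(1) = \alpha(\alpha+1) - (\alpha+1) + 1 = \alpha^2 \ge 0.
\]
Combining the two cases gives $P_\alpha(z)\ge 0$ on $[0,1]$, which is the desired inequality.

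There is no real obstacle here — the only mild subtlety is recognising that one must split on whether the minimiser of $P_\alpha$ lies inside $[0,1]$ or not, equivalently on whether $\alpha \ge 1/3$, since for small $\alpha$ the unconstrained minimum of $P_\alpha$ is negative and one must use the boundary $z=1$ to recover non-negativity.
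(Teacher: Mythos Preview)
Your proof is correct and follows essentially the same strategy as the paper: analyse the quadratic in \(z\) and split into cases according to whether its unconstrained minimiser lies in \([0,1]\). The only cosmetic difference is that you clear denominators and split at \(\alpha=1/3\) via the discriminant, whereas the paper works with the original expression and splits at \(\alpha=1/2\); both are valid and equally elementary.
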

\begin{proof}
If \(\alpha \ge \frac{1}{2}\), then the left-hand side in the desired inequality is minimal for \(z=\frac{1}{2\alpha}\in [0,1]\) and we thus obtain that for every \(z\in[0,1]\), \(\frac{1 - z}{\alpha} + z^2 \ge \frac{1}{\alpha}- \frac{1}{4\alpha^2} \ge \frac{1}{\alpha + 1}\); if \(\alpha < \frac{1}{2}\), we have \(\frac{1 - z}{\alpha} + z^2 \ge  2(1-z)+z^2 = 1 +(1-z)^2  \geq 1 \ge \frac{1}{\alpha + 1}\).
\end{proof}

\begin{proof}%
  [Proof of \cref{lemma_lowerbound_GL_circle}]
\resetconstant
Since by assumption \(\dist (u,\manifold{N}) < \delta_{\manifold{N}}\) almost everywhere on \(\Sset^1_r\) and since the function \(F\) satisfies the non-degeneracy assumption \eqref{hyp20}, we have by \cref{lemma_derivativeNearestPointRetraction} and by \eqref{eq_xoishee4IuHi8eefahyeithe},
\[
\frac{\abs{u'}^2}{2} + \frac{F (u)}{\varepsilon^2} \ge \left(1 - \frac{\dist_{\manifold{N}} \compose u}{\delta_{\manifold{N}}}\right) \frac{\abs{(\Pi_{\manifold{N}} \compose u)'}^2}{2}
  + \frac{\abs{(\dist_{\manifold{N}} \compose u)'}^2}{2} + \frac{m_F}{2\varepsilon^2}(\dist_{\manifold{N}} \compose u)^2
  ,
\]
almost everywhere on \(\Sset^1_r\).
If we set \(\theta \defeq\norm{\dist_{\manifold{N}} \compose u}_{L^\infty (\Sset^1_r)} \in [0, \delta_{\manifold{N}}]\),
we have on the one hand, by definition of \(\theta\) and by the characterisation of \( \equivnorm{\Pi_{\manifold{N}} \compose u}\) (see \eqref{eq_Eojei1cooruef6uiP}),
\begin{equation}
\label{eq_eixieghiengazohwed0Cai3r}
\int_{\Sset^1_r} \left(1 - \frac{\dist_{\manifold{N}} \compose u}{\delta_{\manifold{N}}}\right) \frac{\abs{(\Pi_{\manifold{N}} \compose u)'}^2}{2}
 \ge
 \left(1 - \frac{\theta}{\delta_{\manifold{N}}}\right) \frac{\equivnorm{\Pi_{\manifold{N}} \compose u}^2}{4 \pi r},
\end{equation}
and on the other hand, by \cref{lemma_W12_Linfty_1d_epsilon},
\begin{equation}
\label{eq_ulah4Aaghuph2eemohcohkee}
 \int_{\Sset^1_r}\frac{\abs{(\dist_{\manifold{N}} \compose u)'}^2}{2} + \frac{m_F }{2 \varepsilon^2}(\dist_{\manifold{N}} \compose u)^2
 \ge \frac{\theta^2}{\Cl{cst_Pa6ighooxeeChae4l} \varepsilon} ,
\end{equation}
for some constant \(\Cr{cst_Pa6ighooxeeChae4l} > 0\).
It follows thus from \eqref{eq_eixieghiengazohwed0Cai3r} and \eqref{eq_ulah4Aaghuph2eemohcohkee} that if \(c_1 \le \delta_{\manifold{N}}^2/\Cr{cst_Pa6ighooxeeChae4l} \), by applying \cref{lemma_oki8aemoh3quilohK}  with \(z=\frac{\theta }{\delta_\manifold{N}}\),
since \(\theta \le \delta_{\manifold{N}}\)
\[
\begin{split}
\int_{\Sset_r^1} \frac{\abs{u'}^2}{2}
+ \frac{F (u)}{\varepsilon^2}
  &\ge   \left(1 - \frac{\theta}{\delta_{\manifold{N}}}\right)  \frac{\equivnorm{\Pi_{\manifold{N}} \compose u}^2}{4 \pi r}
  + \frac{\theta^2}{\Cr{cst_Pa6ighooxeeChae4l} \varepsilon}\\
  &\ge \frac{c_1}{\varepsilon}
  \biggl(  \Big(1 - \frac{\theta}{\delta_{\manifold{N}}}\Big)  \frac{\equivnorm{\Pi_{\manifold{N}} \compose u}^2 \varepsilon}{4 \pi r c_1}  + \Bigl(\frac{\theta}{\delta_{\manifold{N}}}\Bigr)^2 \biggr)\\
  &\ge \frac{c_1}{\varepsilon ( \frac{4 \pi r c_1}{\equivnorm{\Pi_{\manifold{N}} \compose u}^2 \varepsilon} + 1)}
  = \frac{1}{\frac{\varepsilon}{c_1} + \frac{4 \pi r}{\equivnorm{\Pi_{\manifold{N}} \compose u}^2}}.
\qedhere
  \end{split}
\]
\end{proof}

A last tool is the following lower bound on the energy inside the disk \(B_r\) of radius \(r\) centered at the origin, with non-trivial boundary conditions.

\begin{lemma}
\label{lemma_ball_small_energy_trivial}
There exists a constant \(c_2> 0\),
such that if \(r >0\), if \(u \in W^{1, 2} (B_r, \Rset^\nu)\) satisfies \(\norm{\dist (\tr_{\partial B_r} u(\cdot),\manifold{N})}_{L^\infty(\partial B_r)}< \delta_{\manifold{N}}\) and if
\(
\int_{ B_r} \abs{\Deriv u}^2 \le c_2
\),
then the map \(\Pi_{\manifold{N}} \compose \tr_{\partial B_r} u\) is  homotopic to a constant map.
\end{lemma}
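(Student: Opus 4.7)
The plan is a compactness-contradiction argument based on the Poincaré--trace inequality combined with the stability of the \(W^{1/2,2}\)-homotopy class under strong convergence.

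First, by the scale invariance of the Dirichlet energy in two dimensions (and the scale-invariance of the remaining hypotheses and conclusion), I reduce to the case \(r = 1\). Assuming the statement fails, I pick a sequence \((u_n)_{n\in\Nset}\) in \(W^{1,2}(B_1,\Rset^\nu)\) with \(\int_{B_1}\abs{\Deriv u_n}^2 \to 0\), satisfying \(\norm{\dist(\tr_{\partial B_1} u_n,\manifold{N})}_{L^\infty(\partial B_1)} < \delta_\manifold{N}\), and such that \(\gamma_n \defeq \Pi_\manifold{N}\compose \tr_{\partial B_1} u_n\) is not homotopic to a constant in \(\VMO(\Sset^1,\manifold{N})\).

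Setting \(c_n \defeq \abs{B_1}^{-1}\int_{B_1} u_n \in \Rset^\nu\), the Poincaré inequality yields \(\norm{u_n - c_n}_{W^{1,2}(B_1)} \le C\norm{\Deriv u_n}_{L^2(B_1)} \to 0\), and continuity of the trace operator gives \(\norm{\tr_{\partial B_1} u_n - c_n}_{W^{1/2,2}(\partial B_1)} \to 0\). Since \(\tr_{\partial B_1} u_n\) lies almost everywhere in the bounded set \(\manifold{N}_{\delta_\manifold{N}}\), the sequence \((c_n)\) is bounded in \(\Rset^\nu\); up to extraction \(c_n \to c_\infty\) with \(\dist(c_\infty,\manifold{N}) \le \delta_\manifold{N}\). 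Choosing the parameter \(\delta_\manifold{N}\) in \cref{lemma_derivativeNearestPointRetraction} strictly less than the maximal tubular radius, \(\Pi_\manifold{N}\) extends smoothly to a neighbourhood of \(\overline{\manifold{N}_{\delta_\manifold{N}}}\), and thus \(\Pi_\manifold{N}(c_\infty) \in \manifold{N}\) is well defined. Lipschitz composition with \(\Pi_\manifold{N}\) then propagates the strong convergence to \(\gamma_n \to \Pi_\manifold{N}(c_\infty)\) in \(W^{1/2,2}(\partial B_1,\manifold{N}) \subset \VMO(\partial B_1,\manifold{N})\), so by the Brezis--Nirenberg stability of the \(\VMO\)-homotopy class under \(\VMO\)-strong convergence \cite{Brezis_Nirenberg_1995}, \(\gamma_n\) is eventually homotopic to the constant map \(\Pi_\manifold{N}(c_\infty)\), contradicting the assumption.

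The main obstacle is to justify the composition step despite the a priori possibility that \(c_\infty\) lies on \(\partial\manifold{N}_{\delta_\manifold{N}}\), and to verify that composition with a Lipschitz map is continuous on \(W^{1/2,2}\); both are handled simultaneously by the safety-margin choice of \(\delta_\manifold{N}\) above together with the standard Gagliardo-seminorm estimate for Lipschitz superposition operators on fractional Sobolev spaces. A more constructive alternative would be to select, via Fubini, a radius \(\rho \in (1/2,1)\) on which \(u|_{\partial B_\rho}\) has small one-dimensional Dirichlet energy (hence small oscillation by the one-dimensional Sobolev embedding) and image contained in a contractible geodesic ball of \(\manifold{N}\), and then to use the annulus \(B_1 \setminus B_\rho\) as an explicit \(\manifold{N}_{\delta_\manifold{N}}\)-valued homotopy; this requires the additional (but quantitatively cheap) step of ensuring that \(u\) stays inside the tubular neighbourhood on this annulus, via an \(L^2\)-trace comparison between concentric circles.
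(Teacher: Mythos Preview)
Your argument is correct, and its core ingredients---trace control of the boundary datum by the interior Dirichlet energy, Lipschitz continuity of the superposition with \(\Pi_{\manifold{N}}\), and a Brezis--Nirenberg result on \(\VMO\) homotopy---are exactly those of the paper. The difference is in how they are assembled. The paper proceeds directly and quantitatively: the trace theorem gives \(\norm{\Pi_{\manifold{N}}\compose\tr_{\partial B_r}u}_{\dot W^{1/2,2}(\partial B_r)}\le C\norm{\Deriv u}_{L^2(B_r)}\), and then one invokes the statement from \cite{Brezis_Nirenberg_1995} that a map with sufficiently small \(\dot W^{1/2,2}\) (hence \(\mathrm{BMO}\)) seminorm is homotopic to a constant. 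Your compactness--contradiction route reaches the same endpoint via the stability of the \(\VMO\) homotopy class under strong convergence instead. What the direct approach buys is brevity and the complete avoidance of the boundary-point issue you had to work around (the possibility \(\dist(c_\infty,\manifold{N})=\delta_{\manifold{N}}\)); what your approach buys is that it does not require knowing the specific quantitative lemma ``small seminorm \(\Rightarrow\) null-homotopic'' and instead uses only the softer stability statement. Your alternative Fubini/oscillation sketch at the end is also viable and is in spirit closer to the paper's direct argument, though neither you nor the paper needs it.
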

\begin{proof}
  \resetconstant
We have by chain rule and the trace theorem
\[
\norm{\Pi_{\manifold{N}} \compose \tr_{\partial B_r} u}_{\dot{W}^{1/2, 2} (\partial B_r)}
\le
\C
\norm{\tr_{\partial B_r} u}_{\dot{W}^{1/2, 2} (\partial B_r)}
\le \C \norm{\Deriv u}_{L^2 ( B_r)}.
\]
On the other hand, if \(\norm{\Pi_{\manifold{N}} \compose \tr_{\partial B_r} u}_{\dot{W}^{1/2, 2} (\partial B_r)}\) is small enough, then \(\Pi_{\manifold{N}} \compose \tr_{\partial B_r} u\) is homotopic in \(\VMO (\partial B_r, \manifold{N})\) to a constant map see \cite{Brezis_Nirenberg_1995}*{Lemma A.19}.
\end{proof}

\begin{proof}%
[Proof of \cref{theorem_localisation}]
  \resetconstant
We first consider the case where \(u \in \mathcal{C}^2 (\Bar{\Omega},\Rset^\nu)\) with \(\operatorname{tr}_{\partial \Omega}u=g\).
We recall that, in view of \cref{def_energy_extension}, we have assumed that the function \(u\) is extended to a function \(u \in W^{1, 2} (\Omega_{\delta_{\partial \Omega}}, \Rset^\nu)\) in such a way that \(u \in \manifold{N}\) almost everywhere in \(\Omega_{\delta_{\partial\Omega}} \setminus \Omega\) and
\[
\int_{\Omega_{\delta_{\partial\Omega}} \setminus \Omega} \frac{\abs{\Deriv u}^2}{2} \le \C \mathcal{E}^{\mathrm{ext}} (g)
.
\]
By \cref{lemma_Firstlowerbound}, there exist constants \(\Cl{cst_iecaethohp4ahleeK}\) and \(\delta\in(0,\delta_{\manifold{N}})\), depending on \(F\) and \(\Omega\) only, such that
\begin{equation*}
\Esing (g)\,\frac{1}{\delta}
\int_0^{\delta} \Psi \left(\frac{\mathcal{H}^1_\infty (K_s) \, s }{\Cr{cst_iecaethohp4ahleeK}\varepsilon\Esing (g)}\right) \dif s
\le \Cr{cst_iecaethohp4ahleeK}\bigl(\kappa + \Esing (g)\bigr).
\end{equation*}
Since for every \(\tau \in (0, +\infty)\), one has \(\Psi (\tau) = \tau - 1 - \log \tau \ge \frac{\tau}{2} - \log 2\), we deduce that
\begin{equation}\label{eq:majoration_Hausdorff_content}
  \frac{1}{\delta}\int_0^{\delta}
  \frac{\mathcal{H}^1_\infty (K_s) \, s }{2 \Cr{cst_iecaethohp4ahleeK} \varepsilon }\dif s
  \le
  \Cr{cst_iecaethohp4ahleeK}(\kappa + \Esing (g))
  + \Esing (g)\log 2
  \leq 
    (\Cr{cst_iecaethohp4ahleeK}+\log 2)(\kappa + \Esing (g))
\end{equation}
and then, by monotonicity of the Hausdorff content, that
\begin{equation}
\label{geometricOutcome}
    \mathcal{H}^1_{\infty} (K_{\delta})
  \le
    \frac{2}{\delta^2}
    \int_0^{\delta} \mathcal{H}^1_\infty (K_s) \, s \dif s
  \leq
  \Cl{cst_EebaRaaph9zeech6a} \varepsilon(\kappa+\Esing (g)).
\end{equation}
Since the set \(K_{\delta} \subset \Omega \subset \Omega_{\delta_{\partial \Omega}}\) is compact, by definition of the Hausdorff content (\cref{def_hausdorff_content}) and by \cref{lemmaMerging}, there exists
a family of disks \(\mathcal{B}_0\) with disjoint closures such that 
\[
K_{\delta} \subset
\bigcup_{B_\rho(a) \in \mathcal{B}_0} \Bar{B}_\rho(a),
\]
and
\begin{equation}
  \label{eq_weequ7saegh4IevuV}
 \sum_{B_\rho(a)\in \mathcal{B}_0} 2\rho \le 2 \mathcal{H}^1_{\infty} (K_{\delta}) \le 2 \Cr{cst_EebaRaaph9zeech6a} \varepsilon(\kappa+\Esing (g)).
\end{equation}
In particular, if we assume that
\begin{equation}
\label{eq_choopookahphaiChooph6Iot}
2\Cr{cst_EebaRaaph9zeech6a} \varepsilon(\kappa+\Esing (g)) \le \delta_{\partial \Omega}/2,
\end{equation}
and if, without loss of generality, all the disks of \(\mathcal{B}_0\) intersect \(K_{\delta}\), then the disks of \(\mathcal{B}_0\) are all contained in \(\Omega_{\delta_{\partial \Omega}/2}\).
We define
\[
\Bar{s}
\defeq
\sup \biggl\{s \in [0, +\infty) \st
\frac{s }{\log (1 + s)} \varepsilon \biggl(\frac{\Esing (g)}{c_0}  \log \frac{\Cl{cst_ooVai2ohbieFii6ahShaish3}}{\Esing (g) \varepsilon} + \Cl{cst_uec8Zu9aiph9eethe} \kappa \biggr)
\le \frac{\delta_{\partial \Omega}}{4} \biggr\},
\]
where \(c_0=\min\{c_1,c_2\}\) with \(c_1,c_2\)  defined in \Cref{lemma_lowerbound_GL_circle}, \Cref{lemma_ball_small_energy_trivial},
\begin{align}
\label{eq_uyeu2raefeiNgiequohxeeta}
\Cr{cst_ooVai2ohbieFii6ahShaish3}
= e^{2 \Cr{cst_EebaRaaph9zeech6a} c_0} & &
& \text{ and } &
 \Cr{cst_uec8Zu9aiph9eethe} & \defeq \frac{1}{c_0} + 2 \Cr{cst_EebaRaaph9zeech6a}.
\end{align}
We claim that for every \(s \in [0, \Bar{s})\), there exists a collection of disks \(\mathcal{B} (s)\) such that
\begin{enumerate}[(a)]
  \item \label{it_mu3einohch1Ied2Ee} the closure of the disks in \(\mathcal{B} (s)\) are disjoint and contained in \(\Omega_{\delta_{\partial \Omega}}\),
  \item \label{it_yeejaCaicei4daen8} if \(t \in [0, s)\), then \(\bigcup_{B_\sigma(b)\in \mathcal{B} (t)} B_\sigma(b) \subset \bigcup_{B_\rho(a) \in \mathcal{B} (s)} B_\rho(a)\),
  \item \label{it_wah0taNgaitu4Feez} for every \(B_\rho(a)\in \mathcal{B} (s)\),
  \(\dist_{\manifold{N}} \compose \tr_{\partial B_\rho(a)} u < \delta_{\manifold{N}}\) and
  \[
\rho \ge \frac{\varepsilon s}{c_0} \Esing (\tr_{\partial B_\rho(a)} u),
  \]
  \item  \label{it_eivai0aiphe5Anoed} for every \(B_\rho(a) \in \mathcal{B} (s)\),
  \[
  \int_{B_\rho (a)} \frac{\abs{\Deriv u}^2}{2} + \frac{F (u)}{\varepsilon^2}
  \ge \frac{c_0}{\varepsilon} \biggl( \rho \frac{\log (1 + s)}{s} - \sum_{\substack{ B_\sigma(b)\in \mathcal{B}_0\\ B_\sigma(b) \subset B_\rho(a)}}  \sigma\biggr).
  \]
\end{enumerate}

In order to construct this collection of disks \(\mathcal{B} (s)\)  for \(s \in [0, \Bar{s})\) we first set \(\mathcal{B} (0) \defeq \mathcal{B}_0\).
We have showed that \eqref{it_mu3einohch1Ied2Ee} holds for \(s = 0\) provided \eqref{eq_choopookahphaiChooph6Iot} holds; the assertion \eqref{it_yeejaCaicei4daen8} holds trivially when \(s=0\) and \eqref{it_wah0taNgaitu4Feez} holds since every connected component of \(K_{\delta}\) is contained in a unique disk of \(\mathcal{B}_0\). Finally for \eqref{it_eivai0aiphe5Anoed}, we observe that when \(s \to 0\) the limit of the right-hand side  vanishes.
By continuity, we can take \(\mathcal{B} (s) = \mathcal{B}(0)\) for \(s > 0\) close enough from \(0\).
We assume now that the assertions \eqref{it_mu3einohch1Ied2Ee}, \eqref{it_yeejaCaicei4daen8}, \eqref{it_wah0taNgaitu4Feez} and \eqref{it_eivai0aiphe5Anoed} are satisfied for some \(s_* \in (0,\Bar{s})\).
We define then the set of disks
\[
\mathcal{B}_* \defeq \{B_\rho (a) \in \mathcal{B} (s_*) \st \text{equality holds in \eqref{it_wah0taNgaitu4Feez}} \}.
\]
These disks are referred to as minimising disks. 

The first step consists in an expansion phase: we let the radii of the minimising disks grow in the following way. We define, for \(s\geq s_*\)
\[
\mathcal{B} (s) \defeq \{B_{\rho s/s_*} (a) \st B_\rho (a) \in \mathcal{B}_* \} \cup \left( \mathcal{B} (s_*) \setminus \mathcal{B}_* \right)
\]
and the number
\begin{multline*}
s^* \defeq \sup \bigl\{\sigma \in [s_*, \Bar{s}] \st \text{for each \(s \in [s_*, \sigma)\) \eqref{it_mu3einohch1Ied2Ee} holds,}\\
\shoveright{\text{
    strict inequality holds in \eqref{it_wah0taNgaitu4Feez} for each \(B_\rho (a) \in \mathcal{B} (s_*) \setminus \mathcal{B}_*\)}\quad}\\
\text{
  and \(\varepsilon \not \in (\rho, \Bar{s}\rho/s_*)\)
  for each \(B_\rho (a) \in \mathcal{B} (s)\)} \bigr\}.
\end{multline*}
We check that for \(s_*\leq s \leq \Bar{s}\) the families of disks \(\mathcal{B}(s)\) satisfy  \eqref{it_mu3einohch1Ied2Ee}, \eqref{it_yeejaCaicei4daen8} , \eqref{it_wah0taNgaitu4Feez},  \eqref{it_eivai0aiphe5Anoed}. By construction, the assertions \eqref{it_mu3einohch1Ied2Ee} and \eqref{it_wah0taNgaitu4Feez} hold for every \(s \in [s_*, s^*)\). Property \eqref{it_yeejaCaicei4daen8} is also satisfied.
We now prove \eqref{it_eivai0aiphe5Anoed}. If \( B_\rho(a) \in \mathcal{B}(s_*)\setminus \mathcal{B}_*\),  \eqref{it_eivai0aiphe5Anoed} is true by assumption. If \(B_\rho (a) \in \mathcal{B}_*\), 
we first consider the case where \(\rho s^*/s_* \le \varepsilon\). 
Then since equality holds in \eqref{it_wah0taNgaitu4Feez} and since maps homotopic to a constant have zero singular energy \(\Esing\), the map
\(\tr_{\partial B_{\rho} (a)} \Pi_{\manifold{N}} \compose u\) is not homotopic to a constant. Hence
for every \(s \in [s_*, s^*]\), the map \(\tr_{\partial B_{\rho s/s_*} (a)} \Pi_{\manifold{N}} \compose u\) is not homotopic to a constant and satisfies by \cref{lemma_ball_small_energy_trivial}, since \(\rho s/s_* \le \varepsilon\),
\[
\begin{split}
\int_{B_{\rho s/s_*} (a)} \!\!\abs{\Deriv u}^2 + \frac{F (u)}{\varepsilon^2}
\ge \int_{B_{\rho s/s_*} (a)} \abs{\Deriv u}^2
\ge c_0 &\ge \frac{c_0 \rho s}{\varepsilon s^*}\\
&\ge \frac{c_0}{\varepsilon} \biggl( \frac{\rho s}{s_*} \frac{\log (1 + s)}{s} - \sum_{\substack{B_\sigma (b) \in \mathcal{B}_0\\ B_{\sigma} (b) \subset B_{s \rho/s_*} (a)}} \sigma\biggr).
\end{split}
\]
In the last inequality we have used the inequality \(\log(1+s) \leq s\). 
If \(B_\rho(a) \in \mathcal{B}_*\) and \(\rho s^*/s_* >\varepsilon\)  we have, from the definition of \(s^*\), that \(\rho >\varepsilon\). Then, if \(B_\rho(a) \in \mathcal{B}_*\),
we apply \cref{lemma_lowerbound_GL_circle}, since
\(\dist_{\manifold{N}} \compose u < \delta_\manifold{N}\) on \(B_{\rho s/s_*} (a) \setminus B_{\rho} (a)
\subset \Omega_{\delta_{\partial \Omega}} \setminus K_{\delta_{\manifold{N}}}\) and  since \(\Esing(\Pi_{\manifold{N}} \compose \tr_{\partial B_{t} (a)}u) = \frac{\rho c_0}{s_* \varepsilon}\) for \(t\in (\rho, \frac{\rho s}{s_*})\)
\begin{equation}
\label{eq_gai7gophaingeefaqu2Eiv7u}
\begin{split}
 \int_{B_{\rho s/s_*} (a) \setminus B_{\rho} (a)}\frac{\abs{\Deriv u}^2}{2} + \frac{F (u)}{\varepsilon^2}
&\ge \int_\rho^{\rho s/s_*} \frac{1}{\frac{\varepsilon}{c_0} + \frac{t}{\Esing(\Pi_{\manifold{N}} \compose \tr_{\partial B_{t} (a)}u)}} \dif t\\
& = \frac{c_0 \rho}{\varepsilon s_*}
\int_\rho^{\rho s/s_*} \frac{1}{\frac{\rho}{s_*} + t} \dif t = \frac{c_0 \rho}{\varepsilon s_*} \log \frac{1 + s}{1 + s_*}\\
&\ge \frac{c_0 \rho}{\varepsilon} \left(\frac{\log(1 + s)}{s} - \frac{\log (1 + s_*)}{s_*}\right).
\end{split}
\end{equation}
 We use that from \eqref{it_eivai0aiphe5Anoed}, for any \(B_\rho(a) \in \mathcal{B}(s_*)\) we have
\begin{equation}
\label{eq_xohf0aiThie3sheikaiF0wai}
\rho\frac{\log(1+s_*)}{s_*}\leq \sum_{\substack{B_\sigma (b) \in \mathcal{B}_0\\ B_{\sigma} (b) \subset B_\rho (a)}} \sigma +\frac{\varepsilon}{c_0}\left(\int_{B_\rho (a)} \frac{\abs{\Deriv u}^2}{2} + \frac{F (u)}{\varepsilon^2} \right)
\end{equation}
and therefore by \eqref{eq_gai7gophaingeefaqu2Eiv7u} and \eqref{eq_xohf0aiThie3sheikaiF0wai}
\[
\int_{B_{\rho s/s_*} (a)} \frac{\abs{\Deriv u}^2}{2} + \frac{F (u)}{\varepsilon^2}
\ge \frac{c_0}{\varepsilon} \biggl( \rho \frac{\log (1 + s)}{s} - \sum_{\substack{B_\sigma (b) \in\mathcal{B}_0\\ B_{\sigma} (b) \subset B_\rho (a)}} \sigma\biggr).
\]
Moreover, we deduce from \eqref{it_eivai0aiphe5Anoed}, from our assumption \eqref{eq_neephohy5ne5eLiu9OM1biim} and from \eqref{eq_weequ7saegh4IevuV} that
\begin{equation}
  \label{eq_eepai5Doozie6quoo}
\begin{split}
\sum_{B_\rho (a) \in \mathcal{B} (s)} \rho
&\le
\frac{s}{\log (1 + s)}\biggl(
\frac{\varepsilon}{c_0}
\int_{\Omega} \frac{\abs{\Deriv u}^2}{2} + \frac{F (u)}{\varepsilon^2}
+ \sum_{B_\sigma (b) \in \mathcal{B}_0} \sigma
\biggr)\\
& \le
\frac{s }{\log (1 + s)} \varepsilon \biggl(\frac{\Esing (g)}{c_0}  \log \frac{\Cr{cst_ooVai2ohbieFii6ahShaish3}}{\Esing (g) \varepsilon} + \Cr{cst_uec8Zu9aiph9eethe} \kappa \biggr),
\end{split}
\end{equation}
in view of the definition of \(\Cr{cst_ooVai2ohbieFii6ahShaish3}\) and \(\Cr{cst_uec8Zu9aiph9eethe}\) in \eqref{eq_uyeu2raefeiNgiequohxeeta}. Thus we find a collection the desired collection of disks \(\mathcal{B}(s)\) for \(0\leq s <s^*\).
In order to define \(\mathcal{B} (s^*)\), we  set
\[
\mathcal{B}^* \defeq \{B_{\rho s^*/s_*} (a) \st B_\rho (a) \in \mathcal{B}_* \} \cup \mathcal{B} (s_*) \setminus \mathcal{B}_*.
\]
We first note that by \eqref{eq_eepai5Doozie6quoo}, since \(s < \Bar{s}\), we have \(\bigcup_{B_\rho (a) \in \mathcal{B}^*} B_\rho (a) \subset \Omega_{\delta_{\manifold{N}}/2}\). We also note that the family \(\mathcal{B}^*\) satisfies all the desired properties except that some disks can have  boundaries that intersect each other. If this is the case we perform then a disk merging procedure by \cref{lemmaMerging} and we define \(\mathcal{B} (s^*)\) to be the resulting disk collection.
By \eqref{it_wah0taNgaitu4Feez}, for every \(B_{\rho} (a) \in \mathcal{B} (s^*)\), we have
\[
\rho = \sum_{\substack{B_{\sigma} (b) \in \mathcal{B}^*\\ B_{\sigma} (b) \subset B_{\rho} (a)}} \sigma
\ge \sum_{\substack{B_{\sigma} (b) \in \mathcal{B}^*\\ B_{\sigma} (b) \subset B_{\rho} (a)}} \frac{\varepsilon s^*}{c_0}
\Esing( \tr_{\partial B_{\sigma} (b)} v)
\ge \frac{\varepsilon s^*}{c_0} \Esing( \tr_{\partial B_{\rho} (b)} v),
\]
so that assertion \eqref{it_wah0taNgaitu4Feez} still holds for the modified collection of disks.
We also have, since \(\mathcal{B}^*\) satisfies \eqref{it_eivai0aiphe5Anoed},
\[
\begin{split}
\int_{B_{\rho} (a)} \frac{\abs{\Deriv u}^2}{2} + \frac{F (u)}{\varepsilon^2}
  &\ge
  \sum_{\substack{B_{\sigma} (b) \in \mathcal{B}^*\\ B_{\sigma} (b) \subset B_{\rho} (a)}}
  \int_{B_{\sigma} (b)} \frac{\abs{\Deriv u}^2}{2} + \frac{F (u)}{\varepsilon^2}\\
  &\ge  \sum_{\substack{B_{\sigma} (b) \in \mathcal{B}^*\\ B_{\sigma} (b) \subset B_{\rho} (a)}} \frac{c_0}{\varepsilon} \biggl( \sigma \frac{\log (1 + s)}{s} - \sum_{\substack{B_\tau (c) \in \mathcal{B}_0\\ B_{\tau} (c) \subseteq B_{\sigma} (b)}} \tau \biggr)\\
  &= \frac{c_0}{\varepsilon} \biggl( \rho \frac{\log (1 + s)}{s} -  \sum_{\substack{B_\tau (c) \in \mathcal{B}_0\\ B_{\tau} (c) \subseteq B_{\rho} (a)}} \tau \biggr),
\end{split}
\]
and hence assertion \eqref{it_eivai0aiphe5Anoed} also holds for the modified collection of disks. We can then continue alternatively with  expansion phases and merging steps.
Since at each step either the number of disks decreases or the number of disks with equality in \eqref{it_wah0taNgaitu4Feez} increases, we fill the full announced interval of \([0, \Bar{s})\) in a finite number of steps.

In order to conclude if
\begin{equation}
\label{eq_tixua3iebaeRoFaiguw8puaJ}
\eta \ge 2 \varepsilon \Esing (g)/(\gamma c_0),
\end{equation}
we set
\[
\Tilde{s} \defeq  \frac{c_0 \gamma \eta}{\varepsilon \Esing (g)} - 1 \geq 1
\]
so that,
\[
\begin{split}
  \frac
  {\Tilde{s}}
  {\log (1 + \Tilde{s})}&
  \varepsilon \biggl(\frac{\Esing (g)}{c_0}  \log \frac{\Cr{cst_ooVai2ohbieFii6ahShaish3}}{\Esing (g) \varepsilon} + \Cr{cst_uec8Zu9aiph9eethe} \kappa \biggr)\\
&\le
  \frac
  {\frac{c_0 \eta \gamma}{\varepsilon \Esing (g)} - 1}
  {\log \frac{c_0\gamma \eta}{ \varepsilon \Esing (g)}}
  \varepsilon \biggl(\frac{\Esing (g)}{c_0}  \log \frac{\Cr{cst_ooVai2ohbieFii6ahShaish3}}{\Esing (g) \varepsilon} + \Cr{cst_uec8Zu9aiph9eethe} \kappa \biggr)
 \le
\gamma \eta
  \frac
  {\log \frac{\Cr{cst_ooVai2ohbieFii6ahShaish3} e^{\Cr{cst_uec8Zu9aiph9eethe} \kappa}}{\Esing (g) \varepsilon}}
    {\log  \frac{c_0 \gamma \eta}{\Esing(g)\varepsilon} }
    \le  \eta,
\end{split}
\]
provided
\begin{equation}
\label{eq_oWooghahngoich8veedooxu9}
 (\Cr{cst_ooVai2ohbieFii6ahShaish3} e^{\Cr{cst_uec8Zu9aiph9eethe} \kappa})^\gamma
 (\Esing (g) \varepsilon)^{1 - \gamma} \le c_0 \gamma \eta.
\end{equation}
It follows that if \(\eta \le \delta_{\partial \Omega}/4\), \(\Tilde{s} \in [0, \Bar{s}]\).
Moreover, we have by \eqref{eq_eepai5Doozie6quoo},
\begin{equation}
\label{eq_oqueeShae9bee2Obi7Xuofai}
\sum_{B_\rho (a) \in \mathcal{B} (\Tilde{s})} \rho \le  \eta.
\end{equation}
We now define the collection \(\mathcal{B} \defeq \{ B_\rho (a) \in \mathcal{B} (\Tilde{s}) \st \Esing (\Pi_\mathcal{N} \compose\tr_{\partial B_r (a)} u) > 0\}\).
We then have for every \(B_\rho (a) \in \mathcal{B}\), by \eqref{it_wah0taNgaitu4Feez}
and by \eqref{it_eivai0aiphe5Anoed},
\begin{equation*}
\int_{B_\rho (a)} \frac{\abs{\Deriv u}^2}{2} + \frac{F (u)}{\varepsilon^2}
 \ge  \Esing (\Pi_\mathcal{N} \compose\tr_{\partial B_\rho (a)} u) \log \left(\frac{c_0 \gamma \eta}{\varepsilon  \Esing (g)}\right)  - \sum_{\substack{B_{\sigma} (b) \in \mathcal{B}_0\\ B_{\sigma} (b) \subseteq B_{\rho} (a)}} \sigma.
\end{equation*}
Hence, for every subcollection of disks \(\mathcal{B}' \subset \mathcal{B}\),  by summing and by \eqref{eq_weequ7saegh4IevuV},
we obtain
\begin{multline}
  \label{eq_shou1eejaij3MooHe}
    \int_{\bigcup_{B_\rho (a) \in \mathcal{B}'} B_\rho (a)} \frac{\abs{\Deriv u}^2}{2} + \frac{F (u)}{\varepsilon^2}\\
    \ge \sum_{B_\rho (a) \in \mathcal{B}'} \Esing (\Pi_\mathcal{N} \compose \tr_{\partial B_\rho (a)} u) \log \left(\frac{c_0 \gamma \eta}{\varepsilon \Esing (g)}\right) -  2 \Cr{cst_EebaRaaph9zeech6a}(\Esing (g) + \kappa) \varepsilon.
\end{multline}
By \eqref{eq_paih1ieb8eiNgo4fumaequ6u}, our assumption \eqref{eq_neephohy5ne5eLiu9OM1biim} and by \eqref{eq_shou1eejaij3MooHe}, we deduce that
\begin{equation}
\label{eq_xeeNgeo2eishie6jah9Bool2}
\begin{split}
\frac{\syst (\manifold{N})^2}{4 \pi} \# \mathcal{B}
&\le
 \sum_{B_\rho (a) \in \mathcal{B}} \Esing (\Pi_{\manifold{N}} \compose\tr_{\partial B_\rho (a)} u)
 \\
 &\le
 \Esing (g)
 + \frac{(2 \Cr{cst_EebaRaaph9zeech6a} \varepsilon + \log \frac{1}{c_0 \gamma \eta})\Esing (g)
 + (2\Cr{cst_EebaRaaph9zeech6a} \varepsilon + 1) \kappa}
 {
\log \frac{c_0 \gamma \eta}{\varepsilon \Esing (g)}
 }.
 \end{split}
\end{equation}
The proposition is proved when \(u \in \mathcal{C}^2 (\Bar{\Omega})\),
with \eqref{it_OoPhah4ouneukoot6thos4cu} following from \eqref{eq_oqueeShae9bee2Obi7Xuofai}, with a constant \(C\)  in the conditions coming from the conditions \eqref{eq_choopookahphaiChooph6Iot}, \eqref{eq_tixua3iebaeRoFaiguw8puaJ} and \eqref{eq_oWooghahngoich8veedooxu9};
 the conclusion \eqref{it_uNamohyo4Ciece8aegah1que} follows from  \eqref{eq_shou1eejaij3MooHe} and  \eqref{it_dieYair4co5Ui2oowipheisa} from \eqref{eq_xeeNgeo2eishie6jah9Bool2}.

In the general case where \(u \in W^{1, 2} (\Omega, \Rset^\nu)\),
we first consider the case where the function \(F\) is bounded and continuous, so that the Ginzburg--Landau functional is continuous for the strong convergence in \(W^{1,2}\).
We consider a sequence \((u_n)_{n \in \Nset}\) in \(\mathcal{C}^2 (\Bar{\Omega})\) converging strongly to \(u\) in \(W^{1, 2} (\Omega, \Rset^\nu)\).
We apply the proposition to \(u_n\) and let \(\mathcal{B}_n\) be the associated disks.
By \eqref{it_dieYair4co5Ui2oowipheisa}, up to a subsequence, \((\Pi_{\manifold{N}} \compose \tr_{\partial B_\rho (a)}u_n)_{B_\rho (a) \in \mathcal{B}_n}\) can be chosen to remain in the same homotopy class and  \(\# \mathcal{B}_n\) can be chosen to be constant.

If \(F\) is not bounded, then we apply the proposition to a sequence of  bounded functions \(\Tilde{F}_l \in \mathcal{C}(\Rset^\nu, [0, +\infty))\) such that \(\Tilde{F}_l \le F\), \(\Tilde{F}_l\) converges to \(F\) everywhere in \(\Rset^\nu\) and \(\Tilde{F}_l = F\) on a neighbourhood of \(\manifold{N}\). Since the constants only depend on the behaviour of \(F\) in a neighbourhood of \(\manifold{N}\), the constants can be taken independently on \(l \in \Nset\) and the conclusion follows by Leguesgue's monotone convergence theorem.
\end{proof}

\section{Energy convergence}
\label{section_energy_convergence}
We investigate  first in \S \ref{section_aejie9eiLoh9haire} 
the convergence of sequences whose Ginzburg--Landau energy satisfies a logarithmic bound. 
This bound being satisfied for minimisers in view of \cref{proposition_geometricUpperBound}, 
we apply this result to minimisers and get additional properties in \S \ref{theorem_main1}. 

In this section \(\Omega\) is a Lipschitz bounded domain and \(F \in \mathcal{C} (\Rset^\nu, [0,+\infty))\) satisfies \(F^{-1}(\{0\}) = \manifold{N}\) and \eqref{hyp20}.

\subsection{Convergence of bounded sequences}
\label{section_aejie9eiLoh9haire}

The main result about convergence of sequences whose Ginzburg--Landau energy satisfies a logarithmic bound is

\begin{theorem}
\label{theorem_compactnessweak}

Let \(g\in W^{1/2,2}(\partial\Omega,\manifold{N})\), \((u_n)_{n \in \Nset}\) be a sequence in \(W^{1,2}(\Omega,\Rset^\nu)\) with \(\tr_{\partial \Omega}u_n=g\)  and \((\varepsilon_n)_{n \in \Nset}\)
be a sequence in \((0, +\infty)\) converging to \(0\) such that
\begin{equation}
\label{eq_borne_energie}
\sup_{n \in \Nset} \int_{\Omega}  \frac{\abs{\Deriv u_n}^2}{2} + \frac{F (u_n)}{\varepsilon_n^2}
- \Esing (g) \log \frac{1}{\varepsilon_n}
< + \infty .
\end{equation}
Then up to a subsequence, there exists a map \(u_* \in W^{1, 2}_{\mathrm{ren}} (\Omega, \manifold{N})\), such that if we write \(\operatorname{sing} (u_*) =  \{(a_1,\gamma_1), \dotsc, (a_k,\gamma_k)\}\), we have
\begin{enumerate}[(i)]
  \item \label{it_zeqpfibhaedq}
 the sequence \((u_n)_{n \in \Nset}\) converges to \(u_*\) weakly in \(W^{1, 2}_{\textrm{loc}} (\Bar{\Omega} \setminus \{a_1, \dotsc, a_k\}, \Rset^\nu)\) and almost everywhere in \(\Omega\),
  \item \label{it_aebzherçbahçfh}\(\Esing (g) = \sum_{i = 1}^k \frac{\equivnorm{\gamma_i}^2}{4 \pi}\),
 \item \label{it_iji8Jienie1cad0ie} \(\displaystyle \sup_{n \in \Nset} \int_{\Omega} \frac{
 \abs{\Deriv (\dist_{\manifold{N}} \compose u_n)}^2}{2} + \frac{F (u_n)}{\varepsilon_n^2}
 + \sup_{t > 0} t^2 \mathcal{L}^2 (\abs{\Deriv u_n}^{-1}([t, +\infty))) < +\infty\),
 \item \label{it_eShexuRaiChe7oochae6ooda} one has, narrowly as measures on \(\Omega\),
 \[
 \frac{\abs{\Deriv u_n}^2}{2 \log \frac{1}{\varepsilon_n}}\underset{n\to\infty}{\weakto}\sum_{i = 1}^k \frac{\equivnorm{\gamma_i}^2}{4 \pi}\delta_{a_i},
 \]
\item \label{it_Eejai5sungeiF7sho} \(\displaystyle
   \mathcal{E}^\mathrm{ren} (u_*) + \mathcal{Q}_F(u_*)
   \le \liminf_{n \to \infty}
   \int_{\Omega} \frac{\abs{\Deriv u_n}^2}{2} + \frac{F (u_n)}{\varepsilon_n^2}
   - \Esing (g) \log \frac{1}{\varepsilon_n}
 \), 
 
 \item \label{it_vaid7ieb1rub4Che6} for every \(\rho \in (0, \Bar{\rho} (a_1, \dotsc, a_k))\),
 \[
     \mathcal{E}^\mathrm{ren} (u_*)
   +
     \mathcal{Q}_F(u_*)
 \le
  \int_{\Omega \setminus \bigcup_{i = 1}^k B_\rho (a_i)} \frac{\abs{\Deriv u^\ast}^2}{2}
  +
 \liminf_{n \to \infty}
       \int_{\bigcup_{i = 1}^k B_\rho (a_i)} \frac{\abs{\Deriv u_n}^2}{2} + \frac{F (u_n)}{\varepsilon_n^2}
          - \Esing (g) \log \frac{1}{\varepsilon_n}.
 \]
\end{enumerate}
\end{theorem}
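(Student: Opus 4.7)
The plan is to apply the localisation result \cref{theorem_localisation} to each \(u_n\) with parameters \(\gamma = 1/2\), a slowly vanishing \(\eta = \eta_n\), and \(\kappa\) chosen so that \eqref{eq_neephohy5ne5eLiu9OM1biim} is ensured by the hypothesis \eqref{eq_borne_energie}. This produces for each \(n\) a collection \(\mathcal{B}_n\) of disks of total diameter \(\mathcal{O}(\eta_n)\) whose boundary traces, after projection onto \(\manifold{N}\), carry a topological resolution of \(g\) and concentrate the divergent part of the energy.

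First I extract a subsequence along which the structure stabilises. By \eqref{it_dieYair4co5Ui2oowipheisa} in \cref{theorem_localisation}, \(\#\mathcal{B}_n\) is uniformly bounded; up to a subsequence it equals a constant \(k\), the centres converge in \(\Bar{\Omega}\), and --- using that the singular energies of the traces are bounded and that \(\{\equivnorm{\gamma}\}\) is discrete (\cref{proposition_equiv_norm_discrete}) --- the homotopy classes of \(\Pi_{\manifold{N}} \compose \tr_{\partial B} u_n\) stabilise to classes of minimising geodesics \(\gamma_i\). A boundary argument using \cref{corollary_lowerBndBoundary} on concentric annuli of \(\partial \Omega\) rules out centres converging to \(\partial \Omega\). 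After merging disks with coinciding limit centres, we obtain \(\{a_1, \dotsc, a_k\} \subset \Omega\) and a topological resolution \((\gamma_1, \dotsc, \gamma_k)\) of \(g\). Comparing the localised lower bound \eqref{it_uNamohyo4Ciece8aegah1que} with \eqref{eq_borne_energie} forces \(\Esing(g) = \sum_{i=1}^k \frac{\equivnorm{\gamma_i}^2}{4\pi}\), proving \eqref{it_aebzherçbahçfh} and the minimality of the resolution.

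Outside any fixed neighbourhood of \(\{a_1, \dotsc, a_k\}\), \eqref{it_uNamohyo4Ciece8aegah1que} combined with \eqref{eq_borne_energie} bounds \(\int \abs{\Deriv u_n}^2\) uniformly, while \cref{proposition_Firstlowerbound} applied globally gives \eqref{it_iji8Jienie1cad0ie} and, in particular, \(\int F(u_n)/\varepsilon_n^2 \le C\) independently of \(n\). By \eqref{hyp20}, \(\dist(u_n, \manifold{N}) \to 0\) in \(L^2(\Omega)\), so any weak subsequential limit is \(\manifold{N}\)-valued almost everywhere; a diagonal extraction on expanding neighbourhoods of the singular set yields the limit \(u_*\) and the convergence \eqref{it_zeqpfibhaedq}. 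The narrow concentration \eqref{it_eShexuRaiChe7oochae6ooda} then follows by matching, on each shrinking ball about \(a_i\), the logarithmic lower bound \eqref{it_uNamohyo4Ciece8aegah1que} with the upper bound \eqref{eq_borne_energie}, invoking \eqref{it_aebzherçbahçfh}.

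For \eqref{it_Eejai5sungeiF7sho} and \eqref{it_vaid7ieb1rub4Che6}, fix \(\rho \in (0, \Bar{\rho}(a_1, \dotsc, a_k))\). Weak lower semicontinuity on the complement of \(\bigcup_i B_\rho(a_i)\) produces \(\int \frac{\abs{\Deriv u_*}^2}{2}\) on that set. Inside each \(B_\rho(a_i)\), the energy of \(u_n\) is bounded below by \(\mathcal{Q}^{\rho/\varepsilon_n}_{F, \tr_{\partial B_\rho(a_i)} u_n(a_i + \rho\,\cdot)}\) via \eqref{scaling_invariance}. Applying \cref{prop_Q_F_projection} to replace the boundary trace by its \(\manifold{N}\)-projection (with error controllable for a.e.~\(\rho\) by Fubini and the uniform bound on \(\int F(u_n)/\varepsilon_n^2\)) and then \cref{proposition_QW_dependence} together with the synharmonic convergence \(\synhar{\tr_{\Sset^1} u_*(a_i + \rho\,\cdot)}{\gamma_i} \to 0\) from \cref{admissibleMaps}\eqref{synhamonicConvergenceSing}, I obtain
\[
\liminf_{n\to\infty}\int_{B_\rho(a_i)} \frac{\abs{\Deriv u_n}^2}{2} + \frac{F(u_n)}{\varepsilon_n^2} - \frac{\equivnorm{\gamma_i}^2}{4\pi}\log\frac{\rho}{\varepsilon_n} \ge \mathcal{Q}_{F, \gamma_i} + o_{\rho \to 0}(1).
\]
Summing and using \eqref{it_aebzherçbahçfh} yields \eqref{it_vaid7ieb1rub4Che6}; passing \(\rho \to 0\) and using the definition \eqref{renormalised_energy_u} of \(\mathcal{E}^{\mathrm{ren}}(u_*)\) gives \eqref{it_Eejai5sungeiF7sho}. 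The hard step is precisely the synharmonic control of boundary traces on small circles: a Fubini-type selection of radii with well-controlled circle energy is required to ensure that \(\Pi_{\manifold{N}} \compose \tr_{\partial B_\rho(a_i)} u_n\) approaches a minimising geodesic synharmonic to \(\gamma_i\) in \(W^{1/2,2}\), uniformly in \(n\), while the \(\Rset^\nu\)-valued error between \(u_n\) and its projection is absorbed via \cref{prop_Q_F_projection}.
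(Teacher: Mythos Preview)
Your approach tracks the paper's closely --- the same chain of \cref{theorem_localisation}, \cref{proposition_Firstlowerbound}, discreteness of \(\{\equivnorm{\gamma}\}\), and the Fubini/\cref{prop_Q_F_projection}/\cref{proposition_QW_dependence} argument for \eqref{it_Eejai5sungeiF7sho} and \eqref{it_vaid7ieb1rub4Che6}. Two steps, however, do not go through as written.

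First, applying \cref{theorem_localisation} with a \emph{single} vanishing scale \(\eta_n\) does not yield a uniform bound on \(\int_{\Omega \setminus \bigcup_i B_\rho(a_i)} \abs{\Deriv u_n}^2\) for fixed \(\rho\): the complementary energy bound you extract from \eqref{it_uNamohyo4Ciece8aegah1que} and \eqref{eq_borne_energie} is \(\Esing(g)\log(1/\eta_n) + O(1)\), which diverges as \(n\to\infty\). The paper runs \cref{theorem_localisation} with a \emph{fixed} \(\eta_p\) (applied to all large \(n\)), giving a bound uniform in \(n\), and then compares the collections \(\mathcal{B}_{n,p}\) and \(\mathcal{B}_{n,q}\) at two scales via an energy-overlap argument to show \(\dist_{\mathcal{H}}(\mathcal{C}_{n,p},\mathcal{C}_{n,q}) \le \eta_p + \eta_q\). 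Only this two-parameter scheme ensures that the limiting centres are scale-independent, so that for each fixed \(\rho\) one can choose \(p\) with \(3\eta_p < \rho\) and still have a finite, \(n\)-uniform energy bound on \(\Omega \setminus \bigcup_i B_\rho(a_i)\) --- which is what drives both \eqref{it_zeqpfibhaedq} and the renormalisability of \(u_*\).

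Second, your boundary exclusion via \cref{corollary_lowerBndBoundary} ``on concentric annuli of \(\partial\Omega\)'' does not match what that corollary provides (a global estimate on a domain), and it is not clear how it forces \(a_i \in \Omega\). The paper works throughout on the extended domain \(\Omega_{\delta_{\partial\Omega}}\) and, once \(u_*\) is obtained, applies \cite{Monteil_Rodiac_VanSchaftingen_RE}*{Lemma 6.2}: the lower bound on \(\int_{\Omega_{\delta_{\partial\Omega}} \setminus \bigcup_i B_{3\eta_p}(a_i)} \abs{\Deriv \Bar{u}_*}^2/2\) carries a factor \(1/\nu_{\Bar\rho,3\eta_p}(a_i)\), where \(\nu\) measures the fraction of the annulus lying in \(\Omega\). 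The matching upper bound forces \(\nu_{\Bar\rho,3\eta_p}(a_i) \to 1\), which for a Lipschitz boundary implies \(a_i \in \Omega\). The same inequality simultaneously shows that \((\tr_{\partial B_{3\eta_p}(a_i)} u_*)_i\) is a minimal topological resolution and that \(u_*\) is renormalisable --- a fact you need \emph{before} invoking \cref{admissibleMaps}\eqref{synhamonicConvergenceSing} in your treatment of \eqref{it_Eejai5sungeiF7sho}.
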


\Cref{theorem_compactnessweak} follows from \cref{theorem_localisation} as in \cites{Sandier_1998,Jerrard_1999}.

\begin{proof}[Proof of \cref{theorem_compactnessweak}]
\resetconstant

The boundedness assertion \eqref{it_iji8Jienie1cad0ie} follows immediately from the lower bound for the Ginzburg-Landau energy of \cref{proposition_Firstlowerbound}.
Since \(g \in W^{1/2, 2} (\partial \Omega, \manifold{N})\),
there exists a map \(w \in W^{1, 2} (\Omega_{\delta_{\partial \Omega}}, \manifold{N})\), with \(\Omega_{\delta_{\partial \Omega}}\) defined in \eqref{def:tubular_neighboorhood_of_boundary} , such that \(\tr_{\partial \Omega} w = g\). 
For each \(n \in \Nset\), we define the function \(\Bar{u}_n \in W^{1, 2} (\Omega_{\delta_{\partial \Omega}}, \Rset^\nu)\)
in such a way that \(\Bar{u}_n \vert_{\Omega} = u_n\) and \(\Bar{u}_n \vert_{\Omega_{\delta_{\partial \Omega}} \setminus \Omega} = w\).

We let \(\Cl{cst_ungaiB8aiXai8thaiv5aghoo} \in (0, +\infty)\) be a constant as in \cref{theorem_localisation} and we consider a sequence \((\eta_p)_{p \in \Nset}\) in \((0, +\infty)\) converging to \(0\). Since whatever the constants \(\kappa\in(0,+\infty)\), \(\gamma\in(0,1)\), and for each \(p \in \Nset\), there exists \(n_p\in\Nset\) such that for every \(n \ge n_p\),
\begin{align*}
\Cr{cst_ungaiB8aiXai8thaiv5aghoo} e^{\gamma  \Cr{cst_ungaiB8aiXai8thaiv5aghoo} \kappa}
(\Esing (g)\varepsilon_n)^{1-\gamma} & \le \gamma \eta_p, &
\Cr{cst_ungaiB8aiXai8thaiv5aghoo} \Esing (g)\varepsilon_n &\le \frac 12\gamma \eta_p,
&\text{ and }&
\Cr{cst_ungaiB8aiXai8thaiv5aghoo} \kappa \varepsilon_n \le 1,
\end{align*}
we have by \cref{theorem_localisation} and by the energy bound \eqref{eq_borne_energie} a finite collection \(\mathcal{B}_{n, p}\) of disjoint disks of radii less than \(\eta_p\) such that for every \(n\ge n_p\) and every \(B\in \mathcal{B}_{n, p}\), we have \( \Bar{B} \subset \Omega_{\delta_{\partial \Omega}}\) and \(\dist_{\manifold{N}} \compose \tr_{\partial B} \Bar{u}_n < \delta_{\manifold{N}}\), such that for some constant \(\Cl{nfdoksaiewoqn}>-\infty\) independant of \(p,n\) (which may depend on \(\gamma,\kappa\) and \(\Esing (g)\)), we have in view of \eqref{it_uNamohyo4Ciece8aegah1que} and \eqref{it_dieYair4co5Ui2oowipheisa} in \cref{theorem_localisation}, for every \(\mathcal{B}' \subseteq \mathcal{B}_{n, p}\)
\begin{equation}
\label{equation_ti9Iegheegei7uephaiquahr}
   \int_{\bigcup_{B \in \mathcal{B}'} B}
  \frac{\abs{\Deriv \Bar{u}_n}^2}{2} + \frac{F (\Bar{u}_n)}{\varepsilon_n^2}
  \geq \sum_{B \in \mathcal{B}'}
  \Esing (\Pi_{\manifold{N}} \compose \tr_{\partial B} \Bar{u}_n) \log \frac{\eta_p}{\varepsilon_n}-\Cr{nfdoksaiewoqn},
\end{equation}
with \(\Esing (\Pi_{\manifold{N}} \compose \tr_{\partial B} \Bar{u}_n)\)
and such that the maps \((\Pi_{\manifold{N}} \compose \tr_{\partial B} \Bar{u}_n)_{B\in \mathcal{B}_{n,p}}\) form a topological resolution of \(g\).
%{\color{red}
%; in particular, 
%\[
%\sum_{B\in \mathcal{B}_{n, p}}
%\Esing (\Pi_{\manifold{N}} \compose \tr_{\partial B} \Bar{u}_n) \ge \Esing (g).
%\]
%It follows thus from the boundedness assumption \eqref{eq_borne_energie} that
%\begin{equation}
%  \label{eq_rei7engaew9Wo1Oor}
%\sum_{B\in \mathcal{B}_{n, p}}
%\Esing (\Pi_{\manifold{N}} \compose \tr_{\partial B} \Bar{u}_n)\log \frac{\eta_p}{\varepsilon_n}
%\le \Esing (g)
%\log \frac{1}{\varepsilon_n} + \Cl{cst_Thaey8thie0aithei}.
%\end{equation}
%}

Since the manifold \(\manifold{N}\) is compact, in view of  \cref{proposition_equiv_norm_discrete}, the set \(\{\equivnorm{\gamma} \st \gamma \in \VMO (\Sset^1, \manifold{N})\}\) is discrete, and thus there exists \(\delta > 0\) such that if \((\gamma_1, \dotsc, \gamma_\ell)\) is a topological resolution of \(g\),
and
\(
 \sum_{i = 1}^{\ell} \Esing (\gamma_i)
\le \Esing (g) + \delta,
\)
then
\(
\sum_{i = 1}^{\ell} \Esing (\gamma_i)
= \Esing (g).
\)
By \cref{theorem_localisation} \eqref{it_dieYair4co5Ui2oowipheisa}, and taking \(n_p\) larger if necessary, we can thus assume that
\[
\# \mathcal{B}_{n, p} \frac{\syst(\manifold{N})^2}{4 \pi}
\le
\sum_{B\in \mathcal{B}_{n, p}}
\Esing (\Pi_{\manifold{N}} \compose \tr_{\partial B} \Bar{u}_n)
= \Esing (g),
\]
so that \((\Pi_{\manifold{N}} \compose \tr_{\partial B} \Bar{u}_n)_{B \in \mathcal{B}_{n, p}}\) is a minimal topological resolution of \(\tr_{\partial \Omega} u_n=g\). By \eqref{equation_ti9Iegheegei7uephaiquahr}, it follows that
\begin{equation}
\label{equation_foepkmflmfoewapfmdls}
  \int_{\bigcup_{B\in \mathcal{B}_{n, p}} B}
  \frac{\abs{\Deriv \Bar{u}_n}^2}{2} + \frac{F (\Bar{u}_n)}{\varepsilon_n^2}
  \geq
  \Esing (g) \log \frac{\eta_p}{\varepsilon_n}-\Cr{nfdoksaiewoqn},
\end{equation}
which with our assumption \eqref{eq_borne_energie}, yields
\begin{equation}
  \label{eq_localisation2}
  \int_{\Omega_{\delta_{\partial \Omega}}\setminus \bigcup_{B\in \mathcal{B}_{n, p}} B}
    \frac{\abs{\Deriv \Bar{u}_n}^2}{2} + \frac{F (\Bar{u}_n)}{\varepsilon_n^2}
  \leq
    \Esing (g)\log \frac{1}{\eta_p}+\Cl{cst_ooHei2tahgae8Tei3}.
\end{equation}

Let \(\mathcal{C}_{n, p}\) denote the set of centres of the disks in \(\mathcal{B}_{n, p}\).
Up to a subsequence in \(n\) and by a diagonal argument, we can assume that for each \(p \in \Nset\), the sequence \((\mathcal{C}_{n, p})_{n\in\mathbb{N}}\) converges in Hausdorff distance to a finite set \(\mathcal{C}_p\) in \(\Bar{\Omega}\) of cardinality at most \( 4 \pi \Esing (g)/\operatorname{sys} (\manifold{N})^2\).
Taking a subsequence, we can assume further that \((\mathcal{C}_p)_{p \in \Nset}\) converges in Hausdorff distance to a finite set \(\mathcal{C} = \{a_1, \dotsc, a_k\} \subset \Bar{\Omega}\), with \(k \le 4 \pi \Esing (g)/\operatorname{sys} (\manifold{N})^2\).
(The sets \(\mathcal{C}_{p, n}\), \(\mathcal{C}_p\) and \(\mathcal{C}\) being possibly empty, we understand that a sequence converges to the empty set in Hausdorff distance whenever it is eventually a constant sequence of empty sets.)

Moreover, 
we have for every \(p, q \in \Nset\), for \(n \in \Nset\) large enough by \eqref{equation_ti9Iegheegei7uephaiquahr},
\begin{equation}
\begin{split}
\int_{\Omega_{\delta_{\partial \Omega}}}\frac{\abs{\Deriv \Bar{u}_n}^2}{2} + \frac{F (\Bar{u}_n)}{\varepsilon_n^2}
&\ge \sum_{B \in \mathcal{B}_{n, p}}
  \int_{B}
  \frac{\abs{\Deriv \Bar{u}_n}^2}{2} + \frac{F (\Bar{u}_n)}{\varepsilon_n^2}
  + \sum_{\substack{B \in \mathcal{B}_{n, q}\\
  B \cap \bigcup_{B' \in  \mathcal{B}_{n, p}}B' = \emptyset}}
  \int_{B}
  \frac{\abs{\Deriv \Bar{u}_n}^2}{2} + \frac{F (\Bar{u}_n)}{\varepsilon_n^2}\\
  &\geq
  \sum_{B \in \mathcal{B}_{n, p}}
  \Esing (\Pi_{\manifold{N}} \compose \tr_{\partial B} \Bar{u}_n) \log \frac{\eta_p}{\varepsilon_n}\\
  &\qquad 
  + \sum_{\substack{B \in \mathcal{B}_{n, q}\\
  B \cap \bigcup_{B' \in  \mathcal{B}_{n, p}}B' = \emptyset}}
  \Esing (\Pi_{\manifold{N}} \compose \tr_{\partial B} \Bar{u}_n) \log \frac{\eta_q}{\varepsilon_n}
  - 2 \Cr{nfdoksaiewoqn}\\
  &\geq
  \Esing (g) \log \frac{\eta_p}{\varepsilon_n}\\
& \qquad  + \# \{B \in \mathcal{B}_{n, q} \st  B \cap {\textstyle \bigcup_{B ' \in  \mathcal{B}_{n, p}}B' = \emptyset}\} \tfrac{\syst(\manifold{N})^2}{4 \pi} \log \frac{\eta_q}{\varepsilon_n} - 2\Cr{nfdoksaiewoqn},
  \end{split}
\end{equation}
from which it follows that if \(n\) is large enough,
we have \(\dist_{\mathcal{H}} (\mathcal{C}_{n, p}, \mathcal{C}_{n, q}) \le  \eta_p + \eta_q\).
Passing to the limit, we have \(\dist_{\mathcal{H}} (\mathcal{C}_{p}, \mathcal{C}_{q}) \le  \eta_p + \eta_q\) and then \(\dist_{\mathcal{H}} (\mathcal{C}_{p}, \mathcal{C})  \le \eta_p\).
In particular, for every \(p \in \Nset\), for \(n \in \Nset\) large enough, we have \(\dist_{\mathcal{H}} (\mathcal{C}_{n, p}, \mathcal{C})  \le 2\eta_p\), and thus 
\begin{gather}
\label{eq_eePhahw5ohng0zie0}
\int_{\Omega_{\delta_{\partial \Omega}}\setminus \bigcup_{i = 1}^k B_{3 \eta_p} (a_i)}
    \frac{\abs{\Deriv \Bar{u}_n}^2}{2} + \frac{F (\Bar{u}_n)}{\varepsilon_n^2}
  \leq
    \Esing (g)\log \frac{1}{3 \eta_p}+\Cl{cst_if1aphoogi7eiPe5i},
%\shortintertext{where,}
%\Tilde{\eta_p}\defeq 2\eta_p+\dist_{\mathcal{H}} (\mathcal{C}_{p}, \mathcal{C})
\end{gather}
with \(\Cr{cst_if1aphoogi7eiPe5i} \defeq \Cr{cst_ooHei2tahgae8Tei3} +  \Esing (g) \log 3\).
By  weak compactness, Rellich's compactness theorem
and a diagonal argument,
the sequence \((\Bar{u}_n)_{n \in \Nset}\)  converges almost everywhere to some \(\Bar{u}_*  : \Omega_{\delta_{\partial \Omega}} \to \Rset^\nu\)
and weakly to \(\Bar{u}_*\) in \(W^{1, 2} (\Omega_{\delta_{\partial \Omega}} \setminus \bigcup_{i = 1}^k \Bar{B}_{\rho} (a_i), \Rset^\nu)\) for every \(\rho > 0\).
We have \(\Bar{u}_* = w\) on \(\Omega \setminus \Omega_{\delta_{\partial \Omega}}\).
We define \(u_* = \Bar{u}_* \vert_{\Omega}\).
Since by Fatou's lemma,
\[
 \int_{\Omega \setminus \bigcup_{i = 1}^k B_{3 \eta_p} (a_i)} F (u_*) \le \lim_{n \to \infty} \varepsilon_n^2 \int_{\Omega \setminus \bigcup_{i = 1}^k B_{3 \eta_p} (a_i)} \frac{F (u_n)}{\varepsilon_n^2} = 0,
\]
we have \(u_* \in \manifold{N}\) almost everywhere in \(\Omega\).
Moreover, for every \(p \in \Nset\), we have by \eqref{eq_eePhahw5ohng0zie0} and by lower semicontinuity
\begin{equation}
\label{eq_cacooPoc8euQuoh6N}
\begin{split}
  \int_{\Omega_{\delta_{\partial \Omega}}\setminus \bigcup_{i = 1}^k B_{3 \eta_p} (a_i)}
  \frac{\abs{\Deriv\Bar{u}_*}^2}{2} &\le \liminf_{n \to \infty} \int_{\Omega_{\delta_{\partial \Omega}}\setminus \bigcup_{i = 1}^k B_{3 \eta_p} (a_i)} \frac{\abs{\Deriv \Bar{u}_n}^2}{2} + \frac{F (\Bar{u}_n)}{\varepsilon_n^2}\\
  &\le \Esing (g)\log \frac{1}{3 \eta_p}+\Cr{cst_if1aphoogi7eiPe5i}.
  \end{split}
\end{equation}
By \cite{Monteil_Rodiac_VanSchaftingen_RE}*{Lemma 6.2}, for \(p\) large enough so that
\begin{multline*}
3\eta_p<
\Bar{\rho} \defeq \sup \{r > 0 \st \text{for each \(i \in \{1, \dotsc, k\}\), \(B_r (a_i) \subset \Omega_{\delta_{\partial \Omega}} \)}\\
\text{
  and for each \(j \in \{1, \dotsc, k\} \setminus \{i\}\), \(B_r (a_i) \cap B_r (a_j) = \emptyset\)} \},
\end{multline*}
we have
\begin{multline}\label{eq:aefibhaefzpz}
  \int_{\Omega_{\delta_{\partial \Omega}} \setminus \bigcup_{i = 1}^k B_{3 \eta_p} (a_i)} \frac{\abs{\Deriv \Bar{u}_*}^2}{2}
 \\
 \ge
 \sum_{i = 1}^k  \frac{\equivnorm{\tr_{\partial B_{3 \eta_p} (a_i)} \Bar{u}_*}^2}{4 \pi \nu_{\Bar{\rho}, 3 \eta_p} (a_i)}
  \log \frac{\Bar{\rho}}{3 \eta_p}
 \Biggl(1 -
 \Bigl(\frac{2 \pi \C \mathcal{E}^{\mathrm{ext}} (\tr_{\partial \Omega} \Bar{u}_*)}
 {\equivnorm{\tr_{\partial B_{3 \eta_p}} u}^{2}  \log \frac{\Bar{\rho}}{3 \eta_p}}\Bigr)^{1/2}\Biggr)^2,
\end{multline}
where 
\[
\nu_{\Bar{\rho}, {3 \eta_p}} (a)
\defeq \frac{1}{2 \pi  \log \frac{\Bar{\rho}}{{3 \eta_p}}}\int_{(B_{\Bar{\rho}} (a) \setminus \Bar{B}_{3 \eta_p} (a)) \cap \Omega}
\frac{1}{\abs{x - a}^2} \dif x \leq 1.
\]
Since \((\tr_{\partial B_{3 \eta_p} (a_i)} \Bar{u}_*)_{1 \le i \le k}\) is a topological resolution of \(\tr_{\partial \Omega_{\delta_{\partial \Omega}}} \Bar{u}_*\), and thus of \(g\), we have
\[
\sum_{i = 1}^k
 \frac{\equivnorm{\tr_{\partial B_{3 \eta_p} (a_i)} \Bar{u}_*}^2}{4 \pi \nu_{\Bar{\rho}, 3 \eta_p} (a_i)}
 \geq \sum_{i=1}^k \frac{\equivnorm{\tr_{\partial B_{3 \eta_p} (a_i)} \Bar{u}_*}^2}{4 \pi} \geq \Esing(g).
\]
It thus follows by \eqref{eq_cacooPoc8euQuoh6N} and \eqref{eq:aefibhaefzpz} that \(\lim_{p \to \infty} \nu_{\Bar{\rho}, 3 \eta_p} (a_i)=1\)   which implies that \(a_i \in \Omega\) (since \(\Omega\) has Lipschitz boundary). It also follows that \( (\tr_{\partial B_{3 \eta_p} (a_i)} \Bar{u}_*)_{1\leq i\leq k}\) is a minimal topological resolution of \(g\).
Hence, in view of \cref{def_renormalisable} and \eqref{eq_cacooPoc8euQuoh6N}, the map \(u_*\) is renormalisable. Thus, if we  let \( \operatorname{sing} (u_*) =  \{(a_1,\gamma_1), \dotsc, (a_k,\gamma_k)\}\) we obtain \eqref{it_zeqpfibhaedq} and \eqref{it_aebzherçbahçfh}.

% A : This following part is correct exept that we do not know wether it is \(a_i\) or \(a_i^p\) in \eqref{eq:conv_sens_mesures}

We now prove \eqref{it_eShexuRaiChe7oochae6ooda}. By \eqref{it_zeqpfibhaedq} and \eqref{eq_borne_energie}, we deduce that, up to a subsequence, 
\begin{equation}\label{eq:qefpubhaebdq}
\frac{\abs{\Deriv u_n}^2}{2\abs{\log \varepsilon_n}} \rightharpoonup \sum_{i=1}^k \alpha_i \delta_{a_i} \quad\text{in the narrow topology of measures}
\end{equation}
for some constants \(\alpha_1,\dotsc,\alpha_k\in \Rset\). Thanks to the upper bound \eqref{eq_borne_energie} and the lower bound \cref{proposition_Firstlowerbound}, we have that
\[
 \biggl(\int_{\Omega} \frac{\abs{\Deriv u_n}^2}{2} + \frac{F (u_n)}{\varepsilon_n} - \Esing (g)\log \frac{1}{\varepsilon_n}\biggr)_{n \in \Nset}
\]
is bounded. Using this and the fact that, by \eqref{it_iji8Jienie1cad0ie}, \( \frac{1}{\abs{\log \varepsilon_n}}\int_\Omega\frac{F(u_n)}{\varepsilon_n^2}\rightarrow 0\) we obtain that 
\begin{equation}\label{ezpiuhaqefb}
\sum_{i=1}^k \alpha_i=\Esing(g).
\end{equation}
By \eqref{eq_eePhahw5ohng0zie0} and by Fatou's lemma, for almost every \(r \in (0, \Bar{\rho})\), we have for each \(i \in \{1, \dotsc, k\}\),
\begin{equation}
\label{FubiniTypeArgument}
 \liminf_{n \to \infty}
 \int_{\partial B_{r} (a_i)} \frac{\abs{\Deriv u_n}^2}{2} + \frac{F (u_n)}{\varepsilon_n^2} < + \infty.
\end{equation}
By Sobolev embedding and Arzela-Ascoli criterion, we obtain that for \(n\) large enough \(\Pi_{\manifold{N}} \compose \tr_{\partial B_{r} (a_i)} u_n\) and \(\tr_{\partial B_{r} (a_i)} u_*\) are homotopic, and thus in particular 
\begin{equation*}
\Esing (\tr_{\partial B_{r} (a_i)} u_n)
= \Esing(\tr_{\partial B_{r} (a_i)} u) = \Esing(\gamma_i) = \frac{\equivnorm{\gamma_i}^2}{4 \pi}.
\end{equation*}
Hence, taking such an \(r \in (\Bar{\rho}/2, \Bar{\rho})\), we get by \cref{corollary_lowerBndBoundary}
\begin{equation}
 \int_{\partial B_{r} (a_i)} \frac{\abs{\Deriv u_n}^2}{2} + \frac{F (u_n)}{\varepsilon_n^2}
 \ge \biggl(\log \frac{1}{\varepsilon_n}\biggr)\frac{\equivnorm{\gamma_i}^2}{4 \pi} - \C.
\end{equation}
Thus we have \(\alpha_i \ge \frac{\equivnorm{\gamma_i}^2}{4 \pi}\), and in view of \eqref{ezpiuhaqefb}, we obtain \eqref{it_eShexuRaiChe7oochae6ooda}.

% A : The rest of the proof is correct

The rest of the proof is devoted to assertions \eqref{it_Eejai5sungeiF7sho} and \eqref{it_vaid7ieb1rub4Che6}.
For almost every \(r \in (0, \Bar{\rho})\), for each \(i \in \{1, \dotsc, k\}\) and \(n \in \Nset\),
\(\tr_{\partial B_r (a_i)} u_n = u_n \vert_{\partial B_r (a_i)}\).
Hence if we define \(\gamma_{i, n}^r : \Sset^1 \to \Rset^\nu\)  by \(\gamma_{i, n}^r (x) \defeq u_n (a_i + r x)\), we have by \eqref{FubiniTypeArgument}, for almost every \(r \in (0, \Bar{\rho})\)
\begin{equation}
\label{eq_quah6xie2aifoociG}
 \liminf_{n \to \infty}
 \int_{\Sset^1} \frac{\abs{(\gamma_{i, n}^r)'}^2}{2} + \frac{r^2}{\varepsilon_n^2}F (\gamma_{i, n}^r) < + \infty.
\end{equation}
There exists thus a subsequence \((n_\ell)_{\ell \in \Nset}\) (depending on \(r\)) such that
\begin{equation}\label{eq:afbuhaeboiah}
 \sup_{\ell \in \Nset} 
 \int_{\Sset^1} \frac{\abs{(\gamma_{i, n_\ell}^r)'}^2}{2} + \frac{r^2}{\varepsilon_{n_\ell}^2}F (\gamma_{i, n_\ell}^r) < + \infty.
\end{equation}
In particular, \(F(\gamma_{i,n_\ell}^r) \rightarrow 0\) a.e.\ up to extraction of a subsequence if necessary. Hence, by \eqref{hyp20}, we have \(\dist_{\manifold{N}} \compose \gamma_{i, n_\ell}^r\rightarrow 0\) a.e.\ so that for \(\ell\) large enough, we have \(\dist_{\manifold{N}} \compose \gamma_{i, n_\ell}^r<\delta_\manifold{N}/2\). Thus, by \cref{prop_Q_F_projection} and by \eqref{eq:afbuhaeboiah}, we have
 \begin{equation}
   \lim_{\ell \to \infty}
   \bigabs{\mathcal{Q}^{ r/\varepsilon_{n_\ell}}_{F,\gamma_{i, n_\ell}^r} -  \mathcal{Q}^{ r/\varepsilon_{n_\ell}}_{F,\Pi_{\manifold{N}} \compose \gamma_{i, {n_\ell}}^r}}
   \le \lim_{\ell \to \infty} \C \frac{\varepsilon_{n_\ell}}{r} \int_{\Sset^1} \frac{\abs{(\gamma_{i, n_\ell}^r)'}^2}{2} + \frac{r^2}{\varepsilon_{n_\ell}^2}F (\gamma_{i, n_\ell}^r) = 0,
 \end{equation}
so that
\begin{equation}
\label{eq_voo5ooPithofaChae}
 \liminf_{\ell \to \infty} \mathcal{Q}^{ r/\varepsilon_{n_\ell}}_{F,\gamma_{i, n_\ell}^r} - \frac{\equivnorm{\gamma_i}^2}{4 \pi} \log \frac{r}{\varepsilon_{n_\ell}}
 = \liminf_{\ell \to \infty} \mathcal{Q}^{ r/\varepsilon_{n_\ell}}_{F,\Pi_{\manifold{N}} \compose \gamma_{i, {n_\ell}}^r}- \frac{\equivnorm{\gamma_i}^2}{4 \pi} \log \frac{r}{\varepsilon_{n_\ell}}.
\end{equation}
On the other hand, by \eqref{eq:afbuhaeboiah} and Sobolev embeddings, the sequence \((\gamma_{i, n_\ell}^r)_{\ell\in\Nset}\) converges strongly to \(u_* (a_i + r\,\cdot)\) in \(W^{1/2, 2} (\Sset^1, \Rset^\nu)\), and thus \((\Pi_{\manifold{N}} \compose \gamma_{i, n_\ell}^r)_{\ell\in\Nset}\) converges to \(u_* (a_i + r\,\cdot)\) in \(W^{1/2, 2} (\Sset^1, \manifold{N})\). Hence, by \cite{Monteil_Rodiac_VanSchaftingen_RE}*{Proposition 3.3 (vi)}, \(\lim_{\ell \to \infty} \synhar{\Pi_{\manifold{N}} \compose \gamma_{i, n_\ell}^r}{u_* (a_i +r \cdot)}= 0\). Thus by \eqref{eq_voo5ooPithofaChae} and  \cref{proposition_QW_dependence}
\begin{equation}
\label{eq_wah3ieg5hieNgaeja}
\liminf_{\ell\to \infty} \mathcal{Q}^{ r/\varepsilon_{n_\ell}}_{F,\gamma_{i,n_\ell}^r} - \frac{\equivnorm{\gamma_i}^2}{4 \pi} \log \frac{r}{\varepsilon_{n_\ell}}
 =
  \liminf_{\ell \to \infty} \mathcal{Q}^{ r/\varepsilon_{n_\ell}}_{F,u_* (a_i + r \cdot)} - \frac{\equivnorm{\gamma_i}^2}{4 \pi} \log \frac{r}{\varepsilon_{n_\ell}}.
\end{equation}
Finally by \cref{proposition_QW_dependence} again, we have in view of \eqref{eq_wah3ieg5hieNgaeja}
\[
 \liminf_{\ell \to \infty} \mathcal{Q}^{ r/\varepsilon_{n_\ell}}_{F,\gamma_{i,n_\ell}^r}- \frac{\equivnorm{\gamma_i}^2}{4 \pi} \log \frac{r}{\varepsilon_{n_\ell}}
 \ge \mathcal{Q}_{F,\gamma_i} - \synhar{u_* (a_i + r \cdot)}{\gamma_i}.
\]
It follows thus that
\[
\begin{split}
 \lim_{\ell \to \infty} \int_{\Omega}& \frac{\abs{\Deriv u_{n_\ell}}^2}{2}
 + \frac{F (u_{n_\ell})}{\varepsilon_{n_\ell}^2}- \Esing (g)\log \frac{1}{\varepsilon_{n_\ell}}\\
 &\ge \liminf_{V \to \infty} \int_{\Omega \setminus \bigcup_{i = 1}^k B_r (a_i)} \frac{\abs{\Deriv u_{n_\ell}}^2}{2} + \frac{F (u_{n_\ell})}{\varepsilon_{n_\ell}^2}
 + \sum_{i = 1}^k \liminf_{\ell \to \infty} \int_{B_r (a_i)}  \frac{\abs{\Deriv u_{n_\ell}}^2}{2}
 + \frac{F (u_{n_\ell})}{\varepsilon_{n_\ell}^2} \\ & \quad \quad -\Esing(g) \log \frac{1}{\varepsilon_{n_\ell}}\\
 &\ge \int_{\Omega \setminus \bigcup_{i = 1}^k B_r (a_i)} \frac{\abs{\Deriv u_*}^2}{2} - \Esing (g) \log \frac{1}{r}
 + \sum_{i=1}^k \liminf_{\ell \to \infty} \mathcal{Q}^{r/\varepsilon_{n_\ell}}_{F,\gamma_{i,n_\ell}^r}- \frac{\equivnorm{\gamma_i}^2}{4 \pi} \log \frac{r}{\varepsilon_{n_\ell}}\\
 &\ge \int_{\Omega \setminus \bigcup_{i = 1}^k B_r (a_i)} \frac{\abs{\Deriv u_*}^2}{2} - \Esing (g) \log \frac{1}{r} + \mathcal{Q}_{F} (u_*) - \sum_{i  = 1}^k \synhar{u_* (a_i + r \cdot)}{\gamma_i}.
\end{split}
\]
We reach the conclusion \eqref{it_Eejai5sungeiF7sho} by letting \(r \to 0\).

The proof of \eqref{it_vaid7ieb1rub4Che6} proceeds as the proof of \eqref{it_Eejai5sungeiF7sho} in order to reach
\begin{multline*}
\liminf_{\ell \to \infty} \sum_{i = 1}^k \int_{B_\rho (a_i)} \frac{\abs{\Deriv u_{n_\ell}}^2}{2}
 + \frac{F (u_{n_\ell})}{\varepsilon_{n_\ell}^2}- \Esing (g)\log \frac{1}{\varepsilon_{n_\ell}}\\
 \ge \sum_{i = 1}^k  \int_{B_\rho (a_i) \setminus B_r (a_i)} \frac{\abs{\Deriv u_*}^2}{2} - \Esing (g) \log \frac{1}{r} + \mathcal{Q}_{F} (u_*) - \sum_{i  = 1}^k \synhar{u_*(a_i + r \cdot)}{\gamma_i},
\end{multline*}
the conclusion follows then by letting \(r \to 0\) and additivity of integrals.
\end{proof}
\begin{remark}[\(\Gamma\)--convergence]
For each \(g\in W^{1/2,2}(\partial\Omega,\manifold{N})\) and \(\varepsilon \in (0,+\infty)\), we define \({E}^{g}_\varepsilon\) on set of measurable functions by setting
\begin{equation*}
{E}^{g}_\varepsilon(u)=
\begin{cases}
\displaystyle
  \int_\Omega \Bigl(\frac{\abs{\Deriv u}^2}{2} +\frac{F(u)}{\varepsilon^2}\Bigr) -\Esing (g)\log\frac 1\varepsilon&\text{if \(u\in W^{1,2}(\Omega,\Rset^\nu)\) and
\(\tr_{\partial \Omega} u=g\) on \(\partial\Omega\),}\\
+\infty&\text{otherwise,}
\end{cases}
\end{equation*}
and we define the limit functional \(E^{g}_0\) on the set of measurable functions by setting
\begin{equation}
\label{GammaLimit}
E^{g}_0(u)=
\begin{cases}
\mathcal{E}^\mathrm{ren}(u)+\mathcal{Q}_F(u)
& \text{if \(u \in \mathcal{A}_g (\Omega, \manifold{N})\)},\\
+\infty & \text{otherwise},
\end{cases}
\end{equation}
where \(\mathcal{A}_g(\Omega, \manifold{N})\) is the set of maps \(u\in W^{1, 2}_{\mathrm{ren}}(\Omega, \manifold{N})\) such that \(\tr_{\partial \Omega} u=g\) and \((\gamma_1,\dotsc,\gamma_k)\) is a minimal topological resolution of \(g\), where \(\operatorname{sing}(u)=\{(a_1,\gamma_1),\dotsc,(a_k,\gamma_k)\}\).

The family of functionals \(({E}^{g}_\varepsilon)_{\varepsilon>0}\) \(\Gamma\)-converges as \(\varepsilon\to 0\) to \(E^{g}_0\) in \(L^p(\Omega,\Rset^\nu)\) endowed with the strong topology for every \(p \in [1, +\infty)\), and in \(W^{1, p} (\Omega, \Rset^\nu)\) endowed with the weak or strong topology for every \(p \in [1, 2)\). The upper bound follows from the upper bound \cref{proposition_improvedupperBound} and the lower bound from \cref{theorem_compactnessweak}.
For \(\manifold{N} = \Sset^1\) and for the strong convergence in \(W^{1, 1}\), a \(\Gamma\)--convergence result at leading order, i.e.\ the \(\Gamma\)--convergence of \( \mathcal{E}_F^\varepsilon/\log \frac{1}{\varepsilon}\), is due to Jerrard and Soner \cite{Jerrard_Soner_2002}*{Theorem 4.1}. For \(\manifold{N} = \Sset^1\), a \(\Gamma\)--convergence type result at next order can be found in \cite{Alicandro_Ponsiglione_2014}: if \(Ju=\operatorname{det}\nabla u\) denotes the Jacobian of \(u\), the authors show the \(\Gamma\)--convergence of the energy \(\inf_{\{u\st Ju=J\}}\mathcal{E}_F^\varepsilon(u)-\Esing(g)\log \frac{1}{\varepsilon}\) in the Jacobian variable \(J\in\mathcal{C}^{0,1}(\Omega)'\) endowed with the convergence in the flat norm. Our framework allows us to state the \(\Gamma\)--convergence of \(\mathcal{E}_F^\varepsilon(u)-\Esing(g)\log \frac{1}{\varepsilon}\) in the variable \(u\); this in particular requires to introduce the renormalised energy \(\mathcal{E}^\mathrm{ren}\) of renormalisable maps (not necessarily harmonic away from singularities). In the case of the circle \(\manifold{N}=\Sset^1\), this approach is reminiscent of \cite{GoldmanMerletMillot}.

\end{remark}

\subsection{Convergence of minimisers}
We are now ready to fully state and prove our result about the convergence of minimisers:
\begin{theorem}
\label{theorem_main1}
Let \(g\in W^{1/2,2}(\partial\Omega,\manifold{N})\),
let \((\varepsilon_n)_{n \in \Nset}\) be a sequence in \((0,+\infty)\) converging to \(0\)
and for each \(n \in \Nset\)  let \(u_n \in W^{1,2}(\Omega,\Rset^\nu)\)
be a minimiser of the Ginzburg--Landau energy \(\mathcal{E}^\varepsilon_F\) under the Dirichlet boundary condition \(\tr_{\partial\Omega}u_n=g\).
Then, up to a subsequence, there exists a map \(u_*\in W^{1, 2}_{\mathrm{ren}} (\Omega,\manifold{N})\) such that if we write \(\operatorname{sing} (u_*) =\{(a_1,\gamma_1),\dotsc,(a_k,\gamma_k)\}\), we have
\begin{enumerate}[(i)]
  \item \label{it_aiboo5ahXoh7meingohnoo9i} the sequence \((u_{n})_{n \in \Nset}\) converges almost everywhere to \(u_*\) and strongly in
  \(W^{1, 2}_{\textrm{loc}} (\Bar{\Omega} \setminus \{a_1, \dotsc, a_k\})\) and \(F (u_{n})/\varepsilon_n^2 \to 0\) in \(L^1_{\mathrm{loc}} (\Bar{\Omega} \setminus \{a_1, \dotsc, a_k\})\), 
  \item \label{it_ooChifooNg2zeequohsahsoo}\(\Esing (g) = \sum_{i = 1}^k \frac{\equivnorm{\gamma_i}^2}{4 \pi}\),
  \item \label{it_ud9Ooy6uelooch0jei0oopay} \(\displaystyle \sup_{n \in \Nset} \int_{\Omega} \frac{
 \abs{\Deriv (\dist_{\manifold{N}} \compose u_n)}^2}{2} + \frac{F (u_n)}{\varepsilon_n^2}
 + \sup_{t > 0} t^2 \mathcal{L}^2 (\abs{\Deriv u_n}^{-1}[0, +\infty)) < +\infty\),
\item\label{weakCvJac} one has, narrowly as measures on \(\Omega\),
 \[
 \frac{\abs{\Deriv u_n}^2}{2 \log \frac{1}{\varepsilon_n}}\weakto \sum_{i = 1}^k \frac{\equivnorm{\gamma_i}^2}{4 \pi}\delta_{a_i},
 \]
  \item\label{it_zpghargzqger} \(\tr_{\partial \Omega} u_* = g\) and \(u_*\) is a minimising renormalisable singular harmonic map (see \cref{minimisingRenormalisable}) so that in particular, for every \(\rho\in\overline{\rho}(a_1,\dotsc,a_k)\), \(u_*\in \mathcal{C}^\infty(\Omega\setminus\{a_1,\dotsc,a_k\},\manifold{N})\) is harmonic minimising in \(\Omega\setminus\bigcup_{i=1}^k \Bar{B}_\rho(a_i)\) with respect to its own boundary conditions,
  \item \label{it_eigh6wav2ieS3yahThaipho8} we have the equalities
\begin{equation*}
  \begin{split}
    \lim_{n \to \infty} &\int_{\Omega} \frac{\abs{\Deriv u_{n}}^2}{2} + \frac{F (u_{n})}{\varepsilon_n^2} - \Esing (g)\log \frac{1}{\varepsilon_n}\\
    &= \mathcal{E}^{\mathrm{ren}}(u_*)+\mathcal{Q}_{F}(u_*) = \inf \{\mathcal{E}^{\mathrm{ren}} (u) + \mathcal{Q}_F(u) \st
  u \in W^{1, 2}_{\mathrm{ren}} (\Omega,\manifold{N})\text{ and }\tr_{\partial \Omega} u = g\}\\
  &= \mathcal{E}^{\mathrm{geom}}_{g,\gamma_1,\dotsc,\gamma_k}(a_1,\dotsc,a_k) +\sum_{i=1}^k \mathcal{Q}_{F,\gamma_i}=
 \mathcal{W}_{\min},
  \end{split}
\end{equation*}
where
 \begin{multline}
 \label{eq_ies5lohth9sa7Aiboo5AiKat}
 \mathcal{W}_{\min}
 \defeq
  \inf\biggl\{\mathcal{E}^{\mathrm{geom}}_{g,\eta_1,\dotsc,\eta_\ell}(b_1,\dotsc,b_\ell)+\sum_{i=1}^\ell \mathcal{Q}_{F,\eta_i}
\st
b_1,\dotsc,b_\ell \in \Omega
\text{ are distinct }\\[-1em]
\text{and }
(\eta_1, \dotsc, \eta_\ell)
\text{ is a minimal topological resolution } of g
\biggr\}.
 \end{multline}
\end{enumerate}
\end{theorem}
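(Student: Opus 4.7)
The plan is to sandwich the Ginzburg--Landau energy of the minimisers $u_n$ between the explicit upper bound of \cref{proposition_geometricUpperBound} and the lower bound afforded by \cref{theorem_compactnessweak}, so that the equalities forced by this sandwich will yield both the minimisation property of $u_*$ and the strong convergence. For every $\theta > 0$, I would first pick $\ell \in \Nset_\ast$, a configuration $(b_1, \dotsc, b_\ell) \in \Conf{\ell} \Omega$ and a minimal topological resolution $(\eta_1, \dotsc, \eta_\ell)$ of $g$ whose value in \eqref{eq_ies5lohth9sa7Aiboo5AiKat} is at most $\mathcal{W}_{\min} + \theta$. By the minimality of $u_n$ and \cref{proposition_geometricUpperBound},
\[
\mathcal{E}^{\varepsilon_n}_F(u_n) - \Esing(g) \log \tfrac{1}{\varepsilon_n} \le \mathcal{W}_{\min} + \theta + o(1),
\]
and letting $\theta \to 0$ shows that assumption \eqref{eq_borne_energie} of \cref{theorem_compactnessweak} is satisfied.

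Applying \cref{theorem_compactnessweak} along a subsequence then yields a renormalisable limit $u_* \in W^{1,2}_{\mathrm{ren}}(\Omega,\manifold{N})$ with $\operatorname{sing}(u_*) = \{(a_1,\gamma_1),\dotsc,(a_k,\gamma_k)\}$ and the convergence and boundedness statements of items \eqref{it_zeqpfibhaedq}--\eqref{it_eShexuRaiChe7oochae6ooda} of that theorem; these encode \eqref{it_aiboo5ahXoh7meingohnoo9i} (with only weak convergence at this stage), \eqref{it_ooChifooNg2zeequohsahsoo}, \eqref{it_ud9Ooy6uelooch0jei0oopay} and \eqref{weakCvJac} of the present theorem. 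Combining the lower bound \eqref{it_Eejai5sungeiF7sho} of \cref{theorem_compactnessweak} with inequality \eqref{it_bohp7shaephei0eeT} in \cref{admissibleMaps} and with the upper bound just established produces the chain
\[
\mathcal{W}_{\min} \le \mathcal{E}^{\mathrm{geom}}_{g,\gamma_1,\dotsc,\gamma_k}(a_1,\dotsc,a_k) + \sum_{i=1}^k \mathcal{Q}_{F,\gamma_i} \le \mathcal{E}^{\mathrm{ren}}(u_*) + \mathcal{Q}_F(u_*) \le \mathcal{W}_{\min},
\]
since item \eqref{it_ooChifooNg2zeequohsahsoo} identifies $(\gamma_1,\dotsc,\gamma_k)$ as a minimal topological resolution of $g$, making the configuration admissible in the infimum defining $\mathcal{W}_{\min}$. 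Equality throughout establishes \eqref{it_eigh6wav2ieS3yahThaipho8} together with $\mathcal{E}^{\mathrm{ren}}(u_*) = \mathcal{E}^{\mathrm{geom}}_{g,\gamma_1,\dotsc,\gamma_k}(a_1,\dotsc,a_k)$.

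I would next upgrade the weak convergence to strong convergence on compact subsets of $\Bar{\Omega} \setminus \{a_1, \dotsc, a_k\}$ by invoking item \eqref{it_vaid7ieb1rub4Che6} of \cref{theorem_compactnessweak}: for every sufficiently small $\rho > 0$, that inequality combined with the equality of energies derived above forces both the convergence of the Dirichlet energy of $u_n$ on $\Omega \setminus \bigcup_i B_\rho(a_i)$ to that of $u_*$ there and the vanishing of the potential term in the same region. Together with the already-known weak $W^{1,2}_{\mathrm{loc}}$ convergence, this yields strong $W^{1,2}_{\mathrm{loc}}$ convergence on $\Bar{\Omega} \setminus \{a_1, \dotsc, a_k\}$ and $F(u_n)/\varepsilon_n^2 \to 0$ in $L^1_{\mathrm{loc}}(\Bar{\Omega} \setminus \{a_1, \dotsc, a_k\})$, which completes \eqref{it_aiboo5ahXoh7meingohnoo9i}.

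For the minimisation property \eqref{it_zpghargzqger}, I would exploit \cref{proposition_improvedupperBound} as a comparison tool against arbitrary renormalisable competitors: given $v \in W^{1,2}_{\mathrm{ren}}(\Omega, \manifold{N})$ with $\tr_{\partial\Omega} v = g$ and whose singular curves form a minimal topological resolution of $g$, \cref{proposition_improvedupperBound} produces test maps $\Tilde v_n$ (equal to $v$ outside small balls around the singularities of $v$) such that
\[
\mathcal{E}^{\varepsilon_n}_F(\Tilde v_n) \le \Esing(g) \log \tfrac{1}{\varepsilon_n} + \mathcal{E}^{\mathrm{ren}}(v) + \mathcal{Q}_F(v) + o(1).
\]
Minimality of $u_n$ and the lower bound from the second paragraph then give $\mathcal{E}^{\mathrm{ren}}(u_*) + \mathcal{Q}_F(u_*) \le \mathcal{E}^{\mathrm{ren}}(v) + \mathcal{Q}_F(v)$, so $u_*$ is a minimising renormalisable singular harmonic map. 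Specialising to competitors that coincide with $u_*$ outside $\bigcup_i B_\rho(a_i)$ keeps the $\mathcal{Q}_F$-contribution fixed, hence $u_*$ minimises the Dirichlet energy on $\Omega \setminus \bigcup_i \Bar{B}_\rho(a_i)$ under its own boundary conditions, and \cite{Morrey_1948} then ensures smoothness and harmonicity of $u_*$ on $\Omega \setminus \{a_1, \dotsc, a_k\}$. The main subtle point is the identification of the $o(1)$ error in matching the asymptotic profile of $u_n$ near each singularity $a_i$ with the atomic profile realising $\mathcal{Q}_{F,\gamma_i}$; this is exactly what the synharmony-based estimates of \cref{proposition_QW_dependence} and the proof of \cref{theorem_compactnessweak} already handle.
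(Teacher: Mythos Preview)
Your argument follows the same structure as the paper's proof: the upper bound from \cref{proposition_geometricUpperBound}, the compactness and lower bound from \cref{theorem_compactnessweak} (including its item \eqref{it_vaid7ieb1rub4Che6} for the upgrade to strong convergence), the inequality \eqref{it_bohp7shaephei0eeT} of \cref{admissibleMaps}, and the comparison via \cref{proposition_improvedupperBound} are combined in essentially the same way and in the same order.

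One small slip to fix: in the last paragraph, to deduce that $u_*$ is Dirichlet-minimising on $\Omega \setminus \bigcup_i \Bar{B}_\rho(a_i)$ with respect to its own boundary conditions, the relevant competitors $v$ should coincide with $u_*$ \emph{inside} $\bigcup_i B_\rho(a_i)$, not outside. It is precisely this choice that forces $\operatorname{sing}(v)=\operatorname{sing}(u_*)$ and hence $\mathcal{Q}_F(v)=\mathcal{Q}_F(u_*)$, so that the inequality $\mathcal{E}^{\mathrm{ren}}(u_*)+\mathcal{Q}_F(u_*)\le \mathcal{E}^{\mathrm{ren}}(v)+\mathcal{Q}_F(v)$ reduces to $\mathcal{E}^{\mathrm{ren}}(u_*)\le\mathcal{E}^{\mathrm{ren}}(v)$, which is exactly the comparison of Dirichlet energies on the complement $\Omega\setminus\bigcup_i \Bar{B}_\rho(a_i)$. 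With ``outside'' you would instead be varying the behaviour near the singularities, which neither fixes $\mathcal{Q}_F$ nor addresses minimality on the exterior region.
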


When \(\manifold{N} = \Sset^1\), the weak \(L^2\) estimate \eqref{it_ud9Ooy6uelooch0jei0oopay} on the gradient is due to Serfaty and Tice \cite{Serfaty_Tice_2008}*{Proposition 1.3}.

\begin{remark}\label{strongWonep}
By \eqref{it_aiboo5ahXoh7meingohnoo9i} and \eqref{it_ud9Ooy6uelooch0jei0oopay},
for every \(p \in [1, 2)\), the sequence \((u_n)_{n \in \Nset}\) converges to \(u\) strongly in \(W^{1, p} (\Omega)\).
When \(\manifold{N} = \Sset^1\),
such a convergence was known for smooth data \cite{Bethuel_Brezis_Helein_1994}*{Lemma X.11}
and \(W^{1/2, 2}\) data \cite{Bethuel_Bourgain_Brezis_Orlandi_2001}.
\end{remark}

\begin{remark}\label{minimisingRenormalisable}
Following \cite{Monteil_Rodiac_VanSchaftingen_RE}*{Definition 7.8}, the map \(u_* \in W^{1, 2}_{\mathrm{ren}} (\Omega, \manifold{N})\) being a minimising renormalisable singular harmonic map means that if we set \(\operatorname{sing} (u_*) = \{(a_1, \gamma_1), \dotsc, (a_k, \gamma_k)\}\), then for every map \(v \in W^{1, 2}_{\mathrm{ren}} (\Omega, \manifold{N})\) with \(\operatorname{sing} (v) = \{(b_1, \gamma_1), \dotsc, (b_k, \gamma_k)\}\) (that is \(\operatorname{sing}(v)\) differs from \(\operatorname{sing}(u^\ast)\) only by the position of the points, but not by the \(\gamma_i\)), one has
\[
 \mathcal{E}^{\mathrm{ren}} (u_*) \le \mathcal{E}^{\mathrm{ren}} (v).
\]
In particular \(u_*\) is a stationary renormalisable harmonic map, which means that \(u_*\) is harmonic away from its singularities and that its stress-energy energy tensor has vanishing flux around every singularity, or equivalently the residue of its Hopf differential vanishes at every singularity \cite{Monteil_Rodiac_VanSchaftingen_RE}*{Lemma 7.7 and Proposition 7.9}.
\end{remark}

When \(\manifold{N} = \Sset^1\), \cref{theorem_main1} is essentially due to Bethuel, Brezis and Hélein \cite{Bethuel_Brezis_Helein_1994} for star-shaped domains, and to Struwe for simply-connected domains \cite{Struwe_1994}.
The existence of finitely many singularities and the strong convergence in the case of a general compact manifold \(\manifold{N}\) was proved by Canevari \cite{Canevari_2015} for  general smooth bounded domains

\begin{proof}[Proof of \cref{theorem_main1}]
Since \((u_n)_{n \in \Nset}\) is a sequence of minimisers, it follows from \cref{proposition_geometricUpperBound} that
we have
\begin{equation}
\label{eq_ii1sai1gaeKi2eiTh6Izool7}
  \limsup_{n \to \infty} \int_{\Omega} \frac{\abs{\Deriv u_n}^2}{2} + \frac{F(u_n)}{\varepsilon_n^2}- \Esing(g)\log\frac{1}{\varepsilon_n}\\
  \le
  \mathcal{W}_{\min}.
\end{equation}
By \eqref{eq_ii1sai1gaeKi2eiTh6Izool7} and by \cref{theorem_compactnessweak}, up to a subsequence, there exists a family of points \((a_1,\dotsc,a_k)\) in \(\Omega\) such that \((u_{n})_{n\in\Nset}\) converges weakly in \(W^{1,2}_{\textrm{loc}}(\overline{\Omega}\setminus\{a_1,\dotsc,a_k\},\Rset^\nu)\) to some limit \(u_*\in W^{1,2}_{\textrm{ren}}(\Omega,\manifold{N})\) and
\begin{equation}
\label{eq_ii6wieBeel6EePhie8icaija}
 \mathcal{E}^{\mathrm{ren}}(u_*)+ \mathcal{Q}_F(u_*)
 \le \mathcal{W}_{\min}.
\end{equation}
Note that from \cref{theorem_compactnessweak}, \eqref{it_ooChifooNg2zeequohsahsoo}, \eqref{it_ud9Ooy6uelooch0jei0oopay} and \eqref{weakCvJac} hold. Furthermore \eqref{it_aiboo5ahXoh7meingohnoo9i} also holds if the strong convergence is replaced by the weak convergence.  

Since the map \(u_*\) is renormalisable, by \cref{admissibleMaps} and by \eqref{defTopCstU},
\begin{equation}
\label{eq_vaiX0jei2AhVaor1Epoo0Lah}
 \mathcal{E}^{\mathrm{ren}}(u_*)+ \mathcal{Q}_F(u_*)
 \ge \mathcal{E}^{\mathrm{geom}}_{\gamma_1, \dotsc, \gamma_k} (a_1, \dotsc, a_k) + \sum_{i = 1}^k \mathcal{Q}_{F,\gamma_i},
\end{equation}
where we have set \(\operatorname{sing}(u_*)=\{(a_1,\gamma_1),\dotsc,(a_k,\gamma_k)\}\). It follows thus from \eqref{eq_ies5lohth9sa7Aiboo5AiKat}, \eqref{eq_ii6wieBeel6EePhie8icaija} and \eqref{eq_vaiX0jei2AhVaor1Epoo0Lah} that
\begin{equation}
 \mathcal{E}^{\mathrm{ren}}(u_*) + \mathcal{Q}_F(u_*)
 =  \mathcal{W}_{\min} = \mathcal{E}^{\mathrm{geom}}_{\gamma_1, \dotsc, \gamma_k} (a_1, \dotsc, a_k) + \sum_{i = 1}^k \mathcal{Q}_{F,\gamma_i}.
\end{equation}
By \cref{proposition_improvedupperBound}, since \((u_n)_{n \in \Nset}\) is a sequence of almosts-minimisers and since \(\Esing(g)=\sum_{i=1}^k\frac{\equivnorm{\gamma_i}^2}{4\pi}\), we have also,
\begin{multline}
\label{eq_roh1mahzu4teigh3YaiD7soo}
  \limsup_{n \to \infty} \int_{\Omega} \frac{\abs{\Deriv u_n}^2}{2} + \frac{F(u_n)}{\varepsilon_n^2}- \Esing(g)\log\frac{1}{\varepsilon_n}\\
  \le
  \inf \{\mathcal{E}^{\mathrm{ren}} (u) + \mathcal{Q}_F(u) \st
  u \in W^{1, 2}_{\mathrm{ren}} (\Omega, \manifold{N}) \text{ and }
  \tr_{\partial \Omega} u = g\},
\end{multline}
which together with \eqref{it_Eejai5sungeiF7sho} in \cref{theorem_compactnessweak} yields
\begin{equation}
\label{eq_eelohngiequaich7BohN2phu}
\begin{split}
 \mathcal{E}^{\mathrm{ren}}(u_*)+ \mathcal{Q}_F(u_*) &= \lim_{n \to \infty} \int_{\Omega} \frac{\abs{\Deriv u_n}^2}{2} + \frac{F(u_n)}{\varepsilon_n^2}- \Esing(g)\log\frac{1}{\varepsilon_n}\\
 &= \inf \{\mathcal{E}^{\mathrm{ren}} (u) + \mathcal{Q}_F(u) \st
  u \in W^{1, 2}_{\mathrm{ren}} (\Omega, \manifold{N}) \text{ and }
  \tr_{\partial \Omega} u = g\}.
  \end{split}
\end{equation}
Thus we have proved \eqref{it_eigh6wav2ieS3yahThaipho8}. In particular, \(u_*\) is a minimising renormalisable singular harmonic map. It follows that if \(\rho\in\overline{\rho}(a_1,\dotsc,a_k)\) and if the map \(u\in W^{1,2}_{\textrm{loc}}(\Omega\setminus\{a_1,\dotsc,a_k\},\manifold{N})\) satisfies \(\tr_{\partial \Omega} u=g\) on \(\partial\Omega\) and \(u=u_*\) in \(\cup_{i=1}^k B_\rho(a_i)\), then \(\operatorname{sing}(u)=\operatorname{sing}(u_*)\) and \(\mathcal{E}^{\mathrm{ren}}(u)\geq \mathcal{E}^{\mathrm{ren}}(u_*)\); hence
\[
\int_{\Omega\setminus\bigcup_{i=1}^k B_\rho(a_i)}\frac{\abs{\Deriv u}^2}{2}
- \int_{\Omega\setminus\bigcup_{i=1}^k B_\rho(a_i)}\frac{\abs{\Deriv u_*}^2}{2}
=
\mathcal{E}^{\mathrm{ren}}(u)-\mathcal{E}^{\mathrm{ren}}(u_*)
\geq 0,
\]
so that \(u_*\) is harmonic minimising in \(\Omega\setminus\bigcup_{i=1}^k \Bar{B}_\rho(a_i)\) with respect to its own boundary conditions and, in particular, \(u_*\in \mathcal{C}^\infty(\Omega\setminus\{a_1,\dotsc,a_k\},\manifold{N})\) by the result of \cite{Morrey_1948}; this proves \eqref{it_zpghargzqger}.

Finally, by \eqref{eq_eelohngiequaich7BohN2phu} and \cref{theorem_compactnessweak} \eqref{it_vaid7ieb1rub4Che6}, we have for every \(\rho \in (0, \Bar{\rho} (a_1, \dotsc, a_k))\),
     \begin{multline*}
 \limsup_{n \to \infty}
 \int_{\Omega \setminus \bigcup_{i = 1}^k B_\rho (a_i)
} \frac{\abs{\Deriv u_n}^2}{2} + \frac{F (u_n)}{\varepsilon_n^2}\\
\leq
 \limsup_{n \to \infty}\Bigl(\int_{\Omega} \frac{\abs{\Deriv u_n}^2}{2} + \frac{F (u_n)}{\varepsilon_n^2}- \Esing(g)\log\frac{1}{\varepsilon_n}\Bigr)\\
\hfill -\liminf_{n\to\infty}\Bigl(\int_{\bigcup_{i = 1}^k B_\rho (a_i)
} \frac{\abs{\Deriv u_n}^2}{2} + \frac{F (u_n)}{\varepsilon_n^2}- \Esing(g)\log\frac{1}{\varepsilon_n}\Bigr)\\
 \le
 \int_{\Omega \setminus \bigcup_{i = 1}^k B_\rho (a_i)
} \frac{\abs{\Deriv u_\ast}^2}{2},
\end{multline*}
which implies the announced strong convergence in \eqref{it_aiboo5ahXoh7meingohnoo9i}.
\end{proof}

\section{An explicit computation of the renormalised energy}
\label{section_simple_boundary_conditions}

Although the geometric renormalised energy of singularities and the renormalised energy of renormalisable maps are defined via a shrinking holes approach and are thus quite implicit, if \(\Omega\) is simply-connected and \(g\) is a reparametrisation of a minimising atomic geodesic in \(\manifold{N}\), the geometric renormalised energy of a single singularity coincides strikingly with \(\manifold{N}=\Sset^1\) \cite{Monteil_Rodiac_VanSchaftingen_RE}*{Theorem 10.1}. When \(\Omega =B_1\) this geometric renormalised energy can be explicitly  computed and this allows one  to locate asymptotic singularities for strictly atomic minimising geodesic as boundary conditions (see \eqref{minimisingGeodesicBC} and \eqref{strictlyAtomicBC} in \cref{theorem_explicit}), as Bethuel, Brezis and H\'elein did for \(\manifold{N} = \Sset^1\) \cite{Bethuel_Brezis_Helein_1994}*{Theorem 0.4} in response to a question of Matano.
 
\begin{theorem}
  \label{theorem_explicit}
Let \(\Omega\) be a Lipschitz bounded domain and let \(F \in \mathcal{C} (\Rset^\nu, [0,+\infty))\) satisfy \(F^{-1}(\{0\}) = \manifold{N}\) and \eqref{hyp20}. Assume that
\begin{enumerate}[(a)]
 \item\label{minimisingGeodesicBC} \(g:\mathbb{S}^1 \rightarrow \manifold{N}\) is a minimising geodesic,
 \item\label{strictlyAtomicBC} if \((\gamma_1, \dotsc, \gamma_k)\) is a minimal topological resolution of \(g\) 
 then \(k = 1\) and \(\gamma_1\) is homotopic to \( g\),
 \item\label{homotopicSynharmonic} every map homotopic to \(g\) is synharmonic to \(g\).
\end{enumerate}
If for each \(\varepsilon\in(0,+\infty)\), \(u_\varepsilon\) is a minimiser of \(\mathcal{E}^\varepsilon_F\) in \(W^{1,2}(B_1 ,\Rset^\nu)\) under the condition \(\tr_{\partial\Omega}u_\varepsilon=g\), then
  \begin{equation*}
    u_\varepsilon \underset{\varepsilon\to 0}{\rightarrow} u_* \quad\text{in } W_{\textrm{loc}}^{1,2}(B_1 \setminus \{ 0\},\Rset^\nu),
  \end{equation*}
with \(u_* (x) = g(x/\abs{x})\).
\end{theorem}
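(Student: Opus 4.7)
The plan is to apply \cref{theorem_main1} and then exploit hypotheses (a)--(c) to reduce the problem to an explicit computation. \cref{theorem_main1} yields, up to a subsequence $(\varepsilon_n)_{n \in \Nset}$, a limit $u_* \in W^{1,2}_{\mathrm{ren}}(B_1, \manifold{N})$ with strong convergence in $W^{1,2}_{\mathrm{loc}}(\Bar{B}_1 \setminus \{a_1, \dotsc, a_k\})$, such that $\operatorname{sing}(u_*) = \{(a_1, \gamma_1), \dotsc, (a_k, \gamma_k)\}$ is a minimal topological resolution of $g$. Hypothesis (b) forces $k = 1$ and $\gamma_1$ to be homotopic to $g$; I write $\operatorname{sing}(u_*) = \{(a, \gamma)\}$. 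Hypothesis (c) yields $\synhar{\gamma}{g} = 0$, so by \eqref{synharmonicDependanceGeometric} and \cref{proposition_QW_dependence} I may replace $\gamma$ by $g$ throughout: $\mathcal{E}^{\mathrm{geom}}_{g,\gamma}(a) = \mathcal{E}^{\mathrm{geom}}_{g,g}(a)$ and $\mathcal{Q}_{F,\gamma} = \mathcal{Q}_{F,g}$. Assertion \eqref{it_eigh6wav2ieS3yahThaipho8} of \cref{theorem_main1} then forces $a$ to minimise $\mathcal{E}^{\mathrm{geom}}_{g,g}(\cdot)$ over $B_1$.

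I next show $a = 0$ is the unique minimiser. The upper bound $\mathcal{E}^{\mathrm{geom}}_{g,g}(0) \leq 0$ is obtained by using the competitor $v(x) \defeq g(x/\abs{x})$ in \eqref{eq_def_renorm_geom_rho}: since $g$ is a minimising geodesic, $\int_0^{2\pi}\abs{g'}^2 = \equivnorm{g}^2/(2\pi)$ by \eqref{eq_Eojei1cooruef6uiP}, so a polar computation gives $\int_{B_1 \setminus \Bar{B}_\rho}\abs{\nabla v}^2/2 = \frac{\equivnorm{g}^2}{4\pi}\log(1/\rho)$, hence $\mathcal{E}^{\mathrm{geom},\rho}_{g,g}(0) - \frac{\equivnorm{g}^2}{4\pi}\log(1/\rho) \leq 0$. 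For the matching strict lower bound ensuring that $0$ is the unique minimiser, I appeal to \cite{Monteil_Rodiac_VanSchaftingen_RE}*{Theorem 10.1}, which on $B_1$ identifies $\mathcal{E}^{\mathrm{geom}}_{g,g}$ with the Bethuel--Brezis--H\'elein renormalised energy for a single degree-one $\Sset^1$-vortex, namely $-\frac{\equivnorm{g}^2}{4\pi}\log(1 - \abs{a}^2)$ up to an additive constant, with unique minimiser at $a = 0$.

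With $a = 0$ established and $\mathcal{E}^{\mathrm{ren}}(u_*) = \mathcal{E}^{\mathrm{geom}}_{g,g}(0) = 0$ from \eqref{it_eigh6wav2ieS3yahThaipho8}, I identify $u_*$ by an energy-matching argument in polar coordinates. By the sup formula \eqref{renormalised_energy_u} and since $\tr_{\Sset^1}u_*(\rho\,\cdot)$ is homotopic to $g$, whence $\equivnorm{\tr_{\Sset^1}u_*(\rho\,\cdot)} = \equivnorm{g}$, one has
\[
\int_{B_1 \setminus \Bar{B}_\rho} \frac{\abs{\nabla u_*}^2}{2} \leq \frac{\equivnorm{g}^2}{4\pi}\log\frac{1}{\rho}
\qquad \text{for every } \rho \in (0, 1).
\]
Writing $\abs{\nabla u_*}^2 = \abs{\partial_r u_*}^2 + r^{-2}\abs{\partial_\theta u_*}^2$ in polar coordinates and using $\int_0^{2\pi}\abs{\partial_\theta u_*(r, \cdot)}^2 \geq \equivnorm{g}^2/(2\pi)$ (via \eqref{eq_Eojei1cooruef6uiP} applied to the homotopy class of $g$) yields the matching lower bound. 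Forced equality throughout imposes $\partial_r u_* \equiv 0$ and that $u_*(r, \cdot)$ is a constant-speed minimising geodesic homotopic to $g$ for almost every $r$; combined with $u_*(1, \theta) = g(\theta)$, this forces $u_*(r, \theta) = g(\theta)$, i.e., $u_*(x) = g(x/\abs{x})$. Since the limit is uniquely determined, the convergence extends from subsequences to the full family $(u_\varepsilon)_{\varepsilon > 0}$.

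The main obstacle is the explicit minimisation of $\mathcal{E}^{\mathrm{geom}}_{g,g}$ on $B_1$, which I handle by reduction to the classical $\manifold{N} = \Sset^1$ case via \cite{Monteil_Rodiac_VanSchaftingen_RE}*{Theorem 10.1}; everything else is a controlled application of \cref{theorem_main1} combined with the polar-coordinate energy identities.
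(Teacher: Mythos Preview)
Your proof is correct and follows essentially the same approach as the paper: apply \cref{theorem_main1}, use hypotheses (b)--(c) to reduce to a single singularity with charge synharmonic to $g$, invoke \cite{Monteil_Rodiac_VanSchaftingen_RE}*{Theorem 10.1} to locate the singularity at $0$, and then use the polar energy identity to identify $u_*$. You include a bit more detail than the paper (the explicit competitor showing $\mathcal{E}^{\mathrm{geom}}_{g,g}(0)\le 0$, and the claimed explicit formula from the companion paper), but the skeleton is identical.
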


\begin{proof}
By our assumptions and \cref{theorem_main1}, given any sequence \((\varepsilon_n)_{n\in\Nset}\) in \((0,+\infty)\) converging to \(0\), up to extraction of a  subsequence, there exists a map \(u_\ast\in W^{1,2}_\mathrm{ren}(\Omega,\manifold{N})\) such that \(\operatorname{sing} (u_*) = \{(a,\gamma)\}\) for some \(a\in\Omega\) and some minimising geodesic \(\gamma:\Sset^1\to\manifold{N}\) homotopic to \(g\), and such that \((u_{\varepsilon_n})_{n\in\Nset}\) converges to \(u_*\) in \(W^{1,2}_{\mathrm{loc}}( B\setminus \{a \},\Rset^\nu)\) with
\[
 \mathcal{E}^{\mathrm{ren}}(u_*)+Q_{F, \gamma} = \inf\, \{\mathcal{E}_{g,\gamma}^{\mathrm{geom}}(x)+\mathcal{Q}_{F, \gamma} \st x \in B_1\}=\mathcal{E}_{g,\gamma}^{\mathrm{geom}}(a)+\mathcal{Q}_{F, \gamma}.
\]
By \eqref{homotopicSynharmonic}, \(\gamma\) and \(g\) are synharmonic, which implies that \(\mathcal{E}_{g,\gamma}^{\mathrm{geom}}=\mathcal{E}_{g,g}^{\mathrm{geom}}\) in view of \eqref{synharmonicDependanceGeometric}. But, from  \cite{Monteil_Rodiac_VanSchaftingen_RE}*{Theorem 10.1}, we know that \(\mathcal{E}_{g,g}^{\mathrm{geom}}\) has a unique minimiser at \(x = 0\), and thus \(a = 0\).
By the characterisation \eqref{renormalised_energy_u} of the renormalised energy of the renormalisable map \(u_*\),
  \begin{equation}
    0 = \mathcal{E}^{\mathrm{ren}}(u_*) = \sup_{\rho\in (0, 1)} \int_{B_1\setminus B_\rho} \frac{\abs{\Deriv u_*}^2}{2}-\frac{\equivnorm{g}^2}{4\pi} \log\frac{1}{\rho}.
  \end{equation}
  In view of the elementary lower bound
  \[
  \int_{B_1\setminus B_\rho} \frac{\abs{\Deriv u_*}^2}{2}
  \geq 
  \frac{\equivnorm{g}^2}{4\pi} \log\frac{1}{\rho}
  +\frac{1}{2}\int_{B_1\setminus B_\rho}\abs{\frac{\partial u_*}{\partial r}}^2,
  \]
this implies that for almost every \(x \in B_1\), \(u_* (x) =g\bigl(\frac{x}{\abs{x}}\bigr)\).
  Since the limit is independent of the subsequence, the convergence holds for the whole family.
\end{proof}

\section{Convergence of solutions to the Ginzburg--Landau equation}
\label{section_solutions}
We consider now solutions to the Ginzburg--Landau equation \eqref{eq_GinzburgLandau} arising at least formally as the Euler--Lagrange equation of the Ginzburg--Landau energy \eqref{eq_GLenergy}. 

\subsection{Boundedness and Euler--Lagrange equation for minimisers}
\label{section_Euler_Lagrange}
We first show that under fairly general and reasonable conditions, minimisers of the Ginzburg--Landau energy \(\mathcal{E}^\varepsilon_F\) are weak solutions to the Ginzburg--Landau equation \eqref{eq_GinzburgLandau}.

\begin{proposition}
\label{proposition_equation}
Let \(\Omega\) be a Lipschitz bounded domain and let \(F \in \mathcal{C}^1 (\Rset^\nu, [0,+\infty))\) such that \(F^{-1}(\{0\}) = \manifold{N}\). If \(u \in W^{1, 2} (\Omega, \Rset^\nu)\) is a minimiser for the Ginzburg--Landau energy \(\mathcal{E}^\varepsilon_F\) under its own Dirichlet boundary conditions, and if \(\nabla F (u) \in L^1_{\mathrm{loc}} (\Omega)\), then for every \(\varphi \in \mathcal{C}^1_c (\Omega, \Rset^\nu)\),
\[
 \int_{\Omega} \Deriv u \cdot \Deriv \varphi + \frac{\nabla F (u)}{\varepsilon^2} \cdot \varphi = 0.
\]
\end{proposition}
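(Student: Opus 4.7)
The plan is to carry out the standard outer-variation argument. Fix $\varphi \in \mathcal{C}^1_c(\Omega, \Rset^\nu)$ and set $K \defeq \operatorname{supp}\varphi$; consider the one-parameter family of competitors $u_t \defeq u + t\varphi \in W^{1, 2}(\Omega, \Rset^\nu)$ for $t \in \Rset$. Since $\tr_{\partial \Omega} u_t = \tr_{\partial \Omega} u$, the minimality of $u$ gives $\mathcal{E}^\varepsilon_F(u_t) \ge \mathcal{E}^\varepsilon_F(u)$ for every $t \in \Rset$ (the inequality being trivial when the left side is $+\infty$).

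First I would expand the Dirichlet part exactly via the identity $\abs{\Deriv u_t}^2 = \abs{\Deriv u}^2 + 2t\, \Deriv u \cdot \Deriv \varphi + t^2 \abs{\Deriv \varphi}^2$, giving
\[
\int_\Omega \tfrac{1}{2}\bigl(\abs{\Deriv u_t}^2 - \abs{\Deriv u}^2\bigr) = t\int_\Omega \Deriv u \cdot \Deriv \varphi + \tfrac{t^2}{2}\int_\Omega \abs{\Deriv \varphi}^2.
\]
For the penalisation part, the fundamental theorem of calculus (applied pointwise to the $\mathcal{C}^1$ function $\tau \mapsto F(u(x) + \tau\varphi(x))$) gives
\[
F(u + t\varphi) - F(u) = t\int_0^1 \nabla F(u + s t\varphi) \cdot \varphi \dif s,
\]
and by continuity of $\nabla F$ the integrand converges pointwise on $K$ to $\nabla F(u) \cdot \varphi$ as $t \to 0$. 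Granted an interchange of limit and integral, dividing the minimality inequality by $t$ and sending $t \to 0^+$ and $t \to 0^-$ yield respectively the $\geq 0$ and $\leq 0$ versions of the desired identity, whence the Euler--Lagrange equation.

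The main obstacle is justifying
\[
\frac{1}{t}\int_\Omega \bigl[F(u + t\varphi) - F(u)\bigr] \;\longrightarrow\; \int_\Omega \nabla F(u) \cdot \varphi \quad \text{as } t \to 0,
\]
which by Fubini's theorem reduces to dominated convergence for the family $\{\nabla F(u + s\varphi) \cdot \varphi\}_{\abs{s}\le 1}$. The hypothesis $\nabla F(u) \in L^1_{\mathrm{loc}}(\Omega)$ makes the limit integrand summable on $K$; for the majorant I would split $K = K_M \cup (K \setminus K_M)$ with $K_M \defeq K \cap \{\abs{u} \le M\}$. On $K_M$ the perturbed maps $u + s\varphi$ stay in the fixed compact set $\{\abs{z} \le M + \norm{\varphi}_{L^\infty}\}$, so $\abs{\nabla F}$ is uniformly bounded there and dominated convergence is immediate; on $K \setminus K_M$, whose measure tends to $0$ as $M \to \infty$, one combines $\nabla F(u) \in L^1_{\mathrm{loc}}$ with the continuity of $\nabla F$ to render the tail arbitrarily small, if necessary after first establishing an a priori $L^\infty$ bound on $u$ via a projection-type truncation (replacing $u$ where it leaves a sufficiently large ball by the nearest point on that ball does not increase the Ginzburg--Landau energy once one is far from $\manifold{N}$, and so can be assumed to hold without loss of generality for a minimiser).
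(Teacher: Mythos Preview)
Your argument has a genuine gap at the dominated convergence step for the potential term. On the tail set \(K \setminus K_M = K \cap \{\abs{u} > M\}\), you need a majorant for \(\abs{\nabla F(u + s\varphi)}\) uniform in \(\abs{s}\le 1\), but the hypothesis only gives \(\nabla F(u) \in L^1_{\mathrm{loc}}\), which says nothing about \(\nabla F(u + s\varphi)\). Since \(F \in \mathcal{C}^1(\Rset^\nu)\) carries no growth control at infinity, \(\abs{\nabla F(u + s\varphi)}\) can be arbitrarily larger than \(\abs{\nabla F(u)}\) on the region where \(\abs{u}\) is large, and the small measure of \(K \setminus K_M\) does not compensate for this. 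Your proposed rescue---an a priori \(L^\infty\) bound on the minimiser via radial truncation---is precisely what the hypotheses do \emph{not} provide: the claim that projecting onto a large ball does not increase the energy requires a monotonicity of \(F\) in \(\abs{z}\) at infinity that is nowhere assumed here (the paper establishes such bounds only in a separate result, under explicit extra structural assumptions on \(F\)).

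The paper resolves this by a Bousquet-type truncation of the \emph{test function} rather than of \(u\): one replaces \(\varphi\) by \(\eta_R\varphi\) with \(\eta_R \defeq \theta(\abs{u}/R)\) for a fixed cutoff \(\theta\) supported in \([0,2]\). On \(\{\eta_R \ne 0\}\) the map \(u\) is bounded by \(2R\), so the first variation \(t \to 0\) is elementary there and yields
\[
0 = \int_\Omega \Bigl(\Deriv u \cdot \Deriv \varphi + \tfrac{1}{\varepsilon^2}\nabla F(u)\cdot\varphi\Bigr)\,\theta(\abs{u}/R)
+ \sum_{i=1}^2 \int_\Omega (\partial_i u\cdot\varphi)\Bigl(\partial_i u\cdot\tfrac{u}{\abs{u}}\Bigr)\tfrac{\theta'(\abs{u}/R)}{R}.
\]
Now one sends \(R \to \infty\): the first integral converges by dominated convergence because \(\Deriv u\cdot\Deriv\varphi \in L^1\) and \(\nabla F(u)\cdot\varphi \in L^1\) by hypothesis, while the extra commutator term is bounded by \(\tfrac{C}{R}\int_\Omega \abs{\Deriv u}^2\abs{\varphi}\to 0\). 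The point is that this route uses only \(\nabla F(u)\in L^1_{\mathrm{loc}}\) and never needs to evaluate \(\nabla F\) at perturbed points where \(u\) is unbounded.
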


\Cref{proposition_equation} does not make any assumption on \(F\) beyond continuous differentiability; in particular \(F\) needs not satisfy \eqref{hyp20}.

The proof of \cref{proposition_equation} follows a truncation argument due to Bousquet \citelist{\cite{Ponce_2016}*{proof of Theorem 4.23}\cite{Bousquet_2013}}. 

\begin{proof}[Proof of \cref{proposition_equation}]
Let \(\theta \in \mathcal{C}^1 (\Rset_+)\) such that \(\theta = 1\) on \([0, 1]\) and \(\theta = 0\) on \([2, +\infty)\).
For every \(R > 0\), we consider the function \(\eta_R \defeq \theta(\abs{u}/R)\).
We have, since \(\eta_R \in W^{1, 2} (\Omega)\) and since \(u\) is bounded on the set \( \{\eta_R \neq 0\} \),
\[
\begin{split}
0 =
  \lim_{t \to 0} \frac{\mathcal{E}^\varepsilon_F (u + t \eta_R \varphi) - \mathcal{E}^\varepsilon_F (u)}{t}
  &= \int_{\Omega} \eta_R \Deriv u \cdot \Deriv \varphi + 
  \Deriv u \cdot (\Deriv\eta_R) \varphi
  + \frac{\nabla F (u)}{\varepsilon^2} \cdot \varphi \eta_R\\
  &= \int_{\Omega}  \Bigl(\Deriv u \cdot \Deriv \varphi + \frac{\nabla F (u)}{\varepsilon^2} \cdot \varphi\Bigr)\theta (\tfrac{\abs{u}}{R})\\
 &\qquad +\sum_{i=1}^2 \int_{\Omega} (\partial_i u\cdot\varphi) \bigl(\partial_i u\cdot\frac{u}{\abs{u}}\bigr)\frac{\theta'(\tfrac{\abs{u}}{R}) }{R}.
  \end{split}
\]
Letting \(R \to +\infty\), we conclude in view of Lebesgue's dominated convergence theorem.
\end{proof}

The condition \(\nabla F (u) \in L^1_{\mathrm{loc}} (\Omega)\) in \cref{proposition_equation} can be obtained by establishing an apriori bound on the minimiser.

\begin{proposition}
\label{bounded_minimiser}
Let \(\Omega\) be a Lipschitz bounded domain and let \(F \in \mathcal{C}^1 (\Rset^\nu, [0,+\infty))\) such that \(F^{-1}(\{0\}) = \manifold{N}\). If there exists a map \(\Psi : \mathcal{C}^{0, 1} (\Rset^\nu, \Rset^\nu)\) such that 
\begin{enumerate}[(a)]
 \item \label{it_aipai9cae5ohxahn3Jong6oo} \(\Psi\) is non-expansive in \(\Rset^\nu\), i.e., \(\abs{\Psi(x)-\Psi(y)}\leq \abs{x-y}\),
 \item \label{it_Aequ9iek9ahh4xuye1aithed} \(F \compose \Psi \le F\) in \(\Rset^\nu\),
 \item \label{it_eroh3wae2ko7quooVuuThaed} \(\Psi = \id\) on \(\manifold{N}\),
\end{enumerate}
then for every \(g \in W^{1/2, 2} (\partial \Omega, \manifold{N})\),
if \(u\) is a minimiser for the Ginzburg--Landau energy under the boundary condition \( \tr_{\partial \Omega} u=g\),
then \(u \in \Bar{K}_\Psi\) almost everywhere in \(\Omega\),
where 
\[
 K_\Psi \defeq \biggl\{x \in \Rset^\nu \st \limsup_{h \to 0} \frac{\abs{\Psi (x + h) - \Psi (x)}} {\abs{h}} = 1\biggr\}.
\]
\end{proposition}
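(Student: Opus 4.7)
The plan is to compare $u$ to the competitor $v \defeq \Psi \circ u$ and then to exploit the rigidity that follows from the equality case in minimality. Since $\Psi$ is $1$-Lipschitz by \eqref{it_aipai9cae5ohxahn3Jong6oo} and $u \in W^{1,2}(\Omega,\Rset^\nu)$, the composition $v$ belongs to $W^{1,2}(\Omega,\Rset^\nu)$, with the chain rule representation $\Deriv v(x) = \Deriv\Psi(u(x)) \cdot \Deriv u(x)$ holding at almost every $x$: Rademacher's theorem provides $\Deriv\Psi$ almost everywhere in $\Rset^\nu$, and the Marcus--Mizel chain rule for Sobolev compositions with Lipschitz maps validates the formula a.e.\ in $\Omega$. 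Because $\tr_{\partial\Omega} u = g$ takes values in $\manifold{N}$ and $\Psi$ restricts to the identity on $\manifold{N}$ by \eqref{it_eroh3wae2ko7quooVuuThaed}, the trace of $v$ also equals $g$, so $v$ is an admissible competitor in the Dirichlet problem.

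I would then combine $|\Deriv\Psi|_{\mathrm{op}} \leq 1$ almost everywhere (which gives $|\Deriv v(x)|^2 \leq |\Deriv u(x)|^2$ a.e.) with \eqref{it_Aequ9iek9ahh4xuye1aithed} (which gives $F(v(x)) \leq F(u(x))$ pointwise) to infer $\mathcal{E}^\varepsilon_F(v) \leq \mathcal{E}^\varepsilon_F(u)$. Minimality of $u$ forces equality, which in turn forces the pointwise identities
\begin{equation*}
|\Deriv v(x)| = |\Deriv u(x)| \quad\text{and}\quad F(\Psi(u(x))) = F(u(x)) \quad\text{a.e.\ in } \Omega.
\end{equation*}
At almost every $x$ with $\Deriv u(x) \neq 0$ where $\Psi$ is differentiable at $u(x)$ --- this last condition holding for a.e.\ such $x$, as the pullback by $u$ of the null set of non-differentiability points of $\Psi$ is negligible on $\{\Deriv u \neq 0\}$ by a standard chain-rule-for-Sobolev-compositions argument --- the identity $|\Deriv\Psi(u(x))\, \Deriv u(x)| = |\Deriv u(x)|$ combined with $|\Deriv\Psi(u(x))|_{\mathrm{op}} \leq 1$ forces every nonzero column of $\Deriv u(x)$ to be a unit-norm vector for $\Deriv\Psi(u(x))$, so that $|\Deriv\Psi(u(x))|_{\mathrm{op}} = 1$. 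Differentiability of $\Psi$ at $u(x)$ then gives $\limsup_{h \to 0} |\Psi(u(x)+h) - \Psi(u(x))|/|h| = 1$, that is, $u(x) \in K_\Psi$.

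The main obstacle is to extend this conclusion to almost every $x$ in $\{\Deriv u = 0\}$, where the preceding Dirichlet-energy identity is vacuous. The plan is to exploit the closure in the statement together with a Lebesgue density argument. First, the upper local Lipschitz constant of a Lipschitz map is upper semicontinuous, so $K_\Psi$ is closed and $\overline{K_\Psi} = K_\Psi$; consequently, at any Lebesgue point of $u$ that is also a density point of $\{\Deriv u \neq 0\}$, approximate continuity of $u$ gives $u(x) \in \overline{K_\Psi}$. The residual case concerns full-density points $x_0$ of $\{\Deriv u = 0\}$, at which $u$ is essentially constant equal to some $y_0$ on a small neighbourhood of $x_0$; if $y_0 \notin \overline{K_\Psi}$, the open set $\Rset^\nu \setminus \overline{K_\Psi}$ supplies a scale on which $\Psi$ is strictly contractive near $y_0$, and a localised admissible perturbation of $u$ (interpolating between $u$ and $\Psi \circ u$ on a small disk) would strictly lower the energy, contradicting minimality. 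Making this residual reduction fully rigorous --- in particular quantifying the strict contraction of $\Psi$ and controlling the $W^{1,2}$ matching of the perturbation at the boundary of the localisation region --- is the technical heart of the proof.
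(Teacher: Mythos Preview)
Your opening steps---comparing $u$ with $v = \Psi \compose u$, invoking minimality to force $\int_\Omega \abs{\Deriv v}^2 = \int_\Omega \abs{\Deriv u}^2$, and concluding that $\Deriv u = 0$ a.e.\ on $u^{-1}(\Rset^\nu \setminus K_\Psi)$---match the paper's argument and are essentially correct. (The paper cites the Ambrosio--Dal Maso chain rule directly rather than tracking differentiability points of $\Psi$; this sidesteps your claim that the pullback by $u$ of the non-differentiability set of $\Psi$ is negligible on $\{\Deriv u \ne 0\}$, which is delicate since $u \in W^{1,2}$ on a two-dimensional domain is the borderline case for Lusin's property~(N).) The divergence is entirely in the residual set $\{\Deriv u = 0\}$, and there your argument has a genuine gap.

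First a minor point: the assertion that the upper local Lipschitz constant is upper semicontinuous (hence $K_\Psi$ closed) is false in general---take $\Psi:\Rset\to\Rset$ with $\Psi' = 1$ on intervals $[1/n - 4^{-n}, 1/n + 4^{-n}]$ and $\Psi' = 1/2$ elsewhere; then $1/n \in K_\Psi$ but $0 \notin K_\Psi$---though fortunately you do not actually use it. The serious issue is your treatment of full-density points of $\{\Deriv u = 0\}$. Such a point need \emph{not} have $u$ essentially constant on any neighbourhood, and even if it did, your localised perturbation cannot produce a strict energy decrease: on a region where $\Deriv u = 0$ both $u$ and $\Psi \compose u$ have zero Dirichlet energy, while the hypotheses give only $F \compose \Psi \le F$ with no strict inequality, so the potential term need not drop either. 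The conclusion on $\{\Deriv u = 0\}$ is thus not a local fact at all; it has to be propagated from the boundary through the connectedness of $\Omega$.

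This is precisely the paper's device, which replaces the whole residual analysis by a three-line global argument. Once you know $\Deriv u = 0$ a.e.\ on $u^{-1}(\Rset^\nu \setminus K_\Psi)$, set $w \defeq \dist(u(\cdot), K_\Psi) \in W^{1,2}(\Omega)$. On $u^{-1}(\Rset^\nu \setminus K_\Psi)$ the chain rule gives $\Deriv w = 0$ because $\Deriv u = 0$ there; on $u^{-1}(\overline{K_\Psi}) = \{w = 0\}$ the weak derivative of $w$ vanishes a.e.\ since Sobolev functions have zero gradient on their level sets. These two sets cover $\Omega$, so $\Deriv w = 0$ a.e.\ and $w$ is constant on the connected domain. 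Since $\Psi = \id$ on $\manifold{N}$ forces $\manifold{N} \subset K_\Psi$ (take increments along $\manifold{N}$), the trace of $w$ vanishes, whence $w \equiv 0$, i.e.\ $u \in \overline{K_\Psi}$ a.e.
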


In particular, if \(F (R z/\abs{z}) \le F (z)\) for some \(R > 0\) and every \(z \in \Rset^\nu \setminus B_R\), taking 
\[\Psi (z) \defeq \begin{cases}  z  &\text{ if } \abs{z}<R \\
R z/\abs{z} &\text{ if } \abs{z} \geq R,
\end{cases}
\] we conclude that any minimiser of the Ginzburg--Landau energy \eqref{eq_GLenergy} satisfies \(\norm{u}_{L^\infty (\Omega)}\le R\). This is the case in particular when \(\manifold{N} = \Sset^1\) and \(F (z) = (1 - \abs{z}^2)^2/4\) \cite{Bethuel_Brezis_Helein_1993}*{Proposition 2}.

When the set \(K_\Psi\) is bounded, \cref{proposition_equation} also implies that \(u\) is a weak solution of the Ginzburg--Landau equation.

\begin{proof}[Proof of \cref{bounded_minimiser}]
If \(u\) is a minimiser, we set \(v = \Psi \compose u\). By \eqref{it_eroh3wae2ko7quooVuuThaed}, we have \(\tr_{\partial \Omega} v = \Psi \compose \tr_{\partial \Omega} u = g\) on \(\partial \Omega\).
Now, by \eqref{it_Aequ9iek9ahh4xuye1aithed} and since \(u\) is a minimiser, we have 
\[
\int_{\Omega} \frac{\abs{\Deriv u}^2}{2}
\le \int_{\Omega} \frac{\abs{\Deriv v}^2}{2}
+ \int_{\Omega} \frac{F (v)}{\varepsilon^2} - \int_{\Omega} \frac{F (u)}{\varepsilon^2}
\le \int_{\Omega} \frac{\abs{\Deriv v}^2}{2}.
\]
By \eqref{it_aipai9cae5ohxahn3Jong6oo} and by the chain rule for distributional derivatives \cite{Ambrosio_DalMaso_1990}, we have \(\abs{\Deriv v}^2 \le \abs{\Deriv u}^2\) almost everywhere in \(\Omega\),
and either \(\abs{\Deriv u}^2 = \abs{\Deriv v}^2 = 0\) or \(\abs{\Deriv v}^2 < \abs{\Deriv u}^2\) on \(u^{-1} (\Rset^\nu \setminus K_\Psi)\).
By optimality, this means that \(\Deriv u = 0\) a.e.\ on \(u^{-1} (\Rset^\nu \setminus K_\Psi)\); hence, by the chain rule, \(\Deriv \left( \dist( u(x),K_\psi)\right)=0\) a.e.\ on \(u^{-1} (\Rset^\nu \setminus K_\Psi)\). 
Since the weak derivative of \(\dist( u(\cdot),K_\psi)\) also vanishes a.e.\ on the zero level set, i.e.\ on \(u^{-1}(\Bar K_\psi)\), this implies that \(\Deriv \left( \dist( u(x),K_\psi)\right)=0\) a.e.\ in \(\Omega\). By \eqref{it_eroh3wae2ko7quooVuuThaed}, \(\manifold{N} \subset K_\psi\) and thus by the trace condition we find \( \dist( u,K_\psi)=0\) almost everywhere in \(\Omega\) which implies the conclusion.
\end{proof}

\subsection{Uniform convergence to the manifold}
Given a boundary datum \(g \in W^{1/2, 2} (\partial \Omega, \manifold{N})\), we show that the asymptotic vanishing of the penalisation term in the Ginzburg--Landau equation for a sequence of solutions implies that the distance to the manifold vanishes asymptotically uniformly.

\begin{theorem}
  \label{proposition_uniform_convergence_N}
 Let \(\Omega\) be a Lipschitz bounded domain, let \(g\in W^{1/2,2}(\partial \Omega,\manifold{N})\), and assume that \(F \in \mathcal{C}^1 (\Rset^\nu, [0,+\infty))\) satisfies \(F^{-1}(\{0\}) = \manifold{N}\) and \eqref{hyp20}. If \((\varepsilon_n)_{n \in \Nset}\) is a sequence in \((0,+\infty)\) converging to \(0\), if for every \(n \in \Nset\), \(u_n \in W^{1, 2} (\Omega, \Rset^\nu)\) is a solution to the Ginzburg--Landau equation \eqref{eq_GinzburgLandau} such that \(\tr_{\partial \Omega} u_n  = g\), and if for some \(a\in \Omega\) and \(\rho>0\), we have
  \[
  \lim_{n \to \infty}  \int_{\Omega \cap B_{\rho} (a)}\frac{ F (u_n)}{\varepsilon_n^2}= 0,
  \]
  and
  \[
   \sup_{n \in \Nset} \, \norm{\Deriv u_n}_{L^2 (\Omega \cap B_\rho (a))}
   +  \varepsilon_n^2\norm{\Delta u_n}_{L^\infty (\Omega \cap B_{\rho} (a))}
   < + \infty,
  \]
  then
\[
\lim_{n \to \infty} \norm{\dist (u_n, \manifold{N})}_{L^\infty (\Omega \cap B_{\rho/2} (a))} = 0.
\]
\end{theorem}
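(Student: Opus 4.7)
The plan is to argue by contradiction via a blow-up analysis at scale \(\varepsilon_n\). Suppose the conclusion fails: after extraction of a subsequence, there exist \(\delta>0\) and \(x_n \in \Omega \cap B_{\rho/2}(a)\) with \(\dist(u_n(x_n),\manifold{N}) \ge 2\delta\). Defining the rescaled maps \(v_n(y) \defeq u_n(x_n + \varepsilon_n y)\), a direct computation gives \(\Delta v_n = \nabla F(v_n)\) pointwise. On any bounded ball \(B_R \subset \Rset^2\) that is eventually contained in the rescaled domain --- e.g.\ when \(x_n\) stays a distance much larger than \(\varepsilon_n\) away from \(\partial(\Omega \cap B_\rho(a))\), which is the interior case --- the three hypotheses translate into
\[
  \norm{\Delta v_n}_{L^\infty(B_R)} \le C,
  \qquad
  \int_{B_R} \abs{\Deriv v_n}^2 \le C,
  \qquad
  \int_{B_R} F(v_n) \xrightarrow[n\to\infty]{} 0.
\]

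I would then establish a uniform interior Lipschitz bound on each \(v_n\) by splitting \(v_n = v_n^{(1)} + v_n^{(2)}\) on \(B_R\), where \(v_n^{(1)}\) is the Newtonian potential of \(\Delta v_n\) in \(B_R\) (vanishing on \(\partial B_R\)) and \(v_n^{(2)}\) is harmonic. The Calder\'on--Zygmund \(W^{2,p}\) estimate combined with the two-dimensional Sobolev embedding \(W^{2,p} \hookrightarrow W^{1,\infty}\) (for \(p>2\)) controls \(\norm{\nabla v_n^{(1)}}_{L^\infty(B_R)}\) by \(\norm{\Delta v_n}_{L^\infty(B_R)}\), while the mean-value property applied to the harmonic \(\nabla v_n^{(2)}\) controls \(\norm{\nabla v_n^{(2)}}_{L^\infty(B_{R/2})}\) by \(\norm{\nabla v_n^{(2)}}_{L^2(B_R)}\). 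Hence \(\norm{\nabla v_n}_{L^\infty(B_{R/2})} \le C(R)\) uniformly, so \((v_n)\) is uniformly Lipschitz on every compact subset. From \(\int_{B_1} F(v_n) \to 0\) and passage to a subsequence, one finds \(y_n \in B_1\) with \(F(v_n(y_n)) \to 0\); the uniform Lipschitz bound together with the compactness of \(\manifold{N}\), the continuity of \(F\) and \(F^{-1}(\{0\}) = \manifold{N}\) then forces \((v_n(0))\) to remain in a bounded subset of \(\Rset^\nu\). Arzel\`a--Ascoli gives a subsequence \(v_n \to v^\infty\) in \(C^1_{\mathrm{loc}}(\Rset^2)\), with \(\Delta v^\infty = \nabla F(v^\infty)\) and \(F(v^\infty) \equiv 0\) by Fatou's lemma, so \(v^\infty(\Rset^2) \subseteq \manifold{N}\). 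In particular \(v^\infty(0) \in \manifold{N}\), contradicting \(\dist(v^\infty(0),\manifold{N}) \ge 2\delta\).

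The case of boundary points \(x_n\) for which \(\dist(x_n,\partial \Omega)/\varepsilon_n\) stays bounded is treated by rescaling around the nearest point on \(\partial \Omega\); after flattening the boundary by a bi-Lipschitz change of variables one obtains a limiting equation on the half-space \(\Rset^2_+\), and the \(W^{1/2,2}(\partial \Omega,\manifold{N})\) regularity of \(g\) guarantees that the rescaled boundary traces converge in \(W^{1/2,2}_{\mathrm{loc}}\) to \(\manifold{N}\)-valued traces of \(v^\infty\), which again forces \(v^\infty \in \manifold{N}\) everywhere and closes the contradiction. The main technical obstacle is ruling out the escape to infinity of the blow-up centre \(v_n(0) = u_n(x_n)\): one must combine the uniform Lipschitz estimate with the \(L^1\)--vanishing of \(F(v_n)\) and the compact-isolated-zero-set structure of \(F\), using crucially that the existence of a point \(y_n \in B_1\) with \(F(v_n(y_n)) \to 0\) would, for an unbounded Lipschitz sequence \(v_n\), contradict the fact that the zero set of \(F\) is confined to the compact manifold \(\manifold{N}\).
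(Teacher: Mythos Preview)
Your interior blow-up argument is essentially correct and parallels the paper's: both obtain a Lipschitz bound \(\abs{\Deriv u_n}\le C/\varepsilon_n\) on a region where \(B_{R\varepsilon_n}(x_n)\subset\Omega\), and then a ball of radius \(\sim\varepsilon_n\) on which \(\dist(u_n,\manifold{N})\ge\delta\) forces \(\int F(u_n)/\varepsilon_n^2\) to stay bounded away from zero. The paper does this directly without passing to a limit, but the content is the same.

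The gap is in the boundary case. Your Lipschitz estimate on \(v_n\) comes from splitting into a Newtonian potential with zero boundary data and a harmonic remainder whose gradient is controlled by the mean-value property. Near \(\partial\Omega\) this breaks down: after flattening and rescaling, the harmonic part carries the rescaled Dirichlet datum \(\tilde g_n(t)=g(\pi(x_n)+\varepsilon_n t)\). The \(W^{1/2,2}\) seminorm is scale-invariant in one dimension, so \((\tilde g_n)\) is merely bounded in \(W^{1/2,2}_{\mathrm{loc}}\), not precompact in any space that yields a \(C^0\) (let alone \(C^1\)) bound up to the flat boundary. Without that, you cannot invoke Arzel\`a--Ascoli up to the boundary, and the claimed uniform convergence \(v_n\to v^\infty\) on the closed half-disk is unjustified. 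With only \(g\in W^{1/2,2}\) there is no reason the rescaled boundary data should converge in a topology strong enough to force \(v^\infty\in\manifold{N}\) at the boundary.

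The paper circumvents this by a decomposition that does not rescale the boundary datum: write \(u_n=v+w_n\), where \(v\) is the \emph{fixed} harmonic extension of \(g\) and \(w_n\) has zero trace. Then \(w_n\) enjoys up-to-the-boundary Calder\'on--Zygmund estimates (their Lemma~9.5) giving \(\abs{\Deriv w_n}\le C/\varepsilon_n\), while for \(v\) one uses a VMO-type fact (their Lemma~9.4): the harmonic extension of a \(W^{1/2,2}(\partial\Omega,\manifold{N})\) map satisfies \(\dist(v(x),\manifold{N})\to 0\) as \(\dist(x,\partial\Omega)\to 0\). Combining these, points within distance \(\sim\varepsilon_n\) of \(\partial\Omega\) automatically satisfy \(\dist(u_n,\manifold{N})\le\delta\), reducing the problem to the interior regime. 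This harmonic-extension trick is exactly what your blow-up approach is missing; it is the device that absorbs the low regularity of \(g\).
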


The assumptions of boundedness on \(\Deriv u_n\) in \(L^2\) and of convergence of \(F(u_n)/\varepsilon_n^2\) in \(L^1\) hold for sequences of minimisers away from singularities (\cref{theorem_main1}). The uniform bound on \(\Delta u_n\) follows from an a priori bound on \(u_n\) (see \cref{bounded_minimiser}) and the local boundedness of \(\nabla F\) in view of the Ginzburg--Landau equation \eqref{eq_GinzburgLandau}; it could also follow from the global boundedness of \(\nabla F\).

The uniform convergence to the vacuum manifold \(\mathcal{N}\) away from singularities was known for \(\manifold{N} = \Sset^1\) \cite{Bethuel_Brezis_Helein_1993}*{Step B.2}.
The result is reminiscent of uniform convergence of the modulus of \(W^{1, 2}\)--converging sequences of functions
whose Laplacian and whose modulus on the boundary are controlled \cite{Berlyand_Mironescu_Rybalko_Sandier_2014}*{Lemma 2.13}.

The next lemma states that harmonic functions tend uniformly to the image of their trace when we approach the boundary.

\begin{lemma}
  \label{lemma_harmonic_extension_average_close}
If \(\Omega\subset\Rset^2\) has a Lipschitz boundary, if \(v \in W^{1, 2} (\Omega, \Rset^\nu)\) satisfies \(-\Delta v = 0\) in \(\Omega\) and if \(\tr_{\partial\Omega}v \in \manifold{N}\) almost everywhere in \(\partial \Omega\), then
\[
\lim_{\substack{x \in \Omega\\\dist(x,\partial \Omega)\to 0}}\dist (v (x),\manifold{N}) = 0.
\]
\end{lemma}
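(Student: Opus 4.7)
The plan is to localise near an arbitrary boundary point $p \in \partial\Omega$, bi-Lipschitzly flatten the boundary so as to work in a half-disk, and then compare $v$ to the average of its trace on a shrinking boundary interval. Two ingredients drive the argument: Moser's interior $L^\infty$--$L^2$ estimate for $v$, and the mean-oscillation vanishing of $g$ following from $g \in W^{1/2,2}(\partial\Omega,\manifold{N}) \subset \VMO(\partial\Omega,\manifold{N})$. Since $\partial\Omega$ is compact, the local statement transfers to the uniform limit asserted in the lemma by a finite cover.

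After bi-Lipschitz flattening $\Phi$ near $p$, the function $\tilde v \defeq v \compose \Phi^{-1}$ lies in $W^{1,2}(B_\rho^+, \Rset^\nu)$ and satisfies, componentwise, a divergence-form uniformly elliptic equation $\dive(A\Deriv \tilde v) = 0$ with bounded measurable $A$ depending only on $\Phi$; its trace $\tilde g$ on the flat piece $B_\rho \cap \{x_2 = 0\}$ remains in $W^{1/2,2}$ and takes values in $\manifold{N}$ almost everywhere. For $y_0 = (a, t) \in B_{\rho/2}^+$ with $t$ small, the Moser $L^\infty$--$L^2$ estimate applied to $\tilde v - c$ for any constant $c \in \Rset^\nu$ yields
\[
|\tilde v(y_0) - c|^2 \le \frac{C}{|B_{t/2}(y_0)|}\int_{B_{t/2}(y_0)} |\tilde v - c|^2,
\]
where $C$ depends only on the Lipschitz character of $\partial\Omega$ near $p$. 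Choosing $c \defeq (1/t)\int_{a-t/2}^{a+t/2}\tilde g(a', 0)\, da'$ and splitting via the triangle inequality, the right-hand side is controlled by the sum of two averages: one of $|\tilde v(a', s) - \tilde g(a', 0)|^2$, and one of $|\tilde g(a', 0) - c|^2$. The first average is bounded via the fundamental theorem of calculus along vertical segments together with Cauchy--Schwarz, producing a bound of the form $C\int_{R(a,t)} |\Deriv \tilde v|^2$ on a rectangle of area $O(t^2)$ shrinking to a boundary point, which vanishes as $t \to 0$ uniformly in $a$ by absolute continuity of $|\Deriv \tilde v|^2 \in L^1$. The second average is a mean-square oscillation of $\tilde g$ on an interval of length $t$, which vanishes as $t \to 0$ uniformly in $a$ via the Gagliardo-seminorm bound
\[
\frac{1}{t}\int_{a-t/2}^{a+t/2}|\tilde g - c|^2\, da' \le \int_{a-t/2}^{a+t/2}\int_{a-t/2}^{a+t/2} \frac{|\tilde g(a') - \tilde g(a'')|^2}{|a' - a''|^2}\, da'\, da''
\]
and absolute continuity of the $W^{1/2,2}$ Gagliardo integrand.

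Finally, since $\tilde g(\cdot, 0) \in \manifold{N}$ almost everywhere, Chebyshev's inequality produces a point $a^* \in (a - t/2, a + t/2)$ with $\tilde g(a^*, 0) \in \manifold{N}$ and $|\tilde g(a^*, 0) - c|^2 \le (2/t)\int_{a-t/2}^{a+t/2}|\tilde g - c|^2$, so that $\dist(c, \manifold{N})$ also tends to $0$ uniformly in $a$ as $t \to 0$. The triangle inequality then gives $\dist(\tilde v(y_0), \manifold{N}) \to 0$ uniformly in $a$, and translating back through $\Phi$ and covering $\partial\Omega$ by finitely many charts yields the conclusion. The main technical point is the uniformity of the two convergences in the base point $a$: this reduces to the absolute continuity of the integrals of $|\Deriv \tilde v|^2$ and of the Gagliardo density for $\tilde g$, together with the fact that Moser's constant depends only on the $L^\infty$-ellipticity of $A$.
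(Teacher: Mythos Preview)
Your argument is correct and complete; the uniformity claims via absolute continuity of the \(L^1\) integrands are sound, and the vertical fundamental-theorem-of-calculus step is justified by the ACL characterisation of \(W^{1,2}\) on the flattened half-disk.

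Your route, however, differs from the paper's. The paper works directly in \(\Omega\) without flattening: it first notes that \(v\) is bounded by the maximum principle (since \(\manifold{N}\) is compact), invokes the interior gradient estimate \(|\Deriv v(y)| \le C\|v\|_{L^\infty}/\dist(y,\partial\Omega)\) specific to harmonic functions, and then, for \(x\) at distance \(r\) from \(\partial\Omega\), splits \(\dist(v(x),\manifold{N})\) by averaging over a small ball \(B_{\eta r}(x)\). The oscillation term \(\fint|v(y)-v(x)|\,\dif y\) is controlled by the gradient bound and made small by choosing \(\eta\) small, while the term \(\fint\dist(v(y),\manifold{N})\,\dif y\) is controlled by a Poincar\'e inequality applied to the scalar function \(\dist(v,\manifold{N})\), which has vanishing trace. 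Your approach trades the harmonic-specific gradient estimate for Moser's \(L^\infty\)--\(L^2\) bound and trades the Poincar\'e inequality on \(\dist(v,\manifold{N})\) for an explicit Gagliardo-seminorm control of the trace oscillation; this makes your argument robust to general divergence-form uniformly elliptic operators, at the cost of the flattening step and some bookkeeping. The paper's proof is shorter and avoids the change of variables, but leans on the exact Laplacian and the two-parameter \((\eta,r)\) limiting scheme.
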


\Cref{lemma_harmonic_extension_average_close} follows from the corresponding property for harmonic extensions of functions of vanishing mean oscillation (VMO) \cite{Brezis_Nirenber_1996}*{Theorem A3.2} and the embedding of \(W^{1/2, 2} (\partial \Omega,\Rset^\nu)\) in \(\VMO(\partial \Omega,\Rset^\nu)\) (see \cite{Berlyand_Mironescu_Rybalko_Sandier_2014}*{Lemma 2.12} for \(\manifold{N} = \Sset^1\)).
We give a direct proof when \(v\) is is \(W^{1, 2} (\Omega,\Rset^\nu)\).

\begin{proof}[Proof of \cref{lemma_harmonic_extension_average_close}]
  \resetconstant
Since the function \(v\) is harmonic, by the maximum principle, \(v\) is bounded. There exists a constant \(\Cl{cst_the8Ookaacheiy4phohw9Aew}<+\infty\) such that for every \(y \in \Omega\) (see for example \cite{Gilbarg_Trudinger_1983}*{Theorem 2.10}),
\begin{equation}\label{eq_meiX8ii4Quoilaeng6dohG2L}
  \abs{\Deriv v (y)}
  \le \frac{\Cr{cst_the8Ookaacheiy4phohw9Aew}\norm{v}_{L^\infty(\Omega)}}{\dist (y, \partial \Omega)}.
\end{equation}
For every \(x \in \Omega\), we let \(r \defeq \dist (x, \partial \Omega)\).
If \(0 < \eta  < 1\), we have 
\begin{equation}
\label{eq_Nah0iuwagh8seihaiGhae2Ee}
\dist (v (x), \manifold{N})
\le
\fint_{B_{\eta r} (x)} \abs{v (y) - v (x)}\dif y +  
\fint_{B_{\eta r} (x)} \dist (v (y), \manifold{N}) \dif y.
\end{equation} 
In view of  \eqref{eq_meiX8ii4Quoilaeng6dohG2L}, we have 
\begin{equation}
\label{eq_oaK1ka9Liegh8aeHieNa2aef}
 \fint_{B_{\eta r} (x)} \abs{v (y) - v (x)}\dif y
 \le \frac{\Cr{cst_the8Ookaacheiy4phohw9Aew}\norm{v}_{L^{\infty} (\Omega)}} {\frac{1}{\eta} - 1}.
\end{equation}
Next, since the set \(\Omega\) has a Lipschitz boundary and \(\tr_{\partial\Omega} \dist (v, \manifold{N}) = 0\), we have the following Poincaré inequality
\begin{equation}
\label{eq_Iat2Leis7eeJ2Ohphohx5oVo}
\int_{B_{2 r}(x) \cap \Omega} \dist  (v (y), v (\partial \Omega))^2
\le \C r^2 \int_{B_{2r}(x) \cap \Omega} \abs{D v}^2.
\end{equation}
It follows from \eqref{eq_Iat2Leis7eeJ2Ohphohx5oVo} that 
\begin{equation}
\label{eq_ohmi6Eph6dohnge1so7ochai}
 \fint_{B_{\eta r} (x)} \dist (v (y), \manifold{N}) \dif y
 \le 
 \biggl(\fint_{B_{\eta r} (x)} \dist (v (y), \manifold{N})^2 \dif y\biggr)^\frac{1}{2}
 \le \frac{\C}{\eta} \biggl(\int_{B_{2r}(x) \cap \Omega} \abs{D v}^2\biggr)^\frac{1}{2}.
\end{equation}
In order to conclude we observe that when \(\eta\) is small enough, the first-term in the right-hand side of \eqref{eq_Nah0iuwagh8seihaiGhae2Ee} can be made arbitrarily small by \eqref{eq_oaK1ka9Liegh8aeHieNa2aef}, while for any given \(\eta>0\) the second term in the right-hand side of \eqref{eq_Nah0iuwagh8seihaiGhae2Ee} goes to \(0\) as \(\dist(x,\partial \Omega)\to 0\)
in view of 
\eqref{eq_ohmi6Eph6dohnge1so7ochai} and Lebesgue's dominated convergence theorem since \(v\in W^{1,2}(\Omega,\Rset^\nu)\).
\end{proof}

% A : LA

\begin{lemma}[Regularity estimate]
\label{lemma_regularity_D2_Deltainfty}
If \(\Omega\) is a Lipschitz bounded domain and if \(w \in W^{1,2}(\Omega,\Rset^\nu)\) is such that \( \Delta w \in L^\infty(\Omega,\Rset^\nu)\) and \(\tr_{\partial \Omega}w = 0\), then for every \(\rho>0\), and for every \(a\in\Omega\), 
\[
\norm{\Deriv w}_{L^\infty (\Omega \cap B_{\rho/2} (a))}
\le
C (\rho) \bigl(\norm{\Deriv w}_{L^2 (\Omega \cap B_\rho (a))}+ \norm{\Delta w}_{L^\infty (\Omega \cap B_{\rho} (a))}\bigr)^\frac{1}{2}
\norm{\Deriv w}_{L^2 (\Omega\cap B_{\rho} (a))}^\frac{1}{2}.
\]
\end{lemma}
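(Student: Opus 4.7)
The plan is to derive this estimate as a Gagliardo–Nirenberg type interpolation between $\|\nabla w\|_{L^2}$ and $\|\Delta w\|_{L^\infty}$. Writing $A \defeq \|\Deriv w\|_{L^2(\Omega\cap B_\rho(a))}$ and $B \defeq \|\Delta w\|_{L^\infty(\Omega\cap B_\rho(a))}$, the right-hand side $(A+B)^{1/2}A^{1/2}$ is comparable up to absolute constants to $A + \sqrt{AB}$, so the desired estimate is the standard outcome of balancing, at an adaptive scale $r$, a pointwise bound of the form
\[
|\Deriv w(x_0)| \leq C\Bigl(\frac{\|\Deriv w\|_{L^2(B_r(x_0)\cap\Omega)}}{r} + r\,\|\Delta w\|_{L^\infty(B_r(x_0)\cap\Omega)}\Bigr)
\]
for every $x_0 \in \Omega \cap B_{\rho/2}(a)$ and every admissible scale $r \in (0, r_\ast]$, where $r_\ast = r_\ast(\rho,\Omega)$ is a geometric cap. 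Optimising $r = \min(r_\ast, \sqrt{A/(A+B)})$ then yields $|\Deriv w(x_0)| \leq C(\rho)\sqrt{A(A+B)}$, with the $r\leq r_\ast$ cap absorbed into the constant $C(\rho)$.

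To establish the pointwise estimate I would distinguish two cases. In the \emph{interior case} $B_r(x_0) \subset \Omega$, apply the classical two-dimensional interior $L^\infty$–$L^2$ estimate for Poisson's equation to $w - \bar w$ (with $\bar w$ the mean of $w$ on $B_r(x_0)$) to get
\[
\|w - \bar w\|_{L^\infty(B_{r/2}(x_0))} \leq C\bigl(r^{-1}\|w-\bar w\|_{L^2(B_r)} + r^2\|\Delta w\|_{L^\infty}\bigr),
\]
combine with the interior gradient estimate $\|\Deriv w\|_{L^\infty(B_{r/4})} \leq C(r^{-1}\|w-\bar w\|_{L^\infty(B_{r/2})} + r\|\Delta w\|_{L^\infty})$, and finally invoke Poincaré's inequality $\|w-\bar w\|_{L^2(B_r)} \leq Cr\|\Deriv w\|_{L^2(B_r)}$ to produce the claimed pointwise bound. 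In the \emph{boundary case}, when $B_r(x_0)$ meets $\partial\Omega$, flatten $\partial\Omega$ locally via a bi-Lipschitz chart and extend $w$ by odd reflection across the flat piece; the reflected function solves a uniformly elliptic divergence-form equation with $L^\infty$ coefficients whose ellipticity constants depend only on the Lipschitz character of $\partial\Omega$, and the analogous $L^\infty$–$L^2$ and gradient estimates hold by the De Giorgi–Nash–Moser theory (here one takes $\bar w = 0$, using Poincaré's inequality for functions with vanishing trace on a portion of the boundary instead of the mean-zero Poincaré inequality).

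The main delicacy is the boundary case under a merely Lipschitz assumption on $\partial\Omega$: classical $C^{1,\alpha}$ Schauder-type boundary estimates are not directly available, and one must rely on the scale-invariant De Giorgi–Nash–Moser framework applied to the reflected divergence-form operator. This forces $r_\ast$ and $C(\rho)$ to depend also on the Lipschitz character of $\partial\Omega$, which is permitted since the statement allows the constant to depend on $\Omega$. Once the pointwise bound is in hand, the interpolation in $r$ is immediate and taking the supremum over $x_0 \in \Omega \cap B_{\rho/2}(a)$ concludes the proof.
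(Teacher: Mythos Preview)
Your overall strategy --- derive a scale-invariant pointwise bound of the form
\[
\abs{\Deriv w(x_0)} \le C\Bigl(\frac{\norm{\Deriv w}_{L^2(\Omega\cap B_r(x_0))}}{r} + r\,\norm{\Delta w}_{L^\infty(\Omega\cap B_r(x_0))}\Bigr)
\]
and then optimise in \(r\) --- is exactly the paper's strategy, and your interior argument is fine. The gap is in the boundary case. After a bi-Lipschitz flattening the Laplacian becomes a divergence-form operator \(\operatorname{div}(A(x)\nabla\cdot)\) with coefficients that are merely \(L^\infty\), and odd reflection preserves this structure. De Giorgi--Nash--Moser then gives H\"older continuity of the \emph{solution}, not of its gradient; the interior gradient estimate you quote (\(\norm{\Deriv w}_{L^\infty}\) controlled by \(r^{-1}\norm{w}_{L^\infty}\) plus the forcing) relies on the mean-value property/Green representation for the Laplacian and has no analogue for rough-coefficient operators. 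Without an \(L^\infty\) gradient bound the optimisation step never launches.

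The paper avoids this by working directly in the Lipschitz domain with no change of variables: for some fixed \(p>2\) (which exists for any Lipschitz domain in dimension two) it invokes the local Calder\'on--Zygmund \(W^{2,p}\) estimate for the Dirichlet Laplacian on \(\Omega\cap B_r(x)\), then Sobolev and Poincar\'e (the latter in its zero-mean form when \(r\le\dist(x,\partial\Omega)\) and its zero-trace form when \(r\ge 2\dist(x,\partial\Omega)\)), and finally the Morrey--Sobolev embedding \(W^{1,p}\hookrightarrow \mathcal{C}^{0,1-2/p}\) applied to \(\Deriv w\). This produces the pointwise estimate above for \(r\) outside the narrow band \((\dist(x,\partial\Omega),\,2\dist(x,\partial\Omega))\), and a doubling argument fills in that band. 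If you want to repair your boundary case, replace the DGNM appeal with this \(W^{2,p}\) route; alternatively you can cite Jerison--Kenig type results, but the point is that \emph{some} second-derivative input specific to the Laplacian on Lipschitz domains is required.
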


\Cref{lemma_regularity_D2_Deltainfty} is reminiscent of the \(L^\infty\) estimates \cite{Bethuel_Brezis_Helein_1993}*{Lemma A.1}.

\begin{proof}[Proof of \cref{lemma_regularity_D2_Deltainfty}]
\resetconstant
We fix \(p > 2\).
Since \(\partial \Omega\) has a Lipschitz boundary, there exists \(\rho_0>0\) such that if \(2\dist (x, \partial \Omega) \le r \le \rho_0\), then \(B_r(x) \cap \Omega\) is homeomorphic to a half-disk and has uniformly Lipschitz boundary. By a finite covering argument, we can assume that \(\rho \le \rho_0\).

If \(x \in \Omega\) and \(r \in (0,  \rho) \setminus (\dist (x, \partial \Omega), 2 \dist (x, \partial \Omega))\),
we have by classical Calder\'on--Zygmund estimates and a scaling argument,
\begin{equation}
 \norm{\Deriv^2 w}_{L^p (\Omega \cap B_{r/2} (x))}
 \le \C \biggl(\frac{\norm{w}_{L^p (\Omega \cap B_{r} (x))}}{r^2}
 + \norm{\Delta w}_{L^p (\Omega \cap B_r (x))}\biggr);
\end{equation}
combining it with the two-dimensional Sobolev inequality
\[\norm{w}_{L^p(\Omega \cap B_r(x))}\leq \C r^{{2}/{p}} (r^{-1}\norm{w}_{L^2(\Omega \cap B_r(x))}+
\norm{\Deriv w}_{L^2(\Omega \cap B_r(x))} ),
\]
and the Poincar\'e inequality 
\[
 \norm{w}_{L^2(\Omega \cap B_r(x)))}\leq \C r\norm{\Deriv w}_{L^2(\Omega \cap B_r(x))}
 \] 
(which is true when \(r\geq 2 \dist (x, \partial \Omega)\) since \(w=0\) on \(\partial \Omega\), and also when \(r\leq\dist (x, \partial \Omega)\) if we assume without loss of generality that \(\int_{\Omega \cap B_r(x)} w = 0\)), we obtain 
%we obtain the Poincar\'e-Sobolev inequality 
%\begin{equation}
%\label{eq_ephel2auN6ioph6aeciej1ti}
% \norm{w}_{L^p (\Omega \cap B_{r} (x))}
% \le \C r^\frac{2}{p}\norm{\Deriv w}_{L^2 (\Omega \cap B_r (x))},
%\end{equation}
\begin{equation}
\label{eq_shoh3shahpu9foo5Ou7aWaix}
  \norm{\Deriv^2 w}_{L^p (\Omega \cap B_{r/2} (x))}
 \le \C \biggl(\frac{\norm{\Deriv w}_{L^2 (\Omega \cap B_{r} (x))}}{r^{2 - \frac{2}{p}}}
 + \norm{\Delta w}_{L^p (\Omega \cap B_r (x))}\biggr);
\end{equation}
moreover, by the Morrey--Sobolev embedding \(W^{1,p} \subset \mathcal{C}^{0, 1 - 2/p}\) and the Cauchy--Schwarz inequality,
\begin{equation}
\label{eq_ohTae3So4ahShive1aqu0wig}
\begin{split}
\abs{\Deriv w (x)}
 &\le \fint_{\Omega \cap B_{r/2} (x)} \abs{\Deriv w (x) - \Deriv w (y)} \dif y + \fint_{\Omega \cap B_{r/2} (x)} \abs{\Deriv w}\\
 &\le \C \biggl(\norm{\Deriv^2 w}_{L^p(\Omega \cap B_{r/2} (x))} r^{1 - \frac{2}{p}} + \frac{\norm{\Deriv w}_{L^2(\Omega \cap B_{r/2} (x))}}{r}\biggr),
 \end{split}
\end{equation}
and it follows thus from \eqref{eq_shoh3shahpu9foo5Ou7aWaix}, \eqref{eq_ohTae3So4ahShive1aqu0wig} and \(\norm{\Delta w}_{L^p (\Omega \cap B_r (x))} \leq (\pi r^2)^{1/p}\norm{\Delta w}_{L^\infty (\Omega \cap B_r (x))}\) that
\begin{equation}
\label{eq_rohd4vies9ohkeuYiem5OoJe}
\abs{\Deriv w (x)} \le \Cl{cst_ouw2weeQuaiy5aroo9goh6wu} \biggl(r \norm{\Delta w}_{L^\infty (\Omega \cap B_{r/2} (x))} + \frac{\norm{\Deriv w}_{L^2(\Omega \cap B_{r/2} (x))}}{r} \biggr).
\end{equation}

Now, if \(x\in \Omega\cap B_{\rho/2}(a)\) then, for every  \(r \in (0,  \rho) \setminus (\dist (x, \partial \Omega), 2 \dist (x, \partial \Omega))\), we have the inclusion \(\Omega\cap B_{r/2}(x)\subset\Omega\cap B_{\rho}(a)\), and we deduce from \eqref{eq_rohd4vies9ohkeuYiem5OoJe} that
\begin{equation}
\label{eq_ieornajfnkefionaio}
\abs{\Deriv w (x)} \le \Cr{cst_ouw2weeQuaiy5aroo9goh6wu} \biggl(r \norm{\Delta w}_{L^\infty (\Omega \cap B_{\rho}(a) )} + \frac{\norm{\Deriv w}_{L^2(\Omega \cap B_{\rho} (a))}}{r} \biggr).
\end{equation}

We observe now that \eqref{eq_rohd4vies9ohkeuYiem5OoJe} holds also for \(r \in (0, \rho)\cap [\dist (x, \partial \Omega), 2\dist (x, \partial \Omega)]  \) with \( 2\Cr{cst_ouw2weeQuaiy5aroo9goh6wu}\) instead of \(\Cr{cst_ouw2weeQuaiy5aroo9goh6wu}\) and we conclude by taking 
\[
 r \defeq \min \Biggl( \rho/2, \sqrt{\frac{\norm{\Deriv w}_{L^2(\Omega \cap B_{\rho} (a))}}{\norm{\Delta w}_{L^\infty (\Omega \cap B_\rho (a))}}}\Biggr)
 \qedhere
\]
if \(\norm{\Deriv w}_{L^2(\Omega \cap B_{\rho} (a))}\norm{\Delta w}_{L^\infty (\Omega \cap B_\rho (a))}>0\), and \(r\defeq \rho/2\) otherwise.
\end{proof}

% LA

\begin{proof}%
[Proof of \cref{proposition_uniform_convergence_N}]%
\resetconstant%
Following \cite{Bethuel_Brezis_Helein_1993}*{Proof of Step B.1},
let \(v\in W^{1,2}(\Omega,\Rset^\nu)\) be a solution to the Dirichlet problem
\[
\left\{
\begin{aligned}
  \Delta v & = 0 && \text{in \(\Omega\)},\\
  v &= g && \text{on \(\partial \Omega\)}.
\end{aligned}
\right.
\]
For each \(n  \in \Nset\), we define the function \(w_n \defeq  u_n - v\), which satisfies by assumption on \(u_n\) and by construction of \(v\),
\[
\left\{
\begin{aligned}
  \Delta w_n & = \frac{\nabla F (u_n)}{\varepsilon_n^2} & & \text{in \(\Omega\)},\\
  w_n& = 0 & & \text{on \(\partial \Omega\)}.
\end{aligned}
\right.
\]
By \cref{lemma_regularity_D2_Deltainfty} and by assumption, we have
\begin{multline}
  \label{eq_ejaeDoo8aequ9tena}
\norm{\Deriv w_n}_{L^\infty (\Omega\cap B_{\rho/2}(a))}\\
\le 
\C \bigl(\norm{\Deriv w_n}_{L^2 (\Omega\cap B_\rho(a))}+ \norm{\Delta w_n}_{L^\infty (\Omega\cap B_\rho(a))}\bigr)^\frac{1}{2}
\norm{\Deriv w_n}_{L^2 (\Omega\cap B_\rho(a))}^\frac{1}{2} 
\le  \frac{\Cl{cst_Vahch0su1noi5oing}}{\varepsilon_n}.
\end{multline}

Let now \(\delta \in (0,\frac{\delta_{\manifold{N}}}{4})\).
By \cref{lemma_harmonic_extension_average_close}, there exists \(r > 0\), such that
if \(\dist (x, \partial \Omega) \le r\), then \(\dist (v (x), \manifold{N})\le \delta/2\).
If moreover \(x\in \Omega\cap B_{\rho/2}(a)\) and
\(\dist (x, \partial \Omega) \le \varepsilon_n\delta/(2\Cr{cst_Vahch0su1noi5oing})<r\), then, thanks to \eqref{eq_ejaeDoo8aequ9tena},
we have \(\abs{w_n (x)} \le \delta/2\). Hence for \(n\) large enough, as \(u_n=w_n+v\),
\begin{equation}
\label{eq_hr8402io90jio}
\text{for all \(x\in \Omega\cap B_{\rho/2}(a)\) such that
\(\dist (x, \partial \Omega) \le \varepsilon_n\delta/(2\Cr{cst_Vahch0su1noi5oing})\)},\ \dist (u_n (x), \manifold{N}) \le \delta.
\end{equation}

We consider now a point \(x\in \Omega\cap B_{\rho/2}(a)\) such that \(\dist (x, \partial \Omega) > \varepsilon_n\delta/(4 \Cr{cst_Vahch0su1noi5oing})\); we have by classical estimates on harmonic extensions (see \eqref{eq_meiX8ii4Quoilaeng6dohG2L}) and by \eqref{eq_ejaeDoo8aequ9tena}
\begin{equation}
\label{eq_rohlua3ovohcoh2Ia6Euguci}
\abs{\Deriv u_n (x)} \le \frac{\Cl{cst_Gee0Sahcaix2Eif3uD6sahk5}}{\varepsilon_n \delta}.
\end{equation}
We assume now by contradiction that there exists a sequence \( (a_n)_{n \in \Nset}\) in \(B_{\rho/2}(a)\cap \Omega\) such that \( \dist(u_n(a_n), \mathcal{N}) \geq 2\delta\). By continuity, we can assume that \(\dist(u_n (a_n),\mathcal{N}) = 2\delta\). By \eqref{eq_hr8402io90jio}, we have in particular \(\dist (a_n, \partial \Omega)>\varepsilon_n\delta/(2\Cr{cst_Vahch0su1noi5oing})\) and so for \(n\) large enough,
\[
B_{\varepsilon_n\delta/(4 \Cr{cst_Vahch0su1noi5oing})}(a_n)\subset\{x\in\Omega\cap B_{\rho}(a)\st \dist (x, \partial \Omega)>\varepsilon_n\delta/(4\Cr{cst_Vahch0su1noi5oing})\}.
\]
Since the distance to a closed set is non-expansive, using \eqref{eq_rohlua3ovohcoh2Ia6Euguci}, we have if \(x\in B_{\varepsilon_n\delta/(4 \Cr{cst_Vahch0su1noi5oing})}(a_n)\),
\[
 \abs{\dist (u_n (x), \manifold{N}) - \dist (u_n (a_n), \manifold{N})}
 \le \abs{u_n (x) - u_n (a_n)}
 \le \frac{\Cr{cst_Gee0Sahcaix2Eif3uD6sahk5}}{\varepsilon_n \delta} \abs{x - a_n}.
\]
Hence, for every \(x\in  B_{\Cl{cst_9403jfo930fji}\varepsilon_n\delta^2}(a_n)\) with \(\Cr{cst_9403jfo930fji}\defeq\inf\{1/(\delta_\manifold{N} \Cr{cst_Vahch0su1noi5oing});1/\Cr{cst_Gee0Sahcaix2Eif3uD6sahk5}\}\), we have
\[
  \delta \le \dist (u_n (x), \manifold{N})\le 3 \delta < \delta_{\manifold{N}}.
\]
Hence, we have if \(n\) is large enough, using \eqref{hyp20},
\[
\frac{m_F}{2}\Cr{cst_9403jfo930fji}^2\pi\delta^6 \le \frac{m_F}{2}\int_{B_{\Cr{cst_9403jfo930fji}\varepsilon_n\delta^2}(a_n)} \frac{\dist (u_n, \manifold{N})^2}{\varepsilon_n^2}
 \le \int_{\Omega \cap B_\rho (a)} \frac{F (u_n)}{\varepsilon_n^2},
\]
which cannot hold by assumption if \(n \in \Nset\) is large enough since the right-hand side goes to zero.
\end{proof}

% LA

\subsection{Weak convergence of solutions}
The next result shows that, under some assumptions, the limit of weakly converging sequences of solutions to the Ginzburg--Landau equation are harmonic maps. For a related result when \(\manifold{N}\) is a compact manifold of dimension \(1\) we refer to \cite{Mironescu_Shafrir_2017}. 

\begin{theorem}
\label{proposition_convergence_solutions}
Let \(\Omega\) be a Lipschitz bounded domain and assume that \(F \in \mathcal{C}^1 (\Rset^\nu, [0,+\infty))\) satisfies \(F^{-1}(\{0\}) = \manifold{N}\) and \eqref{hyp20}. Assume also that \(g\in W^{1/2,2}(\partial \Omega,\manifold{N})\), that \((\varepsilon_n)_{n \in \Nset}\) is a sequence in \((0,+\infty)\) converging to \(0\) and that for every \(n \in \Nset\), \(u_n \in W^{1, 2} (\Omega, \Rset^\nu)\) is a solution to the Ginzburg--Landau equation \eqref{eq_GinzburgLandau} with \(\tr_{\partial \Omega} u_n  = g\). If for some \(a\in \Omega\) and \(\rho>0\), we have
\begin{enumerate}[(i)]
\item\label{eq_nnif40289ue2e} \((u_n \vert_{\Omega \cap B_\rho (a)})_{n \in \Nset}\) converges weakly to some limit \(u\) in \(W^{1, 2} (\Omega \cap B_\rho (a), \Rset^\nu)\),
\item 
\label{eq:aavdaeoihvqihf}
\(\lim_{n \to \infty} \norm{\dist (u_n, \manifold{N})}_{L^\infty (\Omega \cap B_{\rho} (a))} = 0\),
\item\label{eq_Vohyeajee6eig6ohvoothamu}\(\lim_{n \to \infty} \int_{\Omega\cap B_\rho (a)} \frac{\abs{D \Pi_{\manifold{N}}(u_n) [\nabla F (u_n)]}} {\varepsilon_n^2} = 0\),
\end{enumerate}
then \(u\) is a \(\manifold{N}\)-valued harmonic map in \(\Omega \cap B_\rho (a)\).
\end{theorem}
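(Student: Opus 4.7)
First, I would observe that the limit $u$ is $\manifold{N}$-valued almost everywhere: by the Rellich--Kondrachov theorem, hypothesis \eqref{eq_nnif40289ue2e} yields $u_n\to u$ in measure, which combined with the uniform smallness of $\dist(u_n,\manifold{N})$ in \eqref{eq:aavdaeoihvqihf} forces $u\in\manifold{N}$ a.e. For $n$ large, the projection $v_n\defeq \Pi_\manifold{N}\compose u_n$ is well-defined, $\manifold{N}$-valued, and bounded in $W^{1,2}(\Omega\cap B_\rho(a))$ by \cref{lemma_derivativeNearestPointRetraction}. Continuity of $\Pi_\manifold{N}$ together with $u_n\to u$ in measure yields $v_n\to u$ strongly in every $L^p$ with $p<\infty$; since moreover $D\Pi_\manifold{N}(u_n)\to P^\top_\manifold{N}(u)$ strongly in every $L^p$ and $Du_n\weakto Du$ weakly in $L^2$, we obtain $Dv_n=D\Pi_\manifold{N}(u_n)\,Du_n \weakto P^\top_\manifold{N}(u)\,Du=Du$ weakly in $L^2$ (using the chain rule for $\manifold{N}$-valued Sobolev maps).

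For any test field $\varphi\in\mathcal{C}^\infty_c(\Omega\cap B_\rho(a),\Rset^\nu)$, I would test the Ginzburg--Landau equation \eqref{eq_GinzburgLandau} against the vector field $\Psi_n\defeq (D\Pi_\manifold{N}(u_n))^\ast \varphi$ (well-defined for $n$ large). The right-hand side becomes $\int (D\Pi_\manifold{N}(u_n)[\nabla F(u_n)]/\varepsilon_n^2)\cdot \varphi$, which tends to zero by \eqref{eq_Vohyeajee6eig6ohvoothamu} since $\varphi$ is bounded. On the left, integration by parts combined with the chain rule identity $\sum_a A_{ba}(u_n)\partial_i u_n^a = \partial_i v_n^b$ gives
\[
  Du_n : D\Psi_n = D^2\Pi_\manifold{N}(u_n)(Du_n,Du_n)\cdot \varphi + Dv_n : D\varphi,
\]
so by weak convergence of $Dv_n$ to $Du$, we deduce the key identity
\[
 \int D^2\Pi_\manifold{N}(u_n)(Du_n,Du_n)\cdot \varphi \underset{n\to\infty}{\longrightarrow} -\int Du : D\varphi.
\]

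Next, I would use \eqref{eq:second_fund_form}, which identifies $-D^2\Pi_\manifold{N}(y)$ on tangent vectors at $y\in\manifold{N}$ with the second fundamental form $B_y$, together with the observation that $D^2\Pi_\manifold{N}(y)$ annihilates normal arguments at $y\in\manifold{N}$, to rewrite the quadratic term. The uniform Taylor expansions $D^2\Pi_\manifold{N}(u_n) = D^2\Pi_\manifold{N}(v_n)+O(\dist(u_n,\manifold{N}))$ in $L^\infty$ and $P^\top_\manifold{N}(v_n)Du_n=Dv_n+O(\dist(u_n,\manifold{N}))|Du_n|$ in $L^2$, combined with $\|\dist(u_n,\manifold{N})\|_{L^\infty}\|Du_n\|_{L^2}^2 \to 0$, yield
\[
 \int D^2\Pi_\manifold{N}(u_n)(Du_n,Du_n)\cdot \varphi = -\int B_{v_n}(Dv_n,Dv_n)\cdot \varphi + o(1),
\]
so that $\int B_{v_n}(Dv_n,Dv_n)\cdot \varphi \to \int Du:D\varphi$.

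The main obstacle is to identify this last limit with $\int B_u(Du,Du)\cdot \varphi$, which would yield the weak harmonic map equation $\int Du:D\varphi = \int B_u(Du,Du)\cdot \varphi$ for every $\varphi\in\mathcal{C}^\infty_c(\Omega\cap B_\rho(a),\Rset^\nu)$. Since $B_{v_n}\to B_u$ strongly in every $L^p$ while $Dv_n$ only converges weakly in $L^2$, this limit passage in a quadratic form of the gradient is equivalent to strong $W^{1,2}_{\mathrm{loc}}$ convergence $v_n\to u$. I would establish this strong convergence by testing \eqref{eq_GinzburgLandau} against $(u_n-u)\chi^2$ for a cutoff $\chi\in\mathcal{C}^\infty_c(\Omega\cap B_\rho(a))$: splitting $u_n-u=(u_n-v_n)+(v_n-u)$ in the residual term $\int \nabla F(u_n)/\varepsilon_n^2\cdot(u_n-u)\chi^2$, the tangential piece $v_n-u$ is handled through $u_n\to u$ strongly in $L^p$ combined with \eqref{eq_Vohyeajee6eig6ohvoothamu}, while the normal piece $u_n-v_n$ is controlled via the non-degeneracy \eqref{hyp20} relating $\nabla F(u_n)\cdot(u_n-v_n)$ to $F(u_n)$. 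Once strong convergence is secured, Vitali's theorem passes the quadratic limit and completes the proof that $u$ is a weak $\manifold{N}$-valued harmonic map on $\Omega\cap B_\rho(a)$.
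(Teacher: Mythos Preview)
Your reduction to the identity \(\int B_{v_n}(Dv_n,Dv_n)\cdot\varphi\to\int Du:D\varphi\) is correct in spirit, but the final step---proving strong \(W^{1,2}_{\mathrm{loc}}\) convergence by testing \eqref{eq_GinzburgLandau} against \((u_n-u)\chi^2\)---does not go through, and this is the essential difficulty in the theorem. Concretely, after the splitting \(u_n-u=(u_n-v_n)+(v_n-u)\) you need to show that the ``tangential'' term \(\int\varepsilon_n^{-2}\nabla F(u_n)\cdot(v_n-u)\chi^2\) is \(o(1)\). Even granting a good sign on the normal piece (which already requires \eqref{hyp21} rather than \eqref{hyp20}), the vector \(v_n-u\) is only approximately tangent at \(v_n\): writing \(v_n-u=P^\top_{\manifold{N}}(v_n)[v_n-u]+O(|v_n-u|^2)\), you must control \(\int\varepsilon_n^{-2}|\nabla F(u_n)|\,|v_n-u|^2\) and, to pass from \(D\Pi_{\manifold{N}}(v_n)[\nabla F(u_n)]\) to the quantity in \eqref{eq_Vohyeajee6eig6ohvoothamu}, also \(\int\varepsilon_n^{-2}\dist(u_n,\manifold{N})\,|\nabla F(u_n)|\). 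Neither is available: \(F\) is only \(\mathcal{C}^1\), there is no hypothesis on \(\int F(u_n)/\varepsilon_n^2\), and \(|\nabla F(u_n)|/\varepsilon_n^2=|\Delta u_n|\) has no uniform \(L^1\) bound. More fundamentally, when \(\pi_2(\manifold{N})\neq 0\) one should expect bubbling: the sequence \(v_n=\Pi_{\manifold{N}}\circ u_n\) behaves like a Palais--Smale sequence for the Dirichlet energy of \(\manifold{N}\)--valued maps, and such sequences famously need not converge strongly, so the strong convergence you aim for is not true in general under the stated hypotheses. A smaller but related slip occurs earlier: the assertion that ``\(D^2\Pi_{\manifold{N}}(y)\) annihilates normal arguments'' is false---only \(D^2\Pi_{\manifold{N}}(y)[h^\perp,h^\perp]\) vanishes, while the mixed term \(D^2\Pi_{\manifold{N}}(y)[h^\top,h^\perp]\) is essentially the Weingarten map. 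This leaves an uncontrolled \(O(1)\) cross term in your passage from \(D^2\Pi_{\manifold{N}}(u_n)(Du_n,Du_n)\) to \(-B_{v_n}(Dv_n,Dv_n)\), which again could only be absorbed via strong convergence.

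The paper avoids strong convergence entirely. Through two geometric lemmas it shows that \(D\Pi_{\manifold{N}}(v_n)[\Delta v_n]=f_n+g_n\) with \(\|f_n\|_{H^{-1}}\to 0\) (coming from \(\alpha_{\manifold{N}}(u_n)^{-1}\to\operatorname{id}\) via \eqref{eq:aavdaeoihvqihf}) and \(\|g_n\|_{L^1}\to 0\) (coming from \eqref{eq_Vohyeajee6eig6ohvoothamu} together with an identity of the form \(|\alpha_{\manifold{N}}(u)^{-1}D\Pi_{\manifold{N}}(u)[\Delta u]-\operatorname{div}(\alpha_{\manifold{N}}(u)^{-1}D(\Pi_{\manifold{N}}\circ u))|\le C\,\dist(u,\manifold{N})|Du|^2\)). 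This exhibits \((v_n)\) as a Palais--Smale sequence for the harmonic map problem, and the conclusion that the weak limit \(u\) is harmonic then follows from the theorem of Bethuel on weak limits of such sequences (with the refinements of Freire--M\"uller--Struwe and Rivi\`ere), which relies on the div--curl structure of the two--dimensional harmonic map system rather than on any strong convergence.
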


For the classical Ginzburg--Landau equation, which corresponds to \eqref{eq_GinzburgLandau} when \(F(z)=(1-\abs{z}^2)^2\), we have \(\nabla F(z)=-4(1-\abs{z}^2)z\), \(\Pi_\manifold{N}(z)=\frac{z}{\abs{z}}\) and \(\Deriv \Pi_\manifold{N}(z)=\frac{\operatorname{id}}{\abs{z}}-\frac{z\otimes z}{\abs{z}^3}\); hence,
\[
D \Pi_{\manifold{N}}(u_n) [\nabla F (u_n)] =-4(1-\abs{u_n}^2)\frac{u_n}{\abs{u_n}}+4(1-\abs{u_n}^2)\frac{(u_n\cdot u_n)u_n}{\abs{u_n}^3}= 0
\]
which implies  \eqref{eq_Vohyeajee6eig6ohvoothamu}. 
We recover from \cref{theorem_compactnessweak}, \cref{proposition_uniform_convergence_N} and \cref{proposition_convergence_solutions} that solutions to the classical Ginzburg--Landau satisfying an a priori bound (see \cref{bounded_minimiser}) and an upper-bound of the form \eqref{eq_borne_energie} converge to a harmonic map with values into \(\mathbb{S}^1\) outside a finite set of singularities when \(\varepsilon\) goes to zero \cite{Bethuel_Brezis_Helein_1994}*{Theorem X.1}.

When \(F \in \mathcal{C}^3 (\Rset^\nu)\), the condition \eqref{eq_Vohyeajee6eig6ohvoothamu} in \cref{proposition_convergence_solutions} follows from
\(\lim_{n \to \infty} \int_{\Omega\cap B_\rho (a)} \frac{F (u_n)}{\varepsilon_n^2} = 0\), in view of the next lemma:

\begin{lemma}
  \label{lemma_isotropic_around_N}
  Let \(F \in \mathcal{C}^3 (\manifold{N}_{\delta_{\manifold{N}}},[0,+\infty))\). If \(F^{-1} (\{0\}) = \manifold{N}\) and \(F\) satisfies \eqref{hyp20}, then there exist constants \(C\in (0,+\infty)\) and \(\delta\in (0,\delta_\manifold{N})\) such that for every \(z \in \manifold{N}_{\delta}\),
  \begin{equation}
    \abs{\Deriv\Pi_{\manifold{N}}(z) [\nabla F (z)]}
    \le C F(z).
\end{equation}
\end{lemma}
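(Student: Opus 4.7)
The overall strategy is to exploit the fact that, although $\nabla F(z)$ is only first-order small in $\dist_{\manifold{N}}(z)$ near $\manifold{N}$, its component tangent to $\manifold{N}$ vanishes to second order, and $\Deriv\Pi_{\manifold{N}}$ essentially extracts that tangential part. To set things up I would first write $z = y + w$ with $y \defeq \Pi_{\manifold{N}}(z) \in \manifold{N}$ and $w \defeq z - y \in (T_y\manifold{N})^\perp$, so that $|w| = \dist_{\manifold{N}}(z)$. Since $F \in \mathcal{C}^3$ and $\manifold{N}$ is compact, a Taylor expansion at $y$ yields
\[
\nabla F(z) = \nabla F(y) + \Deriv^2 F(y)[w] + O(|w|^2),
\]
with a remainder that is uniform in $y \in \manifold{N}$. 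Because $F \ge 0$ vanishes on $\manifold{N}$, necessarily $\nabla F \equiv 0$ on $\manifold{N}$, so the first term drops and $\nabla F(z) = \Deriv^2 F(y)[w] + O(|w|^2)$.

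The heart of the argument is the algebraic observation that $\Deriv^2 F(y)[w]$ is normal to $\manifold{N}$ at $y$. To verify this I would take $v \in T_y\manifold{N}$ arbitrary, realize it as $\gamma'(0)$ for a smooth curve $\gamma \subset \manifold{N}$ through $y$, and differentiate the identity $\nabla F(\gamma(t)) \equiv 0$ at $t = 0$; this gives $\Deriv^2 F(y)[v, \cdot] \equiv 0$ as a linear form on $\Rset^\nu$. By the symmetry of $\Deriv^2 F(y)$ one then has $v \cdot \Deriv^2 F(y)[w] = \Deriv^2 F(y)[v, w] = 0$ for every $v \in T_y\manifold{N}$, so $\Deriv^2 F(y)[w] \in (T_y\manifold{N})^\perp$. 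Combined with \eqref{eq:proj_ortho}, which asserts $\Deriv\Pi_{\manifold{N}}(y) = P^\top_{\manifold{N}}(y)$, this yields $\Deriv\Pi_{\manifold{N}}(y)[\Deriv^2 F(y)[w]] = 0$.

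To close the loop I would invoke the smoothness of $\Pi_{\manifold{N}}$ on $\manifold{N}_{\delta_{\manifold{N}}}$ to bound $\|\Deriv\Pi_{\manifold{N}}(z) - \Deriv\Pi_{\manifold{N}}(y)\| \le C|w|$; chaining this with the previous cancellation and with the Taylor expansion gives $|\Deriv\Pi_{\manifold{N}}(z)[\nabla F(z)]| = O(|w|^2)$, uniformly in $y \in \manifold{N}$. Finally, shrinking $\delta$ so that $\delta \le \delta_F$, the lower bound in \eqref{hyp20} yields $|w|^2 = \dist_{\manifold{N}}(z)^2 \le (2/m_F)\, F(z)$, concluding the proof. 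I do not anticipate a serious obstacle: the whole argument hinges on the single algebraic cancellation in the middle paragraph, which in turn is nothing more than the fact that $\nabla F$ is identically zero along $\manifold{N}$, and every remaining estimate is a routine uniform-in-$y$ perturbation bound from compactness and $\mathcal{C}^3$ regularity.
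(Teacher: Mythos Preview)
Your argument is correct and follows essentially the same route as the paper: both hinge on the cancellation obtained by differentiating the identity \(\nabla F \equiv 0\) on \(\manifold{N}\), which shows \(\Deriv^2 F(y)[v,\cdot]=0\) for every \(v\in T_y\manifold{N}\), and then close with the lower bound in \eqref{hyp20}. The only cosmetic difference is bookkeeping: the paper Taylor-expands \(\Deriv F(z)[\Deriv\Pi_{\manifold{N}}(z)[v]]\) directly (so the adjoint \(\Deriv\Pi_{\manifold{N}}(z)^*\) appears and is then compared to \(\Deriv\Pi_{\manifold{N}}(z)\)), whereas you expand \(\nabla F(z)\) first and absorb the discrepancy via \(\|\Deriv\Pi_{\manifold{N}}(z)-\Deriv\Pi_{\manifold{N}}(y)\|=O(|w|)\).
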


\begin{proof}[Proof of \cref{lemma_isotropic_around_N}]%
\resetconstant%
By a second-order Taylor expansion of \(\Deriv F(z)\), we have for \(v \in \Rset^\nu\)
\begin{multline}
\label{eq_fae7reoD2ohneeF7Ahth8Yie}
\Deriv F (z) [\Deriv \Pi_{\manifold{N}}(z)[v]] 
= \Deriv F (\Pi_{\manifold{N}} (z)) [\Deriv \Pi_{\manifold{N}}(z)[v]]\\
+ \Deriv^2F(\Pi_\manifold{N}(z))[z-\Pi_\manifold{N}(z),
\Deriv\Pi_{\manifold{N}}(z)[v]]
+O(\abs{v}\dist(z,\manifold{N})^2).
\end{multline}
We first have for every \(z \in \manifold{N}_{\delta}\),
\begin{equation}
\label{eq_lie3yieJ1eeph2Oodeeng2ie}
\Deriv F (\Pi_{\manifold{N}}(z)) = 0, 
\end{equation}
so that the first term in the right-hand side of \eqref{eq_fae7reoD2ohneeF7Ahth8Yie} vanishes.
Differentiating \eqref{eq_lie3yieJ1eeph2Oodeeng2ie}, we get for \(v, w \in \Rset^\nu\) and \(z \in \manifold{N}_\delta\),
\begin{equation}
\label{eq_haeJa5aik8yohj5oowughaja}
 \Deriv^2 F (\Pi_{\manifold{N}}(z))[w, \Deriv \Pi_{\manifold{N}}(z) [v]] = 0,
\end{equation}
so that the second term in the right-hand side of \eqref{eq_fae7reoD2ohneeF7Ahth8Yie} also vanishes.
We deduce from \eqref{eq_fae7reoD2ohneeF7Ahth8Yie}, \eqref{eq_lie3yieJ1eeph2Oodeeng2ie} and \eqref{eq_haeJa5aik8yohj5oowughaja} that 
\[
\abs{
 \Deriv \Pi_{\manifold{N}} (z)^*[\nabla F (z)]
 }
 \le \C \dist (z, \manifold{N})^2.
\]
Since \(\Deriv \Pi_{\manifold{N}} (z)\) is self-adjoint and \(\nabla F(z)=0\) when \(z \in \manifold{N}\), we have 
\[
 \abs{\Deriv \Pi_{\manifold{N}}(z)^*[\nabla F (z)] - \Deriv \Pi_{\manifold{N}} (z)[\nabla F (z)]}
 \le \C \dist (z, \manifold{N})^2, \text{ for all } z \in \manifold{N}_{\delta}\qedhere
\]
and the conclusion follows, in view of \eqref{hyp20}.
\end{proof}

We begin the proof of \cref{proposition_convergence_solutions} with the following geometrical identity for the nearest-point projection:
\begin{lemma}
\label{lemma_projection}
For every \(y \in \manifold{N}\), \(h \in \Rset^\nu\) and \(w \in T_y \manifold{N}\), we have 
\begin{equation}
 w \cdot \Deriv^2 \Pi_{\manifold{N}}(y)[h, h]
 =w\cdot \Deriv^2 \Pi_{\manifold{N}} (y)[h, \Deriv \Pi_{\manifold{N}} (y)[h]]
 + \Deriv\Pi_{\manifold{N}} (y)[h] \cdot 
 \Deriv^2 \Pi_{\manifold{N}} (y)[h, w].
\end{equation}
\end{lemma}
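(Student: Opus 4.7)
The plan is to reduce the identity to two geometric facts about \(\Pi_{\manifold{N}}\) near a point of \(\manifold{N}\) by exploiting self-adjointness of \(\Deriv \Pi_{\manifold{N}}(y)\) throughout the tubular neighbourhood. Since \(\Pi_{\manifold{N}}(y) = y - \nabla \rho(y)\) on \(\manifold{N}_{\delta_{\manifold{N}}}\), with \(\rho(y) \defeq \frac{1}{2}\dist_{\manifold{N}}(y)^2\) (a standard fact for the nearest-point retraction onto a smooth submanifold), one has \(\Deriv \Pi_{\manifold{N}}(y) = \id - \Deriv^2 \rho(y)\), which is a symmetric endomorphism of \(\Rset^\nu\) for every \(y\) in this neighbourhood. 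Differentiating the resulting bilinear identity \(\Deriv \Pi_{\manifold{N}}(y)[a] \cdot b = a \cdot \Deriv \Pi_{\manifold{N}}(y)[b]\) in direction \(h\) and invoking the Schwarz symmetry in \((h, a)\), the trilinear form
\[
T(h_1, h_2, h_3) \defeq \Deriv^2 \Pi_{\manifold{N}}(y)[h_1, h_2] \cdot h_3
\]
is totally symmetric in its three entries.

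This symmetry rewrites the last term of the right-hand side as \(\Deriv \Pi_{\manifold{N}}(y)[h] \cdot \Deriv^2 \Pi_{\manifold{N}}(y)[h, w] = T(h, w, \Deriv \Pi_{\manifold{N}}(y)[h]) = w \cdot \Deriv^2 \Pi_{\manifold{N}}(y)[h, \Deriv \Pi_{\manifold{N}}(y)[h]]\), so the identity reduces to
\[
w \cdot \Deriv^2 \Pi_{\manifold{N}}(y)[h, h] = 2\, w \cdot \Deriv^2 \Pi_{\manifold{N}}(y)[h, \Deriv \Pi_{\manifold{N}}(y)[h]].
\]
Decomposing \(h = h^\top + h^\perp\) with \(h^\top \defeq \Deriv \Pi_{\manifold{N}}(y)[h] \in T_y \manifold{N}\) and \(h^\perp \in (T_y \manifold{N})^\perp\) and expanding both sides by bilinearity, this identity is in turn equivalent to
\[
w \cdot \Deriv^2 \Pi_{\manifold{N}}(y)[h^\top, h^\top] = w \cdot \Deriv^2 \Pi_{\manifold{N}}(y)[h^\perp, h^\perp].
\]

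The next step is to show that both sides vanish. The left-hand side is zero by \eqref{eq:second_fund_form}: \(\Deriv^2 \Pi_{\manifold{N}}(y)[h^\top, h^\top] = -B_y(h^\top, h^\top) \in (T_y \manifold{N})^\perp\) is orthogonal to \(w \in T_y \manifold{N}\). The right-hand side is also zero, as a consequence of the stronger statement \(\Deriv^2 \Pi_{\manifold{N}}(y)[h^\perp, h^\perp] = 0\): for \(y \in \manifold{N}\) and \(h^\perp \in (T_y \manifold{N})^\perp\), the point \(y\) is the unique nearest point of \(\manifold{N}\) to \(y + t h^\perp\) for all sufficiently small \(t\). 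Indeed, parametrising \(\manifold{N}\) near \(y\) by \(s \mapsto y + s + \frac{1}{2} B_y(s, s) + O(\abs{s}^3)\) with \(s \in T_y \manifold{N}\) and using \(h^\perp \cdot s = 0\), the squared distance from \(y + t h^\perp\) to such a point expands as \(t^2 \abs{h^\perp}^2 - t\, h^\perp \cdot B_y(s, s) + \abs{s}^2 + O(\abs{s}^3)\), which is strictly minimised at \(s = 0\) for \(\abs{t}\) small. Hence \(t \mapsto \Pi_{\manifold{N}}(y + t h^\perp)\) is identically equal to \(y\) near \(t = 0\), so all its derivatives vanish at \(t = 0\).

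The only delicate point is the opening step: one needs self-adjointness of \(\Deriv \Pi_{\manifold{N}}(y)\) at \emph{every} \(y\) in the tubular neighbourhood (not merely on \(\manifold{N}\)) in order to obtain the full symmetry of \(T\) by differentiation---this is what elevates the manipulation beyond what can be deduced directly from \(\Deriv \Pi_{\manifold{N}}(y)\vert_{\manifold{N}} = P^\top_{\manifold{N}}(y)\). Once this symmetry is established, the remainder of the argument is routine tangential/normal bookkeeping combined with the two geometric observations above.
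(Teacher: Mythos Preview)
Your proof is correct. The paper follows the same tangential/normal decomposition \(h = h^\top + h^\perp\) and relies on the same two geometric facts you isolate --- that \(\Deriv^2\Pi_{\manifold{N}}(y)[h^\perp, h^\perp] = 0\) and that \(\Deriv^2\Pi_{\manifold{N}}(y)\vert_{T_y\manifold{N} \otimes T_y\manifold{N}}\) takes values in \((T_y\manifold{N})^\perp\) --- but instead of first establishing the full symmetry of \(T\), it expands the three-term difference directly into four pieces and handles the mixed-term cancellation \(w \cdot \Deriv^2\Pi_{\manifold{N}}(y)[h^\perp, h^\top] = h^\top \cdot \Deriv^2\Pi_{\manifold{N}}(y)[h^\perp, w]\) via the link between \(\Deriv^2\Pi_{\manifold{N}}\) and the second fundamental form. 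Your route front-loads the stronger observation that \(\Deriv\Pi_{\manifold{N}}\) is self-adjoint \emph{throughout} the tubular neighbourhood (since \(\Pi_{\manifold{N}} = \id - \nabla\rho\) with \(\rho = \tfrac{1}{2}\dist_{\manifold{N}}^2\)), from which the total symmetry of \(T\) --- and hence that very mixed-term identity --- follow at once; this is a clean conceptual repackaging of what the paper does by hand, and it makes the reduction to ``both sides vanish'' a one-line computation rather than a four-term bookkeeping exercise.
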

\begin{proof}
Setting \(h^\top \defeq \Deriv \Pi_{\manifold{N}} (y)[h]\) and \(h^\perp \defeq h - h^\top\), we need to prove that the following quantity vanishes: 
\begin{multline}
\label{eq_seiTe4oJaovejaehuchahgho}
  w\cdot \Deriv^2 \Pi_{\manifold{N}} (y)[h, h]- w  \cdot \Deriv^2 \Pi_{\manifold{N}} (y)[h, h^\top] - h^\top \cdot \Deriv^2 \Pi_{\manifold{N}} (y)[h, w]\\
  =w \cdot \Deriv^2 \Pi_{\manifold{N}} (y)[h^\top, h^\perp] +w \cdot  \Deriv^2 \Pi_{\manifold{N}} (y)[h^\perp, h^\perp] \\
  - h^\top \cdot \Deriv^2 \Pi_{\manifold{N}} (y)[h^\top, w] 
  - h^\top \cdot \Deriv^2 \Pi_{\manifold{N}} (y)[h^\perp, w].
\end{multline}
Since \(h^\perp \in T_y^\perp \manifold{N}\), and since \(\Pi_{\manifold{N}}(y+t h^\perp)=\Pi_\manifold{N}(y)\) for all \(t\) small enough , we have, by differentiating twice:
\begin{equation}
\label{eq_Chee2pheeBeihoPoo6As1jai}
\Deriv^2 \Pi_{\manifold{N}} (y)[h^\perp, h^\perp] = 0.
\end{equation}
By the connection between the nearest-point projection and the second fundamental form \cite{Moser_2005}*{lemma 3.2}, we have
\begin{equation}
\label{eq_iocimoXaereeghooShool0ph}
 w \cdot \Deriv^2 \Pi_{\manifold{N}} (y)[h^\perp, h^\top]
 = - h^\perp \cdot B_y(w, h^\top)
 = - h^\top \cdot \Deriv^2 \Pi_{\manifold{N}} (y)[h^\perp, w].
\end{equation}
Finally, since \(w \in T_y \manifold{N}\), \(h^\top \in T_y \manifold{N}\) and by using that \(\Deriv^2\Pi_\manifold{N}(y):T_y\manifold{N} \otimes T_y\manifold{N} \rightarrow T_y^\perp\manifold{N}\) we have 
\begin{equation}
\label{eq_sheash8Aezeohai2ohR6kash}
 h^\top \cdot \Deriv^2 \Pi_{\manifold{N}} (y)[h^\top, w] = 0.
\end{equation}
In view of \eqref{eq_Chee2pheeBeihoPoo6As1jai} ,\eqref{eq_iocimoXaereeghooShool0ph}
and
\eqref{eq_sheash8Aezeohai2ohR6kash}, the right-hand side of \eqref{eq_seiTe4oJaovejaehuchahgho} vanishes and the conclusion follows.
\end{proof}

\begin{lemma}
  \label{lemma_project_equations}
For every \(y\in \manifold{N}_{\delta_{\manifold{N}}}\), the map \(\alpha_{\manifold{N}} (y) \defeq \Deriv \Pi_{\manifold{N}} (y) \Deriv \Pi_{\manifold{N}} 
(y)^*: T_{\Pi_\manifold{N}(y)}\manifold{N} \rightarrow T_{\Pi_\manifold{N}(y)}\manifold{N}\) is invertible. Moreover, if a map \(u \in W^{2, 1}_{\mathrm{loc}}(\Omega, \Rset^\nu)\) satisfies \(\norm{\dist(u, \manifold{N})}_{L^\infty(\Omega)}< \delta_{\manifold{N}}\), then we have
\begin{equation*}
  \left|\alpha_{\manifold{N}}(u)^{-1} \Deriv\Pi_{\manifold{N}} (u)[\Delta u ]
 -\operatorname{div} [\alpha_{\manifold{N}}(u)^{-1} D(\Pi_{\manifold{N}} \compose u)]\right|\\
 \le C \abs{u - \Pi_{\manifold{N}}(u)} \abs{\Deriv u}^2.
\end{equation*}
\end{lemma}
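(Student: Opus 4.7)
To establish invertibility of $\alpha_{\manifold{N}}(y)$, I would start at the base case $y \in \manifold{N}$, where the identity $\Deriv\Pi_{\manifold{N}}(y) = P^\top_{\manifold{N}}(y)$ from \eqref{eq:proj_ortho} (and the self-adjointness of orthogonal projections) gives $\alpha_{\manifold{N}}(y) = (P^\top_{\manifold{N}}(y))^2 = P^\top_{\manifold{N}}(y)$, which is the identity on $T_y\manifold{N}$. The continuity of $y \mapsto \Deriv \Pi_{\manifold{N}}(y)$ and the compactness of $\manifold{N}$ then yield a uniform lower bound on the smallest singular value of $\alpha_{\manifold{N}}(y)\vert_{T_{\Pi_\manifold{N}(y)}\manifold{N}}$ on some tubular neighbourhood; shrinking $\delta_{\manifold{N}}$ if needed (which is harmless since all previous uses of $\delta_{\manifold{N}}$ were upper bounds on how small a neighbourhood to consider), we obtain invertibility with a uniformly bounded inverse throughout $\manifold{N}_{\delta_{\manifold{N}}}$.

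For the quantitative identity, the plan is to reorganise the expression as a single matrix multiplication and apply the Leibniz rule. Let $B(y) \defeq \alpha_{\manifold{N}}(y)^{-1}\compose \Deriv\Pi_{\manifold{N}}(y)$, which makes sense as a smooth $\Rset^\nu \to T_{\Pi_{\manifold{N}}(y)}\manifold{N} \subset \Rset^\nu$ linear map because the image of $\Deriv\Pi_{\manifold{N}}(y)$ lies in the domain of $\alpha_{\manifold{N}}(y)^{-1}$. Then $\alpha_{\manifold{N}}(u)^{-1} \Deriv(\Pi_{\manifold{N}}\compose u) = B(u)\,\Deriv u$, so the chain and product rules give
\[
\operatorname{div}[B(u)\,\Deriv u] = B(u)\,\Delta u + \sum_{i=1}^2 \Deriv_y B(u)[\partial_i u]\,\partial_i u,
\]
and the left-hand side of the claimed estimate is exactly $\bigl|\sum_i \Deriv_y B(u)[\partial_i u]\,\partial_i u\bigr|$. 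The remaining task is therefore to show that this quadratic expression in $\Deriv u$ is controlled by $|u - \Pi_{\manifold{N}}(u)|\,|\Deriv u|^2$.

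This is where the main obstacle lies: the map $B$ does not vanish on $\manifold{N}$ (it equals $P^\top_{\manifold{N}}$ there), so one must show that $\Deriv_y B(y)[v] v$ has a specific algebraic structure at points $y \in \manifold{N}$ which causes the offending terms to cancel after pairing with the right projection. The strategy is to Taylor-expand $B$ around $\Pi_{\manifold{N}}(u) \in \manifold{N}$, writing $u = \Pi_{\manifold{N}}(u) + \zeta$ with $\zeta \in T_{\Pi_{\manifold{N}}(u)}^\perp\manifold{N}$ and $|\zeta| = |u - \Pi_{\manifold{N}}(u)|$; the linear-in-$\zeta$ term produces directly the desired bound $C|\zeta|\,|\Deriv u|^2$, while the zeroth-order term $\sum_i \Deriv_y B(\Pi_{\manifold{N}}(u))[\partial_i u]\,\partial_i u$ must be shown to contribute only through its component tangent to the relevant normal directions in $\manifold{N}$. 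To do this one differentiates the defining relation $\alpha_{\manifold{N}}(y) B(y) = \Deriv\Pi_{\manifold{N}}(y)$ at $y \in \manifold{N}$, uses the characterisation of the second fundamental form $\Deriv^2\Pi_{\manifold{N}}$ via \eqref{eq:second_fund_form} and, crucially, the symmetrisation identity of \cref{lemma_projection} to convert the remaining $\Deriv^2\Pi_{\manifold{N}}(y)[v,v]$ terms into expressions of the form $\Deriv^2\Pi_{\manifold{N}}(y)[v, \Deriv\Pi_{\manifold{N}}(y)[v]]$, which measure exactly how far $v$ is from being tangent at $\Pi_{\manifold{N}}(y)$ and hence produce an additional factor of $|u - \Pi_{\manifold{N}}(u)|$. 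Assembling the estimates and using the boundedness of the derivatives of $\Pi_{\manifold{N}}$, $\alpha_{\manifold{N}}$ and $\alpha_{\manifold{N}}^{-1}$ on the compact set $\manifold{N}_{\delta_{\manifold{N}}}$ concludes the proof.
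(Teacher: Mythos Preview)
Your reduction of the left-hand side to $\bigl|\sum_{i}\Deriv B(u)[\partial_i u]\,\partial_i u\bigr|$ with $B(y)=\alpha_{\manifold{N}}(y)^{-1}\Deriv\Pi_{\manifold{N}}(y)$ is correct, but the estimate you then aim for is simply false: the bilinear map $v\mapsto\Deriv B(y)[v]\,v$ does \emph{not} vanish at points $y\in\manifold{N}$. For $\manifold{N}=\Sset^1\subset\Rset^2$ one computes $B(y)=\abs{y}\id-y\otimes y/\abs{y}$, hence $\Deriv B(y)[v]\,v=\bigl((y\cdot v)^2/\abs{y}^2-\abs{v}^2\bigr)\,y/\abs{y}$, a nonzero \emph{normal} vector whenever $v\not\parallel y$; taking $u(x)=(\cos x_1,\sin x_1)$ one finds the left-hand side of the lemma equals $1$ while $\abs{u-\Pi_{\manifold{N}}(u)}=0$. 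Your invocation of \cref{lemma_projection} is where the argument breaks: that identity controls only the pairing $w\cdot\Deriv^2\Pi_{\manifold{N}}(y)[v,v]$ for \emph{tangential} test vectors $w\in T_y\manifold{N}$, hence only the tangential component of the error; the normal component survives at order $\abs{\Deriv u}^2$ and carries no factor of $\abs{u-\Pi_{\manifold{N}}(u)}$.

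The paper's proof avoids this by never isolating $\Deriv B$. Instead of pulling $\alpha_{\manifold{N}}(u)^{-1}$ through the divergence, it computes $\partial_i^2(\Pi_{\manifold{N}}\compose u)$ in two ways and obtains an identity for
\[
\Deriv\Pi_{\manifold{N}}(u)[\partial_i^2 u]-\alpha_{\manifold{N}}(u)\,\partial_i\bigl[\alpha_{\manifold{N}}(u)^{-1}\partial_i(\Pi_{\manifold{N}}\compose u)\bigr],
\]
in which the factor $\alpha_{\manifold{N}}(u)$ (whose range is $T_{\Pi_{\manifold{N}}(u)}\manifold{N}$) forces both sides into the tangent space. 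Only tangential test vectors are then needed, and this is exactly the setting of \cref{lemma_projection}. Multiplying afterwards by the bounded operator $\alpha_{\manifold{N}}(u)^{-1}$ yields the stated conclusion; since $\alpha_{\manifold{N}}(u)^{-1}\alpha_{\manifold{N}}(u)$ equals the tangential projection $P^\top_{\manifold{N}}(\Pi_{\manifold{N}}(u))$ rather than $\id_{\Rset^\nu}$, what is actually established is the inequality with the divergence term replaced by its tangential projection --- which is precisely the form used downstream in the proof of \cref{proposition_convergence_solutions}, where $\Deriv\Pi_{\manifold{N}}(v_n)$ is applied explicitly. (Your invertibility argument is correct but unnecessarily roundabout: $\Deriv\Pi_{\manifold{N}}(y)$ is surjective onto $T_{\Pi_{\manifold{N}}(y)}\manifold{N}$ for every $y\in\manifold{N}_{\delta_{\manifold{N}}}$ by \cref{lemma_derivativeNearestPointRetraction}, and $AA^*$ is invertible whenever $A$ is onto, so no shrinking of $\delta_{\manifold{N}}$ is needed.)
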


Here, we recall that \(\Deriv\Pi_\manifold{N}(y)^*: T_{\Pi_
\manifold{N}(y)}\manifold{N}\rightarrow T_{\Pi_
\manifold{N}(y)}\manifold{N}\subset\Rset^\nu \) stands for the adjoint of \(\Deriv\Pi_\manifold{N}(y)\) which is defined by 
\[
\Deriv \Pi_{\manifold{N}}(y)[v] \cdot w 
 = v\cdot \Deriv\Pi_{\manifold{N}}(y)^*[w] \quad\text{for all \(v\in \Rset^\nu\) and \(w \in T_{\Pi_{\manifold{N}}(y)}\manifold{N}\).}
 \]
 
\Cref{lemma_project_equations} is a generalisation of the decomposition when \(\manifold{N} = \Sset^1\) of \(u\)
into its modulus and argument \cite{Bethuel_Brezis_Helein_1993}*{(51)-52)}, which is connected to the substitution in the Schrödinger equation to obtain Madelung equations, see e.g.\ \cite{Carles_Danchin_Saut_2012} and references therein. 

\begin{proof}[Proof of \cref{lemma_project_equations}]
\resetconstant
First of all, we have that \(\alpha_\manifold{N}(y)\) is invertible since \(\Deriv\Pi_\manifold{N}(y):T_{\Pi_
\manifold{N}(y)}\manifold{N}\rightarrow T_{\Pi_
\manifold{N}(y)}\manifold{N}\) is onto. 

Let \(i \in \{1, 2\}\). We have on the one hand
\begin{equation}
\label{eq_dohXu2oon6Cei5phiehu8ang}
\begin{split}
\partial_i^2 (\Pi_{\manifold{N}} \compose u)
&  = \partial_i\left(\alpha_{\manifold{N}}(u)\alpha_{\manifold{N}} (u)^{-1} \partial_i (\Pi_{\manifold{N}} \compose u)\right)  \\
&= \alpha_{\manifold{N}} (u) \partial_i \bigl( \alpha_{\manifold{N}} (u)^{-1} \partial_i (\Pi_{\manifold{N}} \compose u)\bigr)
+ \partial_i (\alpha_{\manifold{N}} (u)) \alpha_{\manifold{N}} (u)^{-1} \partial_i(\Pi_{\manifold{N}} \compose u),
\end{split}
\end{equation}
with
\begin{equation}
\label{eq_obae3meixoh8lo9Ieh3ooghu}
 \partial_i (\alpha_{\manifold{N}} (u))
 = \Deriv^2 \Pi_{\manifold{N}} (u)[\partial_i u] \compose \Deriv \Pi_{\manifold{N}} (u)^*
 + \Deriv \Pi_{\manifold{N}} (u) \compose \bigl(\Deriv^2 \Pi_{\manifold{N}} (u)[\partial_i u]\bigr)^*.
\end{equation}
On the other hand, we have
\begin{equation}
\label{eq_BohG5quaoso7beec7yeatiex}
 \partial_i^2 (\Pi_{\manifold{N}} \compose u) = \partial_i (\Deriv \Pi_{\manifold{N}} (u)[\partial_i u]) =
 \Deriv^2 \Pi_{\manifold{N}} [\partial_i u, \partial_i u] +
 \Deriv \Pi_{\manifold{N}}(u) [\partial_i^2 u],
\end{equation}
and therefore by \eqref{eq_dohXu2oon6Cei5phiehu8ang}, \eqref{eq_obae3meixoh8lo9Ieh3ooghu} and \eqref{eq_BohG5quaoso7beec7yeatiex}, we have
\begin{multline}
\label{eq_ohthieY1iibibiega4tiv7Ce}
 \Deriv \Pi_{\manifold{N}} (u)[\partial^2_i u]
 - \alpha_{\manifold{N}} (u) \partial_i\bigl( \alpha_{\manifold{N}} (u)^{-1} \partial_i (\Pi_{\manifold{N}} \compose u)\bigr)\\
 = \bigl(\Deriv^2 \Pi_{\manifold{N}} (u)[\partial_i u] \compose \Deriv \Pi_{\manifold{N}} (u)^*
 + \Deriv \Pi_{\manifold{N}} (u) \compose \Deriv^2 \Pi_{\manifold{N}} (u)[\partial_i u]^*\bigr)[\alpha_{\manifold{N}} (u)^{-1} \partial_i(\Pi_{\manifold{N}} \compose u)]\\-
 \Deriv^2 \Pi_{\manifold{N}} (u)[\partial_i u, \partial_i u].
\end{multline}
Since the left-hand side of \eqref{eq_ohthieY1iibibiega4tiv7Ce} lies in \(T_{\Pi_\manifold{N}(u)} \manifold{N}\), it suffices to estimate the projection of the right-hand side of \eqref{eq_ohthieY1iibibiega4tiv7Ce} on \(T_{\Pi_\manifold{N}(u)} \manifold{N}\).

Since \(\Deriv\Pi_\manifold{N}(\Pi_\manifold{N}(u))\) is the orthogonal projection onto the tangent space \(T_{\Pi_{\manifold{N}}(u)}\manifold{N}\), we have that both \(\Deriv\Pi_\manifold{N}(\Pi_\manifold{N}(u))=\Deriv\Pi_\manifold{N}(\Pi_\manifold{N}(u))^\ast\) and \(\alpha_\manifold{N}(\Pi_\manifold{N}(u))\) are the identity on \(T_{\Pi_\manifold{N}(u)}\manifold{N}\). Hence, by using a Taylor expansion, we have for every \(v \in T_{\Pi_\manifold{N}(u)}\mathcal{N}\),
\begin{multline}\label{eq:arupvhaae}
v\cdot\Deriv^2 \Pi_{\manifold{N}} (u)[\partial_i u] \left[\Deriv \Pi_{\manifold{N}} (u)^*\alpha_{\manifold{N}} (u)^{-1} \partial_i(\Pi_{\manifold{N}} \compose u)\right] \\
=v\cdot\Deriv^2\Pi_\manifold{N}\left(\Pi_\manifold{N}(u) \right)\left[\partial_iu,\Deriv\Pi_\manifold{N}\left( \Pi_\manifold{N}(u)\right)[\partial_iu]\right]+O(\abs{v}\abs{u-\Pi_\manifold{N}(u)}\abs{\partial_iu}^2),
\end{multline}
\begin{multline}\label{eq:ouaegva}
v \cdot \Deriv \Pi_{\manifold{N}} (u) \compose \Deriv^2 \Pi_{\manifold{N}} (u)[\partial_i u]^*[\alpha_{\manifold{N}} (u)^{-1} \partial_i(\Pi_{\manifold{N}} \compose u)] \\
=  \Deriv^2\Pi_\manifold{N}(u)[\partial_iu,\Deriv\Pi_{\manifold{N}}(u)^*[v]] \cdot \alpha_{\manifold{N}}(u)^{-1} \partial_i(\Pi_\manifold{N}\compose u) \\
= \Deriv^2\Pi_\manifold{N}(\Pi_\manifold{N}(u))[\partial_iu,v] \cdot \partial_i (\Pi_\manifold{N}\compose u)+O(\abs{v}\abs{u-\Pi_\manifold{N}(u)}\abs{\partial_iu}^2)
\end{multline}
and, in view of \cref{lemma_projection},
\begin{equation}\label{eq:azgiahzg}
\begin{split}
v\cdot \Deriv^2\Pi_\manifold{N}(u)[\partial_iu,\partial_iu]
&=v\cdot \Deriv^2\Pi_{\manifold{N}}(\Pi_\manifold{N}(u))[\partial_iu,\partial_iu]+O(\abs{v}\abs{u-\Pi_\manifold{N}(u)}\abs{\partial_iu}^2)\\
&= v \cdot \Deriv^2\Pi_\manifold{N}\left(\Pi_\manifold{N}(u) \right)\left[\partial_iu,\Deriv\Pi_\manifold{N}\left( \Pi_\manifold{N}(u)\right)[\partial_iu]\right]\\
&\qquad+\Deriv^2\Pi_\manifold{N}(\Pi_\manifold{N}(u))[\partial_iu,v] \cdot \Deriv \Pi_\manifold{N}(\Pi_\manifold{N}(u))[\partial_iu] \\
&\qquad\qquad+O(\abs{v}\abs{u-\Pi_\manifold{N}(u)}\abs{\partial_iu}^2).
\end{split}
\end{equation}
Hence from \eqref{eq_ohthieY1iibibiega4tiv7Ce}, \eqref{eq:arupvhaae}, \eqref{eq:ouaegva} and \eqref{eq:azgiahzg} we arrive at
\begin{equation}
\label{eq_QuohzuvaijieneCh2dieZiet}
 \abs{\alpha_{\manifold{N}} (u)^{-1}\Deriv\Pi_{\manifold{N}} (u)[\partial^2_i u]
 - \partial_i  \bigl(\alpha_{\manifold{N}} (u)^{-1} \partial_i(\Pi_{\manifold{N}} \compose u)\bigr)}
 \le
 \C \abs{u - \Pi_{\manifold{N}}(u)} \abs{\partial_i u}^2.
\end{equation}
The conclusion then follows by the triangle inequality and summing \eqref{eq_QuohzuvaijieneCh2dieZiet} over \(i \in \{1, 2\}\).
\end{proof}

\begin{proof}%
[Proof of \cref{proposition_convergence_solutions}]%
\resetconstant
By classical regularity estimates, we have \(u_n \in W^{2, p} (\Omega,\Rset^\nu )\) for every \(p\in (1,+\infty)\).
It follows from our assumption \(\lim_{n \to \infty} \norm{\dist (u_n, \manifold{N})}_{L^\infty (\Omega \cap B_{\rho} (a))} = 0\), that for \(n \in \Nset\) large enough  we have \(\norm{\dist (u_n, \manifold{N})}_{L^\infty (\Omega \cap B_{\rho} (a))}<\delta_\manifold{N}\) so that we can define \(v_n \defeq \Pi_{\manifold{N}} \compose u_n\vert_{\Omega \cap B_\rho (a)}\).  By smoothness of \(\Pi_\manifold{N}\) and the assumption \eqref{eq_nnif40289ue2e}, we know that the sequence \((v_n)_{n\in\Nset}\) converges to \(u\) weakly in \(W^{1,2}(\Omega\cap B_\rho(a),\Rset^\nu)\). Moreover, we have
\begin{equation}\label{weakPalaisSmale}
\Deriv\Pi_{\manifold{N}}(v_n)[\Delta v_n]= f_n+g_n,
\end{equation}
where, using the same notation \(\alpha_{\manifold{N}}(y)\defeq \Deriv\Pi_{\manifold{N}}(y) \Deriv\Pi_{\manifold{N}}(y)^*\) as in \cref{lemma_project_equations},
\[
f_n\defeq\Deriv \Pi_{\manifold{N}}(v_n) \left[\operatorname{div} \left(\left(\operatorname{id} -
 \alpha_{\manifold{N}}(u_n)^{-1}\right) \Deriv v_n\right)\right]
 \]
 and
\[
g_n=\Deriv\Pi_\manifold{N}(v_n)\operatorname{div}\left(\alpha_{\manifold{N}}(u_n)^{-1}\Deriv v_n\right).
 \]

By weak convergence, \((\Deriv v_n)_{n\in\Nset}\) is bounded in \(L^2\). Using the fact that \(\alpha_{\manifold{N}}(y)\) depends smoothly on \(y\in\manifold{N}_{\delta_\manifold{N}}\) and that \(\alpha_{\manifold{N}}(y)=\operatorname{id}_{T_y\manifold{N}}\) when \(y\in\manifold{N}\), we find by our assumption \eqref{eq:aavdaeoihvqihf},
\begin{equation*}
 \lim_{n \to \infty} \norm{(\operatorname{id} -
  \alpha_{\manifold{N}}(u_n)^{-1}) \Deriv v_n}_{L^2 (\Omega \cap B_\rho (a))} = 0,
\end{equation*}
and we deduce that
\begin{equation}
\label{eq_convergenLone}
\norm{f_n}_{H^{-1}(\Omega\cap B_\rho(a),\Rset^\nu)}\underset{n\to\infty}{\longrightarrow} 0.
\end{equation}

Now, we have from \cref{lemma_project_equations} and by smoothness of \(\Deriv\Pi_\manifold{N}\),
\begin{equation*}
\norm{g_n}_{L^1}\leq \Cl{cst_fi0jewodck0ok}\left(\norm{\alpha_{\manifold{N}}^{-1}(u_n)\Deriv\Pi_{\manifold{N}}(u_n)\Delta u_n}_{L^1}+\norm{\abs{u_n - \Pi_{\manifold{N}} (u_n)} \abs{\Deriv u_n}^2}_{L^1}\right)
\end{equation*}
and since \(u_n\) satisfies the Ginzburg--Landau equation, we have by our assumption \eqref{eq_Vohyeajee6eig6ohvoothamu}, 
\[
\norm{\Deriv \Pi_{\manifold{N}}(u_n) [\Delta u_n]}_{L^1 (\Omega \cap B_\rho (a))} \le \frac{1}{\varepsilon_n^2}\norm{\Deriv \Pi_{\manifold{N}}(u_n)[\nabla F (u_n)]}_{L^1(\Omega \cap B_\rho(a))} \underset{n\to\infty}{\longrightarrow}0.
\]
We have also \(\norm{\abs{u_n - \Pi_{\manifold{N}} (u_n)} \abs{\Deriv u_n}^2}_{L^1}\to 0\) by the asumption \eqref{eq:aavdaeoihvqihf} and by boundedess of \((\abs{\Deriv u_n})_{n\in\Nset}\) in \(L^2 (\Omega \cap B_\rho (a))\). Hence
\begin{equation}
\label{eq_convergenHone}
\norm{g_n}_{L^1(\Omega\cap B_\rho(a),\Rset^\nu)}\underset{n\to\infty}{\longrightarrow} 0.
\end{equation}

Since \(\Deriv\Pi_\manifold{N}(v_n)\) is the orthogonal projection on \(T_{v_n}\manifold{N}\), the conclusion follows from \eqref{weakPalaisSmale}, \eqref{eq_convergenLone}, \eqref{eq_convergenHone} and the result about weak limits of Palais-Smale sequences for the harmonic maps equation in \cite{Bethuel_1993} (see also \cite{Freire_Muller_Struwe_1998} and \cite{Riviere_2007}).
\end{proof}

\subsection{Higher-order convergence of solutions\label{higherOrderSection}}
Under regularity assumptions on the boundary, we improve the convergence away from singularities. 

\begin{theorem}
\label{prop_strongconvC0}
Let \(\Omega\) be a bounded open set with \(\mathcal{C}^2\) boundary and assume that \(F \in \mathcal{C}^1 (\Rset^\nu, [0,+\infty))\) satisfies \(F^{-1}(\{0\}) = \manifold{N}\) and \eqref{hyp21}. Let also \(g\in \mathcal{C}^2(\partial \Omega,\manifold{N})\), \((\varepsilon_n)_{n \in \Nset}\) be a sequence in \((0,+\infty)\) converging to \(0\) and \((u_n)_{n \in \Nset}\) be a sequence of solutions to \eqref{eq_GinzburgLandau} with \(u_n\in\mathcal{C}^2(\Bar{\Omega},\Rset^\nu)\) and \(u_n{}_{\vert\partial\Omega}=g\). If \(F \in \mathcal{C}^4 (\manifold{N}_\delta)\) for some \(\delta\in (0,\delta_\manifold{N})\), and if for some \(a\in\Bar{\Omega}\) and \(\rho\in(0,+\infty)\), we have
\begin{enumerate}[i)]
\item \((u_n)_{n\in\Nset}\) converges to some \(\manifold{N}\)-valued harmonic map \(u_*\) in \(W^{1,2}(\Omega\cap B_\rho (a),\Rset^\nu)\),
\item \(\lim_{n\to\infty}\norm{\dist (u_n, \manifold{N})}_{L^\infty(\Omega\cap B_\rho (a))}=0\),
\item \(\lim_{n \to \infty} \int_{\Omega\cap B_\rho (a)} \frac{F (u_n)}{\varepsilon_n^2} = 0\),
\end{enumerate}
then \((u_n)_{n \in \Nset}\) is bounded in \(W^{2, p}_{\mathrm{loc}} (\Bar{\Omega} \cap B_{r} (a))\)  for all \(p\in[1,+\infty)\) and \(r\in(0,\rho)\).
\end{theorem}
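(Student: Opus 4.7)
The aim is to prove that $\Delta u_n = \nabla F(u_n)/\varepsilon_n^2$ is uniformly bounded in $L^\infty_{\mathrm{loc}}(\Bar\Omega \cap B_\rho(a))$; the desired $W^{2,p}_{\mathrm{loc}}$-bound will then follow from standard interior and boundary Calder\'on--Zygmund estimates combined with the $\mathcal{C}^2$-regularity of $\partial\Omega$ and of $g$. Since $\|\dist(u_n,\manifold{N})\|_{L^\infty(B_\rho(a)\cap\Omega)}\to 0$, assumption \eqref{hyp21} gives for $n$ large the pointwise bound $\bigabs{\nabla F(u_n)} \le M_F \dist(u_n, \manifold{N})$, so it will suffice to prove the \emph{quadratic closeness} estimate $\dist(u_n,\manifold{N}) \le C\varepsilon_n^2$ on a slightly smaller region. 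This parallels the classical bound $\bigabs{1-\abs{u_\varepsilon}^2} \le C\varepsilon^2$ obtained by Bethuel--Brezis--H\'elein for $\manifold{N}=\Sset^1$.

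The first and most delicate step is a uniform Lipschitz bound $\|\Deriv u_n\|_{L^\infty(B_{r_1}(a)\cap\Omega)} \le C$ for some $r_1<\rho$. A crude preliminary estimate $\|\Deriv u_n\|_{L^\infty} \le C/\varepsilon_n$ follows from $\|\Delta u_n\|_{L^\infty} \le C/\varepsilon_n^2$ via \cref{lemma_regularity_D2_Deltainfty} (applied to $u_n-\tilde g$ where $\tilde g$ is a smooth extension of $g$), but removing the $\varepsilon_n$-dependence requires a blow-up argument. Assuming by contradiction that $\|\Deriv u_n\|_{L^\infty}=M_n\to\infty$ at some $x_n\to x_\infty$, the rescaling $\tilde u_n(y):=u_n(x_n+y/M_n)$ solves the Ginzburg--Landau equation with effective parameter $\tilde\varepsilon_n:=\varepsilon_n M_n\le C$, has gradient bounded by $1$ with equality at the origin, and satisfies $\int_{B_R}F(\tilde u_n)\to 0$ by scaling and the assumption $\int_{B_\rho(a)} F(u_n)/\varepsilon_n^2\to 0$. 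Passing to the limit (on a half-plane if $x_n\to \partial\Omega$, using the $\mathcal{C}^2$-regularity to straighten the boundary) produces a non-constant harmonic map $\Rset^2\to\manifold{N}$ (or a half-plane version thereof) of locally finite Dirichlet energy, which contradicts the strong $W^{1,2}$-convergence of $u_n$ to the smooth harmonic map $u_*$ at the non-singular point $x_\infty$. This step is the main obstacle of the proof, as the Liouville-type analysis must interpolate between genuine harmonic-map behaviour (when $\tilde\varepsilon_n\to 0$) and non-trivial Ginzburg--Landau behaviour (when it stays bounded), the vanishing of the local potential energy being essential to exclude both scenarios.

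Once the Lipschitz bound is established, a screened-Poisson inequality for the scalar distance $d_n:=\dist(u_n,\manifold{N})$ completes the argument. Setting $\phi_n := u_n-\Pi_{\manifold{N}}\compose u_n$ so that $d_n=\abs{\phi_n}$, the identity $\Delta(d_n^2/2)=\phi_n\cdot \Delta\phi_n + \abs{\Deriv\phi_n}^2$, combined with the equation, the lower bound $\phi_n\cdot\nabla F(u_n)\ge m_F d_n^2$ from \eqref{hyp21}, and the crucial estimate $\bigabs{\Deriv\Pi_{\manifold{N}}(u_n)[\nabla F(u_n)]}\le CF(u_n)\le C d_n^2$ of \cref{lemma_isotropic_around_N}, yields --- after absorbing a cubic remainder $\varepsilon_n^{-2} d_n^3$ into the principal part by uniform smallness of $d_n$ --- the inequality
\[
-\varepsilon_n^2 \Delta (d_n^2) + c_F\, d_n^2 \le C\varepsilon_n^4 \abs{\Deriv u_n}^4
\qquad\text{on } B_{r_1}(a)\cap\Omega,
\]
with $c_F,C$ independent of $n$. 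The standard $L^\infty$ maximum-principle estimate for the screened operator $-\varepsilon_n^2\Delta+c_F$ --- using the vanishing trace $d_n=0$ on $\partial\Omega\cap B_{r_1}(a)$ together with a smooth cutoff to handle the free interior boundary $\Omega\cap\partial B_{r_1}(a)$ --- combined with the Lipschitz bound then yields $d_n^2\le C\varepsilon_n^4/c_F$ and hence $\dist(u_n,\manifold{N})\le C\varepsilon_n^2$. A final application of Calder\'on--Zygmund theory delivers the $W^{2,p}_{\mathrm{loc}}$-bound for every $p<\infty$, the boundary portion following by straightening $\partial\Omega$ locally and using the smoothness of $g$.
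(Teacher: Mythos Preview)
Your overall architecture---uniform Lipschitz bound, then a screened Poisson inequality for the squared distance, then the maximum principle, then Calder\'on--Zygmund---is exactly the paper's strategy. The screened Poisson step and the conclusion are essentially equivalent to what the paper does (the paper obtains $-\varepsilon_n^2\Delta(Q\circ u_n)+c\,Q\circ u_n\le C\varepsilon_n^2\sqrt{Q\circ u_n}$ directly from $D^2Q$ rather than via \cref{lemma_isotropic_around_N} and Young, but both routes yield $\dist(u_n,\manifold{N})\le C\varepsilon_n^2$).

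The one genuine gap is the Lipschitz bound. The paper does not argue by blow-up; it invokes the small-energy $\varepsilon$-regularity estimate of \cref{smallenergyestimate} (Contreras--Lamy--Rodiac), which rests on a Bochner-type inequality $-\Delta e_{\varepsilon}\le Ce_{\varepsilon}^2$ valid once $\dist(u,\manifold{N})$ is small. After a covering argument to make $\int e_{\varepsilon_n}(u_n)\le\eta_0$, this gives $\sup_{B_{\rho/2}}\,e_{\varepsilon_n}(u_n)\le C$ in one stroke. Your blow-up, by contrast, is incomplete precisely in the regime $\tilde\varepsilon_n\to 0$: to pass $|D\tilde u_n(0)|=1$ to the limit you need $C^1$-compactness of $\tilde u_n$, hence a bound on $\Delta\tilde u_n=\nabla F(\tilde u_n)/\tilde\varepsilon_n^2$, hence $\dist(\tilde u_n,\manifold{N})\le C\tilde\varepsilon_n^2$---which is the very estimate you are ultimately after. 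Weak or $L^2$ convergence of $D\tilde u_n$ to zero does not by itself contradict $|D\tilde u_n(0)|=1$. Closing this circularity requires either running the screened Poisson argument already at the rescaled level (where $|D\tilde u_n|\le 1$ is available) or, equivalently, the Bochner/$\varepsilon$-regularity input that the paper uses directly; either way the small-energy estimate is not avoided.

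A minor point: the bound $|\nabla F(u_n)|\le M_F\dist(u_n,\manifold{N})$ does not follow from \eqref{hyp21}, which only controls the normal component $DF(z)[z-\Pi_{\manifold{N}}(z)]$. You need the $\mathcal{C}^2$-regularity of $F$ near $\manifold{N}$ (a second-order Taylor expansion at $\Pi_{\manifold{N}}(z)$) to get $|\nabla F(z)|\le C\dist(z,\manifold{N})$; this is available here since $F\in\mathcal{C}^4(\manifold{N}_\delta)$.
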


In particular, it follows by the Morrey--Sobolev embedding that \((u_n)_{n \in \Nset}\) converges to \(u_*\) in \(C^{1, \alpha} (\Bar{\Omega} \cap B_{\rho/2} (a))\) for all \(0<\alpha <1\).

The first tool to prove \cref{prop_strongconvC0} is the following proposition that was proved in \cite{Contreras_Lamy_Rodiac_2018} in dimension \(n\geq 3\) and whose proof is the same for \(n=2\).
It relies on the fact that when \(\dist(u_n,\manifold{N})\) is small, a B\"ochner-type formula holds: \(-\Delta e_\varepsilon(u_n) \leq Ce_\varepsilon(u_n)^2\) if \(u_n\) is a solution of \eqref{eq_GinzburgLandau} and where \(e_\varepsilon (u)= \frac{\abs{\Deriv u}^2}{2}+\frac{F(u)}{\varepsilon^2}\) and on boundary elliptic estimates on the gradient.

\begin{proposition}[\cite{Contreras_Lamy_Rodiac_2018}*{Proposition 3.1}]
\label{smallenergyestimate}
Let \(\Omega\) be a bounded open set with \(\mathcal{C}^2\) boundary, let \(g \in \mathcal{C}^2(\partial \Omega,\manifold{N})\), and let \(F \in \mathcal{C}^1 (\Rset^\nu, [0,+\infty))\) such that \(F^{-1}(\{0\}) = \manifold{N}\), \(F\in\mathcal{C}^3(\manifold{N}_\delta)\) for some \(\delta\in (0,\delta_\manifold{N})\) and \(F\) satisfies \eqref{hyp21}. There exist \(\varepsilon_0,\eta_0\in(0,+\infty)\) and \(C=C(F,\Omega,g)\in(0,+\infty)\) such that for every \(\e\in(0,\varepsilon_0)\), \(\rho\in(0,1)\) and \(a \in \Bar{\Omega}\), if \(u\in \mathcal{C}^2(\Bar{\Omega},\Rset^\nu)\) is a solution of \eqref{eq_GinzburgLandau} with \(\tr_{\partial \Omega} u=g \), \(\norm{\dist (u, \manifold{N})}_{L^\infty(\Omega \cap B_{\rho}(a))}<\delta_{\manifold{N}} \) and
\begin{equation}
\label{eq_small_energy}
E \defeq \int_{\Omega \cap B_{\rho}(a)}\frac{\abs{\Deriv u}^2}{2}+\frac{F(u)}{\varepsilon^2}  \leq \eta_0,
\end{equation}
then
\begin{equation}
\rho^2\sup_{B_{\rho/2} (a)}  \left( \frac{\abs{\Deriv u}^2}{2}+\frac{F(u)}{\varepsilon^2} \right) \leq C(E+\rho^2).
\end{equation}
\end{proposition}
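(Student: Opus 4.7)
The proof follows the Böchner-and-small-energy scheme indicated in the paragraph preceding the statement and carried out in \cite{Contreras_Lamy_Rodiac_2018}. Two ingredients are needed: the pointwise differential inequality $-\Delta e_\varepsilon(u) \leq C e_\varepsilon(u)^2$ valid wherever $\dist(u, \manifold{N}) < \delta_\manifold{N}/2$, and a scaling-contradiction argument that turns the smallness of $\int e_\varepsilon(u)$ into a pointwise estimate. Classical boundary elliptic estimates, which use the $\mathcal{C}^2$ hypothesis on $\partial \Omega$ and $g$, handle the case where $a$ lies near $\partial\Omega$.

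For the differential inequality, a direct computation using $\Delta u = \nabla F(u)/\varepsilon^2$ yields the Böchner identity
\[
\Delta e_\varepsilon(u) = \abs{\Deriv^2 u}^2 + \frac{\abs{\nabla F(u)}^2}{\varepsilon^4} + \frac{2}{\varepsilon^2}\sum_{j=1}^2 \Deriv^2 F(u)[\partial_j u, \partial_j u].
\]
Because $F$ and $\nabla F$ both vanish identically on $\manifold{N}$, the Hessian $\Deriv^2 F(y)$ at $y \in \manifold{N}$ vanishes on pairs with a tangent entry and, under \eqref{hyp21} via \cref{lemma_compFanddist}, is positive definite on pairs of normal vectors. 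Taylor-expanding around $\Pi_\manifold{N}(u)$ then gives $\Deriv^2 F(u)[\Deriv u, \Deriv u] \geq -C\, \dist(u, \manifold{N})\, \abs{\Deriv u}^2$ on $\{\dist(u,\manifold{N}) < \delta_\manifold{N}/2\}$. Combining the lower bound $\abs{\nabla F(u)}^2 \geq c\, \dist(u,\manifold{N})^2$ (which follows from \eqref{hyp21} since $\abs{\nabla F(u)} \cdot \dist(u,\manifold{N}) \geq \Deriv F(u)[u - \Pi_\manifold{N}(u)] \geq m_F\, \dist(u,\manifold{N})^2$) with Young's inequality of the form $\dist(u,\manifold{N})\abs{\Deriv u}^2/\varepsilon^2 \leq \alpha\, \dist(u,\manifold{N})^2/\varepsilon^4 + \abs{\Deriv u}^4/(4\alpha)$, one chooses $\alpha$ small enough to absorb the first term into $\abs{\nabla F(u)}^2/\varepsilon^4$ (retained with a factor $1/2$) and the second term into $C e_\varepsilon(u)^2$, yielding the claimed Böchner inequality.

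For the pointwise bound, suppose for contradiction that $\rho^2 \sup_{B_{\rho/2}(a)} e_\varepsilon(u) > K(E + \rho^2)$ for some large constant $K$. The standard doubling argument (see e.g.\ \cite{Bethuel_Brezis_Helein_1994}*{Theorem III.3}) produces $x_0 \in B_{\rho/2}(a)$ and $r_0 > 0$ such that $\lambda \defeq e_\varepsilon(u)(x_0) \gtrsim K/r_0^2$ and $e_\varepsilon(u) \leq 4\lambda$ on $B_{r_0}(x_0)$. Rescaling $\tilde u(y) \defeq u(x_0 + \lambda^{-1/2}y)$ gives a solution to the Ginzburg-Landau equation with parameter $\varepsilon\sqrt\lambda$ on a ball of radius $r_0\sqrt\lambda \gtrsim \sqrt{K}$, satisfying $\tilde e(0) = 1$, $\tilde e \leq 4$, and $\int \tilde e = \lambda \int_{B_{r_0}(x_0)} e_\varepsilon(u) \leq \lambda E$. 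Since $\dist(\tilde u, \manifold{N}) < \delta_\manifold{N}/2$ throughout (as $\eta_0$, hence $E$, is small), the Böchner inequality applies and becomes the linear inequality $-\Delta \tilde e \leq C \tilde e^2 \leq 4C \tilde e$. Moser iteration then gives $\tilde e(0) \leq C_1 \int_{B_1(0)} \tilde e \leq C_1 \lambda E$, which can be made strictly less than $1$ by choosing $\eta_0$ small and $K$ large — the desired contradiction. For balls touching $\partial\Omega$, one flattens $\partial\Omega$ in local $\mathcal{C}^2$ charts, extends $u$ across the boundary using a smooth $\mathcal{C}^2$ extension of $g$, and observes that after subtracting this extension the resulting function satisfies a Ginzburg-Landau-type equation with bounded additional right-hand side controlled by $\norm{g}_{\mathcal{C}^2}$ and the $\mathcal{C}^2$-norm of $\partial\Omega$; the interior argument then applies.

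The main obstacle is the Böchner inequality itself. The factor $1/\varepsilon^2$ in front of $\Deriv^2 F(u)[\Deriv u, \Deriv u]$ is singular as $\varepsilon \to 0$ and $\Deriv^2 F$ is not positive semi-definite in tangent directions; absorbing the negative contribution requires the sharp quadratic lower bound $\abs{\nabla F(u)}^2 \gtrsim \dist(u,\manifold{N})^2$ from \eqref{hyp21} together with a delicately tuned Young inequality, because a naive expansion would only yield $-\Delta e_\varepsilon \leq C e_\varepsilon/\varepsilon^2$, which is too weak for the subsequent scaling argument. The boundary treatment is classical but must be performed in boundary-adapted charts, which is exactly where the $\mathcal{C}^2$ regularity of $\partial\Omega$ and $g$ enters the hypotheses.
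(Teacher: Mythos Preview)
The paper does not give its own proof of this proposition: it is quoted from \cite{Contreras_Lamy_Rodiac_2018}*{Proposition 3.1}, with the remark that the argument there (written for \(n\ge 3\)) goes through verbatim for \(n=2\), and the preceding paragraph names the two ingredients --- the B\"ochner-type inequality \(-\Delta e_\varepsilon(u)\le C e_\varepsilon(u)^2\) valid where \(\dist(u,\manifold{N})\) is small, and boundary elliptic estimates. Your sketch correctly reproduces both ingredients and organises them via the Chen--Struwe small-energy/doubling scheme, which is indeed the approach of the cited reference.

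One minor slip: in your rescaling step you write \(\int \tilde e = \lambda\int_{B_{r_0}(x_0)} e_\varepsilon(u)\le \lambda E\). In two dimensions the Jacobian of \(y\mapsto x_0+\lambda^{-1/2}y\) is \(\lambda^{-1}\), so the \(\lambda\) cancels and \(\int_{B_1(0)}\tilde e = \int_{B_{\lambda^{-1/2}}(x_0)} e_\varepsilon(u)\le E\) with no extra factor (this is the conformal invariance of the energy in dimension two). This is harmless for the argument: the Moser estimate then gives \(\tilde e(0)\le C_1 E\le C_1\eta_0<1\) for \(\eta_0\) small, which is the desired contradiction; the role of \(K\) is only to guarantee \(r_0\sqrt{\lambda}\ge 2\) so that the iteration applies on a unit-scale ball.
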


\begin{proof}[Proof of \cref{prop_strongconvC0}]
By a covering argument, we can restrict our attention to the case \(r=\frac{\rho}{4}\) with \(\rho>0\) sufficiently small so that
\[
\int_{\Omega \cap B_{\rho}(a) } \frac{\abs{\Deriv u_*}^2}{2} \le \eta_0/2,
\]
with \(\eta_0\) given by \cref{smallenergyestimate},
and thus when \(n \in \Nset\) is large enough
\[
\int_{\Omega \cap B_{\rho}(a) } \frac{\abs{\Deriv u_n}^2}{2}+\frac{F(u_n)}{\varepsilon_n^2} 
\le \eta_0.
\]
It follows then, from  \cref{smallenergyestimate}, that 
\begin{equation}\label{dunBnd}
\sup_{n \in \Nset} \norm{\Deriv u_n}_{L^\infty (B_{\rho/2} (a))} < + \infty.
\end{equation}

Let \(Q(y)\defeq \dist_{\manifold{N}}(y,\manifold{N})^2\). A direct computation shows that
\begin{equation}
\label{eq_boh6zae3unai5DooPa2aa5ah}
\Delta (Q(u_n))=\Deriv Q(u_n)[\Delta u_n]+
\sum_{i = 1}^2 \Deriv^2 Q(u_n)[\partial_i u_n, \partial_iu_n].
\end{equation}
Since \(u_n\) satisfies the Ginzburg--Landau equation \eqref{eq_GinzburgLandau} and by \eqref{hyp21}, we have
\begin{equation*}
\Deriv Q(u_n)[\Delta u_n]=\Deriv Q(u_n)\Big[\frac{\nabla F(u_n)}{\varepsilon_n^2}\Big]=2\frac{\nabla F(u_n)}{\varepsilon_n^2} \cdot (u_n-\Pi_{\manifold{N}}(u_n))\geq \frac{2m_F}{\varepsilon_n^2}\dist(u_n,\manifold{N})^2.
\end{equation*}
Moreover, by the computation of the second derivatives of the squared distance given in \cref{remarkDistanceSquare}, using \eqref{secondProjectionEstimate} and the inequality \(\frac{1}{\sqrt{1-x}}\leq 1+x\) on \((0,\frac 12)\), we have for every \(z\in\manifold{N}_{\delta_{\manifold{N}}/2}\) and \(v\in\Rset^\nu\),
\[
\Deriv^2 Q(z)[v,v]=2\abs{v}^2-2\Deriv\Pi_{\manifold{N}}(z)[v]\cdot v\geq 2\abs{v}^2-\frac{2\abs{v}^2}{\sqrt{1-\frac{\dist(z,\manifold{N})}{\delta_\manifold{N}}}}\geq -\frac{2\dist(z,\manifold{N})\abs{v}^2}{\delta_\manifold{N}}.
\]
Hence, by \eqref{dunBnd}, \eqref{eq_boh6zae3unai5DooPa2aa5ah} and the two preceding estimates, we have for \(n\) large enough, in \(B_{\rho/2}(a)\),
\[
\Delta (Q(u_n))\geq \frac{2m_F}{\varepsilon_n^2}\dist(u_n,\manifold{N})^2-\frac{2\dist(z,\manifold{N})\abs{\Deriv u_n}^2}{\delta_\manifold{N}}
\geq
\frac{\Cl{cst_43k0ofm}}{\varepsilon_n^2}Q(u_n)-\Cl{cst9032jieon}\sqrt{Q(u_n)}.
\]

We have thus proved that the function
\(Q \compose u_n\) satisfies for \(n\) large enough
\begin{equation}
\label{maxPrincip}
\left\{
\begin{aligned}
-\varepsilon_n^2\Delta (Q \compose u_n)+\Cr{cst_43k0ofm}\, Q \compose u_n &\leq \Cr{cst9032jieon}\varepsilon_n^2\sqrt{Q\compose u_n}  &&\text{ in } B_{\rho/2}(a)\cap \Omega, \\
Q \compose u_n &=0 &&\text{ on } B_{\rho/2}(a) \cap \partial \Omega,
\end{aligned}
\right.
\end{equation}
where the boundary condition holds because \(u_n \in \manifold{N}\) on \(\partial \Omega\). As in \cite{Nguyen_Zarnescu_2013}*{Lemma 6 and Lemma 7}, we  deduce from the maximum principle that 
\[
Q\compose u_n=\dist(u_n,\manifold{N})^2\leq \Cl{cst_4903fjio90efwi}\varepsilon_n^4\quad\text{in \(B_{\rho/4}(a)\cap \Omega\)}.
\]
Since \(\abs{\nabla F}^2=0\) on \(\manifold{N}\), by minimality, we have also \(\Deriv(\abs{\nabla F}^2)=0\) on \(\manifold{N}\); as \(F\in\mathcal{C}^3(\manifold{N}_\delta)\), this means that there is a constant \(\Cl{cst_0493jewir}\in(0,+\infty)\) with \(\abs{\nabla F(z)}^2\leq\Cr{cst_0493jewir}\dist(z,\manifold{N})^2\) for every \(z\in\manifold{N}_{\delta_\manifold{N}}\). Hence,
\begin{equation}
\abs{\Delta u_n}=\frac{\abs{\nabla F(u_n)}}{\varepsilon_n^2}\leq\sqrt{\Cr{cst_4903fjio90efwi}\Cr{cst_0493jewir}}\quad \text{in } B_{\rho/4}(a)\cap \Omega.
\end{equation}
By elliptic estimates we obtain that \((u_n)_{n \in \Nset}\) is bounded in \(W^{2,p}_{\textrm{loc}}(B_{\rho/4}(a)\cap \Bar{\Omega})\) for every \(p\in [1,+\infty)\).
\end{proof}

The \(\mathcal{C}^{1,\alpha}\) convergence is the best we can hope for if we consider convergence up to the boundary, since if we had \(\mathcal{C}^2\) convergence up to the boundary we would have \(\Delta u_*=0\) on the boundary which is incompatible with \(-\Delta u_*=B_{u_*}(\nabla u_*,\nabla u_*)\), where \(B_u*\) is the second fundamental form of \(\mathcal{N}\) at \(u_*\), see \cite{Bethuel_Brezis_Helein_1993}*{Remark 1} when \(\mathcal{N}=\mathbb{S}^1\).
However it is natural to address the question of higher convergence in the interior of \(\Omega\) away from the singularities.
Since this relies on a bootstrap argument such a result is not easy to obtain for a general potential \(F\) and should be rather addressed for specific \(F\).
We refer to \cite{Bethuel_Brezis_Helein_1994} and \cite{Nguyen_Zarnescu_2013} for results in this direction in the Ginzburg--Landau and Landau--de Gennes models.

\begin{bibdiv}
\begin{biblist}

\bib{Alicandro_Ponsiglione_2014}{article}{
   author={Alicandro, Roberto},
   author={Ponsiglione, Marcello},
   title={Ginzburg--Landau functionals and renormalizêed energy: a revised \(\Gamma\)--convergence approach},
   journal={J. Funct. Anal.},
   volume={266},
   date={2014},
   number={8},
   pages={4890--4907},
   issn={0022-1236},
   doi={10.1016/j.jfa.2014.01.024},
}

\bib{Ambrosio_DalMaso_1990}{article}{
   author={Ambrosio, L.},
   author={Dal Maso, G.},
   title={A general chain rule for distributional derivatives},
   journal={Proc. Amer. Math. Soc.},
   volume={108},
   date={1990},
   number={3},
   pages={691--702},
   issn={0002-9939},
   doi={10.2307/2047789},
}
		
\bib{Ambrosio_Mantegazza_1998}{article}{
   author={Ambrosio, Luigi},
   author={Mantegazza, Carlo},
   title={Curvature and distance function from a manifold},
   journal={J. Geom. Anal.},
   volume={8},
   date={1998},
   number={5},
   pages={723--748},
   issn={1050-6926},
   doi={10.1007/BF02922668},
}

\bib{Ball_Zarnescu_2011}{article}{
  author={Ball, John M.},
  author={Zarnescu, Arghir},
  title={Orientability and energy minimization in liquid crystal models},
  journal={Arch. Ration. Mech. Anal.},
  volume={202},
  date={2011},
  number={2},
  pages={493--535},
  issn={0003-9527},
  doi={10.1007/s00205-011-0421-3},
}

\bib{Bauman_Park_Phillips_2012}{article}{
   author={Bauman, Patricia},
   author={Park, Jinhae},
   author={Phillips, Daniel},
   title={Analysis of nematic liquid crystals with disclination lines},
   journal={Arch. Ration. Mech. Anal.},
   volume={205},
   date={2012},
   number={3},
   pages={795--826},
   issn={0003-9527},
   doi={10.1007/s00205-012-0530-7},
}

\bib{Beaufort_Lambrechts_Henrotte_Geuzaine_Remacle_2017}{article}{
  title={Computing cross fields A PDE approach based on the Ginzburg--Landau theory},
  author={Beaufort, Pierre-Alexandre},
  author={Lambrechts, Jonathan},
  author={Henrotte, Fran{\c{c}}ois},
  author={Geuzaine, Christophe},
  author={Remacle, Jean-Fran{\c{c}}ois},
  journal={Procedia Engineering},
  volume={203},
  pages={219--231},
  year={2017},
  publisher={Elsevier}
}		

\bib{Bethuel_1993}{article}{
   author={Bethuel, Fabrice},
   title={Weak limits of Palais-Smale sequences for a class of critical
   functionals},
   journal={Calc. Var. Partial Differential Equations},
   volume={1},
   date={1993},
   number={3},
   pages={267--310},
   issn={0944-2669},
   doi={10.1007/BF01191297},
 }

 \bib{Bethuel_Brezis_Helein_1993}{article}{
   author={Bethuel, Fabrice},
   author={Brezis, Ha\"{\i}m},
   author={H\'{e}lein, Fr\'{e}d\'{e}ric},
   title={Asymptotics for the minimization of a Ginzburg--Landau functional},
   journal={Calc. Var. Partial Differential Equations},
   volume={1},
   date={1993},
   number={2},
   pages={123--148},
   issn={0944-2669},
   doi={10.1007/BF01191614},
 }
\bib{Bethuel_Brezis_Helein_1994}{book}{
   author={Bethuel, Fabrice},
   author={Brezis, Ha\"\i m},
   author={H\'elein, Fr\'ed\'eric},
   title={Ginzburg--Landau vortices},
   series={Progress in Nonlinear Differential Equations and their
   Applications},
   volume={13},
   publisher={Birkh\"auser},
   address={Boston, Mass.},
   date={1994},
   doi={10.1007/978-3-319-66673-0},
}

\bib{Bethuel_Bourgain_Brezis_Orlandi_2001}{article}{
  author={Bethuel, Fabrice},
  author={Bourgain, Jean},
  author={Brezis, Ha\"{\i}m},
  author={Orlandi, Giandomenico},
  title={\(W^{1,p}\) estimates for solutions to the Ginzburg--Landau equation with boundary data in \(H^{1/2}\)},
  journal={C. R. Acad. Sci. Paris S\'{e}r. I Math.},
  volume={333},
  date={2001},
  number={12},
  pages={1069--1076},
  issn={0764-4442},
  doi={10.1016/S0764-4442(01)02191-7},
}

\bib{Berlyand_Mironescu_Rybalko_Sandier_2014}{article}{
   author={Berlyand, Leonid},
   author={Mironescu, Petru},
   author={Rybalko, Volodymyr},
   author={Sandier, Etienne},
   title={Minimax critical points in Ginzburg--Landau problems with
   semi-stiff boundary conditions: existence and bubbling},
   journal={Comm. Partial Differential Equations},
   volume={39},
   date={2014},
   number={5},
   pages={946--1005},
   issn={0360-5302},
   doi={10.1080/03605302.2013.851214},
}

\bib{Bousquet_2013}{article}{
   author={Bousquet, Pierre},
   title={The Euler equation in the multiple integrals calculus of
   variations},
   journal={SIAM J. Control Optim.},
   volume={51},
   date={2013},
   number={2},
   pages={1047--1062},
   issn={0363-0129},
   doi={10.1137/120882561},
}

\bib{Brezis_Nirenberg_1995}{article}{
   author={Brezis, Ha\"\i m},
   author={Nirenberg, Louis},
   title={Degree theory and BMO},
   part={I},
   subtitle={Compact manifolds without boundaries},
   journal={Selecta Math. (N.S.)},
   doi={10.1007/BF01671566},
   volume={1},
   date={1995},
   number={2},
   pages={197--263},
   issn={1022-1824},
}
\bib{Brezis_Nirenber_1996}{article}{
   author={Brezis, Ha\"\i m},
   author={Nirenberg, Louis},
   title={Degree theory and BMO},
   part={II},
   subtitle={Compact manifolds with boundaries},
   contribution={with an appendix by the authors and Petru Mironescu},
   journal={Selecta Math. (N.S.)},
   volume={2},
   date={1996},
   number={3},
   pages={309--368},
   issn={1022-1824},
   doi={10.1007/BF01587948},
 }

\bib{Canevari_2015}{article}{
   author={Canevari, Giacomo},
   title={Biaxiality in the asymptotic analysis of a 2D Landau--de Gennes
   model for liquid crystals},
   journal={ESAIM Control Optim. Calc. Var.},
   volume={21},
   date={2015},
   number={1},
   pages={101--137},
   issn={1292-8119},
   doi={10.1051/cocv/2014025},
}

\bib{Canevari_Orlandi_2020b}{article}{
   author={Canevari, Giacomo},
   author={Orlandi, Giandomenico},
   title={Topological singular set of vector-valued maps},
   part={I},
   subtitle={Applications to manifold-constrained Sobolev and BV spaces  },
journal={Calc. Var. Partial Differential Equations},
   volume={58},
   date={2019},
   number={2},
   pages={Paper No. 72},
   issn={0944-2669},
   doi={10.1007/s00526-019-1501-8},
}

\bib{Canevari_Orlandi_2020a}{article}{
   author={Canevari, Giacomo},
   author={Orlandi, Giandomenico},
   title={Topological singular set of vector-valued maps}, 
   part={II},
   subtitle={\(\Gamma\)--convergence for Ginzburg--Landau type functionals },
   doi={10.1007/s00205-021-01671-2},
   journal={Arch. Ration. Mech. Anal.},
}

\bib{Chemin_Henrotte_Remacle_VanSchaftingen}{article}{
  title={Representing three-dimensional cross fields using 4th order tensors},
  author={Chemin, Alexandre},
  author={Henrotte, Fran\c cois},
  author={Remacle, Jean-Fran\c cois},
  author={Van Schaftingen, Jean},
  book={
  editor={Roca, X.},
  editor={Loseille,  A.},
  publisher={Springer}, 
  address={Cham},
  series={Lecture Notes in Computational Science and Engineering},
  volume={127},
  date={2019},
  },
  pages={89--108},
  conference={
  title={IMR2018: 27th International Meshing Roundtable},
  },
}

\bib{Carles_Danchin_Saut_2012}{article}{
   author={Carles, R\'{e}mi},
   author={Danchin, Rapha\"{e}l},
   author={Saut, Jean-Claude},
   title={Madelung, Gross-Pitaevskii and Korteweg},
   journal={Nonlinearity},
   volume={25},
   date={2012},
   number={10},
   pages={2843--2873},
   issn={0951-7715},
   doi={10.1088/0951-7715/25/10/2843},
}

\bib{Do_Carmo_1992}{book}{
   author={do Carmo, Manfredo Perdig\~ao},
   title={Riemannian geometry},
   series={Mathematics: Theory \& Applications},
   translator={Francis Flaherty},
   publisher={Birkh\"auser},
   address={Boston, Mass.},
   date={1992},
   pages={xiv+300},
   isbn={0-8176-3490-8},
}

\bib{Chen_Struwe_1989}{article}{
   author={Chen, Yun Mei},
   author={Struwe, Michael},
   title={Existence and partial regularity results for the heat flow for
   harmonic maps},
   journal={Math. Z.},
   volume={201},
   date={1989},
   number={1},
   pages={83--103},
   issn={0025-5874},
   doi={10.1007/BF01161997},
}

\bib{Contreras_Lamy_Rodiac_2018}{article}{
   author={Contreras, Andres},
   author={Lamy, Xavier},
   author={Rodiac, R\'{e}my},
   title={On the convergence of minimizers of singular perturbation
   functionals},
   journal={Indiana Univ. Math. J.},
   volume={67},
   date={2018},
   number={4},
   pages={1665--1682},
   issn={0022-2518},
   doi={10.1512/iumj.2018.67.7391},
}
\bib{Dacorogna_2008}{book}{
   author={Dacorogna, Bernard},
   title={Direct methods in the calculus of variations},
   series={Applied Mathematical Sciences},
   volume={78},
   edition={2},
   publisher={Springer, New York},
   date={2008},
   pages={xii+619},
   isbn={978-0-387-35779-9},
}

\bib{DeMatteis_Sonnet_Virga_2008}{article}{
   author={De Matteis, Giovanni},
   author={Sonnet, Andr\'{e} M.},
   author={Virga, Epifanio G.},
   title={Landau theory for biaxial nematic liquid crystals with two order
   parameter tensors},
   journal={Contin. Mech. Thermodyn.},
   volume={20},
   date={2008},
   number={6},
   pages={347--374},
   issn={0935-1175},
   doi={10.1007/s00161-008-0086-9},
}

\bib{Elliott_Matano_Tang_1994}{article}{
  author={Elliott, C. M.},
  author={Matano, H.},
  author={Tang, Qi},
  title={Zeros of a complex Ginzburg--Landau order parameter with
    applications to superconductivity},
  journal={European J. Appl. Math.},
  volume={5},
  date={1994},
  number={4},
  pages={431--448},
  issn={0956-7925},
  doi={10.1017/S0956792500001558},
}

\bib{Eminenti_Mantegazza_2004}{article}{
   author={Eminenti, Manolo},
   author={Mantegazza, Carlo},
   title={Some properties of the distance function and a conjecture of De
   Giorgi},
   journal={J. Geom. Anal.},
   volume={14},
   date={2004},
   number={2},
   pages={267--279},
   issn={1050-6926},
   doi={10.1007/BF02922072},
}
\bib{Foote_1984}{article}{
   author={Foote, Robert L.},
   title={Regularity of the distance function},
   journal={Proc. Amer. Math. Soc.},
   volume={92},
   date={1984},
   number={1},
   pages={153--155},
   issn={0002-9939},
   doi={10.2307/2045171},
}

\bib{Freire_Muller_Struwe_1998}{article}{
   author={Freire, Alexandre},
   author={M\"{u}ller, Stefan},
   author={Struwe, Michael},
   title={Weak compactness of wave maps and harmonic maps},
   journal={Ann. Inst. H. Poincar\'{e} Anal. Non Lin\'{e}aire},
   volume={15},
   date={1998},
   number={6},
   pages={725--754},
   issn={0294-1449},
   doi={10.1016/S0294-1449(99)80003-1},
}

\bib{Gilbarg_Trudinger_1983}{book}{
   author={Gilbarg, David},
   author={Trudinger, Neil S.},
   title={Elliptic partial differential equations of second order},
   series={Grundlehren der Mathematischen Wissenschaften},
   volume={224},
   edition={2},
   publisher={Springer-Verlag, Berlin},
   date={1983},
   pages={xiii+513},
   isbn={3-540-13025-X},
%    review={\MR{737190}},
   doi={10.1007/978-3-642-61798-0},
}

\bib{GoldmanMerletMillot}{article}{
 author={Goldman, Michael},
  author={Merlet, Beno\^it},
   author={Millot, Vincent},
 title={A Ginzburg--Landau model with topologically induced free discontinuities},
 journal={Ann. de l'Institut Fourier}, 
 volume={70},
 date={2020},
 number={6}, 
 pages={2583--2675},
 doi={10.5802/aif.3388} 
}

\bib{Golovaty_Montero_2014}{article}{
   author={Golovaty, Dmitry},
   author={Montero, Jos\'{e} Alberto},
   title={On minimizers of a Landau--de Gennes energy functional on planar domains},
   journal={Arch. Ration. Mech. Anal.},
   volume={213},
   date={2014},
   number={2},
   pages={447--490},
   issn={0003-9527},
   doi={10.1007/s00205-014-0731-3},
}

\bib{HTWB}{article}{
  title={Boundary aligned smooth 3D cross-frame field},
  author={Huang, Jin },
  author={Tong, Yiying },
  author={Wei, Hongyu},
  author={Bao, Hujun},
  journal={ACM Transactions on Graphics},
  volume={30},
  date={2011},
}

\bib{Jerrard_Soner_2002}{article}{
  author={Jerrard, Robert L.},
  author={Soner, Halil Mete},
  title={The Jacobian and the Ginzburg--Landau energy},
  journal={Calc. Var. Partial Differential Equations},
  volume={14},
  date={2002},
  number={2},
  pages={151--191},
  issn={0944-2669},
  doi={10.1007/s005260100093},
}

\bib{Ignat_Jerrard_2017}{article}{
   author={Ignat, Radu},
   author={Jerrard, Robert L.},
   title={Interaction energy between vortices of vector fields on Riemannian
   surfaces},
   journal={C. R. Math. Acad. Sci. Paris},
   volume={355},
   date={2017},
   number={5},
   pages={515--521},
   issn={1631-073X},
   doi={10.1016/j.crma.2017.04.004},
}

\bib{Ignat_Jerrard_2020}{article}{
   author={Ignat, Radu},
   author={Jerrard, Robert L.},
   title={Renormalized energy between vortices in some Ginzburg--Landau models on 2-dimensional Riemannian manifolds},
   journal={Arch. Ration. Mech. Anal.},
   volume={239},
   pages={1577--1666},
   date={2021},
   doi={10.1007/s00205-020-01598-0}
}

\bib{Jerrard_1999}{article}{
   author={Jerrard, Robert L.},
   title={Lower bounds for generalised Ginzburg--Landau functionals},
   journal={SIAM J. Math. Anal.},
   volume={30},
   date={1999},
   number={4},
   pages={721--746},
   issn={0036-1410},
}

\bib{Liu_Zhang_Chien_Solomon_Bommes_2018}{article}{
  title={Singularity-constrained octahedral fields for hexahedral meshing},
  author={Liu, Heng},
  author={Zhang, Paul},
  author={Chien, Edward},
  author={Solomon, Justin M.}, 
  author={Bommes, David},
  journal={ACM Transactions on Graphics (TOG)},
  date={2018},
  volume={37},
  number={93},
  doi={10.1145/3197517.3201344},
}

\bib{Mermin_1979}{article}{
   author={Mermin, N. D.},
   title={The topological theory of defects in ordered media},
   journal={Rev. Modern Phys.},
   volume={51},
   date={1979},
   number={3},
   pages={591--648},
   issn={0034-6861},
   doi={10.1103/RevModPhys.51.591},
}

\bib{Mironescu_Shafrir_2017}{article}{
   author={Mironescu, Petru},
   author={Shafrir, Itai},
   title={Asymptotic behavior of critical points of an energy involving a
   loop-well potential},
   journal={Comm. Partial Differential Equations},
   volume={42},
   date={2017},
   number={12},
   pages={1837--1870},
   issn={0360-5302},
   doi={10.1080/03605302.2017.1390680},
}

\bib{Monteil_Rodiac_VanSchaftingen_RE}{article}{
   author={Monteil, Antonin},
   author={Rodiac, R\'emy},
   author={Van Schaftingen, Jean},
   title={Renormalised energies  and renormalisable singular harmonic maps into a compact manifold on planar domains},
   journal={Math. Ann.},
   doi={10.1007/s00208-021-02204-8 },
}

\bib{Moore_1976}{article}{
   author={Moore, John Douglas},
   title={Equivariant embeddings of Riemannian homogeneous spaces},
   journal={Indiana Univ. Math. J.},
   volume={25},
   date={1976},
   number={3},
   pages={271--279},
   issn={0022-2518},
   doi={10.1512/iumj.1976.25.25022},
}
\bib{Moore_Schlafly_1980}{article}{
  author={Moore, John Douglas},
  author={Schlafly, Roger},
  title={On equivariant isometric embeddings},
  journal={Math. Z.},
  volume={173},
  date={1980},
  number={2},
  pages={119--133},
  issn={0025-5874},
  doi={10.1007/BF01159954},
}

\bib{Morrey_1948}{article}{
   author={Morrey, Charles B., Jr.},
   title={The problem of Plateau on a Riemannian manifold},
   journal={Ann. of Math. (2)},
   volume={49},
   date={1948},
   pages={807--851},
   issn={0003-486X},
   doi={10.2307/1969401},
}

\bib{Moser_2005}{book}{
   author={Moser, Roger},
   title={Partial regularity for harmonic maps and related problems},
   publisher={World Scientific},
   address={Hackensack, N.J.},
   date={2005},
   pages={viii+184},
   isbn={981-256-085-8},
   doi={10.1142/9789812701312},
}

\bib{Nash_1956}{article}{
   author={Nash, John},
   title={The imbedding problem for Riemannian manifolds},
   journal={Ann. of Math. (2)},
   volume={63},
   date={1956},
   pages={20--63},
   issn={0003-486X},
   doi={10.2307/1969989},
}

\bib{Nguyen_Zarnescu_2013}{article}{
   author={Nguyen, Luc},
   author={Zarnescu, Arghir},
   title={Refined approximation for minimisers of a Landau--de Gennes energy
   functional},
   journal={Calc. Var. Partial Differential Equations},
   volume={47},
   date={2013},
   number={1--2},
   pages={383--432},
   issn={0944-2669},
   doi={10.1007/s00526-012-0522-3},
}

\bib{Ponce_2016}{book}{
   author={Ponce, Augusto C.},
   title={Elliptic PDEs, measures and capacities},
   series={EMS Tracts in Mathematics},
   volume={23},
   subtitle={From the Poisson equations to nonlinear Thomas-Fermi problems},
   publisher={European Mathematical Society (EMS)}, 
   address={Z\"{u}rich},
   date={2016},
   pages={x+453},
   isbn={978-3-03719-140-8},
   doi={10.4171/140},
}
 
\bib{Riviere_2007}{article}{
   author={Rivi\`ere, Tristan},
   title={Conservation laws for conformally invariant variational problems},
   journal={Invent. Math.},
   volume={168},
   date={2007},
   number={1},
   pages={1--22},
   issn={0020-9910},
   doi={10.1007/s00222-006-0023-0},
}
\bib{Sandier_1998}{article}{
   author={Sandier, Etienne},
   title={Lower bounds for the energy of unit vector fields and applications},
   journal={J. Funct. Anal.},
   volume={152},
   date={1998},
   number={2},
   pages={379--403},
   doi={10.1006/jfan.1997.3170},
   issn={0022-1236},
}

\bib{Sandier_Serfaty_2007}{book}{
   author={Sandier, Etienne},
   author={Serfaty, Sylvia},
   title={Vortices in the magnetic Ginzburg--Landau model},
   series={Progress in Nonlinear Differential Equations and their Applications},
   volume={70},
   publisher={Birkh\"auser},
   address={Boston, Mass.},
   date={2007},
   pages={xii+322},
   isbn={978-0-8176-4316-4},
   isbn={0-8176-4316-8},
}

\bib{Schoen_Uhlenbeck_1982}{article}{
   author={Schoen, Richard},
   author={Uhlenbeck, Karen},
   title={A regularity theory for harmonic maps},
   journal={J. Differential Geom.},
   volume={17},
   date={1982},
   number={2},
   pages={307--335},
   issn={0022-040X},
   doi={10.4310/jdg/1214436923},
}

\bib{Serfaty_Tice_2008}{article}{
  author={Serfaty, Sylvia},
  author={Tice, Ian},
  title={Lorentz space estimates for the Ginzburg--Landau energy},
  journal={J. Funct. Anal.},
  volume={254},
  date={2008},
  number={3},
  pages={773--825},
  issn={0022-1236},
  doi={10.1016/j.jfa.2007.11.010},
}

\bib{Struwe_1994}{article}{
  author={Struwe, Michael},
  title={On the asymptotic behavior of minimizers of the Ginzburg--Landau model in \(2\) dimensions},
  journal={Differential Integral Equations},
  volume={7},
  date={1994},
  number={5-6},
  pages={1613--1624},
  issn={0893-4983},
}

\bib{Viertel_Osting_2017}{article}{
   author={Viertel, Ryan},
   author={Osting, Braxton},
   title={An approach to quad meshing based on harmonic cross-valued maps and the Ginzburg--Landau theory},
   journal={SIAM J. Sci. Comput.},
   volume={41},
   date={2019},
   number={1},
   pages={A452--A479},
   issn={1064-8275},
   doi={10.1137/17M1142703},
}

\end{biblist}

\end{bibdiv}

\end{document}